\address{Simons Center for Geometry and Physics,
State University of New York, Stony Brook, NY 11794-3636 U.S.A.} 
\email{kfukaya@scgp.stonybrook.edu}
\address{Center for Geometry and Physics, Institute for Basic Sciences (IBS), Pohang, Korea \& Department of Mathematics,
POSTECH, Pohang, Korea} \email{yongoh1@postech.ac.kr}
\address{Graduate School of Mathematics,
Nagoya University, Nagoya, Japan} \email{ohta@math.nagoya-u.ac.jp}
\address{Research Institute for Mathematical Sciences, Kyoto University, Kyoto, Japan}
\email{ono@kurims.kyoto-u.ac.jp}
\def\l@section{\@tocline{1}{0pt}{3mm}{8mm}{}}
\def\l@subsection{\@tocline{2}{0pt}{6mm}{10mm}{}}
\def\l@subsubsection{\@tocline{3}{0pt}{9mm}{11mm}{}}
\def\E{\ifmmode{\mathbb E}\else{$\mathbb E$}\fi} %
\def\N{\ifmmode{\mathbb N}\else{$\mathbb N$}\fi} 
\def\R{\ifmmode{\mathbb R}\else{$\mathbb R$}\fi} 
\def\Q{\ifmmode{\mathbb Q}\else{$\mathbb Q$}\fi} 
\def\C{\ifmmode{\mathbb C}\else{$\mathbb C$}\fi} 
\def\H{\ifmmode{\mathbb H}\else{$\mathbb H$}\fi} 
\def\Z{\ifmmode{\mathbb Z}\else{$\mathbb Z$}\fi} 
\def\P{\ifmmode{\mathbb P}\else{$\mathbb P$}\fi} %
\def\T{\ifmmode{\mathbb T}\else{$\mathbb T$}\fi} %
\def\SS{\ifmmode{\mathbb S}\else{$\mathbb S$}\fi} %
\def\DD{\ifmmode{\mathbb D}\else{$\mathbb D$}\fi} %
\def\K{\ifmmode{\mathbb K}\else{$\mathbb K$}\fi}
\def\F{\ifmmode{\mathbb F}\else{$\mathbb F$}\fi} 
\newcommand{\rank}{\operatorname{rank}}
\theoremstyle{theorem}
\newtheorem{thm}{Theorem}[section]
\newtheorem{cor}[thm]{Corollary}
\newtheorem{lem}[thm]{Lemma}
\newtheorem{prop}[thm]{Proposition}
\newtheorem{clm}[thm]{Claim}
\theoremstyle{definition}
\newtheorem{defn}[thm]{Definition}
\newtheorem{rem}[thm]{Remark}
\newtheorem{cons}[thm]{\rm\bfseries{Construction}}
\newtheorem{exm}[thm]{Example}
\newtheorem{conds}[thm]{Condition}
\newtheorem{shitu}[thm]{Situation}
\newtheorem{choi}[thm]{Choice}
\newtheorem{notation}[thm]{Notation}
\numberwithin{equation}{section}
\begin{document}

\title[Construction of a linear K-system]{Construction of a linear K-system in Hamiltonian Floer theory}
\dedicatory{To Claude Viterbo on the occasion of his 60th birthday}

\author{Kenji Fukaya, Yong-Geun Oh, Hiroshi Ohta, Kaoru Ono}

\thanks{Kenji Fukaya is supported partially by 
Simons Collaboration on homological mirror symmetry,
Yong-Geun Oh by the IBS project IBS-R003-D1, Hiroshi Ohta by JSPS Grant-in-Aid
for Scientific Research Nos. 
15H02054, 21H00983, 21K18576, 21H00985 and Kaoru Ono by JSPS Grant-in-Aid for
Scientific Research, Nos. 26247006, 19H00636.}

\begin{abstract}
The notion of linear K-system was introduced by the present authors as
an abstract model arising from the structure of compactified moduli
spaces of solutions to Floer's equation in the book \cite{fooonewbook}.
The purpose of the present article is to provide a geometric realization of
the linear K-system associated with solutions to Floer's equation 
in the Morse-Bott setting.
Immediate consequences (when combined with the abstract theory from \cite{fooonewbook})
are the construction of Floer cohomology for periodic Hamiltonian systems on general compact symplectic manifolds without any restriction,
and the construction of an isomorphism over the Novikov ring between the Floer cohomology and the singular cohomology of the underlying symplectic manifold. The present article utilizes various analytical results on pseudoholomorphic curves established in our earlier papers and books.
\par
However the paper itself is geometric in nature,
and does not presume much prior knowledge of Kuranishi structures
and their construction but assumes only the elementary part thereof, 
and results from \cite{fooo:const1} and \cite[Chapter 8]{foooanalysis} on their construction, 
and the standard knowledge on Hamiltonian Floer theory.
We explain the general procedure of the construction of a linear K-system by explaining
in detail the inductive steps of ensuring the compatibility conditions for the system of
Kuranishi structures leading to a linear K-system for the case of Hamiltonian Floer theory.
\end{abstract}
\subjclass[2020]{53D40, 53D35, 58D27, 57P99}
\maketitle
\date{}

\keywords{}
\maketitle

\tableofcontents

\section{Introduction}
\label{sec:intro}
The technique of virtual fundamental cycles or chains now provides us with a general and powerful method for studying certain moduli spaces in symplectic geometry and gauge theory. 
In \cite{FO}, the first author and the fourth of this article 
introduced the notion of Kuranishi structure and constructed the 
virtual fundamental cycle of the moduli space of stable maps from a marked semi-stable curve to a general closed symplectic manifold based on the theory of Kuranishi structures, and applied it to Floer theory for periodic Hamiltonian systems. After that, 
in \cite{fooobook}, \cite{fooobook2} the authors of the present article
constructed the virtual fundamental chain of the moduli space of stable maps from a  marked bordered semi-stable curve to a closed symplectic manifold with Lagrangian boundary condition by using the theory of Kuranishi structures, and applied it to 
Lagrangian intersection Floer theory. 
In these two studies, we had to construct a certain compatible system of Kuranishi structures on these moduli spaces.

Recently in \cite{fooonewbook}, 
the authors developed a systematic foundation of the 
theory of Kuranishi structures and virtual fundamental chains in a 
general abstract setting.
In particular, we axiomatized the properties of two systems,
called a {\it linear K-system} and a {\it tree-like K-system}, consisting of abstract paracompact metrizable spaces equipped with Kuranishi structures satisfying certain 
compatibility conditions. 
\par
A tree-like K-system \cite[Definition 21.9]{fooonewbook} is a model    
arising from the moduli spaces of stable maps from a marked bordered semi-stable curve with Lagrangian boundary condition which are used in Lagrangian Floer theory.
In fact, in \cite{fooo:const1}, \cite{fooo:const2} 
we carried out a geometric realization of the tree-like K-system by using the moduli spaces
of stable maps from a marked bordered semi-stable curve.  
Namely, we constructed a system of Kuranishi structures 
on the moduli spaces which indeed satisfies the axioms of the tree-like K-system given in \cite{fooonewbook}.
In the procedure of constructing the required Kuranishi structures 
we introduced the notion of {\it obstruction bundle data} assigned to each point of the moduli space \cite[Definition 5.1]{fooo:const1} and showed that we can associate a Kuranishi structure on each moduli space to any obstruction bundle data in a canonical way \cite[Theorem 7.1]{fooo:const1}.
In addition, if the system of the obstruction bundle data satisfies certain compatibility conditions ({\it disk-component-wise system of obstruction bundle data} in the sense of \cite[Definition 5.1]{fooo:const2}), then the obtained system of Kuranishi structures associated to the system of obstruction bundle data defines a tree-like K-system \cite[Theorem 5.3]{fooo:const2}. 
\par
On the other hand, a linear K-system \cite[Definition 16.6]{fooonewbook} 
is 
the model that is used in Floer theory for periodic Hamiltonian systems
which arises from the moduli spaces of solutions to Floer's equation.
The purpose of the present article is to give a geometric realization of 
the linear K-system by using the moduli spaces of solutions to Floer's equation.
The strategy to achieve this realization lies on the line similar to that for the tree-like K-system.  
Namely, we introduce the notion of obstruction bundle data at each point for the moduli spaces of solutions to Floer's equation in Definition \ref{obbundeldata1}.
Since we explained the construction of a Kuranishi structure 
associated to obstruction bundle data in \cite{fooo:const1}, \cite{fooo:const2} in detail and 
since such a construction is similar for the case of a linear K-system, we mainly present the way to equip moduli spaces with suitable obstruction bundle data in this article.
In Section \ref{subsec;KuramodFloercor} we will construct a system of obstruction bundle data satisfying 
certain compatibility conditions which gives rise to a linear K-system.
Comparing to \cite{fooo:const2}, 
we take a slightly different way for constructing the compatible system of obstruction bundle data in order to demonstrate that both methods are applicable to both cases. 
Here we use {\it outer collars} of the moduli spaces to describe the compatibility.
This method is outlined in \cite[Remark 4.3.89]{foooAst}.
The notion of outer collars is introduced and used in 
\cite[Chapter 17]{fooonewbook} in the abstract setting. 
\par
Now our main theorem in this article is summarized as follows.
\begin{thm}[Theorem \ref{theorem266}]\label{thm:main}
Let $(X,\omega)$ be a closed symplectic manifold and 
$H : X \times S^1 \to \R$ a smooth function.
Suppose that the set ${\rm Per} (H)$ of all contractible 1-periodic orbits of the time dependent Hamiltonian vector field of $H$ is Morse-Bott non-degenerate 
(see Condition \ref{264form}).
Then we can construct a linear K-system $\mathcal F_X(H)$ 
such that the critical submanifolds are connected components of $\widetilde{{\rm Per}} (H)$ 
(which are copies of connected components of ${\rm Per} (H)$, see Definition \ref{def:Per}), and the spaces of connecting orbits are the 
outer collared spaces of the moduli spaces of solutions to Floer's equation \eqref{Fleq}.
\end{thm}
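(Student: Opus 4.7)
The plan is to follow the strategy used in \cite{fooo:const1, fooo:const2} for the tree-like K-system, adapted to Floer trajectory moduli spaces, and to encode the cross-stratum compatibility via the outer collar construction of \cite[Chapter 17]{fooonewbook}. For each pair of connected components $\mathcal R_-, \mathcal R_+$ of $\widetilde{\rm Per}(H)$, let $\mathcal M(\mathcal R_-, \mathcal R_+)$ denote the Gromov--Floer compactified moduli space of solutions of \eqref{Fleq} asymptotic to $\mathcal R_\pm$. Its codimension-$k$ boundary stratum is described by fiber products
\[
\mathcal M(\mathcal R_-, \mathcal R_1) \times_{\mathcal R_1} \cdots \times_{\mathcal R_k} \mathcal M(\mathcal R_k, \mathcal R_+)
\]
together with sphere-bubble strata, so a linear K-system structure \cite[Definition 16.6]{fooonewbook} on this family amounts to Kuranishi structures that restrict coherently, under these fiber products, to each boundary stratum.

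Mimicking \cite[Definition 5.1]{fooo:const1}, I would first introduce obstruction bundle data at each point $[u] \in \mathcal M(\mathcal R_-, \mathcal R_+)$ (this is Definition \ref{obbundeldata1} in the paper): a finite-dimensional subspace of smooth sections covering the cokernel of the linearized Floer operator at $[u]$, together with a core neighbourhood and an obstruction bundle varying continuously in the relevant parameters. The chart construction of \cite[Theorem 7.1]{fooo:const1} then produces, without further choices, a Kuranishi chart at $[u]$, and these charts glue into a Kuranishi structure on all of $\mathcal M(\mathcal R_-, \mathcal R_+)$. This reduces the theorem to producing a system of obstruction bundle data on every $\mathcal M(\mathcal R_-, \mathcal R_+)$ that is compatible with the boundary fiber product decomposition.

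The core of the argument is an induction, ordered by a pair consisting of, say, total energy and number of broken ends. Assume the obstruction bundle data have been fixed on all strictly ``smaller'' moduli spaces. The fiber product formula then induces obstruction bundle data on a neighbourhood of each codimension-$k$ stratum of $\mathcal M(\mathcal R_-, \mathcal R_+)$, but gluing provides only parameter-dependent identifications, so the induced data do not literally equal any extension to the interior. This is where the outer collar formalism of \cite[Chapter 17]{fooonewbook}, outlined in \cite[Remark 4.3.89]{foooAst}, enters: attaching an outer collar along each boundary stratum formally decouples the collar coordinate from the symplectic deformation directions, so the induced data extend as a strict product across the collar; a standard partition-of-unity argument then extends them into the interior while preserving evaluation-map transversality at each $\mathcal R_i$.

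After the inductive construction is complete, I would verify the axioms of a linear K-system one by one. The dimension and orientation axioms reduce to the usual index and coherent-orientation statements for Floer trajectories, and the corner-compatibility axiom is enforced by construction. The step I expect to be the main obstacle is handling corners where several broken-trajectory strata (and possibly sphere-bubble strata) meet simultaneously, since naive gluing parameters need not agree on the overlaps; the outer-collar formalism is designed precisely so that these matchings become strict product decompositions, and organising this bookkeeping uniformly across all such corners is the technical heart of the construction.
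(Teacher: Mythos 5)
Your outline correctly identifies the two main ingredients (obstruction bundle data in the sense of Definition \ref{obbundeldata1}, and outer collars to encode boundary compatibility), but it inverts the role the collar actually plays, and this inversion hides the real work. In the paper the Kuranishi structure on ${\mathcal M}_{\ell}(X,H;\alpha_-,\alpha_+)$ is constructed \emph{first}, with no compatibility whatsoever at the boundary and corners (Theorem \ref{kuraexists} and Remark \ref{rem628}), and that structure is never touched afterwards. The collar $[-1,0]^{m-1}$ is then glued on the \emph{outside}: the original structure sits at $\vec t=\vec 0$, the fiber-product structure is imposed only at the faces $t_i=-1$, and the entire interpolation between the two happens inside the collar, governed by a cover $\{\mathcal V(A,B,C)\}$ of $[-1,0]^{m-1}$ indexed by partitions $A\sqcup B\sqcup C=\underline{m-1}$ (Definition \ref{defn2659}, Condition \ref{conds2660}, Proposition \ref{prop2661}). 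Nothing is ``extended into the interior,'' and a partition of unity is not the right tool for obstruction spaces: what one actually does is vary the \emph{index set} $\frak F({\bf q},\vec t)$ of centers whose obstruction spaces are summed (Definitions \ref{defn27677} and \ref{def:513}), and the assertion that the resulting sum is direct is itself a nontrivial statement proved by induction on $m$ (Lemma \ref{lem26689}). Your proposal acknowledges the corner bookkeeping as ``the technical heart'' but offers no mechanism for it; the $\mathcal V(A,B,C)$ system and the equivalence relation on $\widetilde{\frak F}({\bf q},\vec t)$ \emph{are} that mechanism, and without something equivalent the induction you sketch does not close.

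Two further concrete errors. First, sphere-bubble strata do not contribute to the codimension-$k$ corners of the K-system: the smoothing parameter at a non-transit node is a punctured disk $((T_0,\infty]\times S^1)/\sim$, which has no boundary, so only breaking at transit points appears in the normalized corner (Theorem \ref{kuraexists} (2), Lemma \ref{lem2655}); listing bubble strata alongside the fiber products in the boundary description is wrong. Second, in the Morse--Bott setting the orientation axiom is not ``the usual coherent-orientation statement'': one must construct the $O(1)$-local systems $o_{R_\alpha}$ on the critical submanifolds from the family of capped operators $P(\gamma;\frak t)$ (Definition \ref{oricricial}) and prove the isomorphism of orientation bundles in Theorem \ref{kuraexists} (1) together with the sign identity at the boundary; this is a genuine piece of the proof that your plan omits entirely.
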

We also construct morphisms of linear K-systems. Combining these results 
with general properties concerning linear K-systems proved in \cite{fooonewbook},
we have the following theorems:
\begin{thm}[Theorem \ref{HFwelldefine}]
Under the assumption of Theorem \ref{thm:main}
we can define the Floer cohomology
$HF(X,H;\Lambda_{0,{\rm nov}})$ of a periodic Hamiltonian system,
also called the \emph{Hamiltonian Floer cohomology,}
which is independent of various choices involved in the definition.
\end{thm}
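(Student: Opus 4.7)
The plan is to invoke the abstract machinery developed in \cite{fooonewbook} for linear K-systems and apply it to the linear K-system $\mathcal F_X(H)$ produced by Theorem \ref{thm:main}. Recall that in \cite{fooonewbook} it is established that any linear K-system gives rise to a cochain complex over the Novikov ring $\Lambda_{0,{\rm nov}}$, whose generators come from the critical submanifolds equipped with suitable chains or CF-perturbations, and whose differential is defined by integration along the virtual fundamental chains of the spaces of connecting orbits. Accordingly, I would first define $HF(X,H;\Lambda_{0,{\rm nov}})$ as the cohomology of the cochain complex associated to $\mathcal F_X(H)$ by this general procedure; the identity $d^2=0$ follows from the description of the codimension-one boundary of the moduli spaces of Floer trajectories in terms of two-level broken trajectories, which is precisely encoded in the axioms of a linear K-system.

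To establish independence of the auxiliary choices, I would appeal to the morphisms of linear K-systems mentioned immediately after Theorem \ref{thm:main}. Given two sets of data $\xi_0, \xi_1$ (a family of compatible almost complex structures, a Hamiltonian within its isotopy class, and the obstruction bundle data) producing linear K-systems $\mathcal F_X(H,\xi_0)$ and $\mathcal F_X(H,\xi_1)$, the plan is to construct an $s$-dependent interpolating Floer equation on $\R \times S^1$ whose compactified moduli spaces carry a system of obstruction bundle data extending those fixed at the endpoints. These spaces should assemble into a morphism of linear K-systems in the sense of \cite{fooonewbook}, which by the general theorem there induces a chain map between the associated Floer complexes.

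Finally, to upgrade these chain maps to quasi-isomorphisms, I would construct a homotopy between the composition of the morphism from $\xi_0$ to $\xi_1$ with the morphism from $\xi_1$ to $\xi_0$ and the identity morphism on $\mathcal F_X(H,\xi_0)$. Geometrically this is realized by a two-parameter deformation of the interpolating equation that degenerates along the boundary of the square parameter space either to a concatenation of the two morphisms or to the identity morphism obtained from an $s$-independent equation, yielding a homotopy of linear K-systems in the abstract sense of \cite{fooonewbook} and hence a chain homotopy of the associated complexes. The main obstacle will be of the same nature as the one faced in proving Theorem \ref{thm:main} itself, now in the parametrized setting: one must arrange the obstruction bundle data on the moduli spaces of parametrized and doubly-parametrized Floer trajectories so that they are simultaneously compatible with one another and with the endpoint systems across all boundary strata. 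This is to be handled by the outer-collar method developed in Section \ref{subsec;KuramodFloercor}, applied inductively on the energy of Floer trajectories and on the dimension of the parameter space. Once this is carried out, the resulting cohomology is independent of all the choices, and $HF(X,H;\Lambda_{0,{\rm nov}})$ is thereby well-defined.
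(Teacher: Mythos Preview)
Your overall strategy—build the chain complex from the linear K-system via \cite{fooonewbook}, then compare two sets of auxiliary data by a continuation morphism of linear K-systems—matches the paper. The difference, and the gap, is in the last step.

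You propose to show that the composition of the morphism from $\xi_0$ to $\xi_1$ with the one from $\xi_1$ to $\xi_0$ is homotopic to \emph{the identity morphism}, realized as ``the morphism obtained from an $s$-independent equation.'' But the $s$-independent continuation morphism $\frak N_{11}$ is \emph{not} obviously the identity morphism of the linear K-system. Its interpolation space $\mathcal N(X,\mathcal J,H^{11};\alpha_-,\alpha_+)$ carries an extra parametrization of the main component, so it is a bundle over $\mathcal M(X,J,H;\alpha_-,\alpha_+)$ rather than equal to it; identifying $\frak N_{11}$ with the identity morphism requires showing the Kuranishi structure is pulled back along the forgetful map. The paper states this as Claim~\ref{clain26136} in Remark~\ref{rem138} and explicitly declines to prove it here, deferring the forgetful-map analysis to \cite{foootech3}. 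So your argument, as written, rests on a step the paper deliberately avoids.

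The paper's workaround is more elementary and worth noting. Since $H^{11}$ is $\tau$-independent, the morphism $\frak N_{11}$ has \emph{energy loss zero} (Lemma~\ref{lem26130}): the interpolation spaces are empty when $E(\alpha_-)>E(\alpha_+)$ and equal to $R_\alpha$ (with identity evaluation maps) when $\alpha_-=\alpha_+$. Consequently the induced chain map satisfies $\psi_{\mathcal J,H^{11}}\equiv \mathrm{id}\bmod T^{\epsilon}\Lambda_{0,\mathrm{nov}}$ for some $\epsilon>0$, hence is invertible over $\Lambda_{0,\mathrm{nov}}$ by a geometric-series argument, and its inverse is automatically a chain map. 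No homotopy to the identity morphism is needed. (For Theorem~\ref{HFwelldefine2} the paper does use compositions, but there it only shows $\frak N_{12}\circ\frak N_{21}$ is homotopic to $\frak N_{11}$—not to the identity—and then invokes the invertibility of $\psi_{11}$ just established.) Your plan becomes correct if you replace ``homotopic to the identity'' by ``induces a chain map congruent to the identity modulo $T^{\epsilon}$,'' which is what the energy-loss-zero argument gives directly.
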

Here $\Lambda_{0,{\rm nov}}$ is the Novikov ring defined by
\begin{equation}\label{nov0}
\Lambda_{0,{\rm nov}}=
\left\{ \left. \sum_{i=0}^{\infty} a_i T^{\lambda_i} \right\vert~ a_i \in \C,  \lambda_i \in \R_{\ge 0}, \lim_{i\to \infty} \lambda_i =\infty \right\}.
\end{equation}
We define the Novikov field $\Lambda_{\rm nov}$ as its field of fractions 
by allowing $\lambda_i$ to be negative.
\begin{thm}[Theorem \ref{HFwelldefine2}, Corollary \ref{cor2333333}]
Suppose that two functions $H^1, H^2 : X \times S^1 \to \R$ satisfy the assumption in Theorem \ref{thm:main}.
Then the Floer cohomologies
$
HF(X,H^r;\Lambda_{{\rm nov}})
$
over the Novikov field $\Lambda_{{\rm nov}}$
associated to $H^r$ $(r=1,2)$ are isomorphic as $\Lambda_{{\rm nov}}$-modules:
$$
HF(X,H^1;\Lambda_{{\rm nov}})
\cong HF(X,H^2;\Lambda_{{\rm nov}}).
$$
Moreover they are isomorphic to $H(X;\Lambda_{{\rm nov}})$.
\end{thm}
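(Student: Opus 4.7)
The plan is to realize both isomorphisms through morphisms of linear K-systems whose abstract homological machinery is provided by \cite{fooonewbook}, thereby reducing the problem to the geometric construction of the appropriate moduli-theoretic input in the same spirit as Theorem \ref{thm:main}.

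For the first isomorphism, I would fix a smooth homotopy $\{H^s\}_{s\in\R}$ of Hamiltonians with $H^s \equiv H^1$ for $s \le -R$ and $H^s \equiv H^2$ for $s \ge R$, and consider the non-autonomous Floer equation associated to $H^s$. The moduli spaces of finite energy solutions connecting a periodic orbit of $H^1$ to one of $H^2$, compactified in the Morse-Bott setting and equipped with outer collars, will form the connecting spaces of a morphism of linear K-systems $\mathcal F_X(H^1) \to \mathcal F_X(H^2)$. The construction of Kuranishi structures on these spaces proceeds exactly as in Theorem \ref{thm:main}: one assigns obstruction bundle data at each point, verifies compatibility of these data along the boundary strata (which are fiber products involving the moduli spaces already equipped with data for $\mathcal F_X(H^r)$, $r=1,2$), and invokes the construction from \cite{fooo:const1} to obtain the required Kuranishi structures. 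The abstract framework of \cite{fooonewbook} then supplies the induced cochain map and hence a morphism $HF(X,H^1;\Lambda_{0,{\rm nov}}) \to HF(X,H^2;\Lambda_{0,{\rm nov}})$.

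To promote this map to an isomorphism, I would construct the reverse morphism from a homotopy running from $H^2$ back to $H^1$, and then realize chain homotopies showing that both compositions are cohomologous to the identity morphism associated to the constant homotopy. The chain homotopies arise from one-parameter families of Floer equations (homotopies of homotopies): the parametrized moduli spaces carry their own system of obstruction bundle data, compatible on the boundary with the two endpoints of the family, and this again fits the linear K-system morphism formalism of \cite{fooonewbook}. After tensoring with the Novikov field $\Lambda_{\rm nov}$, invertibility of the resulting $\Lambda_{\rm nov}$-linear map follows. The same homotopy-of-homotopies mechanism underlies Theorem \ref{HFwelldefine}, so the resulting isomorphism is canonical on cohomology.

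For the comparison with $H(X;\Lambda_{\rm nov})$, I would use the freedom afforded by the preceding step to specialize $H$ to a time-independent, $C^2$-small Morse (or Morse-Bott) function. Then all contractible $1$-periodic orbits are constant loops at critical points (resp.\ critical submanifolds) of $H$, and for sufficiently small $H$ a standard energy/index argument forces all Floer trajectories to be $t$-independent and hence to coincide with negative gradient flow lines of $H$. Consequently the Floer cochain complex reduces to the Morse (or Morse-Bott) complex of $H$, whose cohomology over $\Lambda_{\rm nov}$ is $H^*(X;\Lambda_{\rm nov})$. The main obstacle I anticipate in the entire scheme is the careful verification of compatibility for the obstruction bundle data along the various boundary strata of the parametrized moduli spaces, particularly where strata from the homotopy-of-homotopies meet sphere bubble trees or broken Floer trajectories from either endpoint; the outer collar formalism of \cite[Chapter 17]{fooonewbook} already deployed for Theorem \ref{thm:main} should manage this uniformly.
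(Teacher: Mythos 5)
Your construction of the isomorphism $HF(X,H^1;\Lambda_{\rm nov})\cong HF(X,H^2;\Lambda_{\rm nov})$ follows the paper's route essentially verbatim: continuation morphisms built from the non-autonomous Floer equation (Section \ref{subsec;KuramodFloermor}), homotopies between them from $[0,1]$-parametrized families (Section \ref{subsec;homotpy}), and compositions controlled by the gluing parameter $T\to\infty$ (Section \ref{subsec;KuramodFloercom}), all fed into the abstract machinery of \cite{fooonewbook} and inverted over $\Lambda_{\rm nov}$.

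For the comparison with $H(X;\Lambda_{\rm nov})$ you take a genuinely different route, and as sketched it has a gap. You propose method (2) of the introduction: deform to a time-independent $C^2$-small Morse function and identify the Floer complex with the Morse complex. The claim that ``a standard energy/index argument forces all Floer trajectories to be $t$-independent'' is only true for solutions of small energy, i.e.\ those in the trivial homotopy class. Solutions carrying a nontrivial class in $\pi_2(X)$ have energy bounded below by a constant independent of $\Vert H\Vert_{C^2}$ and need not be $t$-independent; they come in $S^1$-families under the domain rotation, and excluding their contribution in the virtual-chain setting requires an $S^1$-equivariant transversality argument (or an equivariant Kuranishi structure), which is a substantial additional construction not addressed in your outline. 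This is precisely the difficulty the paper avoids: in Section \ref{sec;calc} it instead uses the Morse-Bott method, introducing the trivial linear K-system $\mathcal F_X^{\rm tri}$ (all critical submanifolds equal to $X$, all connecting spaces empty, with Floer cohomology manifestly $H(X;\Lambda_{\rm nov})$) and building morphisms $\frak N_{*(H,J)}$ and $\frak N_{(H,J)*}$ from moduli spaces ${\mathcal N}'_\ell(X,H^{*1};\cdot,\cdot)$ in which the parametrization of the mainstream components beyond the main one is deliberately forgotten; this quotients out the troublesome $(S^1)^m$-families at the level of the moduli space itself (Remark \ref{rem:106}) rather than via equivariant perturbation. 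If you wish to keep the small-Morse-function route you must either supply the $S^1$-equivariant transversality input or explain why the higher-energy, $t$-dependent configurations do not contribute; otherwise the paper's trivial-K-system argument is the cleaner path.
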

More specifically, 
in Theorem \ref{theorem266} we associate an inductive system of linear K-systems in the sense of \cite[Section 16]{fooonewbook}
to a periodic Hamiltonian system.
Together with \cite[Theorem 16.39]{fooonewbook} we obtain the Floer cohomology of a periodic Hamiltonian system in Theorem \ref{HFwelldefine}.
In Section \ref{subsec;KuramodFloermor} we construct a morphism between two
linear K-systems associated to
different Hamiltonians in Theorem \ref{theorem266rev}.
Together with \cite[Theorem 16.39]{fooonewbook}
it implies that the Floer cohomology of a periodic
Hamiltonian system is independent of the Hamiltonian function
(Theorem \ref{HFwelldefine2}).
We then calculate the Floer cohomology of a periodic
Hamiltonian system on any compact symplectic manifold in Section \ref{sec;calc}.
See Corollary \ref{cor2333333}.
It gives a proof of the homological version of Arnold's conjecture \cite{Ar}
about the number of periodic orbits of a periodic
Hamiltonian system.
This proof 
is the same as those in the literature
\cite{FO}, \cite{LiuTi98}, \cite{Rua99}, modulo technical detail.
All the proofs in the literature as well as the one in this article,
first define the Floer cohomology of a periodic
Hamiltonian system.
In the generality we discuss here, we need to use
virtual fundamental chain techniques.
The references \cite{FO}, \cite{LiuTi98}, \cite{Rua99} use various versions of
virtual fundamental chain techniques for this purpose.
\par
A significant difference lies in the way we prove that 
the Floer cohomology of a periodic 
Hamiltonian system on a symplectic manifold $X$ is isomorphic to the cohomology of 
$X$ itself.
In the literature there appeared basically three different ways to prove it.
(We mention only the methods which are established to work for arbitrary compact symplectic manifolds.)
\begin{enumerate}
\item
Morse-Bott method:
\begin{enumerate}
\item
We first include the case when the set ${\rm Per}(H)$ of periodic
orbits of the periodic Hamiltonian $H :  X \times S^1\to \R$ 
does not necessarily consist of isolated points but is a submanifold.
Technically speaking we include the case
when Floer's functional ${\mathcal A}_H$ (see (\ref{form2688ene}))
is not necessarily a Morse
function but only a Morse-Bott function.
\item
We prove the independence of the Floer cohomology of the periodic Hamiltonian $H$.
\item
We next consider the case when $H \equiv 0$
and prove that the Floer cohomology of the periodic Hamiltonian
system is isomorphic to the cohomology of $X$.
\end{enumerate}
\item Method to reduce to a small Morse function:
\par
We consider the case when the Hamiltonian $H$ is sufficiently
small in the $C^2$ sense and is time independent. We
prove that the Floer cohomology of the periodic Hamiltonian
system is isomorphic to the (Morse) homology of $X$ in that case.
\item
Reduction to Lagrangian Floer cohomology of the diagonal:
\begin{enumerate}
\item
We consider the diagonal $\Delta \subset X \times X$ in the direct product
of $X$ with itself with symplectic form $-\omega \oplus \omega$.
\item
We prove that Lagrangian intersection Floer cohomology 
$$HF((\Delta,b),(\Delta,b);\Lambda_{0,{\rm nov}})$$
is well-defined in the sense of \cite{fooobook}.
\item
We prove that $HF((\Delta,b),(\Delta,b);\Lambda_{0,{\rm nov}})$ is isomorphic to
$H(X;\Lambda_{0,{\rm nov}})$ for a certain choice of a bounding cochain $b$.
\end{enumerate}
\end{enumerate}
The method of this article (Section \ref{sec;calc}) is a variation of 
the Morse-Bott method (1).
We do not use $S^1$ equivariant Kuranishi structures explicitly.
This is the point where our discussion is
slightly different from those in the literature but is the same as in \cite{Pa}.
The method (2) was used in \cite{FO}.
We provided its technical details in \cite[Part 5]{foootech}.
The method (3) is \cite[Theorem H]{fooobook}.
See also \cite[Subsection 6.3.3]{fooo092}.
\par
Any of those methods implies the following inequality
due to \cite{FO}, \cite{LiuTi98}, \cite{Rua99}:
\begin{equation}\label{statementarnold}
\# {\rm Per}(H) \ge \sum_{k} \rank H_k(X;\Q),
\end{equation}
which was proved by several people for some special cases of $X$,
for example, \cite{CZ}, \cite{Flo89I}, \cite{HS}, \cite{On}.
The case when the coefficient field $\Q$ is replaced by a finite field
is studied in 
a recent paper by Abouzaid-Blumberg \cite{AB}.
\par
In the present paper, we presume that readers have only basic knowledge concerning the theory of Kuranishi structures, for example, some basic definitions and terminology 
contained in the quick survey \cite[Part 7]{fooospectr} or in the much shorter summary 
\cite[Section 6]{fooo:const1}. 
While we use some part of \cite{fooonewbook}, we refer to it in a pinpointed way, so
readers do not have to understand the details of the proof therein. 
We also use the notion of outer collaring introduced in \cite[Chapter 12]{fooonewbook}, but do not assume readers' knowledge of the contents thereof.
As for the analytic arguments, 
we employ the results from \cite{foooanalysis}.
We also use the arguments in \cite[Chapter 8]{foooanalysis} to prove smoothness of the coordinate change.
In other words, we use the exponential decay estimates but do not use its proof.
In this way \cite{foooanalysis} (except Chapter 8) and most parts of \cite{fooonewbook}
are used as a `black box' in this article.
\par
Then based on the arguments and results in \cite{fooo:const1} we will construct 
the desired Kuranishi structures. 
Although there are some parts, especially Part 5 in \cite{foootech} related to this article, 
we do not assume 
the contents of \cite{foootech} 
for reading this article. Indeed we will repeat and describe the arguments here if necessary.
\par
Throughout this paper, a {\it K-space} means 
a paracompact metrizable space with a Kuranishi structure.
A {\it K-system} is an abbreviation for a system of K-spaces.
We assume that $X$ is any closed symplectic manifold, 
unless otherwise mentioned.
\par
The authors would like to thank a referee for careful reading and useful comments.

\section{Floer's equation and moduli space of solutions}
\label{subsec;feq}
We review Floer's moduli space in this section.
Our discussion is mostly the same as \cite[Section 29]{foootech},
except that we include the case when the space of contractible periodic orbits
has positive dimension.
We repeat several parts for the reader's convenience.
\par
Let $H : X \times S^1 \to \R$ be a smooth function on a symplectic manifold $(X,\omega)$.
We put $H_t(x) = H(x,t)$ where $t \in S^1$ and $x \in X$.
The function $H_t$ generates the Hamiltonian vector field $\frak X_{H_t}$ defined by
$$
i_{\frak X_{H_t}}\omega = d{H_t}.
$$
It defines a one parameter family of diffeomorphisms
$
{\rm exp}_t^{H} : X \to X
$
by
\begin{equation}\label{fporm2622}
\aligned
{\rm exp}_0^{H}(x) & = x, \\
\left(\frac{d\,{\rm exp}_t^{H}}{dt}\right)(x,t_0) & = \frak X_{H_{t_0}}({\rm exp}_{t_0}^{H}(x)).
\endaligned
\end{equation}
We denote by ${\rm Per}(H)$ the set of  all 1-periodic orbits of the time dependent vector field
$\frak X_{H_t}$.
We can identify
\begin{equation}\label{264form}
{\rm Per}(H) 
\cong 
{\rm Fix}({\rm exp}_1^{H}) = \{x \in X \mid {\rm exp}_1^{H}(x) = x\}.
\end{equation}
From now on, ${\rm Per}(H)$ denotes the set of contractible 1-periodic orbits.
Our assumption that  ${\rm Per}(H)$  is non-degenerate in the Morse-Bott sense
is as follows:
\begin{conds}\label{weaknondeg}
We say that ${\rm Per}(H)$ is {\it Morse-Bott non-degenerate} if the following holds.
\begin{enumerate}
\item ${\rm Fix}({\rm exp}_1^{H})$ is a smooth submanifold of $X$.
\item
Let $x \in {\rm Fix}({\rm exp}_1^{H})$ and consider the linear map:
$
d_x{\rm exp}_1^{H} : T_x X \to T_x X.
$
We require
\begin{equation}
T_x ({\rm Fix}({\rm exp}_1^{H}))
=
\{V \in  T_x X\mid (d_x{\rm exp}_1^{H})(V) = V \}.
\end{equation}
\end{enumerate}
\end{conds}
\begin{rem}\label{rem2622}
The typical examples where Condition \ref{weaknondeg} is satisfied
are the following two cases.
\begin{enumerate}
\item
${\rm Per}(H)$ is discrete.
In this case Condition  \ref{weaknondeg} is equivalent to
the condition that
the graph of ${\rm exp}_1^{H}$ is transversal to the diagonal in
$X \times X$.
\item
The case $H=0$.
\end{enumerate}
To prove (\ref{statementarnold}) it suffices to study these two cases only.
\end{rem}
\begin{defn}\label{def:Per}
We put
$$
\widetilde{\rm Per}(H) = \{(\gamma,w) \mid \gamma \in {\rm Per}(H),\,\, w : D^2 \to X,\,\, w(e^{2\pi it}) = \gamma(t)\}/\sim,
$$
where $(\gamma,w) \sim (\gamma',w')$ if and only if $\gamma = \gamma'$ and
$$
\omega([w] - [w']) = 0,\quad
c_1(TX)([w] - [w']) = 0.
$$
We have a natural surjection $\varpi : \widetilde{\rm Per}(H) \ni [(\gamma, w)]  \to 
\gamma \in {\rm Per}(H)$. 
We put $\frak G = \pi_2(X)/\sim$,
where $\alpha \sim \alpha'$ if and only if $\omega[\alpha] = \omega[\alpha']$
and $c_1(TX)[\alpha] = c_1(TX)[\alpha']$.
Then the group $\frak G$ acts on $\widetilde{\rm Per}(H)$ by changing the bounding disk 
$w:D^2 \to X$ so that the action on the fiber $\varpi^{-1}(\ast)$ is simply transitive.
\end{defn}
Following \cite{Flo89I}, we consider maps $u : \R \times S^1 \to X$ satisfying the equation
\begin{equation}\label{Fleq}
\frac{\partial u}{\partial \tau} +  J \left( \frac{\partial u}{\partial t} - \frak X_{H_t}
\circ u \right) = 0
\end{equation}
which we call {\it Floer's equation}.
Here $\tau$ and $t$ are the coordinates of $\R$ and $S^1 = \R/\Z$, respectively.
For $\tilde{\gamma}_{\pm} = ({\gamma}_{\pm},w_{\pm}) \in \widetilde{\rm Per}(H)$
we consider the boundary condition
\begin{equation}\label{bdlatinf}
\lim_{\tau \to \pm \infty}u(\tau,t) = \gamma_{\pm}(t).
\end{equation}
\begin{prop}\label{prof263}
We assume Condition \ref{weaknondeg}. Then for any solution $u$
of (\ref{Fleq}) with
$$
\int_{\R \times S^1}
\left\Vert\frac{\partial u}{\partial \tau}\right\Vert^2 d\tau dt < \infty
$$
there exists ${\gamma}_{\pm} \in {\rm Per}(H)$ such that (\ref{bdlatinf})
is satisfied.
\end{prop}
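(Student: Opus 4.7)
The plan is to follow the standard three-step strategy for asymptotic convergence of finite-energy Floer trajectories, adapted to handle the Morse-Bott case in which ${\rm Per}(H)$ may have positive dimension. The first step is to establish the uniform pointwise decay $\Vert \partial_\tau u(\tau, t)\Vert \to 0$ (and more generally $C^k$-decay) as $\tau \to \pm\infty$. The finite-energy hypothesis gives $\int_{|\tau| > R} \Vert \partial_\tau u\Vert^2 \, d\tau\, dt \to 0$ as $R \to \infty$; combining this with a standard mean value / $\varepsilon$-regularity inequality for solutions of Floer's equation (of the type used throughout \cite{foooanalysis}) yields the uniform decay together with uniform $C^k$ bounds for $u$ on cylinders $[\tau-1, \tau+1] \times S^1$.

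Next, along any sequence $\tau_n \to +\infty$, the loops $\gamma_n(t) := u(\tau_n, t)$ are $C^k$-bounded for every $k$, so Arzel\`a--Ascoli extracts a $C^\infty$-convergent subsequence whose limit $\gamma_\infty$ satisfies $\dot\gamma_\infty = \frak X_{H_t}\circ \gamma_\infty$, obtained by passing to the limit in (\ref{Fleq}) and using $\partial_\tau u \to 0$; the cylinder $u$ itself exhibits $\gamma_\infty$ as homotopic to each $\gamma_n$, keeping it in the contractible class. Let $\mathcal L_+ \subseteq {\rm Per}(H)$ denote the set of all such subsequential limits; by the uniform decay and continuity of $u$ in $\tau$, $\mathcal L_+$ is a connected, compact subset.

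The main obstacle is to show that $\mathcal L_+$ is a single point: when ${\rm Per}(H)$ is discrete the connectedness of $\mathcal L_+$ settles this immediately, but in the Morse-Bott case $u$ could a priori drift along the critical submanifold. To rule this out, I would introduce coordinates on a tubular neighborhood of ${\rm Fix}({\rm exp}_1^H)$ and exploit the transversality clause~(2) of Condition~\ref{weaknondeg}, which supplies a spectral gap for the linearization of Floer's equation in the directions normal to ${\rm Fix}({\rm exp}_1^H)$. A standard differential-inequality argument on the tail energy
\[
E(\tau) = \int_\tau^\infty \int_{S^1} \Vert \partial_\sigma u\Vert^2 \, dt \, d\sigma
\]
(of Lojasiewicz / isoperimetric type) then produces $E(\tau) \le C_0 e^{-\delta \tau}$, which combined with the $\varepsilon$-regularity estimate of Step one upgrades to the pointwise exponential decay $\Vert \partial_\tau u(\tau, \cdot)\Vert_{C^0} \le C_1 e^{-\delta \tau/2}$. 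Integrating in $\tau$ shows that $u(\tau, \cdot)$ is Cauchy in $C^0(S^1, X)$ and converges to a single limit $\gamma_+ \in \mathcal L_+$; the argument at $\tau \to -\infty$ is symmetric.
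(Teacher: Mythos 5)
Your outline is correct and is essentially the argument the paper itself invokes: the paper gives no self-contained proof but cites Floer's Proposition 3b for the non-degenerate case, the removable singularity theorem for $H\equiv 0$, and \cite[Lemma 11.2]{FO} for the general Morse--Bott case, and your three steps ($\varepsilon$-regularity decay, subsequential limits forming a connected set of orbits, and a spectral-gap/differential-inequality argument using Condition \ref{weaknondeg} (2) to rule out drift along the critical submanifold) are exactly the content of those references. No gap.
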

\begin{proof}
In the case of Remark \ref{rem2622} (1) this is proved by Floer
\cite[Proposition 3b]{Flo89I}.
In the case Remark \ref{rem2622} (2) this follows
from the removable singularity
theorem for pseudo-holomorphic curves.
The general case can be proved in the same way as in 
\cite[Lemma 11.2]{FO}.
We omit the details
of the proof of the general
case since to prove (\ref{statementarnold}) it suffices to
consider the two cases in Remark \ref{rem2622}.
\end{proof}
\begin{rem}
The convergence (\ref{bdlatinf}) is of exponential order.
Namely we have
$$
\Vert u(\tau,t) - \gamma_{\pm}(t)\Vert_{C^k} \le C_k e^{-c_k \vert\tau\vert}
$$
for some $c_k >0, C_k>0$.
\end{rem}
We decompose
$\widetilde{\rm Per}(H)$ into connected components and write
\begin{equation}\label{267form}
\widetilde{\rm Per}(H)
= \coprod_{\alpha \in \frak A}R_{\alpha}.
\end{equation}
Here $\frak A$ is the index set for connected components of 
$\widetilde{\rm Per}(H)$.
We denote by $\overline{R_{\alpha}}\subset X$ the image of $R_{\alpha}$ under the projection 
$\varpi : [(\gamma, w)] \in \widetilde{\rm Per}(H) \to 
\gamma(0) \in 
{\rm Fix}({\rm exp}_1^{H})
(\cong {\rm Per}(H))$, see \eqref{264form}. 
In the Morse-Bott situation\footnote{In the non-degenerate case, the issue discussed here was written in \cite{Flo89I, FO}. Here we extend the argument 
to the Morse-Bott case.}, we need to use certain local systems on the space of $1$-periodic orbits in order to equip the moduli spaces of 
solutions to Floer's equation for defining Floer complexes and chain homomorphisms with the orientation isomorphisms in the sense of linear K-systems 
\cite[Condition 16.1 (VII)]{fooonewbook}.  
Gluing $D^2$ and $[0, \infty) \times S^1$ by identifying $e^{2\pi \sqrt{-1} t} \in \partial D^2$ and $(0, [t]) \in \{0\} \times S^1$, we 
obtain a capped half cylinder $Z$.
\par
For each $1$-periodic orbit $\gamma \in \overline{R_\alpha}$, let $\mathcal{P}_{\gamma}(\overline{R_\alpha)}$ be the set of  trivializations of $\gamma^*TX$ as a symplectic vector bundle 
and write $\mathcal{P}(\overline{R_\alpha})= \cup_{\gamma \in \overline{R_\alpha}} \mathcal{P}_{\gamma}(\overline{R_\alpha})$.  
Pick $\mathfrak{t} \in \mathcal{P}_{\gamma}(\overline{R_\alpha})$.  
Using $\mathfrak{t}$, we extend $\gamma^*TX$ to a symplectic vector bundle over $D^2$.  Gluing it with $(\gamma \circ {\rm pr}_2)^* TX$ on 
$[0,\infty) \times S^1$, we obtain a symplectic vector bundle $E(\mathfrak{t})$ on $Z$.  Here ${\rm pr}_2: [0,\infty)\times S^1 \to S^1$ is the second factor projection.  
We extend $\gamma^*J_{\partial D^2=\{0\} \times S^1}$ to a complex structure, which is also denoted by $J$  of the vector bundle $E(\mathfrak{t})$.  
The pull-back of the unitary connection induces a holomorphic vector bundle structure on $E(\mathfrak{t}) \to Z$.  Pick a cut-off function $\chi:[0, \infty) \to \mathbb{R}$ 
such that $\chi = 0$ near $0$ and $\chi (\tau) =1$ for sufficiently large $\tau$.  
\par
Based on \cite[section 2e]{Flo89I} and \cite[section 21]{FO}, 
we define the map 
$P(\gamma; \mathfrak{t}):\Gamma(Z;E(\mathfrak{t})) \to 
\Gamma(Z; E(\mathfrak{t})\otimes \Lambda^{0,1})$ by 
\begin{equation}\label{operatorforori}
\aligned
& P(\gamma;\mathfrak{t}) \xi \\
= & 
\begin{cases} 
\overline{\partial}_{E(\mathfrak{t})} \xi & \text{on $D^2$}, \\
\left(1 - \chi(\tau)\right) \overline{\partial}_{E(\mathfrak{t})} \xi 
(d\tau - \sqrt{-1} dt) + \chi(\tau) D_{\gamma}\overline{\partial}_{J,H} \xi  & \text{on $[0,\infty) \times S^1$}.
\end{cases}
\endaligned
\end{equation}
Here $D_{\gamma}\overline{\partial}_{J,H}$ is the linearization operator for Floer's equation of connecting orbits at the stationary solution 
$u(\tau, t)= \gamma(t)$, i.e., 
$$
D_{\gamma}\overline{\partial}_{J,H} \xi =\left(\nabla_{\frac{\partial}{\partial \tau}} \xi +  \nabla_{\frac{\partial}{\partial t}} (J\xi)- \nabla_{\xi} (J{\mathfrak{X}_H})\right)
(d\tau - \sqrt{-1} dt). 
$$
Pick a positive number $\delta$ such that $\delta$ is less than the absolute values of the non-zero eigenvalues of  
the self adjoint differential operator 
$\zeta \mapsto A \zeta =  \nabla_{\frac{\partial}{\partial t}} (J\zeta )-  \nabla_{\zeta} (J{\mathfrak{X}_H})$ on $S^1$.  
We use $e^{\delta \tau}$ for defining weighted Sobolev spaces and regard the operator 
$P(\gamma;\mathfrak{t})$ as an operator 
$$P(\gamma;\mathfrak{t}): W^{1,p}_\delta (Z; E(\mathfrak{t})) \to L^p_{\delta} (Z;E(\mathfrak{t})\otimes\Lambda^{0,1}).$$ 
For two trivializations $\mathfrak{t}_1, \mathfrak{t}_2$, the operators $P(\gamma; \mathfrak{t}_1)$ and $P(\gamma;\mathfrak{t}_2)$ may have different 
Fredholm indices, in general.  However, the real determinant bundles as $O(1)$-bundles are canonically identified.  
These operators coincide on $[0,\infty) \times S^1$, where we do not use trivializations.  They may differ on $D^2$.  
Note that there exists a complex vector bundle $F$ on $\mathbb{C}P^1$ such that $E(\mathfrak{t}_2)$ is isomorphic to the gluing and smoothing of $E(\mathfrak{t}_1)$ and $F$ 
along the fibers on $0 \in D^2$ and $\infty \in \mathbb{C}P^1$.  
Since the Dolbeault operator $\overline{\partial}_F$ 
on $\mathbb{C}P^1$ with coefficients in $F$
is a complex Fredholm operator, its index is a virtual complex vector space 
and the coincidence condition for sections of $E({\mathfrak t}_1)$ and $F$ at $0 \in D^2$ and $\infty \in {\mathbb C}P^1$ 
is required in the complex vector space $E({\mathfrak t}_1)|_0 = F |_{\infty}$.
Hence its real determinant is canonically oriented (complex orientation).  
Thus the real determinant lines for the operator $P(\gamma; \mathfrak{t}_1)$ and $P(\gamma; \mathfrak{t}_2)$ are canonically isomorphic.  
The determinant line bundle for the family $P(\gamma; \mathfrak{t})$ on $\mathcal{P}(\overline{R_\alpha})$ descends to an $O(1)$-bundle on $\overline{R_\alpha}$, which we denote by 
$o_{R_\alpha}$.  (In the case of Floer theory for Lagrangian intersections,  see \cite[Proposition 8.8.1]{fooobook2}.)  
\begin{defn}\label{oricricial}
We call $o_{R_\alpha}$ on $R_{\alpha}$ 
the {\it orientation system of the critical submanifold 
$R_{\alpha}$} 
(see \cite[Condition 16.1 (VII) (i)]{fooonewbook}).  
\end{defn}

\begin{defn}\label{tildeMreg}
Let $\alpha_1,\alpha_2 \in\frak A$.
We denote by $\widetilde{\mathcal M}^{\text{\rm reg}}(X,H;\alpha_-,\alpha_+)$
the set of all maps $u : \R \times S^1 \to X$
with the following properties:
\begin{enumerate}
\item $u$ satisfies (\ref{Fleq}).
\item There exist $\tilde{\gamma}_{\pm}=({\gamma}_{\pm},w_{\pm}) \in R_{\alpha_{\pm}}$
such that
 (\ref{bdlatinf}) and
$$
w_- \# u \sim w_+
$$
are satisfied.
Here $\#$ is the obvious concatenation.
\end{enumerate}
The translation along $\tau \in \R$ defines an $\R$ action on 
$\widetilde{\mathcal M}^{\text{\rm reg}}(X,H;\alpha_-,\alpha_+)$.
This $\R$ action is free unless $\alpha_- = \alpha_+$.
We denote by ${\mathcal M}^{\text{\rm reg}}(X,H;\alpha_-,\alpha_+)$
the quotient space of this action.
For the case $\alpha_-=\alpha_+$, the set
${\mathcal M}^{\text{\rm reg}}(X,H;\alpha_-,\alpha_+)$ is the
empty set by definition.
\par
We define the evaluation map
\begin{equation}\label{evatinfqaaa}
{\rm ev} = ({\rm ev}_{-}, {\rm ev}_{+}): {\mathcal M}^{\text{\rm reg}}(X,H;\alpha_-,\alpha_+)
\to R_{\alpha_-} \times R_{\alpha_+}
\end{equation}
by
$$
{\rm ev}(u) = (({\gamma}_{-},w_{-}),({\gamma}_{+},w_{+})).
$$
\end{defn}
\begin{rem}
Note that $w_{-}$ can not be determined by the map $u$ only.
In fact if $[v] \in \pi_2(X)$ then $u$ may be also
regarded as an element of
${\mathcal M}^{\text{\rm reg}}(X,H;[v]\#\alpha_-,[v]\#\alpha_+)$.
More precisely, an element of
$\widetilde{\mathcal M}^{\text{\rm reg}}(X,H;\alpha_-,\alpha_+)$
should be regarded as a pair $(u,\alpha_-)$.
We write $u$ instead of $(u,\alpha_-)$ in the case no confusion can occur.
\end{rem}
The main result we will prove in Sections
\ref{subsec;compactFloer}-\ref{subsec;KuramodFloercor} is the following.
\begin{thm}\label{theorem266}
We assume Condition \ref{weaknondeg}.
\begin{enumerate}
\item
The space
${\mathcal M}^{\text{\rm reg}}(X,H;\alpha_-,\alpha_+)$ has a compactification
${\mathcal M}(X,H;\alpha_-,\alpha_+)$.
\item
The compact space ${\mathcal M}(X,H;\alpha_-,\alpha_+)$ has a Kuranishi structure with
corners.
The evaluation map ${\rm ev}$ is extended to it as a strongly smooth map
in the sense of \cite[Definition 3.40]{fooonewbook}.
\item
There exists a linear K-system $\mathcal F_X(H)$,
whose critical submanifold is $R_{\alpha}$ ($\alpha \in \frak A$) and
whose space of connecting orbits
is ${\mathcal M}(X,H;\alpha_-,\alpha_+)^{\boxplus 1}$.
Here ${\mathcal M}(X,H;\alpha_-,\alpha_+)^{\boxplus 1}$ is the 
outer collaring of ${\mathcal M}(X,H;\alpha_-,\alpha_+)$.
See Definition \ref{def:outcollar} for the notion of the outer collaring.
(We use $o_{R_{\alpha}}$ in Definition \ref{oricricial} as the orientation on $R_{\alpha}$.)
\item
The Kuranishi structure on ${\mathcal M}(X,H;\alpha_-,\alpha_+)^{\boxplus 1}$
in (3) coincides with one in (2) on
 ${\mathcal M}(X,H;\alpha_-,\alpha_+)
 \subset {\mathcal M}(X,H;\alpha_-,\alpha_+)^{\boxplus 1}$.
\end{enumerate}
\end{thm}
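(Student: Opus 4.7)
The plan is to adapt the strategy of \cite{fooo:const1,fooo:const2} that realizes the tree-like K-system for Lagrangian Floer theory to the linear K-system for Hamiltonian Floer theory. Part (1) is the standard Gromov--Floer compactification: any sequence of solutions of (\ref{Fleq}) with uniformly bounded energy has a subsequence that converges, after reparametrization by $\R$-translations on each cylinder piece and Hofer-type bubble rescaling, to a \emph{stable broken trajectory}, i.e.\ a chain of finite-energy solutions of (\ref{Fleq}) connecting elements of $\widetilde{{\rm Per}}(H)$, with trees of pseudo-holomorphic spheres attached at interior marked points. Define ${\mathcal M}(X,H;\alpha_-,\alpha_+)$ to be the set of isomorphism classes of such stable broken trajectories whose asymptotics lie in $R_{\alpha_\pm}$ and whose total homotopy class matches $\alpha_+ - \alpha_-$ in $\frak G$, with evaluation at the two ends into $R_{\alpha_-} \times R_{\alpha_+}$. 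Topology and compactness are classical, using the exponential decay of Proposition \ref{prof263} at each breaking end; the Morse-Bott hypothesis (Condition \ref{weaknondeg}) guarantees that the asymptotic limits remain in the submanifolds $R_{\alpha_\pm}$.

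For part (2), I would construct a Kuranishi chart at each point $\mathbf{u} \in {\mathcal M}(X,H;\alpha_-,\alpha_+)$ from a choice of \emph{obstruction bundle data}, in the sense to be introduced in Definition \ref{obbundeldata1}. Concretely, at a smooth (unbroken, unbubbled) solution one picks a finite-dimensional space $E_{\mathbf{u}}$ of smooth sections, supported in a compact set disjoint from the ends, that together with the image of the linearized Floer operator $D_u \overline{\partial}_{J,H}$ spans the target weighted Sobolev space; one then defines the chart using the implicit function theorem on the space of maps with the Morse-Bott asymptotic evaluation constraint into $R_{\alpha_\pm}$. At a broken/bubbled point $\mathbf{u} = (u_1,\dots,u_k;\text{bubble trees})$ the obstruction space is taken to be a direct sum of pieces, each supported on a fixed compact subset of the corresponding component. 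The gluing construction and the smoothness of coordinate changes follow the arguments in \cite{fooo:const1} and \cite[Chapter~8]{foooanalysis}, using the exponential decay to control the gluing parameter and to prove that the charts fit into a Kuranishi structure with corners whose codimension-$k$ strata come from $k$-fold breakings. The evaluation ${\rm ev}=({\rm ev}_-,{\rm ev}_+)$ extends strongly smoothly as the asymptotics depend smoothly on the gluing data.

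Part (3) is the heart of the matter, and it requires the obstruction bundle data to be chosen so that on codimension-$k$ boundary strata the induced Kuranishi structure agrees with the fiber product
\[
{\mathcal M}(X,H;\alpha_-,\alpha_0)\,{}_{{\rm ev}_+}\!\times_{R_{\alpha_0}}\!{}_{{\rm ev}_-}{\mathcal M}(X,H;\alpha_0,\alpha_+)
\]
(and its iterates), with the correct orientation determined by $o_{R_\alpha}$. I would construct such a compatible system inductively on the partial order given by the symplectic area $\omega(\alpha_+-\alpha_-)$ (well-founded under Condition \ref{weaknondeg} by Gromov compactness). At each stage the obstruction bundle on a boundary stratum is forced by previous choices; to extend it to a neighborhood one must propagate it into the interior while keeping surjectivity, a standard partition-of-unity construction over the compact moduli space. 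The key technical device that makes the \emph{equality} (not merely the isomorphism) of Kuranishi structures at the boundary manageable is the outer collaring of \cite[Chapter~17]{fooonewbook}: working on ${\mathcal M}(X,H;\alpha_-,\alpha_+)^{\boxplus 1}$ one has an honest collar neighborhood of the boundary on which one can transport and identify the product Kuranishi structure coming from the lower-energy pieces, as outlined in \cite[Remark~4.3.89]{foooAst}. I would verify the axioms of \cite[Definition~16.6]{fooonewbook} one by one (dimension, orientation via $o_{R_\alpha}$ using the pair-of-pants/linearization analysis leading to (\ref{operatorforori}), corner-compatibility, and properness), which is then automatic from the inductive construction.

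Part (4) is immediate from the construction in part (3), since the outer collaring is defined so as to contain ${\mathcal M}(X,H;\alpha_-,\alpha_+)$ as a subspace carrying the Kuranishi structure from part (2). The main obstacle I anticipate is the inductive step for compatibility in (3): at a stratum corresponding to a broken trajectory one must simultaneously enforce matching of the obstruction spaces across the two sides of the break, equivariance under the $\R$-translation on each intermediate component (trivial after taking the quotient but subtle when a component is $\alpha=\alpha'$-constant in the Morse-Bott situation), and compatibility with sphere bubbling at interior points, all while preserving transversality. Handling the Morse-Bott degeneracy of $R_{\alpha_0}$ in the fiber product, where the evaluation maps need not be submersions, is the point at which the outer collaring framework of \cite{fooonewbook} is essential; this reduces the problem to one already treated abstractly there, which is why the main geometric content of the proof is the construction of the compatible obstruction bundle data carried out in Section \ref{subsec;KuramodFloercor}.
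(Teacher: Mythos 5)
Your outline for parts (1), (2) and (4) matches the paper's proof: Gromov--Floer compactification by stable broken trajectories with sphere bubbles (Section \ref{subsec;compactFloer}), Kuranishi charts built from obstruction bundle data in the sense of Definition \ref{obbundeldata1} with gluing and exponential decay as in \cite{fooo:const1}, \cite[Chapter 8]{foooanalysis} (Section \ref{subsec;KuraFloer}), and (4) holding because the modification for (3) never touches ${\mathcal M}(X,H;\alpha_-,\alpha_+)$ itself. For part (3) you correctly identify the outer collar as the place where the fiber-product structure is imposed, but your description mixes two distinct methods that the paper deliberately separates. You propose to choose obstruction bundle data so that the boundary strata of the moduli space itself carry the fiber-product structure and then to ``propagate it into the interior''; that is the disk-component-wise method of \cite{fooo:const2}, which the paper explicitly does \emph{not} follow (see Remark \ref{rem628}). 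In the paper the Kuranishi structure of Theorem \ref{kuraexists} is never made boundary-compatible; instead the entire modification lives on the added cube $[-1,0]^{m-1}$ over each stratum ${\mathcal M}_{\vec\ell}(X,H;\vec\alpha)$, interpolating between the fiber-product structure at the $t_i=-1$ faces and the restriction of the Theorem \ref{kuraexists} structure at the $t_i=0$ faces. The interpolation is organized by the combinatorial system of closed sets ${\mathcal V}(A,B,C)$ (Definition \ref{defn2659}) and obstruction spaces summed over $\frak F({\bf q},\vec t)$, with the direct-sum property (Lemma \ref{lem26689}) proved by induction on the number $m$ of mainstream components --- not by induction on symplectic area. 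Your proposed induction on $\omega(\alpha_+-\alpha_-)$ is not literally well-founded as stated (the energy spectrum of $\frak A$ need not be discrete); it can be repaired by observing that for fixed $(\alpha_-,\alpha_+)$ only finitely many intermediate $\alpha$'s and breaking patterns occur, since each nonconstant piece costs at least the minimal energy $\hbar>0$, but the paper's formulation avoids any such global induction by making all choices at once, indexed by the subsets $B\subseteq\underline{m-1}$ and constrained by Condition \ref{conds2626}. Neither divergence is a fatal gap, but as written your plan for (3) would need to commit to one of the two methods and, if it keeps the energy induction, to justify its well-foundedness.
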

In later sections, we consider a slightly general 
case when we include additional interior marked points on the domain.
We will denote such a moduli space by
${\mathcal M}_{\ell}(X,H;\alpha_-,\alpha_+)$
where $\ell$ is a number of interior marked points.
See Section \ref{subsec;compactFloer} for the precise definition.

\begin{defn}
Theorems \ref{theorem266} and \cite[Theorem 16.39]{fooonewbook}
define a $\Lambda_{0,{\rm nov}}$-module.
We call the resulting $\Lambda_{0,{\rm nov}}$-module 
the {\it Floer cohomology of a periodic Hamiltonian
system} and write $HF(X,H;\Lambda_{0,{\rm nov}})$.
\end{defn}
See Theorem \ref{HFwelldefine} for the well-definedness of the
Floer cohomology group $HF(X,H;\Lambda_{0,{\rm nov}})$.
\par
The proof of Theorem \ref{theorem266}
occupies Sections \ref{subsec;compactFloer}-\ref{subsec;KuramodFloercor}.
In the rest of this section, we
discuss an easy part of the construction.
\begin{defn}\label{def:210}
We define group homomorphisms 
$E : \pi_2(X) \to \R$ and $\mu : \pi_2(X) \to \Z$
by
$E(\beta) = \omega[\beta]$, 
$\mu(\beta) = 2c_1(TX)[\beta]$.
We recall that 
the image of $(E,\mu) : \pi_2(X) \to \R \times \Z$
is isomorphic to the group $\frak G$ in Definition \ref{def:Per}.
We also denote by 
$E : \frak G \to \R$ and
$\mu : \frak G \to \Z$ the induced homomorphisms respectively.
\par
The map $E : \frak A \to \R$ is defined by
$E(\alpha) = \omega[w]$ where $(\gamma,w) \in R_{\alpha}$.
\end{defn}
\begin{rem}
It is easy to see that $E : \frak A \to \R$ is well-defined.
Namely $\omega[w]$ is independent of
the element $(\gamma,w)$ but depends only on $R_{\alpha}$.
In the case of Remark \ref{rem2622} (1)
this is trivial. In the case of Remark \ref{rem2622} (2)
this follows from the fact that $\frak G = \frak A$ in this case.
In the general case, it follows from the fact that
$R_{\alpha}$ is a critical submanifold
of Floer's functional $\mathcal A_H$ defined by
\begin{equation}\label{form2688ene}
\mathcal A_H(\gamma,w)
= \int_{D^2}w^*\omega + \int_{S^1} H(\gamma(t),t) dt.
\end{equation}
\end{rem}
To define $\mu : \frak A \to \Z$ we recall that the linearized operator
of the equation (\ref{Fleq}) is given by
\begin{equation}\label{form2688}
\aligned
D_u\overline{\partial} - 
\nabla_{\cdot}(J\frak X_H) \otimes
(d\tau -{\sqrt{-1}}dt)
~:~ 
& L^2_{m+1,\delta}(\R \times S^1 ;u^*TX) \\
\to
& L^2_{m,\delta}(\R \times S^1 ;u^*TX \otimes \Lambda^{0,1}).
\endaligned
\end{equation}
Here $V \mapsto \nabla_V (J\frak X_H)$ is the covariant derivative of 
$J\frak X_H$ with respect to the tangential vector $V$.
Here $L^2_{m,\delta}$ is the completion of the space of
smooth sections with respect to the weighted $L^2_m$ norm
with weight $e^{\delta \vert\tau\vert}$.
(Here $\delta >0$ and $\tau$ is the coordinate of $\R$.
We assume that $m$ is sufficiently large, say $>100$.)
\begin{defn}\label{defn2610}
We define the {\it virtual dimension} of
${\mathcal M}^{\text{\rm reg}}(X,H;\alpha_-,\alpha_+)$
by
$$
\dim
{\mathcal M}^{\text{\rm reg}}(X,H;\alpha_-,\alpha_+) = {\rm Index}(\ref{form2688}) +\dim R_{\alpha_-}
+ \dim R_{\alpha_+} - 1.
$$
Here $u$ is an element of ${\mathcal M}^{\text{\rm reg}}(X,H;\alpha_-,\alpha_+)$.
\end{defn}
Let $(\gamma,w) \in R_{\frak a}$.
Recall $w : D^2 \to X$.
We take a smooth map $\R \times S^1 \to D^2$
such that it is the projection to $\partial D^2 = S^1$ on $\R_{\ge 0} \times S^1$,
is constantly equal to $0$ on $\R_{\le -T} \times S^1$ and  defines a diffeomorphism from $(-T,0) \times S^1$ onto
${\rm Int}\,{D^2} \setminus \{0\}$.
We compose it with $w$ to obtain
$u_w : \R \times S^1 \to X$.
Let $\chi : \R \to [0,1]$ be a smooth function such that
$\chi(\tau) = 0$ for $\tau < -1$ and $\chi(\tau) = 1$ for $\tau > 1$.
We modify (\ref{form2688}) to define
\begin{equation}\label{form26882}
\aligned
D_u\overline{\partial} - \chi(\tau ) \nabla_{\cdot}(J\frak X_H)
\otimes
(d\tau -{\sqrt{-1}}dt)
~:~ 
& L^2_{m+1,\delta}(\R \times S^1;u_w^*TX)\\
\to &  L^2_{m,\delta}(\R \times S^1;u_w^*TX \otimes \Lambda^{0,1}).
\endaligned
\end{equation}
Let $(\gamma,w) \in R_{\frak a}$.
Note that $w : D^2 \to X$ induces a trivialization of $w^*TX \to D^2$, hence  
the one of $\gamma^*TX$, which is denoted by ${\mathfrak t}_w$.  
\begin{defn}
We define $\mu : \frak A \to \Z$ by
$$
\mu(\alpha)
= {\rm Index} P(\gamma, {\mathfrak t}_w).  
$$
\end{defn}
\begin{lem}\label{lem2612}
\begin{enumerate}
\item
For $\beta \in \pi_2(X)$ and $\alpha \in R_{\alpha}$.
We have
$$
\mu(\beta  \alpha) = \mu(\alpha)  + 2c_1(TX)[\beta].
$$
\item
In the case $H \equiv 0$, $\frak A = \frak G$ and
$\mu(\beta) = 2 c_1(TX)[\beta]$.
\item
We have
$$
\dim{\mathcal M}^{\text{\rm reg}}(X,H;\alpha_-,\alpha_+)
= \mu(\alpha_+) - \mu(\alpha_-) - 1 + \dim R_{\alpha_+}.
$$
\end{enumerate}
\end{lem}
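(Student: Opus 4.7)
The strategy is to apply Fredholm index additivity under gluing of Cauchy--Riemann-type operators along cylindrical ends, combined with Riemann--Roch on closed Riemann surfaces. All three parts are index computations in the spirit of \cite{Flo89I,FO}, with modifications for the weighted Sobolev setup forced by the Morse--Bott situation.

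For (1), I would compare $P(\gamma,\mathfrak{t}_w)$ and $P(\gamma,\mathfrak{t}_{\beta\#w})$. As in the discussion preceding Definition \ref{oricricial}, the bundle $E(\mathfrak{t}_{\beta\#w})$ on $Z$ is obtained from $E(\mathfrak{t}_w)$ by gluing in the holomorphic bundle $F=\beta^*TX$ on $\mathbb{C}P^1$ at the origin of $D^2$. Additivity of Fredholm indices under this nodal gluing gives
\[
{\rm Index}\,P(\gamma,\mathfrak{t}_{\beta\#w})={\rm Index}\,P(\gamma,\mathfrak{t}_w)+{\rm Index}_{\R}\,\overline{\partial}_F-2n,
\]
the $-2n$ being the complex matching at the node. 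Riemann--Roch yields ${\rm Index}_{\R}\,\overline{\partial}_F=2(n+c_1(TX)[\beta])$, so the difference is $2c_1(TX)[\beta]$, proving (1).

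For (2), with $H\equiv 0$ every constant loop is a $1$-periodic orbit, so ${\rm Fix}({\rm exp}_1^H)=X$ and Condition \ref{weaknondeg}(2) is automatic. From Definition \ref{def:Per}, $\widetilde{\rm Per}(H)$ fibers over $X$ with $\frak G$-torsor fibers, and connectedness of $X$ then forces $\frak A=\frak G$. For $\gamma\equiv p$ and a capping $w$ representing $\alpha=[\beta]$, the asymptotic operator $A=J\partial_t$ has real $2n$-dimensional kernel $T_pX$ and discrete non-zero spectrum. Choosing $\delta>0$ below the first positive eigenvalue, $P(\gamma,\mathfrak{t}_w)$ on the weighted space is equivalent to the Dolbeault operator $\overline{\partial}$ on the closed sphere obtained by compactifying the cylindrical end, acting on the extension of $E(\mathfrak{t}_w)$ with first Chern class $c_1(TX)[\beta]$, with a $-2n$ shift from the weight-suppressed asymptotic kernel. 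Riemann--Roch then gives ${\rm Index}\,P(\gamma,\mathfrak{t}_w)=2(n+c_1(TX)[\beta])-2n=2c_1(TX)[\beta]$, proving (2).

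For (3), I would glue the reversed capped half-cylinder $P(\gamma_-,\mathfrak{t}_{w_-})$ to the linearized Floer operator $(\ref{form2688})$ at the common asymptote $\gamma_-$. Because $w_-\#u\sim w_+$ as capping classes, the resulting glued operator is Fredholm-homotopic to $P(\gamma_+,\mathfrak{t}_{w_+})$. In the presence of the non-trivial asymptotic kernel $\ker A_{\gamma_-}\cong T_{\gamma_-(0)}R_{\alpha_-}$, the weighted-index gluing formula reads
\[
\mu(\alpha_+)=\mu(\alpha_-)+{\rm Index}\,(\ref{form2688})+\dim R_{\alpha_-},
\]
the correction $+\dim R_{\alpha_-}$ arising because the $\delta$-weights force decay at $\gamma_-$ on each factor separately, while the glued operator only constrains the outermost ends; the extra solutions gained by gluing are parameterized by the $\dim R_{\alpha_-}$-dimensional asymptotic kernel. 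Substituting this into Definition \ref{defn2610} yields (3). The main obstacle is verifying this weighted gluing formula: in the non-degenerate case (Remark \ref{rem2622}(1)) it is Floer's classical computation \cite{Flo89I}, while in the Morse--Bott case one must carefully track how the $\delta$-weight at each asymptote shifts the Fredholm index of each factor, and confirm that the net correction at the gluing asymptote is exactly $\dim R_{\alpha_-}$ rather than some other combination of $\dim R_{\alpha_\pm}$ and $2n$.
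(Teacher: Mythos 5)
Your proposal is correct and follows essentially the same route as the paper: part (1) is the excision/gluing property of the index (the paper's own discussion of $E(\mathfrak t_1)$ versus $E(\mathfrak t_2)$ before Definition \ref{oricricial} is exactly your nodal-gluing-plus-Riemann--Roch computation), and part (3) is the index additivity for operators of the form $\frac{d}{d\tau}+Q_\tau$ with a correction equal to the multiplicity $\dim R_{\alpha_-}$ of the zero eigenvalue of the common asymptotic operator, which the paper phrases via spectral flow (APS III) and you phrase as a weighted linear gluing formula --- the same fact, and your identified correction $+\dim R_{\alpha_-}$ is the right one. The only cosmetic difference is in (2), where the paper uses (1) and (3) to reduce to $\beta=0$ and then quotes the Atiyah--Patodi--Singer index formula to get index $0$, while you compute the weighted index directly for all $\beta$ by capping off the cylindrical end and applying Riemann--Roch with the $-2n$ shift from the $2n$-dimensional asymptotic kernel; both give the same answer.
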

\begin{proof}
(1) follows from the excision property of the index.  
\par
Let $(\gamma_{\pm},w_{\pm}) \in R_{\alpha_{\pm}}$.
To prove (3) it suffices to show that
\begin{equation}\label{indexsum}
{\rm Index}(\ref{form2688}) + \dim R_{\alpha_-}+
{\rm Index} P(\gamma_-, {\mathfrak t}_{w_-})
= {\rm Index} P(\gamma_+, {\mathfrak t}_{w^+}).
\end{equation}
We can prove (\ref{indexsum}) as follows.
We remark that our operator is of the form $\frac{d}{d\tau} + Q_{\tau}$
where $Q_{\tau}$ is a family of elliptic differential operators on $S^1$.
If we write $P(\gamma_-, {\mathfrak t}_{w^-})$ in this form, then
in the limit $\tau \to \infty$, the
multiplicity of zero eigenvalue of the operator $Q_{\tau}$ is exactly
equal to $\dim R_{\alpha_-}$.
If we write (\ref{form2688}) in this form, then in the limit
$\tau \to -\infty$,
the operator $Q_{\tau}$ has exactly
$\dim R_{\alpha_-}$ as the multiplicity of zero eigenvalue.
(\ref{indexsum}) is a consequence of this observation and
a well established result on spectral flow \cite[section 7]{APS-III}.
\par
We finally prove (2). In view of (1)(3), it suffices to show
the case $\beta = 0$.  Namely, $\gamma_p$ is the constant periodic orbit at $p$ for $H=0$ 
and the constant bounding disk $w_p: D^2 \to X$ induces the canonical trivialization 
${\mathfrak t}_{\rm tri}$, i.e., $(\gamma_p)^*TX \cong S^1 \times T_p X$.  
In that case we can see directly that
the index of $P(\gamma_p, {\mathfrak t}_{\rm tri})$ is $0$ 
by e.g., the Atiyah-Patodi-Singer index formula \cite{APS-I}.   
\end{proof}

\section{Stable map compactification of Floer's moduli space}
\label{subsec;compactFloer}
\subsection{Definition of ${\mathcal M}_{\ell}(X,H;\alpha_-,\alpha_+)$}
We define a compactification of the space 
${\mathcal M}^{\text{\rm reg}}(X,H;\alpha_-,\alpha_+)$
in this section.
This compactification is classical.
The description of this section is mostly the same as that of 
\cite[Section 30]{foootech}.
We repeat the argument for reader's convenience and
to fix the notation.
We need to stabilize the domain sometimes
in later constructions. For this purpose we treat 
the case when we include interior  marked points on the domain.
We will denote it by
${\mathcal M}_{\ell}(X,H;\alpha_-,\alpha_+)$
where $\ell$ is a number of interior marked points.
\begin{rem}
Including marked points also has applications to symplectic topology.
Especially it is used in the construction of the spectral invariant with bulk,
which is now known to contain more informations than
the version without bulk. (See \cite{fooospectr} \cite{usherspe}.)
\end{rem}
We consider $(\Sigma,(z_-,z_+,\vec z))$, a genus zero semistable curve $\Sigma$ 
with $2+\ell$ marked points
$z_-, z_+$ and $\vec z=(z_1,\dots ,z_{\ell})$.
Two marked points $z_-,z_+$ will play different roles.
\begin{defn}\label{defn41}
$\bullet$ Let $\Sigma_0$ be the union of the irreducible components of $\Sigma$ such that
\begin{enumerate}
\item $z_-,z_+ \in \Sigma_0$.
\item $\Sigma_0$  is connected.
\item $\Sigma_0$ is the smallest among those satisfying (1),(2) above.
\end{enumerate}
We call $\Sigma_0$ the {\it mainstream} of $(\Sigma,(z_-,z_+,\vec z))$, or simply, of $\Sigma$.
An irreducible component of the mainstream $\Sigma_0$ is called a {\it mainstream component}.
Other irreducible components of $\Sigma$ 
are called {\it bubble components}.
\par\noindent
$\bullet$ Let $\Sigma_a \subset \Sigma$ be a mainstream component.
If $z_- \notin \Sigma_a$, then there exists a unique singular point $z_{a,-}$ of $\Sigma$ contained in $\Sigma_a$
such that
\begin{enumerate}
\item
$z_-$ and $\Sigma_a \setminus\{z_{a,-}\}$ belong to the different connected component of
$\Sigma \setminus\{z_{a,-}\}$.
\item
$z_+$ and $\Sigma_a \setminus\{z_{a,-}\}$ belong to the same connected component of
$\Sigma \setminus\{z_{a,-}\}$.
\end{enumerate}
In case $z_- \in \Sigma_a$ we set $z_- = z_{a,-}$.
We define $z_{a,+}$ in the same way.
\par
We call $z_{a,\pm}$ the {\it transit points}.
We call other singular points  {\it non-transit singular points}.
\par\noindent
$\bullet$ A {\it parametrization of the mainstream} of $(\Sigma,(z_-,z_+,\vec z))$ is
$\varphi = \{\varphi_a\}$, where $\varphi_a : \R \times S^1 \to \Sigma_a$
for each irreducible component $\Sigma_a$ of the mainstream such that:
\begin{enumerate}
\item
$\varphi_a$ is a biholomorphic map $\varphi_a : \R \times S^1 \cong \Sigma_a \setminus \{z_{a,-},z_{a,+}\}$.
\item
$\lim_{\tau \to \pm \infty}\varphi_a(\tau,t) = z_{a,\pm}$.
\par
\end{enumerate}
$\bullet$ 
A union of one mainstream component and all the trees of the
bubble components rooted on it is called an {\it extended mainstream
component}.
We sometimes denote by $\widehat{\Sigma}_a$ the
extended main stream component containing
the mainstream component ${\Sigma}_a$.
Here and hereafter we call each of
$\widehat{\Sigma}_a \setminus\{z_{a,-},z_{a,+}\}$
an {\it interior of extended mainstream component}.
See Figure \ref{Figpage9}.
\begin{figure}[htbp]
\centering
\includegraphics[scale=0.5]{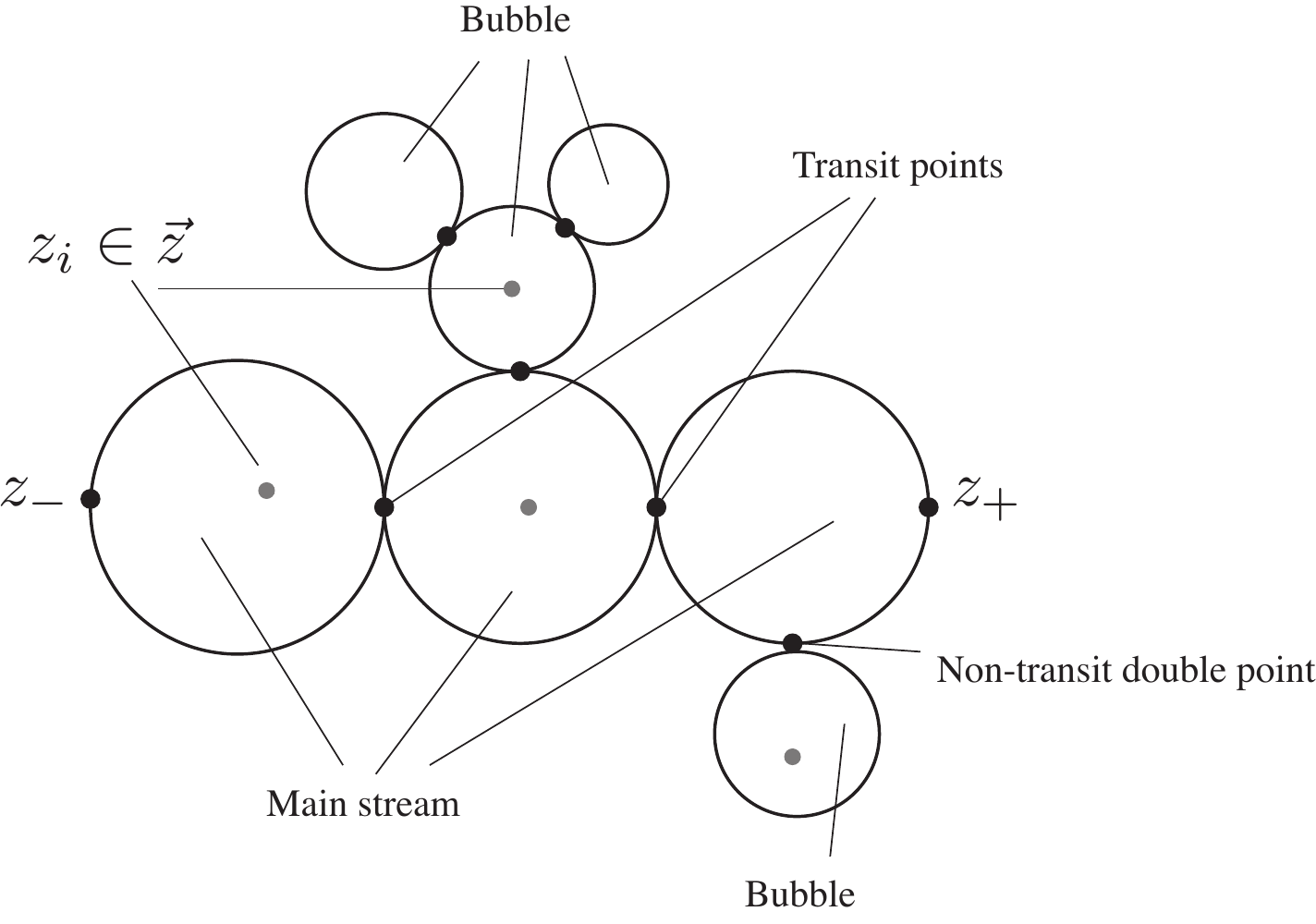}
\caption{$(\Sigma,(z_-,z_+,\vec z))$}
\label{Figpage9}
\end{figure}
\end{defn}
Let $\frak A$ be as in (\ref{267form})
and $\alpha_{\pm} \in \frak A$.
\begin{defn}\label{defn210}
The set $\widehat{\mathcal M}_{\ell}(X,H;\alpha_-,\alpha_+)$
consists of triples $((\Sigma,(z_-,z_+,\vec z)),u,\varphi)$ satisfying
the following conditions: Here $\ell =\# \vec z$.
\begin{enumerate}
\item
$(\Sigma,(z_-,z_+,\vec z))$ is a genus zero semistable curve with $\ell + 2$ marked points.
\item
$\varphi$ is a parametrization of the mainstream.
\item
For each extended main stream component $\widehat{\Sigma}_a$, the map
$u$ induces
$u_a : \widehat{\Sigma}_a \setminus\{z_{a,-},z_{a,+}\} \to X$
which is a continuous map.\footnote{In other words
$u$ is a continuous map from the complement of the set of the transit points.}
\item
If $\Sigma_a$ is a mainstream component and
$\varphi_a : \R \times S^1 \to \Sigma_a$ is as above, then
the composition $u_a \circ \varphi_a$ satisfies the equation (\ref{Fleq}).
\item
$$
\int_{\R \times S^1}
\left\Vert\frac{\partial (u \circ \varphi_a)}{\partial \tau}\right\Vert^2 d\tau dt < \infty.
\footnote{Condition (5) follows from the rest of the conditions in Definition 
\ref{defn210}.}
$$
\item
If $\Sigma_{b}$  is a bubble component,
then $u$ is pseudo-holomorphic on it.
\item
If
$\widehat{\Sigma}_{a_1}$ and $\widehat{\Sigma}_{a_2}$ are
extended mainstream components and
if  $z_{a_1,+} = z_{a_2,-}$, then
$$
\lim_{\tau\to+\infty} (u_{a_1} \circ \varphi_{a_1})(\tau,t)
=
\lim_{\tau\to-\infty} (u_{a_2} \circ \varphi_{a_2})(\tau,t)
$$
holds for each $t \in S^1$.
((5) and Proposition \ref{prof263} imply that
both of the left and right hand sides converge.)
\item
If
$\widehat{\Sigma}_{a}$, $\widehat{\Sigma}_{a'}$
are
extended mainstream components and $z_{a,-} = z_-$, $z_{a',+}
= z_+$, then there exist $(\gamma_{\pm},w_{\pm})
\in R_{\alpha_{\pm}}$ such that
$$
\aligned
\lim_{\tau\to-\infty} (u_{a} \circ \varphi_{a})(\tau,t)
&= \gamma_-(t), \\
\lim_{\tau\to +\infty} (u_{a'} \circ \varphi_{a'})(\tau,t)
&= \gamma_+(t).
\endaligned
$$
Moreover
$$
[u_*[\Sigma]] \# w_- = w_+
$$
where $\#$ is the obvious concatenation.
\item
We assume $((\Sigma,(z_-,z_+,\vec z)),u,\varphi)$
is stable in the sense of Definition \ref{stable26} below.
\end{enumerate}
\end{defn}

To define stability we first define the group of automorphisms.
\begin{defn}\label{defn2615}
We assume that $((\Sigma,(z_-,z_+,\vec z)),u,\varphi)$
satisfies (1)-(8) above.
The {\it extended automorphism group}
${\rm Aut}^+((\Sigma,(z_-,z_+,\vec z)),u,\varphi)$
of $((\Sigma,(z_-,z_+,\vec z)),u,\varphi)$
consists of maps $v : \Sigma \to \Sigma$
with the following properties:
\begin{enumerate}
\item
$v(z_{-}) = z_{-}$ and $v(z_{+}) = z_{+}$.
In particular, $v$ preserves each of the mainstream component $\Sigma_a$
of $\Sigma$.
Moreover $v$ fixes each of the transit points.
\item
$u = u\circ v$ holds outside the set of the transit points.
\item
If $\Sigma_a$ is a mainstream component of $\Sigma$, then
there exists $\tau_a \in \R$ such that
\begin{equation}\label{preserveparame2}
(v \circ \varphi_a)(\tau,t) = \varphi_a(\tau+\tau_a,t).
\end{equation}
on $\R \times S^1$.
\item
There exists $\sigma \in {\rm Perm}(\ell)$ such that
$v(z_i) = z_{\sigma(i)}$. Here ${\rm Perm}(\ell)$ denotes the group of permutations of $\ell$ elements.
\end{enumerate}
\par
The {\it automorphism group}
${\rm Aut}((\Sigma,(z_-,z_+,\vec z)),u,\varphi)$
of $((\Sigma,(z_-,z_+,\vec z)),u,\varphi)$
consists of the elements of ${\rm Aut}^+((\Sigma,(z_-,z_+,\vec z)),u,\varphi)$
such that $\sigma$ in Item (4) is the identity.
\end{defn}
\begin{defn}\label{stable26}
We say $((\Sigma,(z_-,z_+,\vec z)),u,\varphi)$ is {\it stable} 
if ${\rm Aut}((\Sigma,(z_-,z_+,\vec z)),u,\varphi)$ is a finite group.
(This is equivalent to the finiteness of
${\rm Aut}^+((\Sigma,(z_-,z_+,\vec z)),u,\varphi)$.)
\end{defn}
\begin{lem}
An element $((\Sigma,(z_-,z_+,\vec z)),u,\varphi)$
satisfying (1)-(8) of Definition \ref{defn210} is stable
if and only if,
for each irreducible component $\Sigma_i$ of $\Sigma$,
one of the following holds.
\begin{enumerate}
\item
$\Sigma_i$ is a bubble component, and $u$ is nonconstant on $\Sigma_i$.
\item
$\Sigma_i$ is a bubble or a mainstream component, and 
$$
\# ((\vec z \cup\{ z_{-},z_{+}\}) \cap \Sigma_i) + \# (\text{Singular points on $\Sigma_i$})
\ge 3.
$$
\item
$\Sigma_i$ is a mainstream component $\Sigma_a$, and 
$$
\frac{d}{d\tau} (u\circ \varphi_a)
$$
is nonzero at some point. Here $\tau$ is the coordinate of the
$\R$ factor in $\R \times S^1$.
\end{enumerate}
\end{lem}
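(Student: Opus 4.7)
The plan is to prove the two implications separately, handling bubble components and mainstream components by different mechanisms in each case.

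For the ``only if'' direction, I would argue by contrapositive: if some irreducible component $\Sigma_i$ satisfies none of (1)--(3), I construct a positive-dimensional family of automorphisms, contradicting finiteness of ${\rm Aut}$. If $\Sigma_i$ is a bubble component, the failure of (1) forces $u|_{\Sigma_i}$ to be constant, and the failure of (2) leaves at most two special points on $\Sigma_i \cong \mathbb{C}P^1$. The subgroup of $PSL(2,\C)$ fixing at most two points is positive-dimensional, and each M\"obius transformation in this subgroup extends by the identity on every other irreducible component to a valid automorphism of $(\Sigma,(z_-,z_+,\vec z))$: continuity at the attaching nodes is preserved because those nodes are fixed, and $u = u\circ v$ holds automatically since $u|_{\Sigma_i}$ is constant. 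If instead $\Sigma_i = \Sigma_a$ is a mainstream component, the failure of (3) means $u \circ \varphi_a$ is $\tau$-independent, while the failure of (2), combined with the convention from Definition \ref{defn41} that $z_{\pm} \in \Sigma_a$ forces $z_\pm = z_{a,\pm}$, forces $\Sigma_a$ to carry no non-transit singular points and no $\vec z$-marked points. The $\R$-family of translations $\varphi_a(\tau,t) \mapsto \varphi_a(\tau+\tau_a,t)$ then extends by the identity elsewhere to an $\R$-family in ${\rm Aut}$; the transit points $z_{a,\pm}$ are fixed because they correspond to $\tau = \pm\infty$, and the matching condition (7) of Definition \ref{defn210} is preserved because the $\tau$-limits of a $\tau$-independent function are unaltered.

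For the ``if'' direction, I would argue directly that ${\rm Aut}$ is finite. Since ${\rm Aut}^+/{\rm Aut}$ embeds into the finite group ${\rm Perm}(\ell)$, it suffices to prove that ${\rm Aut}^+$ is finite, and for this it suffices to rule out any continuous one-parameter family in ${\rm Aut}^+$. Such a family would restrict to a continuous one-parameter family of automorphisms on some irreducible component, which I rule out case-by-case: on a bubble satisfying (1), the classical finiteness of the subgroup of $PSL(2,\C)$ preserving a non-constant holomorphic map $\mathbb{C}P^1 \to X$ (via the deck group of the induced branched cover onto its image) applies; on any component with at least three special points (case (2)), the stabilizer in $PSL(2,\C)$ of three distinct fixed points is finite, while the stabilizer in $\R$ of even a single non-transit point on a mainstream is trivial since $\tau$-translations of $\R \times S^1$ have no fixed points; on a mainstream satisfying (3), the $\tau$-translation action is broken by the non-vanishing of $\partial_\tau(u\circ\varphi_a)$.

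The main obstacle will be the careful bookkeeping at transit points and attaching nodes in the ``only if'' direction: to promote a local symmetry on a single component to a global automorphism of $((\Sigma,(z_-,z_+,\vec z)),u,\varphi)$, I must verify continuity across each node together with the boundary matching at $\tau = \pm\infty$ demanded by (7)--(8) of Definition \ref{defn210}. The hypotheses that $u|_{\Sigma_i}$ be constant in the bubble case, and that $u\circ \varphi_a$ be $\tau$-independent in the mainstream case, are precisely tailored to make this identity extension compatible with the rest of the data; recording this compatibility constitutes the technical core of the argument.
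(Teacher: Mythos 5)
The paper gives no proof of this lemma (it is explicitly ``left to the reader''), so there is nothing to compare route-by-route; your argument is the standard one. Your ``only if'' direction is correct: in each failure case you exhibit a positive-dimensional subgroup of ${\rm Aut}$ (a subgroup of $PSL(2,\C)$ fixing at most two points on a bubble carrying a constant map, respectively the $\R$-translations on a mainstream component with only its two transit points and $\tau$-independent $u\circ\varphi_a$), and the extension-by-the-identity is legitimate for exactly the reasons you record.

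The ``if'' direction, however, has a genuine logical gap. Finiteness of ${\rm Aut}^+$ does not follow from the absence of a continuous one-parameter subgroup: an infinite \emph{discrete} subgroup is not excluded by that reduction. This matters in precisely one of your cases, namely a mainstream component $\Sigma_a$ whose stability rests only on condition (3). The set of $\tau_a\in\R$ with $u(\varphi_a(\tau+\tau_a,t))=u(\varphi_a(\tau,t))$ for all $(\tau,t)$ is a closed subgroup of $\R$, hence $\{0\}$, $c\Z$ with $c>0$, or $\R$; non-vanishing of $\partial_\tau(u\circ\varphi_a)$ rules out $\R$ but not $c\Z$, which would still make ${\rm Aut}$ infinite. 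To close the gap, use the asymptotics built into Definition \ref{defn210} (5)(7)(8) via Proposition \ref{prof263}: if $u\circ\varphi_a$ were $c$-periodic in $\tau$ with $c\neq 0$, then letting $\tau\to\pm\infty$ along the orbit $\tau+nc$ forces $u(\varphi_a(\tau,t))=\gamma_+(t)=\gamma_-(t)$ for all $\tau$, i.e.\ $\tau$-independence, contradicting (3). With that supplement (and the routine remark that ${\rm Aut}^+$ maps to the finite group of dual-tree automorphisms times ${\rm Perm}(\ell)$ with kernel a product of the per-component stabilizers you analyze), your case analysis does yield finiteness; the remaining cases you treat — the finite deck group of a nonconstant genus-zero map and the trivial pointwise stabilizer of three special points, respectively of a finite non-transit special point under $\tau$-translation — are fine as stated.
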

The proof is left to the reader.

\begin{defn}\label{3equivrel}
On the set $\widehat{\mathcal M}_{\ell}(X,H;\alpha_-,\alpha_+)$  we define two
equivalence relations $\sim_1$, $\sim_2$ as follows.
\par
$((\Sigma,(z_-,z_+,\vec z)),u,\varphi) \sim_1 ((\Sigma',(z'_-,z'_+,\vec z')),u',\varphi')$ if and only if there exists
a biholomorphic map $v : \Sigma \to \Sigma'$ with the following properties:
\begin{enumerate}
\item
$v(z_{-}) = z'_{-}$ and $v(z_{+}) = z'_{+}$.
In particular $v$ sends the mainstream of $\Sigma$ to the mainstream of $\Sigma'$.
Moreover $v$ sends transit points to transit points.
\item
$u' = u\circ v$ holds outside the transit points.
\item
If $\Sigma_a$ is a mainstream component of $\Sigma$
and $v(\Sigma_a) = \Sigma'_{a'}$, then we have
\begin{equation}\label{preserveparame}
v \circ \varphi_a = \varphi'_{a'}
\end{equation}
on $\Sigma_a \setminus\{z_{a,-},z_{a,+}\}$.
\item
$v(z_i) = z'_i$.
\end{enumerate}
\par
The equivalence relation $\sim_2$ is defined  replacing (\ref{preserveparame}) by
existence of $\tau_a \in \R$ such that
\begin{equation}\label{preserveparame2}
(v \circ \varphi_a)(\tau,t) = \varphi'_{a'}(\tau+\tau_a,t).
\end{equation}
We put
$$\aligned
\widetilde{\mathcal M}_{\ell}(X,H;\alpha_-,\alpha_+) &=
\widehat{\mathcal M}_{\ell}(X,H;\alpha_-,\alpha_+)/\sim_1, \\
{\mathcal M}_{\ell}(X,H;\alpha_-,\alpha_+) &=
\widehat{\mathcal M}_{\ell}(X,H;\alpha_-,\alpha_+)/\sim_2.
\endaligned$$
In case $\ell = 0$ we write
${\mathcal M}(X,H;\alpha_-,\alpha_+)$ etc.
\end{defn}
\begin{defn}\label{not2620}
For the case $X = \text{one point}$ and $H\equiv 0$, 
we obtain the space ${\mathcal M}_{\ell}(\text{one point},0;\alpha_0,\alpha_0)$.
Here $\alpha_0$ is the unique point in ${\rm Per}(0)$.
We denote this space by ${\mathcal M}_{\ell}(\text{source})$.
\end{defn}
\begin{rem}
The parametrization $\varphi_a
: \R \times S^1 \to \Sigma_a \setminus \{z_{a,-},z_{a,+}\}$ of each
mainstream component $\Sigma_a$ is automatically determined
by the marked  curve $(\Sigma,z_-,z_+,\vec z)$ up to the
ambiguity $\varphi_a(\tau,t) \mapsto \varphi_a(\tau+\tau_0,t+t_0)$
where $(\tau_0,t_0) \in \R \times S^1$.
In other words, the choice of parametrization of the mainstream 
one to one corresponds
to the choice of one additional marked point on each 
mainstream component. We can use this fact to define the structure of
an orbifold with corner on  ${\mathcal M}_{\ell}(\text{source})$.
\end{rem}
\begin{exm} 
\begin{enumerate}
\item
${\mathcal M}_{0}(\text{source}) = \emptyset$.
${\mathcal M}_{1}(\text{source}) \cong S^1$.
In fact, its point is determined by the $S^1$ factor of the marked point.
\item
${\mathcal M}_{2}(\text{source}) \cong S^1 \times S^1 \times [0,1]$.
To see this let us first consider the case when there is only one mainstream component.
In that case let $\varphi(\tau_i,t_i)$ $(i=1,2)$ be the marked points.
Because of translation symmetry $t_1,t_2$ and $\tau_2 - \tau_1$ parametrize
such elements of ${\mathcal M}_{2}(\text{source})$.
(Note in case these two marked points coincide there occurs bubble.)
The boundary $S^1 \times S^1 \times \partial[0,1]$ then corresponds
to the situation where there are two mainstream components.
\end{enumerate}
\end{exm}
\begin{defn}
An element $[(\Sigma,(z_-,z_+,\vec z)),u,\varphi]$ of
${\mathcal M}_{\ell}(X,H;\alpha_-,\alpha_+)$
is said to be {\it source stable} if
$(\Sigma,(z_-,z_+,\vec z))$ is stable as genus zero marked curve.
\end{defn}
\subsection{Topology on ${\mathcal M}_{\ell}(X,H;\alpha_-,\alpha_+)$}
We next define a topology on the moduli space 
${\mathcal M}_{\ell}(X,H;\alpha_-,\alpha_+)$.
It is mostly the same as the topology of the moduli space
of stable maps which was introduced in \cite[Definitions 10.2 and 10.3]{FO}.
However since the notion of equivalence relation
$\sim_2$ is slightly
different (namely the condition (\ref{preserveparame2})
is included) it is slightly different.
\par
We here recall the following well-known fact.
Suppose that a sequence of genus $0$ stable curves
$(\Sigma^j,(z^j_-,z^j_+,\vec z^j))$
converges to $(\Sigma,(z_-,z_+,\vec z))$ in
the moduli space of stable curves.
Then for a sufficiently small $\epsilon >0$ and a
large $j$ we have a biholomorphic embedding
\begin{equation}\label{form2613}
\psi_{j,\epsilon}
: \Sigma \setminus \text{($\epsilon$ neighborhood of singular points)}
\to \Sigma^j ,
\end{equation}
$\vec z^{j,\epsilon} \subset \Sigma$ and $R(\epsilon)>0$
such that the following holds.
\begin{conds}\label{conds2617}
\begin{enumerate}
\item
$\lim_{j\to \infty, \epsilon \to 0} z^{j,\epsilon}_i = z_i$ for each $i
\in \{1,\dots,\ell,+,-\}$.
\item
$\lim_{\epsilon \to 0} R(\epsilon) = \infty$.
\item
$\psi_{j,\epsilon}(z^{j,\epsilon}_i) = z^{j}_i$.
\item
Any connected component of the complement of the image of
$\psi_{j,\epsilon}$ is biholomorphic to one of 
$[0,R] \times S^1$ with $R > R(\epsilon)$,
$(-\infty, 0] \times S^1$, or $[0,\infty) \times S^1$.
\end{enumerate}
\end{conds}
We can use Margulis' lemma to prove it.
(See \cite[Chapter 4]{B}, \cite[Chapter 11]{Ra}, \cite[Chapter IV]{H}, for example.)
In fact, we can use this condition as the definition of the topology
of the moduli space of marked stable curves of genus $0$.
\begin{defn}\label{stableconvergence}
Suppose that elements $[(\Sigma,(z_-,z_+,\vec z)),u,\varphi]$
and $[(\Sigma^j,(z^j_-,z^j_+,\vec z^j)),u^j,\varphi^j]$
of ${\mathcal M}_{\ell}(X,H;\alpha_-,\alpha_+)$
are source stable. We say
$[(\Sigma^j,(z^j_-,z^j_+,\vec z^j)),u^j,\varphi^j]$
{\it converges} to $[(\Sigma,(z_-,z_+,\vec z)),u,\varphi]$
and write
$$
\underset{j\to \infty}{\rm lims}
[(\Sigma^j,(z^j_-,z^j_+,\vec z^j)),u^j,\varphi^j]
=
[(\Sigma,(z_-,z_+,\vec z)),u,\varphi]
$$
if there exist $\psi_{j,\epsilon}$,
$\vec z^{j,\epsilon} \subset \Sigma$ and $R(\epsilon)>0$ as in
(\ref{form2613}) with the following properties.
\begin{enumerate}
\item
Condition \ref{conds2617} (1)-(4) are satisfied.
\item 
For each $\epsilon >0$
\begin{equation}
\lim_{j\to \infty} \sup \{d(u^j(\psi_{j,\epsilon}(z)),u(z)) \mid z \in
{\rm Dom}(\psi_{j,\epsilon})\} = 0.
\end{equation}
Here
${\rm Dom}(\psi_{j,\epsilon})
= \Sigma \setminus \text{(Union of $\epsilon$ neighborhoods of singular points)}$
is the domain of $\psi_{j,\epsilon}$ as in
(\ref{form2613}) and $d$ is a metric 
on $X$.\footnote{This condition is independent of $d$ since $X$ is compact.}
\item
For each mainstream component $\Sigma_a$ of $\Sigma$ there
exist a mainstream component $\Sigma^j_{a(j)}$ of $\Sigma^j$
and $T_j \to \infty$, $\tau_j \in \R$ such that
\begin{equation}
\lim_{j\to \infty} \sup
\{
d((\varphi_{a_j}^{-1}\circ \psi_{j,\epsilon}
\circ \varphi_{a}) (\tau-\tau_j,t), (\tau,t))
\mid (\tau-\tau_j,t)  \in [-T_j,T_j] \times S^1
\}
= 0.
\end{equation}
Here $d$ is the standard metric on $\R \times S^1$.
\item
The `diameter' of 
the image of $u^j$ of each connected component
$\frak W$ of
$\Sigma^j \setminus \psi_{j,\epsilon}({\rm Dom}(\psi_{j,\epsilon}))$
converges to $0$ in the sense of Condition \ref{conds2621} below.
\end{enumerate}
\end{defn}
To state Condition \ref{conds2621} below we need some notation.
Let 
$$
((\Sigma,(z_-,z_+,\vec z)),u,\varphi)
\in \widehat{\mathcal M}_{\ell}(X,H;\alpha_-,\alpha_+)
$$
and define
$$
\hat u : \Sigma \to X
$$ 
as follows.
We use the flow map
${\rm exp}_t^{H} : X \to X$ defined in (\ref{fporm2622}).
We identify $t \in [0,1) \subset S^1 \cong \R /\Z$.
\begin{enumerate}
\item
If $z \in \Sigma$ is in the mainstream component $\Sigma_a$ and is not
a transit point, then we take $(\tau,t)$ with $\varphi_a(\tau,t) = z$ and put 
$$
\hat u(z) = ({\rm exp}_t^{H})^{-1}(u(z)).
$$
\item
Suppose $z \in \Sigma$ is in a bubble component.
Let $z_0$ be the root of the bubble tree containing $z$.
We take $(\tau,t)$ with $\varphi_a(\tau,t) = z_0$.
Then we put 
$$
\hat u(z) = ({\rm exp}_t^{H})^{-1}(u(z)).
$$
\item
(1) and (2) define $\hat u$ outside the set of the singular points.
It is easy to see that it extends to a map $\hat u : \Sigma \to X$.
\end{enumerate}

\begin{rem}\label{rem:314}
\begin{enumerate}
\item
The map $u$ does {\it not} extend continuously to the
transit point. In fact, the image of the neighborhood of the
transit point contains a periodic orbit which may not consist of a point.
After composing $({\rm exp}_t^{H})^{-1}$ it can be extended to
a continuous map because of (\ref{bdlatinf}).
\item
The map $\hat u$ is {\it not} continuous at $t = [0] = [1] \in S^1$.
This is because $\varphi^H_1 \ne \varphi^H_0 = {\rm identity}$. However it is
continuous at the transit point.
\end{enumerate}
\end{rem}

\begin{defn}\label{redefconnecting}
We say $\hat u$ the {\it redefined connecting orbit map}.
\end{defn}

\begin{conds}\label{conds2621}
In the situation of Definition \ref{stableconvergence}
we require the following.
There exists $o(\epsilon,j)>0$ with $\lim_{j\to \infty, \epsilon \to 0} o(\epsilon,j)
\to 0$ such that
for each connected component $\frak W$ of
$\Sigma^j \setminus \psi_{j,\epsilon}({\rm Dom}(\psi_{j,\epsilon}))$
we have either
\begin{enumerate}
\item[(i)] ${\rm Diam} (\hat u^j(\frak W))  <  o(\epsilon,j)$ for the case when $\frak W$ corresponds to a
transit point of $(\Sigma,(z_-,z_+,\vec z)),u,\varphi)$, 
or 
\item[(ii)] ${\rm Diam} (u^j(\frak W))  <  o(\epsilon,j)$ for the case when
$\frak W$ corresponds to a
non-transit singular point of $(\Sigma,(z_-,z_+,\vec z)),u,\varphi)$.
\end{enumerate}
Here $\hat u^j$ is the redefined connecting orbit map of
$((\Sigma^j,(z^j_-,z^j_+,\vec z^j)),u^j,\varphi^j)$.
\end{conds}
\begin{defn}\label{def2626}
We say a sequence $[(\Sigma^j,(z^j_-,z^j_+,\vec z^j)),u^j,\varphi^j]$
in ${\mathcal M}_{\ell}(X,H;\alpha_-,\alpha_+)$
 {\it converges} to
$[(\Sigma,(z_-,z_+,\vec z)),u,\varphi]
\in {\mathcal M}_{\ell}(X,H;\alpha_-,\alpha_+)$
and write
$$
\lim_{j\to \infty}
[(\Sigma^j,(z^j_-,z^j_+,\vec z^j)),u^j,\varphi^j]
=
[(\Sigma,(z_-,z_+,\vec z)),u,\varphi],
$$
if there exist $\vec w^j \subset \Sigma^j$
and $\vec w \subset  \Sigma$ such that
$[(\Sigma^j,(z^j_-,z^j_+,\vec z^j\cup \vec w^j)),u^j,\varphi^j]$
and
$[(\Sigma,(z_-,z_+,\vec z \cup \vec w)),u,\varphi]$
are source stable and
$$
\underset{j\to \infty}{\rm lims}\,\,
[(\Sigma^j,(z^j_-,z^j_+,\vec z^j\cup \vec w^j)),u^j,\varphi^j]
=
[(\Sigma,(z_-,z_+,\vec z \cup \vec w)),u,\varphi]
$$
in the sense of Definition \ref{stableconvergence}. 
\end{defn}
\begin{rem}
We define the topology by defining the notion of convergence of sequences.
We can do so by the following reason.
We can define the notion of combinatorial or topological type of an
element of ${\mathcal M}_{\ell}(X,H;\alpha_-,\alpha_+)$.
(See \cite[Section 19]{FO} for example.)
If we fix a combinatorial type, we can embed
the set of elements ${\mathcal M}_{\ell}(X,H;\alpha_-,\alpha_+)$
to a space obtained as a pair of stable curve and a map from it
with the fixed combinatorial type.
The topology in Definition \ref{def2626} coincides with one which is defined
by the standard topology of the moduli space of marked curves and,
say, the $C^{\infty}$ topology of the maps. Obviously, 
the latter topology is metrizable. Also there exist only countably many
combinatorial types.
\end{rem}
\begin{thm} The space 
${\mathcal M}_{\ell}(X,H;\alpha_-,\alpha_+)$ is compact and
Hausdorff with respect to the topology of Definition \ref{def2626}.
\end{thm}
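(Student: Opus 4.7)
The plan is to establish compactness by the standard Floer--Gromov stable map compactification argument and then deduce Hausdorffness from uniqueness of limits. First I would obtain a uniform energy bound: for any sequence $[(\Sigma^j,(z^j_-,z^j_+,\vec z^j)),u^j,\varphi^j]$ in ${\mathcal M}_{\ell}(X,H;\alpha_-,\alpha_+)$, the energy of $u^j$ is determined by the topological data $\alpha_-,\alpha_+$ via $\mathcal A_H(\gamma_+,w_+)-\mathcal A_H(\gamma_-,w_-)$ combined with the $C^0$ bound $\Vert H\Vert_{C^0}$, and hence is constant along the sequence. With this bound in hand, the classical $\epsilon$-regularity for Floer's equation (cf.\ \cite{Flo89I}) and for pseudoholomorphic spheres yields an a priori derivative bound away from finitely many concentration points where a definite quantum of energy is lost.

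Next I would carry out the bubbling-off analysis. Wherever derivatives of $u^j$ blow up, rescale: on bubble components one rescales conformally to extract a nonconstant pseudoholomorphic sphere using the removable singularity theorem, while along cylindrical necks where the derivative stays bounded but energy persists one reparametrizes in the $\tau$-direction to produce an additional mainstream component carrying a nonconstant Floer trajectory. The exponential decay near 1-periodic orbits (recorded in the Remark after Proposition \ref{prof263}) together with Proposition \ref{prof263} itself provides asymptotic matching: limits on both sides of each transit point lie on some common critical submanifold $R_\beta$ and the bounding disks concatenate correctly, so condition (7)--(8) of Definition \ref{defn210} are satisfied on the limit. One must also handle the non-transit singular points via standard Gromov compactness, where the image $u^j$ on the neck has vanishing diameter. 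To control the domain, add finitely many auxiliary marked points $\vec w^j$ in regions where derivatives stay bounded from below (and on each unstable component) so that the domains become source stable; a diagonal argument then extracts a limit of the stabilized data in the sense of Definition \ref{stableconvergence}. Forgetting $\vec w$ from the limit gives a convergent subsequence in the sense of Definition \ref{def2626}.

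The two quantitative statements of Definition \ref{stableconvergence} and Condition \ref{conds2621} are exactly what the rescaling arguments deliver: Condition \ref{conds2617}(1)--(4) comes from Deligne--Mumford convergence of the stabilized domains, while the diameter estimates split into the non-transit case, handled by Gromov compactness directly for $u^j$, and the transit case, handled by the redefined connecting orbit map $\hat u^j$ whose diameter on a neck corresponding to a transit point tends to zero precisely because of exponential convergence of Floer trajectories to their asymptotic periodic orbits. The total energy is conserved in the limit, so no energy is lost into invisible components, and the resulting stable object is automatically stable in the sense of Definition \ref{stable26} (isotropy is finite because every non-trivial $\R$-symmetry on a mainstream component requires the Floer map there to be translation invariant, hence a trivial cylinder, which is excluded by minimality of the limit data).

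The Hausdorff property follows from uniqueness of limits under the convergence of Definition \ref{def2626}: if the same sequence converged to two stable objects, one chooses a common stabilization by adjoining additional marked points in both limits and matches them via the Deligne--Mumford identification on the domain side and via the $C^\infty_{\mathrm{loc}}$-limit of $u^j$ on the map side; the two limits are then related by a biholomorphism in the sense of $\sim_2$. The main obstacle I anticipate is the bookkeeping at transit points: one must ensure that rescaling to capture a broken Floer trajectory and rescaling to capture bubble spheres interact correctly, in particular that the matching condition (7) of Definition \ref{defn210} survives in the limit and that the dichotomy in Condition \ref{conds2621} between transit and non-transit necks is preserved under the choice of auxiliary marked points $\vec w^j$. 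This is where the use of $\hat u^j$ (the redefined connecting orbit map) rather than $u^j$ itself is essential.
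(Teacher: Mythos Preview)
Your proposal is correct and follows essentially the same route as the paper: the paper simply omits the proof, noting that it is the same as \cite[Lemma 10.4 and Theorem 11.1]{FO} together with Proposition \ref{prof263}, and your sketch is precisely a faithful expansion of that Fukaya--Ono stable-map compactification argument adapted to the Morse--Bott setting. The one place to be slightly careful is your energy-bound claim: the energy on the mainstream is indeed $\mathcal A_H(\gamma_+,w_+)-\mathcal A_H(\gamma_-,w_-)$, which is constant along the sequence because $\mathcal A_H$ is constant on each $R_{\alpha_\pm}$ (as remarked after Definition \ref{def:210}), while the total symplectic area of bubble components is controlled by the homology class fixed by $\alpha_-,\alpha_+$---so the uniform bound holds, but it is not quite ``constant'' once bubbles are present.
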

The proof is the same as that of
\cite[Lemma 10.4 and Theorem 11.1]{FO}, which relies on Proposition \ref{prof263}, and so is omitted.
\par
We close this section with a few small remarks.
\par
The evaluation map (\ref{evatinfqaaa}) is extended to
\begin{equation}\label{evatinfqbbb}
 ({\rm ev}_{-}, {\rm ev}_{+}): {\mathcal M}_{\ell}(X,H;\alpha_-,\alpha_+)
\to R_{\alpha_-} \times R_{\alpha_+}.
\end{equation}
Moreover we define evaluation maps 
\begin{equation}\label{evatinfqccc}
 ({\rm ev}_{1}, \dots,{\rm ev}_{\ell}): {\mathcal M}_{\ell}(X,H;\alpha_-,\alpha_+)
\to X^{\ell}
\end{equation}
by $$
{\rm ev}_{i}([(\Sigma,(z_-,z_+,\vec z \cup \vec w)),u,\varphi])
=
u(z_i).
$$
They are continuous.
\par
Let $\alpha_-=\alpha_0,\alpha_1,\dots,\alpha_{m-1},\alpha_m = \alpha_+
\in \frak A$.
We consider the fiber product
\begin{equation}\label{2622}
\aligned
&{\mathcal M}_{\ell_1}(X,H;\alpha_0,\alpha_1)
\,{}_{{\rm ev}_{+}}\times_{{\rm ev}_-} {\mathcal M}_{\ell_2}(X,H;\alpha_1,\alpha_2)
\,{}_{{\rm ev}_{+}}\times_{{\rm ev}_-} \dots _{{\rm ev}_{+}}\times_{{\rm ev}_-}\\
& {\mathcal M}_{\ell_{i+1}}(X,H;\alpha_i,\alpha_{i+1})
\,{}_{{\rm ev}_{+}}\times_{{\rm ev}_-}
\dots
{}_{{\rm ev}_+}\times_{{\rm ev}_-} {\mathcal M}_{\ell_m}(X,H;\alpha_{m-1},\alpha_{m})
\endaligned
\end{equation}
and denote it by
$$
{\mathcal M}_{(\ell_1,\dots,\ell_m)}(X,H;\alpha_0,\alpha_1.,\dots,\alpha_m).
$$
See Figure \ref{Fig1}.
In case $\ell_i$ are all $0$ we write
$
{\mathcal M}(X,H;\alpha_0,\alpha_1.,\dots,\alpha_m)
$. 
\begin{figure}[htbp]
\centering
\includegraphics[scale=0.5]{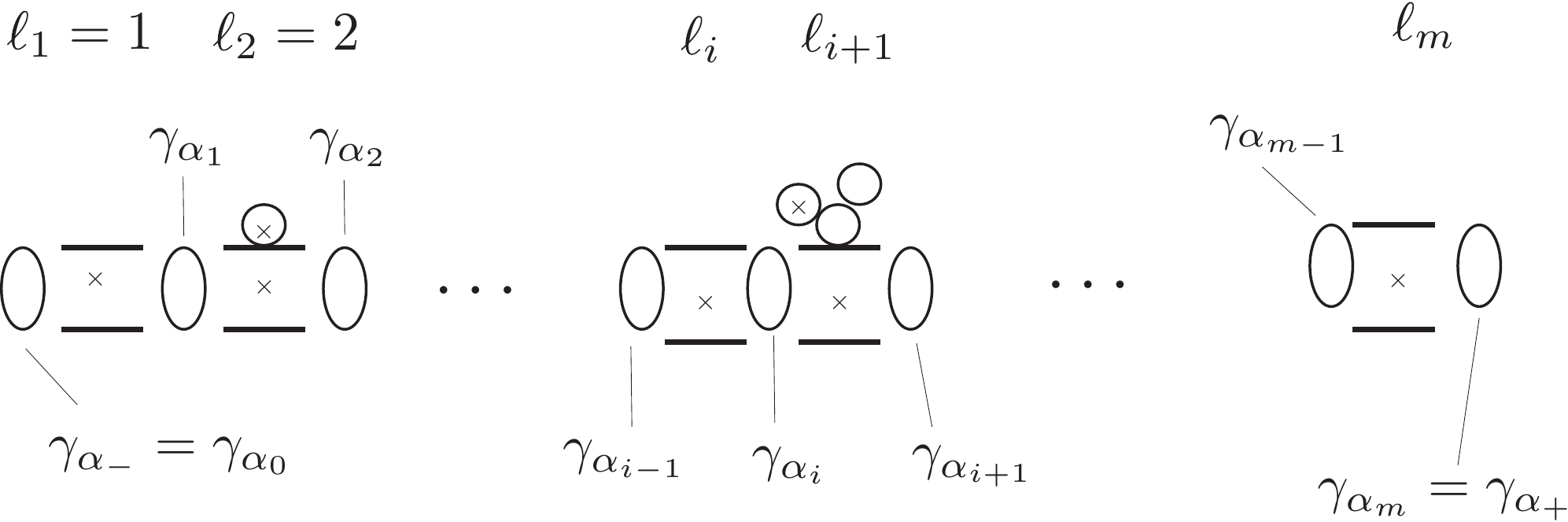}
\caption{$
{\mathcal M}_{(\ell_1,\dots,\ell_m)}(X,H;\alpha_0,\alpha_1.,\dots,\alpha_m)
$}
\label{Fig1}
\end{figure}
\par
We also define a map
\begin{equation}\label{mapform2621}
{\mathcal M}_{(\ell_1,\dots,\ell_m)}(X,H;\alpha_0,\alpha_1.,\dots,\alpha_m)
\to
{\mathcal M}_{\ell_1+\dots+\ell_m}(X,H;\alpha_-,\alpha_+)
\end{equation}
as follows.
Let
$[(\Sigma^a,(z^a_-,z^a_+,\vec z^a)),u^a,\varphi^a]
\in {\mathcal M}_{\ell_a}(X,H;\alpha_{a-1},\alpha_a)$
for $a=1,\dots,m$.
We assume
\begin{equation}\label{form2622}
\aligned
&{\rm ev}_{+}([(\Sigma^a,(z^a_-,z^a_+,\vec z^a)),u^a,\varphi^a])\\
&=
{\rm ev}_{-}([(\Sigma^{a+1},(z^{a+1}_-,z^{a+1}_+,\vec z^{a+1})),u^{a+1},\varphi^{a+1}]).
\endaligned
\end{equation}
On the disjoint union $\bigsqcup_{a=1}^m \Sigma_a$
we identify $z^a_+$ and $z^{a+1}_-$ to obtain $\Sigma$.
We put $z_- = z_-^0$ and $z_+ = z^m_+$.
We also put $\vec z = \bigcup_{a=1}^m \vec z^a$, and 
$u$ and $\varphi$ to be the union of $u^a$ and $\varphi^a$ respectively.
They obviously satisfy Definition \ref{defn210}
(1)-(6). Definition \ref{defn210} (7) is a consequence of
(\ref{form2622}).
We thus obtain the map (\ref{mapform2621}).
This map is obviously continuous, injective and is a homeomorphism
onto its image.

\section{Construction of Kuranishi structure}
\label{subsec;KuraFloer}

\subsection{Statement}
In this section we present technical details of the proof of the next theorem.
\begin{thm}\label{kuraexists}
\begin{enumerate}
\item
The space ${\mathcal M}_{\ell}(X,H;\alpha_{-},\alpha_{+})$ has a Kuranishi structure with corners
together with an isomorphism 
$$
{\rm ev}_{+}^{\ast} (\det TR_{\alpha_+}) \otimes {\rm ev}_{+}^{\ast}(o_{R_{\alpha_+}})
\cong
o_{{\mathcal M}_{\ell}(X,H;\alpha_{-},\alpha_{+})} \otimes {\rm ev}_{-}^{\ast}(o_{R_{\alpha_-}})
$$
of principal $O(1)$-bundles. Here 
$o_{{\mathcal M}_{\ell}(X,H;\alpha_{-},\alpha_{+})}$ is the orientation bundle defined by the 
Kuranishi structure as in \cite[Definition 3.10 (1)]{fooonewbook} and 
$o_{R_{\alpha_{\pm}}}$ are defined in Definition \ref{oricricial}.
\item
Its codimension $k$ normalized corner\footnote{See \cite[Definition 24.18]{fooonewbook} for the definition of normalized corner.} is the union of the images of the map
(\ref{mapform2621}) with a certain Kuranishi structure on
${\mathcal M}_{(\ell_1,\dots,\ell_m)}(X,H;\alpha_0,\alpha_1.,\dots,\alpha_k)$.
\item
The evaluation maps (\ref{evatinfqbbb}) and (\ref{evatinfqccc})
are induced from 
a strongly smooth map\footnote{See \cite[Definition 3.40]{fooonewbook}
for the definition of strongly smooth map.}
of Kuranishi structures.
The map ${\rm ev}_+$ is weakly submersive.
\item
The dimension (as a K-space) is given by
$$
\dim {\mathcal M}_{\ell}(X,H;\alpha_{-},\alpha_{+})
= \mu(\alpha_+) - \mu(\alpha_-) - 1 + \dim R_{\alpha_+} + 2\ell.
$$
\end{enumerate}
\end{thm}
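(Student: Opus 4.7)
The plan is to realize the Kuranishi structure on ${\mathcal M}_{\ell}(X,H;\alpha_-,\alpha_+)$ by assigning \emph{obstruction bundle data} to each point and then running the general machinery from \cite{fooo:const1}. At a point $\frak p = [(\Sigma,(z_-,z_+,\vec z)), u,\varphi]$, I would choose (i) additional interior marked points on each unstable mainstream or bubble component so that the extended domain becomes source stable (enabling a local description of domain deformations via the moduli of marked curves), (ii) codimension-two transversal slices in $X$ through the images of these added marked points, used to rigidify the remaining automorphisms, and (iii) a finite-dimensional subspace $E_{\frak p} \subset C^\infty_c(\Sigma \setminus \{\text{singular/transit}\}; u^*TX \otimes \Lambda^{0,1})$, supported away from nodes and transit points, such that the image of the linearization $D_u\overline{\partial}_{J,H}$ together with $E_{\frak p}$ spans $L^p_\delta$. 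Exactly as in \cite[Theorem 7.1]{fooo:const1}, this produces a Kuranishi chart at $\frak p$: the thickened moduli problem $\overline{\partial}_{J,H} u' \in E_{\frak p}$ parametrizes a smooth manifold (after cutting by the transversal slices and dividing by the finite automorphism group), and $E_{\frak p}$ serves as the obstruction bundle.

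Near a codimension $k$ stratum, which by the compactness discussion of Section \ref{subsec;compactFloer} consists of broken configurations parametrized by the fiber product ${\mathcal M}_{(\ell_1,\dots,\ell_{k+1})}(X,H;\alpha_0,\dots,\alpha_{k+1})$, I would construct the chart by a gluing analysis. Using $k$ gluing parameters $(\rho_1,\dots,\rho_k) \in [0,\epsilon)^k$ to glue necks at the transit points, one forms approximate solutions from the broken configuration and corrects them by the implicit function theorem in weighted Sobolev spaces $L^p_\delta$, the weight $\delta$ chosen smaller than the spectral gap of the asymptotic operators $A$ at each end (this is exactly where Condition \ref{weaknondeg} and the Morse-Bott setting enter). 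The obstruction spaces along the corner stratum are pulled back from the factors of the fiber product, producing a Kuranishi chart with corners whose codimension $k$ normalized corner is identified with the fiber product, proving statement (2). Smoothness of the coordinate changes across strata is obtained from \cite[Chapter 8]{foooanalysis} using the exponential decay of gluing.

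For (3), continuity of the evaluation maps is clear on the topological moduli space, and strong smoothness follows from the fact that the endpoints of the glued solutions depend smoothly on the gluing parameters and on the deformation parameters of the mainstream components. Weak submersivity of ${\rm ev}_+$ is arranged by requiring $E_{\frak p}$ to be chosen large enough that the restriction of the linearization to the kernel of ${\rm ev}_+$ remains surjective, a standard condition imposed on the obstruction bundle data. The dimension formula (4) is Lemma \ref{lem2612}(3) plus $2\ell$ to account for the $\ell$ interior marked points contributing $2\ell$ real parameters. The orientation isomorphism in (1) reduces via excision for real determinant line bundles to a comparison of $\det D_u\overline{\partial}_{J,H}$ with the determinant lines of the operators $P(\gamma_\pm;\frak t_{w_\pm})$ of Definition \ref{oricricial}; the gluing-of-determinants formula yields exactly the stated isomorphism, where the factor $\det T R_{\alpha_+}$ accounts for the zero eigenspace of the asymptotic operator at the positive end.

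The main obstacle is the coherence of the obstruction bundle data: the choices made at distinct points, and in particular along different strata, must be compatible so that the gluing construction pulls back $E_{\frak p}$ correctly along coordinate changes, and so that the chart built near a broken configuration restricts on its normalized corner to the product of charts built for the factors. Achieving this compatibility for all pairs $(\alpha_-,\alpha_+)$ simultaneously requires an inductive procedure with respect to the energy $E(\alpha_+) - E(\alpha_-)$ and the number of breakings; this is precisely the content of Section \ref{subsec;KuramodFloercor} below, where the outer collaring of \cite[Chapter 17]{fooonewbook} is introduced to reconcile the Kuranishi structures on overlapping collar neighborhoods of the corner strata.
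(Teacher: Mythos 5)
Your overall strategy coincides with the paper's: obstruction bundle data at finitely many centers, thickened moduli spaces cut down by transversal constraints, gluing analysis with exponential decay estimates from \cite[Chapter 8]{foooanalysis} for smoothness of coordinate changes, determinant-line gluing for the orientation isomorphism, and deferral of the fiber-product compatibility to the outer-collar construction of Section \ref{subsec;KuramodFloercor}. However, there is one concrete gap in your rigidification step. You propose to stabilize every unstable component by adding marked points and then pinning them down with codimension-two divisors $\mathcal D_i$ transverse to $u$. This fails precisely on a mainstream component $\Sigma_a$ that carries no special points other than its two transit points: there one cannot in general find a point where $u$ is an immersion, so no transversal divisor exists, and without some replacement the added marked point on $\Sigma_a$ is not rigidified (the residual $\R\times S^1$ reparametrization ambiguity survives and the thickened moduli space is not cut out as a manifold of the right dimension). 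The paper's fix is the \emph{canonical marked point} $w_{a,\rm can}=\varphi_a(\tau_a,0)$ of Definition \ref{defncanmark}: the action-type function $f_{H,u,\Sigma_a}$ of \eqref{eq:41} is strictly increasing on such a component, so Lemma \ref{lemma46} singles out a unique $\tau_a$, and the transversal constraint for these points is the equality of action values \eqref{form2636} together with $t'_j=[0]$ (Definition \ref{constrainttt2} (2)(3)), not a divisor condition. Your proposal needs this (or an equivalent) device; as written, the construction of the charts breaks down on strata containing such components.

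A second, smaller point: your middle paragraph asserts that the obstruction spaces along a codimension $k$ corner are pulled back from the factors of the fiber product, so that the corner chart "is identified with the fiber product." This is both stronger than what statement (2) claims (only "a certain Kuranishi structure" on ${\mathcal M}_{(\ell_1,\dots,\ell_k)}$) and inconsistent with your own final paragraph. In the paper the obstruction space at a corner point ${\bf q}$ is the sum of transported spaces $E_{{\bf p}_c,{\rm v}}$ over all centers ${\bf p}_c$ with ${\bf q}\in W({\bf p}_c)$; Lemma \ref{lem2656} guarantees these centers lie in the same corner, so a Kuranishi structure is induced there, but since the data are not chosen mainstream-component-wise this induced structure is \emph{not} the fiber product one (Remark \ref{rem628}). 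Insisting on fiber-product obstruction spaces at the corners while using summed obstruction spaces in the interior would create exactly the incompatibility on overlaps that the outer-collar modification is designed to resolve, so the two prescriptions cannot be mixed within a single chart system at this stage.
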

\begin{rem}\label{rem628}
The Kuranishi structure on
${\mathcal M}_{(\ell_1,\dots,\ell_m)}(X,H;\alpha_0,\alpha_1.,\dots,\alpha_m)$
mentioned in Theorem \ref{kuraexists} (2) is
not in general the fiber product Kuranishi structure.
The method we give in this section does not provide
such a system of Kuranishi structures compatible with the fiber product yet.
In Section \ref{subsec;KuramodFloercor}
we will modify the Kuranishi structures on 
${\mathcal M}_{\ell_i}(X,H;\alpha_{i-1},\alpha_{i})$
at their outer collars so that 
they are consistent with the fiber product. 
Then we will obtain a required K-system so that
${\mathcal M}_{(\ell_1,\dots,\ell_m)}(X,H;\alpha_0,\alpha_1.,\dots,\alpha_m)$ has indeed 
a fiber product Kuranishi structure.
We will carry out this procedure in Section \ref{subsec;KuramodFloercor}.
\end{rem}
\begin{rem}\label{rem43}
The group $\frak G$ in Definition \ref{def:Per} acts on
$\widetilde{\rm Per}(H)$ 
so that it induces a simply transitive action on the index set 
$\frak A$ in \eqref{267form}.
Then we have a natural identification between 
${\mathcal M}_{\ell}(X,H;\alpha_{-},\alpha_{+})$
and ${\mathcal M}_{\ell}(X,H;g(\alpha_{-}),g(\alpha_{+}))$
for any $g\in \frak G$ and $\alpha_{\pm} \in \frak A$. 
Since our construction of Kuranishi structures given below 
is independent of the choice of the bounding disk $w : D^2 \to X$ 
in the definition of $\widetilde{\rm Per}(H)$, 
the resulting Kuranishi structures for other choices of $w$'s
are isomorphic under this identification.
\end{rem}
\subsection{Obstruction bundle data}
The detail of the proof of Theorem \ref{kuraexists} (1)(2)
given in this section is mostly the same as the one given in 
\cite{fooo:const1} (see also \cite[Parts 4 and 5]{foootech} if necessary).
We repeat the proof for the completeness
and also to prepare notations for the discussion
in the next section.
More specifically, in \cite[Definition 5.1]{fooo:const1} we introduced the notion of {\it obstruction bundle data} 
for the moduli space of bordered stable maps of genus $0$ and 
showed the existence \cite[Theorem 11.1]{fooo:const1} and 
that we can associate a Kuranishi structure for any obstruction bundle data
in a canonical way \cite[Theorem 7.1]{fooo:const1}. 
The strategy of this article is the same.
So we first discuss a version of the obstruction bundle data
in our situation with Hamiltonian perturbation.
\par
\begin{defn}\label{symstab}
A {\it symmetric stabilization} of $((\Sigma,(z_-,z_+,\vec z)),u,\varphi)$ is 
an ordered set $\vec w$ of points in $\Sigma$ with the following properties.
\begin{enumerate}
\item
The set $\vec w$ is contained in the union of bubble components.
\item
$\vec w \cap \vec z = \emptyset$.
None of the points in $\vec w$ is a singular point.
\item
For each  bubble component $\Sigma_b$ we have the
inequality
$$
\# (\vec w \cap \vec z) + \# (\text{Singular point on $\Sigma_b$})
\ge 3.
$$
\item
For any $v \in {\rm Aut}^+((\Sigma,(z_-,z_+,\vec z)),u,\varphi)$
we have
$\sigma \in {\rm Perm}(\# \vec w)$
such that $v(w_i) = w_{\sigma(i)}$.
\item If $v \in {\rm Aut}((\Sigma,(z_-,z_+,\vec z)),u,\varphi)$ and 
$\sigma =$ id in (4), then $v = $ id also.
\item
For each $w_i$ the map $u$ is an immersion at $w_i$.
\end{enumerate}
\end{defn}
\begin{rem}
Condition (5) is assumed only to simplify the notations in later discussion.
By this condition we have an embedding
${\rm Aut}((\Sigma,(z_-,z_+,\vec z)),u,\varphi)\to  {\rm Perm}(m)$ where $m = \#\vec w$.
\end{rem}
\begin{lem}
There exists a symmetric stabilization
for any $((\Sigma,(z_-,z_+,\vec z)),u,\varphi)$.
\end{lem}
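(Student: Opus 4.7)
The plan is to build $\vec w$ in two stages, exploiting the finiteness of the extended automorphism group $G^+ := {\rm Aut}^+((\Sigma,(z_-,z_+,\vec z)),u,\varphi)$, which follows from the stability hypothesis.

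In the first stage I will stabilize each bubble component that needs it. Let $\mathcal{B}$ denote the set of bubble components $\Sigma_b$ with fewer than three special points (i.e.\ $\#(\vec z \cap \Sigma_b) + \#(\text{singular points on }\Sigma_b) < 3$). By the stability criterion stated just before Definition \ref{3equivrel}, on each such $\Sigma_b$ the map $u$ must be pseudo-holomorphic and nonconstant; consequently the immersion locus of $u|_{\Sigma_b}$ is the complement of a finite critical set, hence open and dense. I pick one representative bubble from each $G^+$-orbit in $\mathcal B$, and on it select enough points in the immersion locus to bring the count of special points up to three, arranging by genericity that the chosen points avoid $\vec z$, all singular points, and the fixed loci of the finitely many nontrivial elements of the $G^+$-stabilizer of that bubble. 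Taking the full $G^+$-orbit of these points yields a set $\vec w_0$ satisfying conditions (1), (2), (3), (4) and (6) of Definition \ref{symstab}.

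In the second stage I will enforce the faithfulness condition (5). The preparatory observation is that any nontrivial $v \in G := {\rm Aut}((\Sigma,(z_-,z_+,\vec z)),u,\varphi)$ acts trivially on every mainstream component: on $\Sigma_a$ the element $v$ acts by an $\mathbb{R}$-translation $\tau_a$, which is forced to have finite order by finiteness of $G$, hence $\tau_a=0$. Thus the entire nontrivial action of any $v \in G$ is concentrated on the bubble components. I iterate over the finitely many $v \in G \setminus \{{\rm id}\}$ whose induced permutation on the current $\vec w$ is the identity: for each such $v$, locate a bubble component on which $v$ acts nontrivially, pick a point there not fixed by $v$ (disjoint from the already chosen data and from the fixed loci of other nontrivial elements), and adjoin its $G^+$-orbit to $\vec w$. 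The process terminates after finitely many steps and delivers a $\vec w$ satisfying (5) as well.

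The main obstacle will be the handling of ghost bubble subtrees in the second stage: condition (6) forbids placing $w_i$ where $u$ is constant, yet a nontrivial $v \in G$ may \emph{a priori} act nontrivially only on such ghost components. The resolution is the standard propagation argument: a ghost subtree is stable purely through its special points, so every leaf carries either a marked point of $\vec z$ or an attaching node to a non-ghost (hence nonconstant) bubble; if $v$ acts nontrivially on the ghost subtree but fixes all $\vec z$, then it must nontrivially permute the attaching nodes, and so it acts nontrivially on some adjacent nonconstant bubble, where we are free to add an immersion point. Once this step is carried out, conditions (1)--(6) all hold and the lemma is proved.
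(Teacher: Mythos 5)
Your argument is correct and is essentially the standard stabilization argument that the paper itself invokes without writing out (the lemma is stated with ``the proof is \dots now standard, so we omit the proof''). Your two-stage scheme --- first adjoining $G^+$-orbits of immersion points to under-stabilized (necessarily non-ghost) bubbles, then killing residual automorphisms, using that finiteness of ${\rm Aut}$ forces $\tau_a=0$ on every mainstream component and that rigidity of stable genus-zero marked trees pushes any nontrivial action off the ghost subtrees onto adjacent non-constant bubbles --- is exactly the intended proof.
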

The proof is similar to that for the case of the moduli space of stable maps, which is now standard. So we omit the proof.
\par
Note that $(\Sigma,\{z_-,z_+\} \cup \vec z \cup \vec w)$ may not yet be a
stable marked curve because there can be a mainstream component $\Sigma_a$
which contains only transit points or $z_{\pm}$. (Namely $\Sigma_a$ may
not contain any of non-transit singular point or a point of $\vec z$.)
In Definition \ref{symstab} we do not put additional
marked points $\vec w$ on such a component.
Actually it is in general impossible
to find $w_i$ on such a component
satisfying Definition \ref{symstab} (6).
However, we can find a `canonical' marked point
on such a component.
Namely we
use a function $f_{H,u,\Sigma_a}$ 
to find the canonical position in the $\R$ coordinate
and the parametrization $\varphi_a$ to find the $S^1$ coordinate.
This is the way taken in
\cite[page 204]{foootech}, which we repeat below.
\par
Let $\Sigma_a$ be a mainstream component of $\Sigma$ which does not
contain non-transit singular point or marked point $\in \vec z$.
We define a function $f_{H,u,\Sigma_a} : \R \to \R$
as follows.
We first define an element
$(\gamma_{a,-},w_{a,-}) \in \widetilde{{\rm Per}}(H)$.
Here $\gamma_{a,-}$ is a closed orbit
which corresponds to the transit point $z_{a,-}$ by
$$
\gamma_{a,-}(t) = \lim_{\tau\to -\infty} u(\varphi_a(\tau,t)).
$$
We consider the connected component of $\Sigma \setminus \{z_{a,-}\}$
which contains $z_-$. The restriction of $u$ to this connected component
together with 
$w_{-}$
(which is a disk that bounds a curve, an element in
$R_{\alpha_-}$)
determines a homotopy class of maps $w_{a,-} : D^2 \to X$
which bounds $\gamma_{a,-}$. We thus obtain
$(\gamma_{a,-},w_{a,-}) \in \widetilde{{\rm Per}}(H)$.
\par
We define a function
\begin{equation}\label{eq:41}
f_{H,u,\Sigma_a}(\tau_0)
=
\int_{D^2}w_{a,-}^* \omega +
\int_{\tau=-\infty}^{\tau=\tau_0}\int_{t \in S^1} (u\circ \varphi_a)^*\omega
+
\int_{t\in S^1} H(u(\tau_0,t),t) dt.
\end{equation}
Proposition \ref{prof263} implies that this integral converges and 
both limits
$
\lim_{\tau \to \pm\infty}f_{H,u,\Sigma_a}(\tau)
$
exist.
Furthermore, using the fact that $u\circ \varphi_a$ satisfies (\ref{Fleq})
we can show that $f_{H,u,\Sigma_a}$ is nondecreasing.
Since $\Sigma_a$ does not
contain non-transit singular points or marked points $\in \vec z$,
the stability implies that $f_{H,u,\Sigma_a}$ is {\it strictly} increasing.
In fact, the first derivative
$df_{H,u,\Sigma_a}(\tau)/d\tau$ is strictly positive
unless $(\gamma_{a,-},w_{a,-}) = (\gamma_{a,+},w_{a,+})$.
Thus we have proved:
\begin{lem}\label{lemma46}
There exists a unique $\tau_a \in \R$ such that
\begin{equation}
f_{H,u,\Sigma_a}(\tau_a) = \frac{1}{2}
\left( \lim_{\tau \to -\infty}f_{H,u,\Sigma_a}(\tau) + \lim_{\tau \to +\infty}f_{H,u,\Sigma_a}(\tau)
\right).
\end{equation}
\end{lem}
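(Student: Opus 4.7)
The plan is to deduce Lemma \ref{lemma46} as an immediate consequence of three structural properties of $f_{H,u,\Sigma_a}$, two of which are spelled out just above the statement. Explicitly, I would record: (a) continuity of $f_{H,u,\Sigma_a}$ on $\R$, which is obvious from (\ref{eq:41}) and smoothness of the integrand; (b) existence and finiteness of the two limits $\ell_{\pm}:=\lim_{\tau\to\pm\infty}f_{H,u,\Sigma_a}(\tau)$, which is built into the convergence of the area integral (guaranteed by Proposition \ref{prof263}) together with the exponential convergence $u\circ\varphi_a(\tau,\cdot)\to\gamma_{a,\pm}$ at the ends; (c) strict monotonicity of $f_{H,u,\Sigma_a}$.

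Granting (a)--(c), the lemma follows at once: since $\ell_{-}<\ell_{+}$ by strict monotonicity, the midpoint $\tfrac12(\ell_{-}+\ell_{+})$ lies in the open interval $(\ell_{-},\ell_{+})$, so by continuity and the intermediate value theorem there exists at least one $\tau_a\in\R$ with $f_{H,u,\Sigma_a}(\tau_a)=\tfrac12(\ell_{-}+\ell_{+})$; strict monotonicity precludes a second such point. So the content reduces to justifying (c).

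For (c), the key computation is the energy identity. Differentiating (\ref{eq:41}) in $\tau_0$, the fundamental theorem of calculus gives
\begin{equation*}
\frac{d f_{H,u,\Sigma_a}}{d\tau_0}(\tau_0)
= \int_{S^1}\omega\bigl(\partial_\tau(u\circ\varphi_a),\partial_t(u\circ\varphi_a)\bigr)\,dt
+\int_{S^1} dH_t\bigl(\partial_\tau(u\circ\varphi_a)\bigr)\,dt,
\end{equation*}
both evaluated at $\tau=\tau_0$. Using Floer's equation (\ref{Fleq}) to rewrite $\partial_t(u\circ\varphi_a)-\frak X_{H_t}=J\partial_\tau(u\circ\varphi_a)$ and the defining relation $i_{\frak X_{H_t}}\omega=dH_t$, a direct cancellation turns the right-hand side into $\int_{S^1}\lVert\partial_\tau(u\circ\varphi_a)\rVert^2\,dt$, which is non-negative. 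Hence $f_{H,u,\Sigma_a}$ is non-decreasing.

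The step I expect to be the genuine obstacle is upgrading non-decreasingness to strict monotonicity, which is precisely where the hypothesis on $\Sigma_a$ (no non-transit singular point, no marked point of $\vec z$) enters. If $df_{H,u,\Sigma_a}/d\tau_0$ vanished at some $\tau_0$, then $\partial_\tau(u\circ\varphi_a)\equiv 0$ on $\{\tau_0\}\times S^1$, and I would argue that the real-analytic nature of Floer solutions (or, more concretely, the identity that $f$ is monotone and its vanishing derivative on a full circle together with the energy formula forces $\partial_\tau(u\circ\varphi_a)\equiv 0$ on the whole cylinder) forces $u\circ\varphi_a$ to be a stationary solution, hence $(\gamma_{a,-},w_{a,-})=(\gamma_{a,+},w_{a,+})$. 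Under the hypothesis on $\Sigma_a$, such a trivial mainstream component carries the full $\R$-translation symmetry of the parametrization $\varphi_a$ without anything to break it, contradicting the stability condition of Definition \ref{stable26} via an infinite ${\rm Aut}^+$. This completes (c), and thus the proof.
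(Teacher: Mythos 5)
Your proposal is correct and follows the same route the paper takes: the paper's own justification (in the paragraph immediately preceding the lemma) is precisely that the limits exist by Proposition \ref{prof263}, that the energy identity derived from Floer's equation \eqref{Fleq} makes $f_{H,u,\Sigma_a}$ nondecreasing, and that stability of the component (which has no non-transit singular or marked points) upgrades this to strict monotonicity, whence existence and uniqueness of $\tau_a$ follow by the intermediate value theorem. Your fleshing-out of the strictness step — vanishing of $\partial_\tau(u\circ\varphi_a)$ on a circle forcing the solution to be stationary and hence the component to have infinite ${\rm Aut}^+$ — is exactly the content the paper compresses into "the stability implies that $f_{H,u,\Sigma_a}$ is strictly increasing."
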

\begin{defn}\label{defncanmark}
We call the point 
$w_{a,{\rm can}} = \varphi_a(\tau_a,0)$ the {\it canonical marked point on $\Sigma_a$}.
(Note that by our assumption that $\Sigma_a$ has no marked or singular points
other than transit points or $z_{\pm}$, the point $w_{a,{\rm can}}$ is neither a marked
nor a singular point.)
\par
We denote by $\vec w_{\rm can}$ the totality of $w_{a,{\rm can}}$
for each mainstream component $\Sigma_a$
which does
not contain any non-transit singular point or a point of $\vec z$.
\end{defn}
The above discussion also proves the next lemma.
\begin{lem}\label{lemma2639}
If $((\Sigma,(z_-,z_+,\vec z)),u,\varphi)
\in {\mathcal M}_{\ell}(X,H;\alpha_{-},\alpha_{+})$
and $\vec w$ is its symmetric stabilization, then
$(\Sigma,\{z_-,z_+\} \cup \vec z \cup \vec w \cup \vec w_{\rm can}))$
is stable.
\end{lem}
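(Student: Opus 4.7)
The plan is to verify stability componentwise: for a genus zero prestable curve with marked and nodal points, stability amounts to showing that each irreducible component carries at least three special points (marked points together with singular points). So I would analyze each type of irreducible component of $\Sigma$ separately and check that this count is achieved once we have added $\vec w \cup \vec w_{\rm can}$ to the original marked points $\{z_-,z_+\} \cup \vec z$.

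First, consider a bubble component $\Sigma_b$. Condition (3) in Definition \ref{symstab} (read with the natural correction $\vec w \cap \Sigma_b$) says exactly that $\#(\vec w \cap \Sigma_b)$ plus the number of singular points on $\Sigma_b$ is already at least $3$, so bubble components are automatically stable after adjoining $\vec w$, and adjoining $\vec w_{\rm can}$ only helps. Next, consider a mainstream component $\Sigma_a$ which contains either some non-transit singular point or some point of $\vec z$. Such a $\Sigma_a$ automatically carries its two transit-type special points $z_{a,-}, z_{a,+}$ (each of which is either a transit singular point or one of $z_\pm$), and by hypothesis at least one further singular point or element of $\vec z$, so it already has three special points.

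The only remaining case is a mainstream component $\Sigma_a$ with no non-transit singular points and no points of $\vec z$. This is precisely the case in which Definition \ref{defncanmark} manufactures the canonical marked point $w_{a,\rm can} = \varphi_a(\tau_a,0)$, via the strictly increasing function $f_{H,u,\Sigma_a}$ of Lemma \ref{lemma46}. Since $\tau_a \in \R$ is finite, $w_{a,\rm can}$ is distinct from the transit points $z_{a,\pm}$ (which correspond to $\tau = \pm\infty$); since $\Sigma_a$ contains no element of $\vec z$ and no node other than $z_{a,\pm}$, and since by Definition \ref{symstab}(1) points of $\vec w$ lie on bubble components, $w_{a,\rm can}$ is also disjoint from $\vec z \cup \vec w$. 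Hence $\Sigma_a$ now carries the three distinct special points $z_{a,-}, z_{a,+}, w_{a,\rm can}$, and stability is attained.

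No step here is really hard; the closest thing to a subtle point is the well-definedness and genuine novelty of the canonical marked point, namely that $f_{H,u,\Sigma_a}$ is \emph{strictly} increasing, which rules out $(\gamma_{a,-}, w_{a,-}) = (\gamma_{a,+}, w_{a,+})$ on such a component. This is exactly the content of the discussion preceding Lemma \ref{lemma46}: a mainstream component with no extra marked or non-transit singular points on which $f_{H,u,\Sigma_a}$ were only weakly monotone would give an automorphism of $((\Sigma,(z_-,z_+,\vec z)),u,\varphi)$ violating stability (the $\R$-translation symmetry on that component would be unbroken), which contradicts the assumption $((\Sigma,(z_-,z_+,\vec z)),u,\varphi) \in {\mathcal M}_{\ell}(X,H;\alpha_{-},\alpha_{+})$. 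Thus $\tau_a$ is unambiguously defined, $w_{a,\rm can}$ is a legitimate new marked point, and the case analysis above exhausts all irreducible components of $\Sigma$, proving stability.
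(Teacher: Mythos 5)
Your proof is correct and follows essentially the same route as the paper, which simply invokes the preceding discussion of the canonical marked point: the only components at risk are mainstream components with no non-transit singular points and no points of $\vec z$, and there the strict monotonicity of $f_{H,u,\Sigma_a}$ (forced by stability of the map) supplies $w_{a,{\rm can}}$ as the needed third special point. Your explicit componentwise case check and the observation that weak monotonicity would reinstate the $\R$-translation symmetry are exactly the content the paper leaves implicit.
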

Now we define the notion of obstruction bundle 
data.\footnote{This notion was originally introduced in \cite[Definition 31.1]{foootech}.
The difference is that we include the Morse-Bott case in this article,
while in \cite[Definition 31.1]{foootech} we assumed \cite[Definition 29.4]{foootech}
which implies that all the periodic orbits are isolated.}
\begin{notation}\label{nota2642}
\begin{enumerate}
\item[$\bullet$]
We denote by $\mathcal M_{\ell}^{\rm cl}$ the moduli space
of stable curves of genus zero without boundary and
with $\ell$ marked points, and 
by $\overset{\circ}{\mathcal M^{\rm cl}_{\ell}}$ its subset consisting of
elements with only one irreducible component.
\item[$\bullet$]
We denote by $\overset{\circ}{\mathcal M}_{\ell}({\rm source})$ the set of points of
${\mathcal M}_{\ell}({\rm source})$ which have only one irreducible component,
(which is necessarily a mainstream component).
\item[$\bullet$]
We denote by $\pi : \mathcal C_{\ell}^{\rm univ} \to \mathcal M_{\ell}^{\rm cl}$ 
the universal family of stable curves of genus zero
with $\ell$ marked points. See \cite[Theorem 2.2]{fooo:const1} for example.
\end{enumerate}
\end{notation}
\begin{defn}\label{obbundeldata1}
{\it Obstruction bundle data $\frak E_{\bf p}$ centered at}
$$
{\bf p} = [(\Sigma,(z_-,z_+,\vec z)),u,\varphi] \in
{\mathcal M}_{\ell}(X,H;{\alpha_-},{\alpha_+})
$$
are the data 

\begin{equation}\label{eq:obbunddata}
\Big( \vec w, 
\{\mathcal V(\frak x_{\rm v} \cup \vec w_{\rm v}\cup \vec w_{{\rm can},{\rm v}})\}_{\rm v},
\{(\psi_{\rm v}, \phi_{\rm v})\}_{\rm v},
\{ K^{\rm obst}_{\rm v}\}_{\rm v},
\{ E_{{\bf p},{\rm v}}(\frak y_{\rm v})\}_{\rm v},
\{\mathcal D_i\}_{w_i \in \vec w}
\Big)
\end{equation}
satisfying the conditions described below.
We put 
$$
\frak x = (\Sigma,\{z_-,z_+\} \cup \vec z).
$$
Let ${\rm Irr}(\Sigma)$ be the set of the irreducible components of $\Sigma$ and,
for ${\rm v} \in {\rm Irr}(\Sigma)$, we denote by $\Sigma_{\rm v}$
the corresponding irreducible component.
\begin{enumerate}
\item
A symmetric stabilization $\vec w$ of $((\Sigma,z_-,z_+,\vec z),u,\varphi)$.
\item
A neighborhood $\mathcal V(\frak x_{\rm v} \cup \vec w_{\rm v}\cup \vec w_{{\rm can},{\rm v}})$ of 
$$
\frak x_{\rm v} \cup \vec w_{\rm v}\cup \vec w_{{\rm can},{\rm v}} = 
(\Sigma_{{\rm v}}, \{ z_{{\rm v},-}, z_{{\rm v},+} \}\cup\vec z_{\rm v} \cup \vec w_{\rm v}\cup \vec w_{{\rm can},{\rm v}})
$$
for each ${\rm v} \in {\rm Irr}(\Sigma)$:
Here 
\begin{enumerate}
\item[$\bullet$]
$\frak x_{\rm v} \cup \vec w_{\rm v}\cup \vec w_{{\rm can},{\rm v}}$ is an irreducible component of 
$\frak x$.
\item[$\bullet$] 
$\{ z_{{\rm v},-}, z_{{\rm v},+} \}
= \{{\text{\rm transit points on $\Sigma$}}\}\footnote{Containing $z_{\pm}$, see Definition \ref{defn41}.}  \cap \Sigma_{\rm v}$.
\item[$\bullet$]
$\vec{z}_{\rm v}=(\vec{z} \cup \{{\text{\rm non-transit singular points on $\Sigma$}}\})
\cap \Sigma_{\rm v}$.
\item[$\bullet$]
$\vec w_{\rm v}=\vec w \cap \Sigma_{\rm v}$ and $\vec w_{{\rm can},{\rm v}}
=\vec w_{\rm can} \cap \Sigma_{\rm v}$.
\end{enumerate}
Namely:
\begin{enumerate}
\item
In the case $\Sigma_{\rm v}$ is a mainstream component $\Sigma_a$, we include
the parametrization $\varphi_a$ to $\frak x_{\rm v}$ and
$\mathcal V(\frak x_{\rm v} \cup \vec w_{\rm v}\cup \vec w_{{\rm can},{\rm v}})$
is an open subset of
$\overset{\circ}{\mathcal M}_{\ell_{\rm v}+\ell_{\rm v}'+\ell_{\rm v}''}({\rm source})
$
where $\ell_{\rm v} = \# \vec z_{\rm v},
\ell_{\rm v}'= \# \vec w_{\rm v}$ and $\ell_{\rm v}'' = \# \vec w_{{\rm can},{\rm v}}$.
\item
In the case $\Sigma_{\rm v}$ is a bubble component, $\mathcal V(\frak x_{\rm v} \cup \vec w_{\rm v}\cup \vec w_{{\rm can},{\rm v}})$
is an open subset of
$\overset{\circ}{\mathcal M^{\rm cl}}
_{\ell_{\rm v}+\ell_{\rm v}'+\ell_{\rm v}''}$ where $\ell_{\rm v} = \# \vec z_{\rm v},
\ell_{\rm v}'= \# \vec w_{\rm v}$ and $\ell_{\rm v}'' = \# \vec w_{{\rm can},{\rm v}}=0$.
\end{enumerate}
\item
{\it Local trivialization data} $\{(\psi_{\rm v}, \phi_{\rm v})\}_{\rm v}$ at 
$\frak x \cup \vec w\cup \vec w_{{\rm can}}$ in the sense of 
\cite[Definition 3.8]{fooo:const1}:
Namely, denoting by $\mathcal V_{\rm v}$
the neighborhood $\mathcal V(\frak x_{\rm v} \cup \vec w_{\rm v}\cup \vec w_{{\rm can},{\rm v}})$ taken in (2) above,
$\psi_{{\rm v},z} : \mathcal V_{\rm v} \times {\rm Int}\,{D^2} \to 
\mathcal C_{\ell}^{\rm univ}$ is an {\it analytic family of coordinates} at each 
non-transit singular point $z$ 
of $\Sigma$ which is contained in $\Sigma_{\rm v}$ 
(see \cite[Definition 3.1]{fooo:const1}), 
and 
$\phi_{\rm v} : \mathcal V_{\rm v} \times (\frak x_{\rm v} \cup \vec w_{\rm v}\cup \vec w_{{\rm can},{\rm v}}) \to \pi^{-1}(\mathcal V_{\rm v})$ is a
$C^{\infty}$ trivialization over $\mathcal V_{\rm v}$ 
of the universal family 
$\pi : \mathcal C_{\ell}^{\rm univ} \to \mathcal M_{\ell}^{\rm cl}$ of marked stable curves 
(see \cite[Definition 3.6]{fooo:const1}), which is 
compatible with the analytic family of coordinates in the sense of \cite[Definition 3.7 (1)]{fooo:const1}.
We also require the following additional conditions on a part of the local trivialization data.
\begin{enumerate}
\item
Let $z_{a}$ be a transit point contained in $\Sigma_a$ and $\Sigma_{a+1}$.
Then the coordinate near $z_{a}$ given by the local trivialization data 
is
the parametrization $\varphi_a$ or $\varphi_{a+1}$
up to the $\R$ action. Namely it is  $(\tau,t) \mapsto \varphi_{a}(\tau+\tau_0,t)$
(resp. $\varphi_{a+1}(\tau+\tau_0,t))$ for some $\tau_0 \in \R$. 
\item
Let $z$ be a non-transit singular point contained in $\Sigma_{\rm v}$.
Since $\Sigma_{\rm v}$ is a sphere, there exists a biholomorphic map
$$
\phi : \Sigma_{\rm v} \cong \C \cup \{\infty\}
$$
such that $\phi(z) = 0$. 
Then the coordinate around $z$ given by the local trivialization data is
$$
(\tau,t) \mapsto \phi^{-1}(e^{- 2\pi(\tau+\sqrt{-1}t)}),
$$
for some choice of $\phi$. 
\end{enumerate}
\item
A compact subset $K^{\rm obst}_{\rm v}$ of $\Sigma_{\rm v}$
such that 
$\mathcal V(\frak x_{\rm v}\cup \vec w_{\rm v}\cup \vec w_{{\rm can},{\rm v}})
\times K^{\rm obst}_{\rm v}$ is contained in 
a compact subset of $\pi^{-1}(\mathcal V_{\rm v})$ under the $C^{\infty}$ trivialization $\phi_{\rm v}$:
We assume that 
$\cup_{\rm v \in {\rm Irr}(\Sigma)} K^{\rm obst}_{\rm v}$ is invariant under the 
${\rm Aut}^+((\Sigma,(z_-,z_+,\vec z)),u,\varphi)$ action.
We call $K^{\rm obst}_{\rm v}$ the {\it support of the obstruction bundle}.
\item
A $\frak y_{\rm v} \in \mathcal V(\frak x_{\rm v} \cup \vec w_{\rm v}\cup \vec w_{{\rm can},{\rm v}})$-parametrized smooth family
of finite dimensional complex linear subspaces 
$E_{{\bf p},{\rm v}}(\frak y_{\rm v}) \subset 
C^{\infty}_0(\text{\rm Int}\,K^{\rm obst}_{\rm v}; u^*TX \otimes 
\Lambda^{0,1}\Sigma_{\frak y_{\rm v}})
$:
Here $C^{\infty}_0$ denotes the set of smooth sections with compact support in
$\text{\rm Int}\,K^{\rm obst}_{\rm v}$.
We also regard $K^{\rm obst}_{\rm v}$ as a subset of $\Sigma_{\frak y_{\rm v}}$ 
by using the $C^{\infty}$ trivialization of the analytic family of coordinates given as a part of the local trivialization data. 
(Here $\Sigma_{\frak y_{\rm v}}$ is the source curve of $\frak y_{\rm v}$.)
\par
We assume that the direct sum $\bigoplus_{{\rm v} \in {\rm Irr}(\Sigma)} E_{{\bf p},{\rm v}}(\frak y_{\rm v})$ is invariant under the
${\rm Aut}^+((\Sigma,(z_-,z_+,\vec z)),u,\varphi)$ action
in the sense of \cite[Definition 5.5]{fooo:const1}.
\par
When 
$\frak y_{\rm v}=\frak x_{\rm v} \cup \vec w_{\rm v}\cup \vec w_{{\rm can},{\rm v}}$
which is the center of the neighborhood $\mathcal V(\frak x_{\rm v} \cup \vec w_{\rm v}\cup \vec w_{{\rm can},{\rm v}})$, we denote $E_{{\bf p},{\rm v}}(\frak y_{\rm v})$ by $E_{{\bf p},{\rm v}}$.
\item
For each ${\bf p}$ we consider a linear differential operator 
\begin{equation}\label{eq:whole}
D_{\bf p} \overline{\partial}_{J,H}^{\rm whole} ~:~
\bigoplus_{\rm v\in {\rm Irr}(\Sigma)} L^2_{m+1,\delta}(\Sigma_{{\rm v}};
u^*TX) 
\to 
\bigoplus_{\rm v\in {\rm Irr}(\Sigma)} L^2_{m,\delta}(\Sigma_{{\rm v}}; u^*TX \otimes \Lambda^{0,1}
\Sigma_{{\rm v}})
\end{equation}
defined as follows. Here we use the above weighted Sobolev spaces in the same way as in \cite[Definition 3.4]{foooanalysis} for the case $\partial \Sigma_i =\emptyset$ there.
\begin{enumerate}
\item
On $ L^2_{m+1,\delta}(\Sigma_{{\rm v}};
u^*TX)$ for 
$\text{\rm v} \in {\rm Irr}(\Sigma)$ 
being a mainstream component
and ${\rm v} \in \mathcal V(\frak x_{\rm v}\cup \vec w_{\rm v}
\cup \vec w_{{\rm can},{\rm v}})$,
it is the linearized operator of Floer's equation 
(see \eqref{form2688} for this operator)
\begin{equation}\label{ducomponentssphererevrev}
\aligned
D^{{\rm Floer}}_{u}:=D_{u} \overline\partial -  \nabla_{\cdot}(J\frak X_H)\otimes 
(\varphi_{\rm v}^{\ast})^{-1}(d\tau -\sqrt{-1}dt) ~:~
L^2_{m+1,\delta}(\Sigma_{{\rm v}};u^*TX & ) \\
\to
L^2_{m,\delta}(\Sigma_{{\rm v}}; u^*TX \otimes \Lambda^{0,1}
\Sigma_{{\rm v}} & ). 
\endaligned
\end{equation}
\item
On $ L^2_{m+1,\delta}(\Sigma_{{\rm v}};
u^*TX)$ for 
$\text{\rm v} \in {\rm Irr}(\Sigma)$ 
being a bubble component and ${\rm v} \in \mathcal V(\frak x_{\rm v}\cup \vec w_{\rm v}
\cup \vec w_{{\rm can},{\rm v}})$, it is the linearization 
\begin{equation}\label{linearizationJholo}
D_{u} \overline\partial ~:~L^2_{m+1,\delta}(\Sigma_{{\rm v}};
u^*TX) \to
L^2_{m,\delta}(\Sigma_{{\rm v}}; u^*TX \otimes \Lambda^{0,1}
\Sigma_{{\rm v}})
\end{equation}
of the nonlinear Cauchy-Riemann operator.
\end{enumerate}
We consider the following subspace 
$$
L^2_{m+1,\delta}({\bf p}) := 
\{ s=(s_{\rm v}) \in  \bigoplus_{\rm v} L^2_{m+1,\delta}(\Sigma_{{\rm v}};
u^*TX) ~\vert~ {\text{ $s$ satisfies the condition $(\heartsuit)$}}
\}
$$
of the domain of the operator in \eqref{eq:whole},
where $(\heartsuit)$ is the coincidence condition at each singular point 
$z\in \Sigma_{{\rm v}^{+}}
\cap \Sigma_{{\rm v}^{-}}$ on $\Sigma$ such that 
$$
({\rm ev}_{+}^{{\rm v}^+})_{\ast}(s_{{\rm v}^{+}})=
({\rm ev}_{-}^{{\rm v}^-})_{\ast}(s_{{\rm v}^{-}})
$$ 
for a transit point $z=z_{{\rm v}^{+},+}=z_{{\rm v}^{-},-}$ and 
$$
({\rm ev}_{z}^{{\rm v}^+})_{\ast}(s_{\rm v^{+}})=({\rm ev}_{z}^{{\rm v}^-})_{\ast}(s_{{\rm v}^{-}})
$$ 
for a non-transit point $z$. 
Here at a transit point the maps 
$$
({\rm ev}_{\pm}^{{\rm v}^{\pm}})_{\ast} ~:~ L^2_{m+1,\delta}(\Sigma_{{\rm v}};u^*TX)
\to 
T_{\gamma_{{\rm v}^{\pm}}^{\pm}} {\rm Per} (H)
$$
are differentials of the evaluation maps as in \eqref{evatinfqbbb} and 
$$
\gamma_{{\rm v}^{\pm}}^{\pm} = \lim_{\tau \to \pm \infty}
u(\varphi_{{\rm v}^{\pm}}(\tau,t)).
$$
Restricting the domain to the subspace 
$L^2_{m+1,\delta}({\bf p})$, we have a Fredholm operator
\begin{equation}\label{eq:linearized}
D_{\bf p} \overline{\partial}_{J,H} ~:~
L^2_{m+1,\delta}({\bf p}) \to 
\bigoplus_{\rm v} L^2_{m,\delta}(\Sigma_{{\rm v}}; u^*TX \otimes \Lambda^{0,1}
\Sigma_{{\rm v}})
\end{equation}
and call it the {\it linearization operator at ${\bf p}$}.
Then in this item we require that the linearization operator 
$D_{\bf p} \overline{\partial}_{J,H}$  
is surjective $\mod \oplus_{\rm v} E_{{\bf p},{\rm v}}$, and 
${\rm Aut}^+((\Sigma,(z_-,z_+,\vec z)),u,\varphi)$ acts on 
$(D_{\bf p} \overline{\partial}_{J,H})^{-1}(\oplus_{\rm v}E_{{\bf p},{\rm v}})$ effectively.
Here $E_{{\bf p},{\rm v}}$ is introduced at the end of (5) above.
\item
For each $w_i \in \vec w \in \Sigma$ we take a codimension $2$ submanifold $\mathcal D_{i}$ of $X$
such that $u(w_i) \in \mathcal D_i$ and
$$
u_*T_{w_i}\Sigma + T_{u(w_i)}\mathcal D_i = T_{w_i}X.
$$
Moreover the set $\{\mathcal D_i\}$ is invariant under the ${\rm Aut}^+((\Sigma,(z_-,z_+,\vec z)),u,\varphi)$ action in the following sense.
Let $v \in {\rm Aut}^+((\Sigma,(z_-,z_+,\vec z)),u,\varphi)$
and $v(w_i) =w_{\sigma(i)}$ then
$$
\mathcal D_i = \mathcal D_{\sigma(i)}.
$$
(Note $u(w_i) = u(w_{\sigma(i)})$ since $u\circ v = u$.)
We note that we do {\it not} take such submanifolds for the canonical marked points $\in \vec w_{{\rm can}}$.
(In fact, since $u$ is not necessarily an immersion at the canonical marked points, we can not choose such submanifolds.)
\end{enumerate}
\end{defn}
\begin{rem}\label{rem:412}
\begin{enumerate}
\item
The condition (6) and the coincidence condition $(\heartsuit)$ therein imply that 
the restrictions of differential of the evaluation map
$\oplus_{z,\rm v} ({\rm ev}^{\rm v}_z)_{\ast}$, where $z$ is any singular point,
to $(D^{{\rm Floer}}_{u})^{-1}(\oplus_{\rm v}E_{{\bf p},{\rm v}})$
and $(D\overline{\partial}_{u})^{-1}(\oplus_{\rm v}E_{{\bf p},{\rm v}})$
are surjective. Here 
$D^{{\rm Floer}}_{u}$ and $D\overline{\partial}_{u}$ are the linearized operators 
in \eqref{ducomponentssphererevrev} and \eqref{linearizationJholo} respectively.
\item
In Item (5), we consider the family of finite dimensional complex subsapaces 
$E_{\mathbf p, {\rm v}}(\frak y_{\rm v})$ over 
$\frak y_{\rm v} \in \mathcal V(\frak x_{\rm v} \cup \vec w_{\rm v}\cup \vec w_{{\rm can},{\rm v}})$, while we impose the condition in (6) only at
$\frak x_{\rm v} \cup \vec w_{\rm v}\cup \vec w_{{\rm can},{\rm v}}$.
However, 
if we start with the finite dimensional complex subspace 
$E_{\mathbf p, {\rm v}}$ 
in (6),  
we can obtain 
a smooth family
of finite dimensional complex subspaces $E_{{\bf p},{\rm v}}(\frak y_{\rm v})$
of $C^{\infty}_0(\text{\rm Int}\,K^{\rm obst}_{\rm v}; u^*TX \otimes \Lambda^{0,1}\Sigma_{\mathfrak y_{\rm v}}) 
$
at $\frak y_{\rm v}$ in the neighborhood 
$\mathcal V(\frak x_{\rm v} \cup \vec w_{\rm v}\cup \vec w_{{\rm can},{\rm v}})$
satisfying the property in (6) as well 
by using the composition of the inclusion map $\Lambda^{0,1}_x (\Sigma_{\rm v}) \to \Lambda^1_x(\Sigma_{\rm v}) \otimes {\mathbb C}$ 
and the projection $\Lambda^1_x (\Sigma_{\rm v}) \to \Lambda^{0,1}_x (\Sigma_{\mathfrak y_{\rm v}})$ for $x \in K_{\rm v}^{\rm obst}$.   
\end{enumerate}
\end{rem}

\par
Note in \cite[page 204 line 6 to 4 from the bottom]{foootech} we wrote
\par\medskip
$<<$
We require that the data $K^{\rm obst}_{\rm v}$, $E_{\frak p,{\rm v}}(\frak y,u)$ depend only on
the mainstream component
$\frak p_i = [(\Sigma_i,z_{i-1},z_i),u,\varphi]$
(where $z_i$ is the $i$-th transit point) that contains the $\rm v$-th irreducible component.
We call this condition {\it mainstream-component-wise}.
$>>$
\par\medskip
This point is very much related to the main theme of the study 
in Section \ref{subsec;KuramodFloercor}, 
which is a construction of Kuranishi structure 
compatible with the fiber product description of
the boundary and corners.
As we mentioned in Remark \ref{rem628},
the Kuranishi structure we construct in this section is
{\it not} compatible with fiber product description of
the boundary and corners. 
The same proof given in \cite[Theorem 11.1]{fooo:const1}  (see also \cite[Lemma 17.11]{foootech}) 
yields the following.
\begin{lem}\label{lem2635}
For each 
${\bf p} \in {\mathcal M}_{\ell}(X,H;{\alpha_-},{\alpha_+})$
there exist obstruction bundle data 
$\frak E_{\bf p}$ centered at ${\bf p}$
in the sense of Definition \ref{obbundeldata1}.
\end{lem}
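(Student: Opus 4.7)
The plan is to construct each piece of the obstruction bundle data $\frak E_{\bf p}$ in the order they appear in Definition \ref{obbundeldata1}, closely following the strategy of \cite[Theorem 11.1]{fooo:const1} and \cite[Lemma 17.11]{foootech}, but with the modifications forced by the Morse--Bott assumption and the presence of mainstream components whose linearization is the Floer operator rather than $D_u\bar\partial$.

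First I would invoke the existence of a symmetric stabilization $\vec w$ (already proved right after Definition \ref{symstab}). Combined with the canonical marked points $\vec w_{\rm can}$ supplied by Definition \ref{defncanmark} on those mainstream components that carry no other stabilizing data, Lemma \ref{lemma2639} guarantees that $(\Sigma,\{z_-,z_+\}\cup\vec z\cup\vec w\cup\vec w_{\rm can})$ is a stable marked curve. Standard deformation theory for stable curves of genus zero then gives, for each irreducible component $\Sigma_{\rm v}$, a neighborhood $\mathcal V(\frak x_{\rm v}\cup\vec w_{\rm v}\cup\vec w_{{\rm can},{\rm v}})$ and local trivialization data $(\psi_{\rm v},\phi_{\rm v})$ as in \cite[Definitions 3.6 and 3.8]{fooo:const1}. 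The extra conditions (a) and (b) of Item (3) in Definition \ref{obbundeldata1} simply prescribe the shape of the coordinate near transit and non-transit singular points; these can be arranged by composing with the given parametrization $\varphi_a$ or an appropriate M\"obius chart on the bubble.

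Next I would pick the compact supports $K^{\rm obst}_{\rm v}\subset\Sigma_{\rm v}$: for a bubble component take a closed disk avoiding all marked and singular points, and for a mainstream component a closed rectangle $[-T,T]\times S^1$ under $\varphi_a$ for large $T$. To secure the $\mathrm{Aut}^+$-equivariance requirement, first pick any such family and then replace it by the union of its $\mathrm{Aut}^+((\Sigma,(z_-,z_+,\vec z)),u,\varphi)$-orbit, which is still compact since the automorphism group is finite. The main step is the construction of the obstruction spaces $E_{{\bf p},{\rm v}}$. On each $\Sigma_{\rm v}$ the relevant linearization in \eqref{ducomponentssphererevrev} or \eqref{linearizationJholo} is Fredholm on the weighted Sobolev spaces; gluing along singular points via the matching condition $(\heartsuit)$ gives the Fredholm operator \eqref{eq:linearized}. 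A unique continuation argument shows that smooth sections with compact support in $\mathrm{Int}\,K^{\rm obst}_{\rm v}$ are dense in the cokernel of the local Fredholm operator on $\Sigma_{\rm v}$; choose finitely many such sections whose images together with the image of $D_{\bf p}\bar\partial_{J,H}$ span the full cokernel of \eqref{eq:linearized}, then average their complex span over $\mathrm{Aut}^+$ to make the whole $\bigoplus_{\rm v}E_{{\bf p},{\rm v}}$ invariant. The family version $E_{{\bf p},{\rm v}}(\frak y_{\rm v})$ over the neighborhood is then obtained as in Remark \ref{rem:412}(2) by using parallel transport of $\Lambda^{0,1}$. Finally, the codimension $2$ submanifolds $\mathcal D_i$ are chosen as small transverse slices to $u$ at the immersion points $u(w_i)$; using Definition \ref{symstab}(4), pick $\mathcal D_i$ for one representative $w_i$ in each $\mathrm{Aut}^+$-orbit and transport by the automorphisms to the others, which is well-defined because $u\circ v=u$.

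The hard part will be arranging simultaneously the surjectivity $\bmod\bigoplus_{\rm v}E_{{\bf p},{\rm v}}$ of $D_{\bf p}\bar\partial_{J,H}$ and the effectiveness of the $\mathrm{Aut}^+$-action on $(D_{\bf p}\bar\partial_{J,H})^{-1}(\bigoplus_{\rm v}E_{{\bf p},{\rm v}})$. The surjectivity is delicate because the matching condition $(\heartsuit)$ at transit points involves the tangent space $T_{\gamma^{\pm}}{\rm Per}(H)$ of the positive-dimensional critical submanifold, so one must also add enough obstructions to cover the cokernel of the evaluation maps $(\mathrm{ev}^{\rm v}_{\pm})_*$ appearing in Remark \ref{rem:412}(1); enlarging $E_{{\bf p},{\rm v}}$ by averaging can in principle spoil effectiveness, but by taking the original (non-averaged) sections generic one achieves both properties, as in \cite[Lemma 17.11]{foootech}. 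The rest of the verification of Items (1)--(7) of Definition \ref{obbundeldata1} is then routine.
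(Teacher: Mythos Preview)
Your proposal is correct and follows essentially the same approach as the paper, which simply defers to \cite[Theorem 11.1]{fooo:const1} and \cite[Lemma 17.11]{foootech} without giving an independent argument; your outline is a faithful unpacking of that referenced construction, with the appropriate Morse--Bott adaptations (the matching condition $(\heartsuit)$ at transit points and the use of $\vec w_{\rm can}$) correctly identified.
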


\subsection{Smoothing singularities and $\epsilon$-closeness}
We put 
$$
\frak x = [\Sigma,(z_-,z_+,\vec z),\varphi] \in {\mathcal M}_{\ell}(\text{source}).
$$
Let $\frak x_{\rm v}$ be an irreducible component $\Sigma_{\rm v}$ of $\Sigma$ together with
marked and singular points on it.
It is an element of an appropriate  moduli space of marked curves of genus zero if $\Sigma_{\rm v}$
is a bubble component. It is in  ${\mathcal M}_{m}(\text{source})$ if it is a mainstream
component. (More precisely, they may not be stable.
They become stable after we add $\vec w_{\rm v}\cup \vec w_{{\rm can},{\rm v}}$
that are parts of $\vec w\cup \vec w_{{\rm can}}$ on this irreducible component.)
\par
We recall from 
\cite[Lemma 3.9]{fooo:const1}\footnote{\cite[Lemma 3.9]{fooo:const1} treats the case of bordered stable curves with interior and boundary marked points.
In our case, smoothing at a non-transit point corresponds to that at an interior marked point which has a two dimensional parameter space.
On the other hand, at a transit point the parameter space of smoothing is only one dimensional, because the equation on the main stream component is not $S^1$ invariant of the rotational action. In this way smoothing at a transit point can be treated similarly to the case of smoothing at a boundary marked point of a bordered stable curve described in \cite{fooo:const1}.}
the way how to 
smooth the singularity of the curve $\Sigma$ and fix the local trivialization of the
universal family (outside the node).
Namely it determines a map:
\begin{equation}\label{form418}
\aligned
{\Phi}_{\bf p} : \prod_{\rm v} \mathcal V(\frak x_{\rm v} \cup \vec w_{\rm v}\cup \vec w_{{\rm can},{\rm v}}) \times {D}(k;\vec T_0)
\times &\prod_{j=1}^m\left(((T_{0,j},\infty] \times S^1)/\sim\right)
\\
&\to {\mathcal M}_{\ell+\ell'+\ell''}(\text{source})
\endaligned\end{equation}
that is a homeomorphism onto an open neighborhood of
$[\frak x \cup \vec w\cup \vec w_{{\rm can}}]$ in ${\mathcal M}_{\ell+\ell'+\ell''}(\text{source})$.
(See Definition \ref{not2620} for this notation.)
Here $\ell' = \# \vec w$ and $\ell'' = \# \vec w_{{\rm can}}$.
The map ${\Phi}_{\bf p}$ is defined in \cite[(31.4)]{foootech}.
We recall its definition 
together with the other notations appearing in (\ref{form418})
from \cite[Section 30]{foootech} below: 
\par\noindent
\begin{enumerate}
\item[$\bullet$] $k$ is the number of transit points except $\{ z_-, z_+ \}$ of $\Sigma$. 
\item[$\bullet$]
A manifold with corner $\widetilde D(k;\vec T_0)$ is defined as follows.
For any $\vec T_0 =(T_{0,(1)},\dots ,T_{0,(k)}) \in \R_{>1}^{k}$ we put
\begin{equation}\label{216}
\widetilde {\overset{\circ}D}(k;\vec T_0)\\
=
\{(T_1,\dots,T_{k+1}) \in \R^{k+1} \mid T_{a+1} - T_a > T_{0,(a)}\}
\end{equation}
and partially compactify it to $\widetilde {D}(k;\vec T_0)$ by admitting $T_{a+1} - T_a = \infty$
as follows.
We put $s'_a = 1/\log(T_{a+1}-T_a)$. Then
$T_1$ and $s'_1,\dots,s'_{k-1}$ define another parameters.
So (\ref{216}) is identified with $\R \times \prod_{i=1}^k(0,1/\log 
T_{0,(a)})$. 
We partially compactify it to $\R \times \prod_{a=1}^k[0,1/\log 
T_{0,(a)})$.
By taking the quotients of
$\widetilde {\overset{\circ}D}(k;\vec T_0)$ and
$\widetilde {D}(k;\vec T_0)$ by
the $\R$ action 
$T(T_1,\dots,T_{k+1}) = (T_1+T,\dots,T_{k+1}+T)$,
we obtain ${\overset{\circ}D}(k;\vec T_0)$ and $D(k;\vec T_0)$
respectively.
\end{enumerate}
\begin{rem}\label{rem2247}
We take the logarithm of $T_{a+1}-T_a$ to define our coordinate $s'_a$.  By doing so we can stay in the category of
admissible orbifolds or admissible Kuranishi structures 
in the sense of \cite[Chapter 25]{fooonewbook}.
We also take $e^{2\pi t\sqrt{-1}}/\log T$ for the coordinate of the $((T_0,\infty] \times S^1)/\sim$ factor.
This is a slightly different choice from \cite[Appendix A1.4]{fooobook2}.
\end{rem}
\begin{enumerate}
\item[$\bullet$]
The space ${D}(k;\vec T_0)$ is used to parametrize
the ways of smoothing the transit point singularities
as follows.
We consider the case when the parameter $\vec T $ is in
$\overset{\circ}D(k;\vec T_0)$.
Taking a section of the projection 
$\widehat{\mathcal M}_{\ast}(\text{source}) \to {\mathcal M}_{\ast}(\text{source})$,
we have a parametrization $\varphi_a : \R \times S^1 \to \Sigma_a$ for each 
mainstream component $\Sigma_a$ with $a=1,\dots ,k+1$ as in Definition \ref{defn210} (4).
Let us consider 
$$
[-5T_{0,(a-1)},5T_{0,(a)}]_a \times S^1_a
$$
where $T_{0,(0)}=T_{0,(k+1)}=+\infty$ as convention, 
and regard it as a subset of the domain of 
the parametrization $\varphi_a : \R \times S^1 \to \Sigma_a$.
We define
$$
\varphi_0 : \bigcup_a ([-5T_{0,(a-1)},5T_{0,(a)}]_a \times S^1_a)  \to \R \times S^1
$$
as follows. If $(\tau,t) \in [-5T_{0,(a-1)},5T_{0,(a)}]_a \times S^1_a$, then
$$
\varphi_0(\tau,t) = (\tau+10 T_a,t).
$$
We use $\varphi_0\circ\varphi_a^{-1}$ to identify (a part of) $\Sigma_a$ with a subset of
$\R \times S^1$.
Under this identification marked points on $\Sigma_a$ can be moved to $\R \times S^1$. 
Adding $z_{-}, z_{+}$, we have a marked Riemann surface. 
Taking the equivalence class by  
$\sim_2$ in Definition \ref{3equivrel}, we obtain an element of 
${\mathcal M}_{\ast}(\text{source})$ so the map $\Phi_{\bf p}$ in the case $m=0$.
See Figure \ref{Fig23.1}.
\begin{figure}[htbp]
\centering
\includegraphics[scale=0.5]{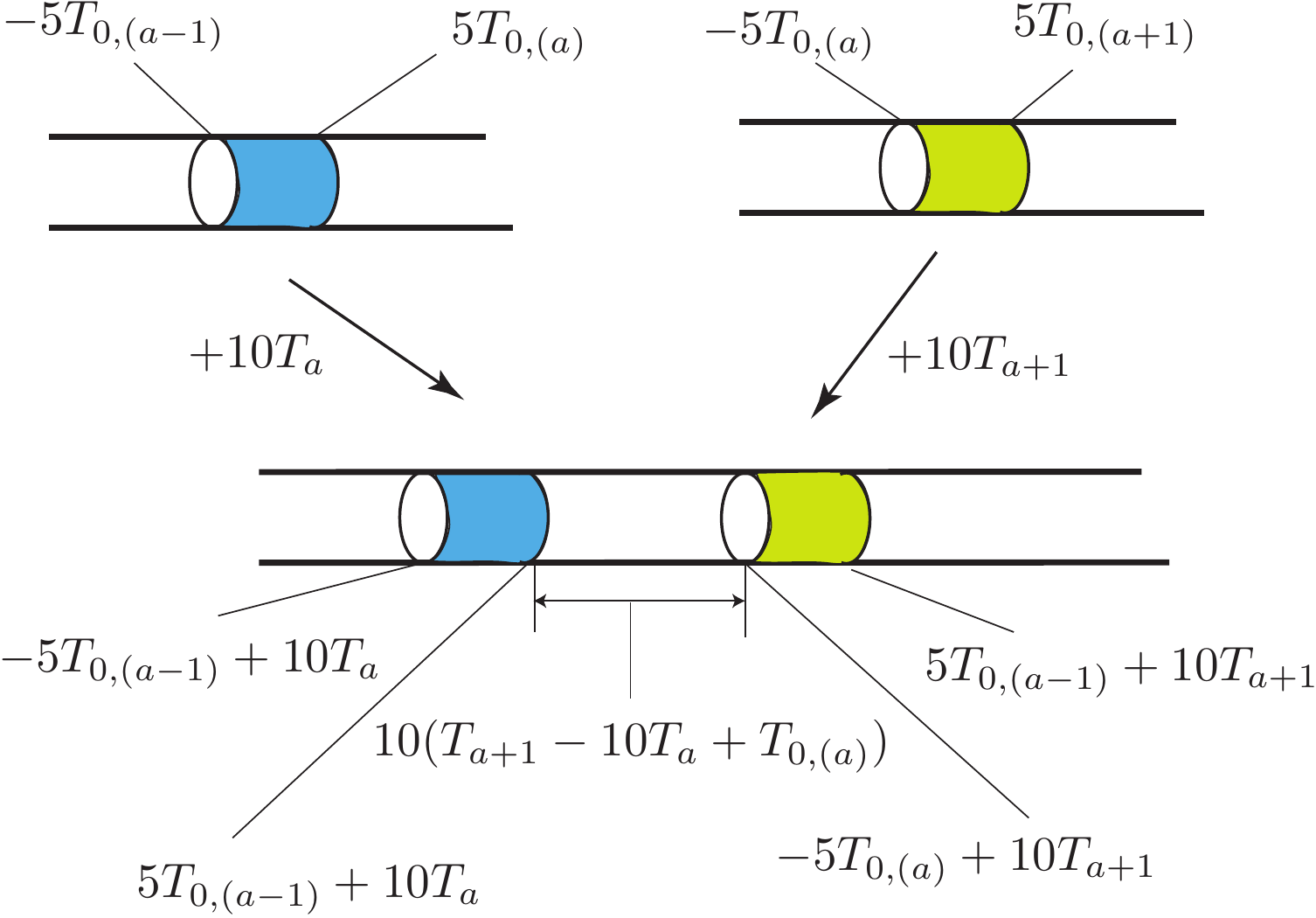}
\caption{The map $\varphi_0$}
\label{Fig23.1}
\end{figure}
\par
\item[$\bullet$]
$m$ in \eqref{form418} is the number of non-transit singular points.
The factor $((T_{0,j},\infty] \times S^1)/\sim$
$(j=1,\dots,m)$ is the space
to parametrize the way to smooth these singular points.
Here $\sim$ is the equivalence relation such that
$(T,t) \sim (T',t')$ if and only if $T=T' =\infty$ or
$(T,t) = (T',t')$.
The way to use this parameter to smooth
non-transit singular points is written in 
\cite[Lemma 3.9]{fooo:const1}
\end{enumerate}
We have thus defined all notations appearing in \eqref{form418}.
\begin{notation}\label{nota:415}
Suppose
\begin{equation}\label{eq:notation}
({\frak Y} \cup \vec w',\varphi') = {\Phi}_{\bf p}(\frak y,\vec T,\vec \theta) \in {\mathcal M}_{\ell+\ell'+\ell''}(\text{source}).
\end{equation}
Here
\begin{enumerate}
\item[$\bullet$] $\frak y = (\frak y_{\rm v})$ where $\rm v$ is 
the index in the set 
of irreducible components of
$\Sigma$ in ${\bf p}$ as in Definition \ref{obbundeldata1}.
\item[$\bullet$] $\vec w'$ is the set of the additional marked points corresponding to
$\vec w$ and $\vec w_{\rm can}$.
\item[$\bullet$] The notation $\frak Y$ includes the marked points corresponding to $\vec z$ and $z_{\pm}$.
\item[$\bullet$] The pair of parameters $(\vec T,\vec \theta) \in {D}(k;\vec T_0)
\times \prod_{j=1}^m\left(((T_{0,j},\infty] \times S^1)/\sim\right)$ and the map 
$\varphi'$ is a parametrization of the
mainstream of $\Sigma'$. Here $\Sigma'$ is the source curve of $\frak Y$.
\end{enumerate}
Of course, the left hand side depends on ${\bf p}$ and 
$\frak y,\vec T,\vec \theta$ in the right hand side also depend on ${\bf p}$ as well as the left hand side.
\end{notation}
\par
In the situation of Notation \ref{nota:415}, since $\Sigma'$ is obtained from the source curves 
$\Sigma_{\frak y_{\rm v}}$
by first removing neighborhoods of singular points and then gluing them,
there exists an embedding
\begin{equation}\label{form2631}
\frak v_{\frak Y,\frak y;{\rm v}} : K^{\rm obst}_{\rm v} \to  \Sigma'.
\end{equation}
Actually the embedding to $\Sigma'$ is defined in a larger region called
the {\it core} of the source curve $\Sigma_{\frak y}$.
(It is the complement of the {\it neck region}. 
See \cite[Definition 4.12]{fooo:const1} for the definition of neck region.)
\begin{defn}\label{orbitecloseness}
Let $(\frak Y\cup \vec w',\varphi') = {\Phi}_{\bf p}(\frak y,\vec T,\vec \theta) \in {\mathcal M}_{\ell+\ell'+\ell''}(\text{source})$ and
$u' : \Sigma' \setminus \{\text{transit points}\} \to X$. We assume
that $(\frak Y,u',\varphi')$ satisfies
Definition \ref{defn210} (1)(2)(3)(7) and (8).
We say that $(\frak Y,u',\varphi') \cup \vec w'$ is {\it $\epsilon$-close} to ${\bf p} \cup \vec w\cup \vec w_{{\rm can}}$  if the following holds.
\begin{enumerate}
\item
$\Vert u' \circ \frak v_{\frak Y,\frak y;{\rm v}} - u \Vert < \epsilon$ on the core of 
$\Sigma_{\frak y}$.
Here $\Vert \cdot\Vert$ is the $C^{10}$ norm.
\item
The map $u' \circ \varphi'$ satisfies the equation \eqref{Fleq} in the neck regions corresponding to transit points.  
For a bubble component $\Sigma'_{b'}$, there is a non-transit point on a mainstream component $\Sigma'_{a'}$ 
such that $\Sigma'_{b'}$ is joined to $\Sigma'_{a'}$ by a tree of bubble components.  
Then the map $u'\vert_{\Sigma'_{b'}}$ is $J$-holomorphic  on some neighborhoods of nodes.  
When a non-transit point on a mainstream component $\Sigma'_{a'}$ is smoothed, 
$u' \circ  \varphi'$  satisfies the equation \eqref{Fleq} on the corresponding neck region on the mainstream component.  
See Figure \ref{Figurep24}.
\item

Let $\hat u'$ be the redefined connecting orbit map of $u'$ 
(see Definition \ref{redefconnecting}).  
For a non-transit point $\varphi_{a'}(\tau_0,t_0)$ on a mainstream component $\Sigma'_{a'}$,  set 
$u^{\prime \#}(z) = ({\rm exp}^H_{t})^{-1}(u(z))$ for $z=\varphi_a(\tau, t)$, $t_0 -1/2 < t < t_0 + 1/2$. 
Then for each connected component $\mathfrak W$ of the complement of the core, we have either
\begin{equation}\label{form262020rever}
{\rm Diam} (\hat u'(\mathfrak W))  <  \epsilon, \enskip
 \text{ for } {\mathfrak W} \text{ around transit points}, 
 \end{equation}
or 
\begin{equation}\label{form262121revrev}
{\rm Diam} (u^{\prime\#}(\mathfrak W))  <  \epsilon, \enskip 
 \text{ for } {\mathfrak W} \text{ around non-transit points}.
\end{equation}
\item For each component $T_{0,(a)}$ of $\vec T_0$ we have $T_{0,(a)}> \epsilon^{-1}$ 
and $T_{0,j} > \epsilon^{-1}$ for any $j=1,\dots ,m$.
\end{enumerate}

\begin{figure}[htbp]
\centering
\includegraphics[scale=0.5]{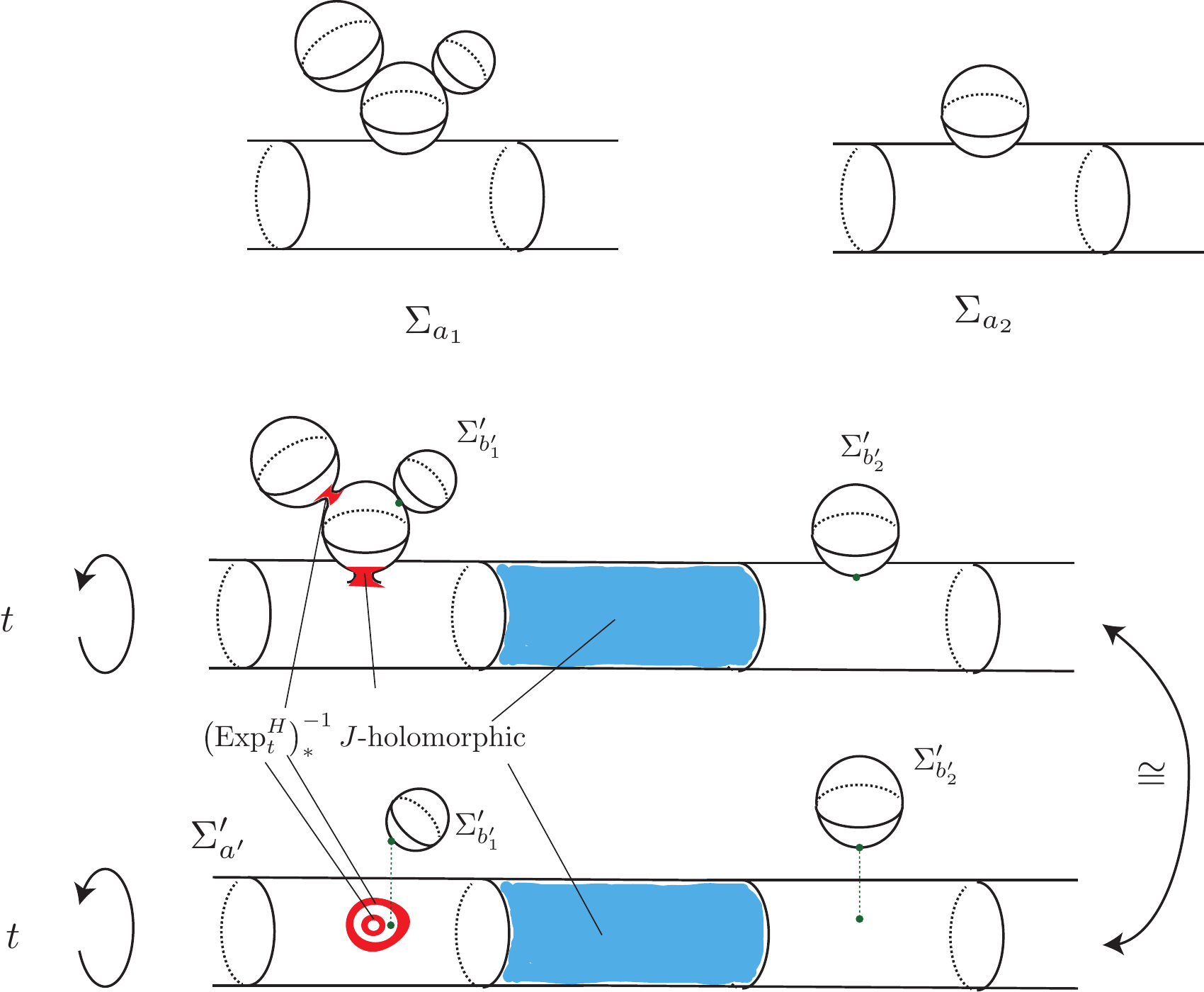}
\caption{Neck regions on mainstream components}
\label{Figurep24}
\end{figure}
\end{defn}
\begin{rem}
Definition \ref{orbitecloseness} is the same as the definition of
$\epsilon$-closeness appearing in \cite[page 215]{foootech}, 
\cite[Definition 4.12]{fooo:const1}.
Note that we do not assume Condition (5)' in \cite[page 215]{foootech},  
since it is a consequence of Definition \ref{orbitecloseness} (1) above, and so is unnecessary
to be assumed.
\end{rem}
\begin{shitu}\label{shitu2639}
Let ${\bf p} \in {\mathcal M}_{\ell+\ell'+\ell''}(X,H;{\alpha_-},{\alpha_+})$.
We fix obstruction bundle data $\frak C_{\bf p}$ centered at ${\bf p}$.
Let $({\frak Y} \cup \vec w',\varphi')  = {\Phi}_{\bf p}(\frak y,\vec T,\vec \theta) \in {\mathcal M}_{\ell+\ell'+\ell''}(\text{source})$
and
$u' : \Sigma' \to X$,
where $\Sigma'$ is the source curve of $\frak Y$ as in 
Notation \ref{nota:415}.
We assume that
$(\frak Y,u',\varphi') \cup \vec w'$ is $\epsilon$-close to  ${\bf p} \cup \vec w\cup \vec w_{{\rm can}}$. $\blacksquare$
\end{shitu}
\begin{defn}\label{constrainttt2}
Suppose we are in Situation \ref{shitu2639}.
We say that $(\frak Y,u',\varphi') \cup \vec w'$ satisfies the {\it transversal constraint} if the following holds.
\begin{enumerate}
\item
If $w'_i$ corresponds to $w_i \in \vec w$, then $u'(w'_i) \in \mathcal D_i$.
\item
Let $w'_j = \varphi'_{a'}(\tau'_j,t'_j) \in \vec w' \cap \Sigma'_{a'}$ be the marked point corresponding to the
canonical marked point $w_{a,{\rm can}} = \varphi_a(\tau_a,0) \in \vec w_{\rm can}$.
Then we require:
\begin{equation}\label{form2636}
f_{H,u',\Sigma'_{a'}}(\tau'_{j})
=
f_{H,u,\Sigma_a}(\tau_{a})
=
\frac{1}{2}
\left( \lim_{\tau \to -\infty}f_{H,u,\Sigma_a}(\tau) + \lim_{\tau \to +\infty}
f_{H,u,\Sigma_a}(\tau)
\right).
\end{equation}
Here $f_{H,u,\Sigma_a}$ 
is the function \eqref{eq:41}
for the mainstream component $\Sigma_a$ of $\Sigma$.
\item
In the situation of (2) we require also
$
t'_j = [0].
$
\end{enumerate}
\end{defn}
\begin{rem}
\begin{enumerate}
\item
The second equality of (\ref{form2636}) is the definition of $\tau_a$ (see Lemma \ref{lemma46}).
So the actual condition is the first equality.
\item
Note that $\Sigma'_{a'}$ may have a sphere bubble (or may contain one of the marked points
of $\frak Y$) even in the case when
$\Sigma_a$ has no sphere bubble (or does not contain one of the marked points
of $\vec z$).
In fact, $\Sigma_a$ may be glued with other mainstream component
that has a sphere bubble when we obtain $\Sigma'$ form $\Sigma$.
Therefore $\Sigma'_{a'}$ may not have a canonical marked point.
\item
Even in the case when $\Sigma'_{a'}$ has a canonical marked point,
it may be different from $w'_j$.
In fact $\Sigma'_{a'}$ may be obtained by gluing several
mainstream components which have no sphere bubbles or points of $\vec z$.
\end{enumerate}
\end{rem}
Suppose we are in Situation \ref{shitu2639}.
Let $z \in K^{\rm obst}_{\rm v}$. 
Recall from Definition \ref{obbundeldata1} (5) that 
$K^{\rm obst}_{\rm v}$ contains a support of elements of
$E_{{\bf p},{\rm v}}(\frak y)$. 
Let
$\frak v_{\frak Y,\frak y;{\rm v}} : K^{\rm obst}_{\rm v} \to \Sigma'$ be an embedding as in (\ref{form2631}).
By $\epsilon$-closeness we have
$$
d(u'(\frak v_{\frak Y,\frak y;{\rm v}}(z)),u(z)) < \epsilon.
$$
We may choose $\epsilon >0$ smaller than the injectivity radius
of $X$. Therefore there exists a unique minimal
geodesic $\ell_z$ in $X$ joining
$u(z)$ with
$u'(\frak v_{\frak Y,\frak y;{\rm v}}(z))$.
The complex linear part of the parallel transport along $\ell_z$
defines a complex linear map
\begin{equation}\label{form2633}
{\rm Pal}_z : T_{u(z)}X \to T_{u'(\frak v_{\frak Y,\frak y;{\rm v}}(z))}X.
\end{equation}
\begin{defn}\label{defn2642}
Suppose we are in Situation \ref{shitu2639}.
Using the map in \eqref{form2633}, we have 
a complex linear embedding
$$
I_{{\bf p},{\rm v};\Sigma',u',\varphi'} : E_{{\bf p},{\rm v}}(\frak y) \to
C^{\infty}(\Sigma';(u')^*TX \otimes \Lambda^{0,1}).
$$
\end{defn}
In Definition \ref{defn2646} we will define 
an obstruction space 
$E((\frak Y\cup \bigcup_{c\in \EuScript B}\vec w'_c,u',\varphi');{\bf q};\EuScript B)$
(see also Definition \ref{defn2646} for the notations used in this notation)
for any ${\bf q} \in {\mathcal M}_{\ell}(X,H;{\alpha_-},{\alpha_+})$
as a sum of images of the maps in Definition \ref{defn2642} for suitable choices of 
${\bf p}$'s.
To carry out this argument
we first observe the following.
\begin{lem}\label{lem2446}
Suppose we are in Situation \ref{shitu2639}.
Then for any ${\bf p} \in {\mathcal M}_{\ell}(X,H;{\alpha_-},{\alpha_+})$ 
there exist $\epsilon_{\bf p} >0$ and a closed neighborhood $W({\bf p})$ of
$\bf p$ in $ {\mathcal M}_{\ell}(X,H;{\alpha_-},{\alpha_+})$  
such that for any 
${\bf q} \in W({\bf p})$
there exists $\vec w^{\bf q}_{{\bf p}}$ {\rm uniquely} with the following
properties:
\begin{enumerate}
\item
${\bf q} \cup \vec w^{\bf q}_{{\bf p}}$ is $\epsilon_{\bf p}$-close to
${\bf p} \cup w_{{\bf p}} \cup \vec w_{{\rm can}}$.
\item
${\bf q} \cup \vec w^{\bf q}_{{\bf p}}$ satisfies the
transversal constraint. (Definition \ref{constrainttt2}.)
\item
The linearization operator 
$D_{\bf q} \overline{\partial}_{J,H}$ at $\bf q$ in \eqref{eq:linearized} 
is surjective $\mod \oplus_{\rm v}\operatorname{Im}~ I_{{\bf p},{\rm v};{\bf q}}$, where $I_{{\bf p},{\rm v};{\bf q}}$ is the map in Definition \ref{defn2642} for the case 
${\bf q}=(\Sigma',u',\varphi')$.
\end{enumerate}
\end{lem}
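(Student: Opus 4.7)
The plan is to establish existence and uniqueness of $\vec w^{\bf q}_{\bf p}$ in three stages: first for the additional marked points corresponding to $\vec w$, second for those corresponding to $\vec w_{\rm can}$, and finally to verify the Fredholm surjectivity modulo the obstruction spaces by an upper-semicontinuity argument.

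For the marked points in $\vec w$, I would apply the implicit function theorem. By Definition \ref{symstab} (6), $u$ is an immersion at each $w_i$, and by Definition \ref{obbundeldata1} (7), $u_*T_{w_i}\Sigma + T_{u(w_i)}\mathcal{D}_i = T_{u(w_i)}X$. Since ${\bf q}$ is sufficiently $C^{10}$-close to ${\bf p}$ along the core of $\Sigma_{\frak y}$, the composition of $u'$ with the projection to the normal bundle of $\mathcal{D}_i$ gives, near $\frak v_{\frak Y,\frak y;{\rm v}}(w_i)$, a map whose derivative is invertible; the implicit function theorem yields a unique $w'_i$ on $\Sigma'$ with $u'(w'_i) \in \mathcal{D}_i$ and continuous dependence on ${\bf q}$. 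For the points in $\vec w_{\rm can}$, the construction in Definition \ref{defncanmark} and Lemma \ref{lemma46} shows that $f_{H,u,\Sigma_a}$ is strictly increasing on each mainstream component without non-transit singular or marked points. Since $f_{H,u',\Sigma'_{a'}}$ depends continuously on the solution $u'$ and its mainstream parametrization $\varphi'_{a'}$, and inherits strict monotonicity for ${\bf q}$ close enough to ${\bf p}$, condition \eqref{form2636} determines a unique $\tau'_j$. The additional requirement $t'_j = [0]$ pins down $w'_j$ uniquely on $\Sigma'_{a'}$.

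For the surjectivity assertion (3), recall that by Definition \ref{obbundeldata1} (6), $D_{\bf p}\overline{\partial}_{J,H}$ is surjective modulo $\bigoplus_{\rm v} E_{{\bf p},{\rm v}}$. The embedding $I_{{\bf p},{\rm v};{\bf q}}$ in Definition \ref{defn2642} produces a finite-dimensional subspace $\operatorname{Im}\,I_{{\bf p},{\rm v};{\bf q}} \subset C^\infty(\Sigma';(u')^*TX\otimes \Lambda^{0,1})$ supported on $\frak v_{\frak Y,\frak y;{\rm v}}(K^{\rm obst}_{\rm v})$, which lies in the core away from the neck regions. The parallel transport ${\rm Pal}_z$ and the family $E_{{\bf p},{\rm v}}(\frak y)$ depend smoothly on ${\bf q}$ and $\frak y$. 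A standard pregluing argument, using the exponential decay estimates from \cite{foooanalysis} to control the contribution of the neck regions and the uniform Fredholm theory on $L^2_{m,\delta}$ spaces, then shows that the map
$$
(D_{\bf q}\overline{\partial}_{J,H},\,\text{inclusion}) : L^2_{m+1,\delta}({\bf q}) \oplus \bigoplus_{\rm v} E_{{\bf p},{\rm v}}(\frak y) \longrightarrow \bigoplus_{\rm v} L^2_{m,\delta}
$$
is surjective for ${\bf q}$ in a sufficiently small neighborhood $W({\bf p})$ of ${\bf p}$, by upper semi-continuity of the cokernel dimension for a continuous family of Fredholm operators.

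The main obstacle is the step (3), specifically ensuring that $\operatorname{Im}\,I_{{\bf p},{\rm v};{\bf q}}$ varies continuously (in an appropriate $C^0$ operator-norm sense) as ${\bf q}$ varies across gluing parameters, so that the semi-continuity principle applies uniformly over $W({\bf p})$. The obstruction support $K^{\rm obst}_{\rm v}$ being a compact subset of $\Sigma_{\rm v}$ disjoint from nodes, together with the admissibility of the smoothing coordinates from Remark \ref{rem2247}, makes this continuity a consequence of the standard pregluing framework of \cite[Chapter 8]{foooanalysis}; once this is in hand, choosing $\epsilon_{\bf p}$ small enough to simultaneously arrange the implicit function theorem estimates in stages one and two, the $\epsilon_{\bf p}$-closeness from Definition \ref{orbitecloseness}, and the Fredholm surjectivity yields the claim.
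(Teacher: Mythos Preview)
Your proposal is correct and follows essentially the same three-stage structure as the paper's own proof: implicit function theorem for the points of $\vec w$ (the paper cites \cite[Lemma 9.9]{fooo:const1}), local strict monotonicity of $f_{H,u,\Sigma_a}$ at $\tau_a$ for the canonical marked points, and openness of surjectivity for (3). The paper dispatches (3) in a single line (``surjectivity is an open condition''), whereas you spell out the pregluing/semicontinuity mechanism that makes this openness hold uniformly as the combinatorial type of ${\bf q}$ varies; this elaboration is accurate and is indeed what underlies the paper's terse statement.
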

\begin{proof}
If $w_{{\bf p},i} \in \vec w_{\bf p}$, the unique existence of
$w_{{\bf p},i}^{\bf q}$ satisfying Definition \ref{constrainttt2} (1)
can be proved in the same way as in 
\cite[Lemma 9.9]{fooo:const1}.
\par
In case $w_{{\bf p},i}^{\bf q}$ corresponds to a canonical marked point, 
the unique existence of
$w_{{\bf p},i}^{\bf q}$ satisfying Definition \ref{constrainttt2} (2)(3)
is a consequence of the following two facts:
\begin{enumerate}
\item[(i)]
$u_{\bf q}$ is $C^1$ close to $u_{\bf p}$.
\item[(ii)]
The first derivative of the function $f_{H,u,\Sigma_a}$ in \eqref{eq:41} is strictly positive at $\tau_i$.
Here $\varphi_a(\tau_i,0)$ is the canonical marked point on $\bf p$
which corresponds to $w_{{\bf p},i}^{\bf q}$.
\end{enumerate}
Furthermore by taking $W(\bf p)$ small enough, the property (3) is satisfied 
because the surjectivity is an open condition.
\end{proof}
Then we make the following choices.
\begin{choi}\label{choice2650}
We fix $\ell$, $\alpha_-$, $\alpha_+$.
\par\noindent
$\bullet$
We take a finite set
$${\EuScript A}_{\ell}(\alpha_-,\alpha_+)
= \{{\bf p}_c \mid c \in \EuScript C_{\ell}(\alpha_-,\alpha_+) \}
\subset {\mathcal M}_{\ell}(X,H;{\alpha_-},{\alpha_+}).
$$
Here $\EuScript C_{\ell}(\alpha_-,\alpha_+)$ is an index set which will be taken as in 
Remark \ref{rem:order}.
\par\noindent
$\bullet$ For each $c \in \EuScript C_{\ell}(\alpha_-,\alpha_+)$
we take obstruction bundle data $\frak E_{{\bf p}_{c}}$
centered at ${\bf p}_c$.
\par\noindent
$\bullet$
For each $c \in \EuScript C_{\ell}(\alpha_-,\alpha_+)$
we take a closed neighborhood $W({\bf p}_c)$ of ${\bf p}_c$
in ${\mathcal M}_{\ell}(X,H;{\alpha_-},{\alpha_+})$
with the following property. For any ${\bf q} \in W({\bf p}_c)$
there exists $\vec w^{\bf q}_{{\bf p}_c}$ such that
${\bf q} \cup \vec w^{\bf q}_{{\bf p}_c}$ is $\epsilon_c$-close
to ${\bf p}_c \cup w_{{\bf p}_c} \cup \vec w_{{\rm can}}$.
Here $\epsilon_c >0$ depends on $c$,
which will be determined later.
Moreover the linearization operator 
$D_{\bf q} \overline{\partial}_{J,H}$ in \eqref{eq:linearized} is surjective 
$\mod \oplus_{\rm v} {\rm Im} \,\, I_{{\bf p}_c,{\rm v};{\bf q}}$
where ${\rm Im} \,\, I_{{\bf p}_c,{\rm v};{\bf q}}$
is the map in Definition \ref{defn2642} for the case 
${\bf p}={\bf p}_c$, ${\bf q}=(\Sigma',u',\varphi')$.
\par\noindent
$\bullet$
We require
\begin{equation}\label{coversuru}
\bigcup_{c\in \EuScript C_{\ell}(\alpha_-,\alpha_+) } 
{\rm Int}\,\,W({\bf p}_c)
= {\mathcal M}_{\ell}(X,H;{\alpha_-},{\alpha_+}).
\end{equation}
\end{choi}
\begin{rem}\label{rem:order}
The logical order to make such choices is as follows.
First for each ${\bf p} \in {\mathcal M}_{\ell}(X,H;{\alpha_-},{\alpha_+})$
we take obstruction bundle data $\frak E_{{\bf p}}$ by Lemma \ref{lem2635}.
We take $\epsilon_{\bf p}>0$ and a closed neighborhood $W({\bf p})$ as in 
Lemma \ref{lem2446}.
Then we have 
$$
\bigcup_{{\bf p}} 
{\rm Int}\,\,W({\bf p})
= {\mathcal M}_{\ell}(X,H;{\alpha_-},{\alpha_+}).
$$
Finally, by compactness of the moduli space, we can take a finite set 
$\EuScript C_{\ell}(\alpha_-,\alpha_+)$
such that 
$c \in \EuScript C_{\ell}(\alpha_-,\alpha_+)$ satisfies the properties in Choice \ref{choice2650}.
\end{rem}
\begin{defn}\label{defn2646}
\begin{enumerate}
\item
For each ${\bf q} \in {\mathcal M}_{\ell}(X,H;{\alpha_-},{\alpha_+})$
we put
$$
\EuScript E({\bf q}) = \{ c \in \EuScript C_{\ell}(\alpha_-,\alpha_+) \mid {\bf q} \in W({\bf p}_c)\}.
$$
\item
Let $\EuScript B \subset \EuScript E({\bf q})$ be a nonempty subset. 
\item
We consider $(\frak Y\cup\bigcup_{c\in \frak B}\vec w'_c, u', \varphi')$
such that for each $c$,
$(\frak Y\cup \vec w'_c,u',\varphi')$ is $\epsilon$-close to ${\bf q} \cup \vec w_{c}^{\bf q}$.
If $\epsilon >0$ is small, then $(\frak Y\cup \vec w'_c,u',\varphi')$ is $\epsilon$-close to
${\bf p}_c \cup \vec w_{c}$
and we can define the map
$$
I_{{\bf p}_c,{\rm v};\Sigma',u',\varphi'} : E_{{\bf p}_c,{\rm v}}(\frak y_{{\bf p}_c}) \to
C^{\infty}(\Sigma';(u')^*TX \otimes \Lambda^{0,1})
$$
in Definition \ref{defn2642} for each irreducible component ${\rm v}$ of ${\bf p}_c$.
Here
$(\frak Y\cup \vec w'_c,\varphi')=  {\Phi}_{{\bf p}_{c}}(\frak y_{{\bf p}_c},\vec T_{{\bf p}_c},\vec \theta_{{\bf p}_c})$
and $\Sigma'$ is the source curve of $\frak Y$ as in Notation \ref{nota:415}.\footnote{As we noticed in Notation \ref{nota:415}, $\frak y_{{\bf p}_c}$ etc depend on the choice of ${\bf p}_{c}$.
Here we put the suffix $c \in \EuScript B$ in the notations in order to remember the dependence.} 
We now put
\begin{equation}\label{obspacedefHFHF}
E((\frak Y\cup \bigcup_{c\in \EuScript B}\vec w'_c,u',\varphi');{\bf q};\EuScript B)
=
\bigoplus_{c \in \EuScript B}\bigoplus_{\rm v} {\rm Im} \,\, I_{{\bf p}_c,{\rm v};\Sigma',u',\varphi'}.
\end{equation}
\end{enumerate}
\end{defn}
We can perturb $E_{{\bf p}_c,{\rm v}}(\frak y_{{\bf p}_c})$ slightly
so that the right hand side of (\ref{obspacedefHFHF}) is a direct sum.
(See \cite[Lemma 11.7]{fooo:const1}, \cite[Lemma 18.8]{foootech}.)
\subsection{Kuranishi chart}
In this subsection we construct a Kuranishi chart 
for any ${\bf q} \in {\mathcal M}_{\ell}(X,H;{\alpha_-},{\alpha_+})$.
We refer \cite[Definition 3.1]{fooonewbook} for the definition of Kuranishi chart.
\begin{defn}\label{stabilizationdefdata}
{\it Stabilization data centered at ${\bf q} \in {\mathcal M}_{\ell}(X,H;{\alpha_-},{\alpha_+})$} are 
$$
\Big( \vec w, 
\{\mathcal V(\frak x_{\rm v} \cup \vec w_{\rm v}\cup \vec w_{{\rm can},{\rm v}})\}_{\rm v},
\{(\psi_{\rm v}, \phi_{\rm v})\}_{\rm v},
\{\mathcal D_i\}_{w_i \in \vec w}
\Big)
$$
in Definition \ref{obbundeldata1} (1)(2)(3) and (7), 
which are sub-data of the obstruction bundle data at ${\bf q}$.
\end{defn}
\begin{shitu}\label{situ2648}
Suppose we are in the situation of Definition \ref{defn2646}.
We also take stabilization data at ${\bf q}$.
Let $(\frak Y\cup\bigcup_{c\in \EuScript E({\bf q})}\vec w'_c, u',\varphi')$ be as in Definition \ref{defn2646} (3)
and $\vec w'_{\bf q}$ be additional marked points on $\frak Y$ such that
$(\frak Y \cup \vec w'_{\bf q},\varphi')$ is $\epsilon$-close to ${\bf q} \cup \vec w_{\bf q} \cup \vec w_{{\bf q},{\rm can}}$.
Here $\vec w_{\bf q}$ is the additional marked points on $\frak Y$ taken as a part of the stabilization data centered at ${\bf q}$
and $\vec w_{{\bf q},{\rm can}}$ are canonical marked points
we put on the mainstream component without sphere bubble or marked points. 
$\blacksquare$
\end{shitu}
\begin{defn}\label{defn2650}
In Situation \ref{situ2648} we consider the following conditions on
an object
$(\frak Y\cup\bigcup_{c\in \EuScript E({\bf q})}\vec w'_c \cup \vec w'_{\bf q},u', \varphi')$:
\begin{enumerate}
\item
If $\Sigma_a$ is the mainstream component and $\varphi'_a$ is a parametrization
of this  mainstream component (which is a part of given $\varphi'$), the following equation is satisfied on
$\R \times S^1$.
\begin{equation}\label{Fleqobstincl}
\aligned
&\frac{\partial(u'\circ \varphi'_a)}{\partial \tau} 
+  J \left( \frac{\partial(u'\circ \varphi'_a)}{\partial t} - \frak X_{H_t}
\circ u'  \circ \varphi'_a\right) \\
& \equiv 0 \mod 
E((\frak Y\cup \bigcup_{c\in \EuScript B} \vec w'_c,u',\varphi');{\bf q};
\EuScript B).
\endaligned
\end{equation}
\item
If ${\rm v}$ is a bubble component, the following equation is satisfied on $\Sigma'_{\rm v}$.
\begin{equation}
\overline{\partial} u' \equiv 0  
\mod E((\frak Y \cup \bigcup_{c\in \EuScript B}\vec w'_c,u',\varphi');{\bf q};\EuScript B).
\end{equation}
\item For each $c \in \EuScript E({\bf q})$ the additional marked points $\vec w'_c$
satisfy the transversal constraint in Definition \ref{constrainttt2} with respect to 
the obstruction bundle data $\frak E_{{\bf p}_c}$ centered at ${\bf p}_c$.
(Namely, for each $c \in \EuScript E({\bf q})$ 
$(\frak Y, u',\varphi') \cup \vec w'_c$ satisfies the transversal constraint in Definition \ref{constrainttt2}.)
\item The additional marked points $\vec w'_{\bf q}$
satisfy the transversal constraint in Definition \ref{constrainttt2} with respect to 
the stabilization data centered at ${\bf q}$
in Situation \ref{situ2648}.
(Namely, $(\frak Y, u',\varphi') \cup \vec w'_{\bf q}$ also satisfies the transversal constraint in Definition \ref{constrainttt2}.)
\footnote
{We take the stabilization data for ${\bf q}$ 
in Situation \ref{situ2648}.
This is enough to define the transversal constraint.}
\item
$(\frak Y\cup \bigcup_{c\in \EuScript E({\bf q})}\vec w'_c \cup \vec w'_{\bf q}, u',\varphi')$ is
$\epsilon_1$-close to
${\bf q} \cup \bigcup_{c\in \EuScript E({\bf q})}\vec w^{\bf q}_c \cup \vec w_{\bf q}$.
\end{enumerate}
We define 
$$
V({\bf q},\epsilon_1,\EuScript B)
$$
to be the set of isomorphism classes of
$(\frak Y\cup\bigcup_{c\in \EuScript E({\bf q})}\vec w'_c \cup \vec w'_{\bf q},u', \varphi')$ satisfying the conditions
(1)--(5) above. 
Here $(\frak Y\cup\bigcup_{c\in \EuScript E({\bf q})}\vec w'_c \cup \vec w'_{\bf q},u',\varphi')$
is said to be {\it isomorphic} to $(\frak Y''\cup\bigcup_{c\in \EuScript E({\bf q})}\vec w''_c \cup \vec w''_{\bf q},u'',
\varphi'')$
if there exists a biholomorphic map $v : \Sigma' \to \Sigma''$ with the following properties.
\begin{enumerate}
\item[(a)]
$u'' = u'\circ v$ holds outside the set of the transit points.
\item[(b)]
If $\Sigma'_a$ is a mainstream component of $\Sigma'$
and $v(\Sigma'_a) = \Sigma''_{a'}$, then we have
$
(v \circ \varphi'_a)(\tau,t) = \varphi''_{a'}(\tau+\tau_a,t)
$
on $\R \times S^1$ where $\tau_a \in \R$ is independent of $(\tau,t)$.
\item[(c)]
$v(z'_i) = z''_i$ and $v(w'_i) = w''_i$.
\end{enumerate}
\end{defn}
\begin{lem}\label{lem2653}
If $\epsilon_1 >0$ and $\epsilon_c >0$ are small enough, then $V({\bf q},\epsilon_1,\EuScript B)$
is a smooth manifold with boundary. Its dimension is
\begin{equation}
\dim {\mathcal M}_{\ell}(X,H;{\alpha_-},{\alpha_+})
+
\sum_{c\in \EuScript B} \sum_{{\rm v} \in {\rm Irr}({\bf p}_c)} \rank E_{{\bf p}_c,{\rm v}}(\frak y_{{\bf p}_c}).
\end{equation}
Here $ {\rm Irr}({\bf p}_c)$ is the set of irreducible components of ${\bf p}_c$.
\par
If ${\bf q}$ has $k+1$ mainstream components, then the element
$[{\bf q} \cup \bigcup_{c\in {\EuScript E}({\bf q})}\vec w^{\bf q}_c \cup \vec w_{\bf q}]$ of  $V({\bf q},\epsilon_1,\EuScript B)$
is in a codimension $k$ corner.
\end{lem}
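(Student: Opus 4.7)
The strategy is to realize $V({\bf q},\epsilon_1,\EuScript B)$ as the zero locus of a Fredholm section of a finite-rank quotient bundle over a Banach thickening, cut down by the transversal constraints, and then to apply the implicit function theorem.

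First, I would set up the Banach chart.  By the smoothing map $\Phi_{\bf q}$ of \eqref{form418}, the source curve $\Sigma'$ is parametrized by $(\frak y,\vec T,\vec\theta)$ together with the positions of $\vec w'_c$ ($c\in\EuScript E({\bf q})$) and $\vec w'_{\bf q}$, while $u'$ lives in a weighted Sobolev space modelled on $L^2_{m+1,\delta}(\Sigma';(u')^*TX)$ subject to the coincidence condition $(\heartsuit)$ at the singular points.  Over this Banach manifold I form the Banach bundle whose fiber is
\[
\bigoplus_{\rm v} L^2_{m,\delta}(\Sigma'_{\rm v};(u')^*TX\otimes\Lambda^{0,1})\,\Big/\,E\bigl((\frak Y\cup\textstyle\bigcup_{c}\vec w'_c,u',\varphi');{\bf q};\EuScript B\bigr),
\]
and let $\mathfrak s$ denote the section whose mainstream-component entry is the left hand side of \eqref{Fleqobstincl} and whose bubble-component entry is $\overline{\partial} u'$.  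Conditions (1) and (2) of Definition \ref{defn2650} say exactly $\mathfrak s = 0$.

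Second, I would verify surjectivity of the linearization $D\mathfrak s$ at the base point $[{\bf q}\cup\bigcup_c\vec w^{\bf q}_c\cup\vec w_{\bf q}]$.  This is the crux.  Choice \ref{choice2650} guarantees that for every $c\in\EuScript E({\bf q})$ the linearized Floer operator $D_{\bf q}\overline{\partial}_{J,H}$ of \eqref{eq:linearized} is surjective modulo $\bigoplus_{\rm v}\operatorname{Im} I_{{\bf p}_c,{\rm v};{\bf q}}$.  After the small perturbation of $E_{{\bf p}_c,{\rm v}}(\frak y_{{\bf p}_c})$ recalled just after Definition \ref{defn2646} which makes the sum \eqref{obspacedefHFHF} a direct sum, surjectivity modulo $E$ follows a fortiori, and allowing the extra smoothing parameters and the positions of the auxiliary marked points to vary only enlarges the domain of $D\mathfrak s$, preserving surjectivity.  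The implicit function theorem together with standard elliptic regularity then yields that $\mathfrak s^{-1}(0)$ is a smooth finite-dimensional submanifold (provided $\epsilon_1,\epsilon_c$ are small enough that the perturbative constructions stay within their domain of validity).

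Third, I would intersect with the transversal constraints of items (3) and (4) of Definition \ref{defn2650}.  Each condition $u'(w'_i)\in\mathcal D_i$ is of codimension $2$ in $X$ and is cut transversally by Definition \ref{obbundeldata1}(7), while each canonical-marked-point condition \eqref{form2636} combined with $t'_j=[0]$ is a codimension-$2$ condition which is transverse by the strict positivity of $df_{H,u,\Sigma_a}/d\tau$ (as in the proof of Lemma \ref{lemma46}) and by the freedom of the $t$-coordinate along the mainstream.  Each added marked point contributes two free parameters (its position on $\Sigma'$) that are precisely cancelled by the two codimensions of its constraint, so the marked-point factors drop out of the dimension count and the net dimension becomes
\[
\dim V({\bf q},\epsilon_1,\EuScript B) = \dim{\mathcal M}_\ell(X,H;\alpha_-,\alpha_+) + \sum_{c\in\EuScript B}\sum_{{\rm v}\in{\rm Irr}({\bf p}_c)}\rank E_{{\bf p}_c,{\rm v}}(\frak y_{{\bf p}_c}),
\]
where the second summand reflects the enlargement of the target by the obstruction space.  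Finally, the corner structure is read off directly from \eqref{form418}: each of the $k$ interior transit-point smoothings contributes a closed half-line coordinate $s'_a = 1/\log(T_{a+1}-T_a)\in[0,1/\log T_{0,(a)})$ as in Remark \ref{rem2247}, whereas each non-transit smoothing is a smooth interior polar-coordinate factor contributing no boundary.  A point ${\bf q}$ with $k+1$ mainstream components therefore has exactly $k$ vanishing boundary coordinates and hence lies in a codimension-$k$ corner.  The principal obstacle is the consistent surjectivity claim in the second step once the obstruction spaces attached to different ${\bf p}_c$'s are combined; this is exactly where the smallness parameters $\epsilon_1$ and $\epsilon_c$ enter the statement.
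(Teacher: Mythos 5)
Your overall architecture — thickened space defined by the equations modulo the obstruction spaces, then cut down by the transversal constraints, with the corner structure read off from the transit-point smoothing parameters — matches the paper's proof. But there is a genuine gap at the heart of your second step. You propose to realize everything as the zero set of a Fredholm section over a single Banach manifold and invoke the implicit function theorem at the base point $[{\bf q}\cup\bigcup_c\vec w^{\bf q}_c\cup\vec w_{\bf q}]$. When ${\bf q}$ has $k+1$ mainstream components, that base point sits over the corner of the gluing-parameter space $D(k;\vec T_0)\times\prod_j\left(((T_{0,j},\infty]\times S^1)/\sim\right)$, where the source curve $\Sigma'$ degenerates: the family of domains is not a smooth fiber bundle near $T=\infty$, and there is no single Banach manifold of maps containing both the broken configuration and its gluings on which your section $\mathfrak s$ is a $C^1$ Fredholm section. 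The implicit function theorem therefore cannot be applied "once and for all" across the strata. The paper's proof instead performs the gluing construction stratum by stratum (for each fixed source curve, following \cite[Sections 5,6]{foooanalysis}) and then obtains the smooth structure on the union of the strata — i.e., smoothness of the charts in the coordinates $s'_a=1/\log(T_{a+1}-T_a)$ — from the exponential decay estimates of \cite[Section 8]{foooanalysis}. This is the analytic core of the lemma and is exactly what your proposal elides; your corner discussion at the end correctly identifies which coordinates produce the corner, but presupposes that the glued solutions depend smoothly on those coordinates, which is the point that needs proof.

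A second, smaller omission: on the neck regions created by smoothing transit points the Hamiltonian term $\frak X_{H_t}\circ u'\circ\varphi'$ does not vanish, so the gluing analysis is not literally the pseudo-holomorphic one; the paper notes that this term is exponentially small on the neck (by examining the coordinate change from $S^1\times[0,\infty)$ to $D^2\setminus\{0\}$) and hence does not disturb the estimates. Your surjectivity argument via Choice \ref{choice2650} and the treatment of the transversal constraints (each auxiliary marked point contributing two parameters cancelled by a codimension-two condition, with transversality from Definition \ref{obbundeldata1} (7) and the strict monotonicity of $f_{H,u,\Sigma_a}$) are consistent with the paper's proof and are fine as stated.
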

\begin{proof}
We first consider the set of isomorphism classes of
$(\frak Y\cup\bigcup_{c\in \EuScript E({\bf q})}\vec w'_c \cup \vec w'_{\bf q},u', \varphi')$
satisfying Definition \ref{defn2650} (1)(2)(5) and denote it by
$\widehat V({\bf q},\epsilon_1,\EuScript B)$.\footnote{A similar moduli space appeared in \cite[Definition 18.15]{foootech}
and was called the thickened moduli space.
The Hamiltonian term
$\frak X_{H_t} \circ u'  \circ \varphi'_{\rm v}$ did not appear there.}
\par
We can prove that $\widehat V({\bf q},\epsilon_1,\EuScript B)$ is a smooth manifold with boundary and corner
in the same way as in \cite[Section 8]{foooanalysis}.
We use the map
\begin{equation}\label{form418rev0}
\aligned
{\Phi}_{\bf q} : \prod \mathcal V((\frak x_{\bf q})_{\rm v} \cup \vec w_{\bf q,\rm v}\cup \vec w_{{\bf q},{\rm can},{\rm v}}) \times& {D}(k;\vec T_0)
\times \prod_{j=1}^m \left( ((T_{0,j},\infty] \times S^1)/\sim \right)
\\
&\to {\mathcal M}_{\ell+\ell'+\ell''}(\text{source})
\endaligned\end{equation}
to work out the gluing analysis in \cite[Sections 5,6]{foooanalysis}. 
(The map (\ref{form418rev0}) is the same map as (\ref{form418}) except we use
the stabilization data at ${\bf q}$.)
(\ref{form418rev0}) parametrizes the source curve (plus marked points)
of elements of $\widehat V({\bf q},\epsilon_1,\EuScript B)$.
For each fixed source curve we can perform the gluing construction
as in \cite[Sections 5,6]{foooanalysis}\footnote{See \cite[Part 3]{foootech} (simple case), \cite[Section 19]{foootech} (the general case) for more detailed explanation if necessary.}
to find that $\widehat V({\bf q},\epsilon_1,\EuScript B)$ is a smooth manifold
strata-wise.
To obtain a smooth structure on the union of the strata, we use the exponential decay estimate
which we can prove in the same way as in 
\cite[Section 8]{foooanalysis}.\footnote{See 
\cite[Theorem 13.2]{foootech} (simple case) or \cite[Theorem 19.5]{foootech}
(general case) for more details.}
The way to use this exponential decay estimate is the same as in 
\cite[Sections 9 and 10]{fooo:const1}, \cite[Section 8]{foooanalysis}. 
\par
We note that the only difference for
the gluing analysis in the current situation is 
the presence of the Hamiltonian term 
$\frak X_{H_t}\circ u'  \circ \varphi'_{\rm v}$.
This term is also nonzero on the neck region where we glue two
solutions. 
However the Hamiltonian term is small in the exponential 
order on the neck region.
We can prove it easily by looking at the coordinate change 
from  $S^1 \times [0,\infty)$ to $D^2 \setminus \{0\}$.
Namely the derivatives of this map decays 
exponentially as the second component 
of the domain goes to infinity. 
(See for example \cite[Lemma 30.24]{foootech} for the detail.)
So it does not affect the argument here.
\par
Once we proved that $\widehat V({\bf q},\epsilon_1,\EuScript B)$
is a smooth manifold, we can
prove that
$V({\bf q},\epsilon_1,\EuScript B)$ is a smooth manifold
by the implicit function theorem.
In fact, Definition \ref{constrainttt2} (1) cuts out a smooth submanifold
transversally because of the implicit function theorem
\cite[Lemma 25.32]{fooonewbook}. (See also 
\cite[Section 20]{foootech}, especially Lemma 20.7.)
We can use the facts (i)(ii) appearing in the proof of Lemma \ref{lem2446}
to show that Definition \ref{constrainttt2} (2)(3) cut out a smooth submanifold
transversally.
We have thus proved that $V({\bf q},\epsilon_1,\EuScript B)$ is a 
smooth manifold.\footnote{In \cite[Sections 19,20,21]{foootech} we first cut out by the transversality
constraint strata-wise and then show that those strata-wise smooth structure
gives the smooth structure on the whole space. So the order
of the proof there is slightly different
from one we mention above. There is no mathematical issue on this point and we can do either way.
The order is changed only for the convenience of the exposition.}
\par
We assume that the source curve of ${\bf q}$ has exactly $k$
mainstream components.
Note that the space ${D}(k;\vec T_0)$ in (\ref{form418})
is a manifold with boundary. The point corresponding to the source curve
of $[{\bf q} \cup \bigcup_{c\in {\EuScript E}({\bf q})}\vec w^{\bf q}_c \cup \vec w_{\bf q}]$
corresponds to the codimension $k$ boundary point of ${D}(k;\vec T_0)$.
In fact, it corresponds to the point $(T,\infty,\dots,\infty)$  on the compactification of the image of the map
$(T_0,\dots,T_{k}) \mapsto (T_0,T_1-T_0,\dots,T_k-T_{k-1})$.
See the discussion right before Figure \ref{Fig23.1}.
Therefore $[{\bf q} \cup \bigcup_{c\in {\EuScript E}({\bf q})}\vec w^{\bf q}_c \cup \vec w_{\bf q}]$
is on the codimension $k$ boundary of  $V({\bf q},\epsilon_1,\EuScript B)$.
\par
The dimension formula follows from Lemma \ref{lem2612}.
\end{proof}
We note that the group ${\rm Aut}^+({\bf q})$ acts
on $V({\bf q},\epsilon_1,\EuScript B)$ since
the stabilization data are assumed to be preserved.
In particular, ${\rm Aut}({\bf q})$ acts on it.
We also note that by the condition in Definition \ref{obbundeldata1} (6)
this action is effective.
Therefore the quotient space
$V({\bf q},\epsilon_1,\frak B)/{\rm Aut}({\bf q})$
is an effective orbifold,
which we denote by $U({\bf q},\epsilon_1,\EuScript B)$.
\par
We define a vector bundle 
$$
E({\bf q},\epsilon_1,\EuScript B) \to U({\bf q},\epsilon_1,\EuScript B)
$$ 
whose fiber at
$(\frak Y\cup \bigcup_{c\in \EuScript E({\bf q})}\vec w'_c \cup \vec w'_{\bf q},u', \varphi')$ is
$E((\frak Y\cup \bigcup_{c\in \EuScript B}\vec w'_c,u',\varphi');{\bf q};\EuScript B)$.
We define its section $s_{({\bf q},\epsilon_1,\EuScript B)}$ by
$$
\aligned
& s_{({\bf q},\epsilon_1,\EuScript B)}(\frak Y\cup\bigcup_{c\in \EuScript E({\bf q})}\vec w'_c \cup \vec w'_{\bf q},u',
\varphi') \\
= &
\begin{cases}
\overline{\partial} u'
&\text{on a bubble component $\Sigma_{\rm v}$,} \\
\frac{\partial(u'\circ \varphi'_{\rm v})}{\partial \tau}
+  J \left( \frac{\partial(u'\circ \varphi'_{\rm v})}{\partial t} - \frak X_{H_t}
\circ u'  \circ \varphi'_{\rm v}\right)
&\text{on a mainstream component $\Sigma_{\rm v}$}.
\end{cases}
\endaligned
$$
Note that the right hand side is an element of
$E({\bf q},\epsilon_1,\EuScript B)$ by
Definition \ref{defn2650}.
\par
By definition if
$s_{({\bf q},\epsilon_1,\EuScript B)}(\frak Y\cup\bigcup_{c\in \EuScript E({\bf q})}\vec w'_c 
\cup \vec w'_{\bf q}, u', \varphi') = 0$, 
then $(\frak Y,u',\varphi')$ represents
an element of ${\mathcal M}_{\ell}(X,H;{\alpha_-},{\alpha_+})$.
We thus obtain a map
$$
\psi_{({\bf q},\epsilon_1,\EuScript B)}
: s_{({\bf q},\epsilon_1,\EuScript B)}^{-1}(0)
\to  {\mathcal M}_{\ell}(X,H;{\alpha_-},{\alpha_+}).
$$
\begin{prop}\label{lem2654}
If $\epsilon_1>0$ is small, then
$(U({\bf q},\epsilon_1,\EuScript B),E({\bf q},\epsilon_1,\EuScript B),
s_{({\bf q},\epsilon_1,\EuScript B)},\psi_{({\bf q},\epsilon_1,\EuScript B)})$
is a Kuranishi chart of ${\mathcal M}_{\ell}(X,H;{\alpha_-},{\alpha_+})$
at ${\bf q}$.
\end{prop}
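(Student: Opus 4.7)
The plan is to verify the four defining properties of a Kuranishi chart as set out in \cite[Definition 3.1]{fooonewbook}: that $U({\bf q},\epsilon_1,\EuScript B)$ is an effective orbifold with corners of the expected dimension, that $E({\bf q},\epsilon_1,\EuScript B)$ is a smooth orbibundle over it, that $s_{({\bf q},\epsilon_1,\EuScript B)}$ is a smooth section, and that $\psi_{({\bf q},\epsilon_1,\EuScript B)}$ is a homeomorphism from the zero set onto an open neighborhood of ${\bf q}$ in ${\mathcal M}_{\ell}(X,H;\alpha_-,\alpha_+)$. The orbifold structure is already in hand: Lemma \ref{lem2653} gives $V({\bf q},\epsilon_1,\EuScript B)$ the structure of a smooth manifold with corners of the claimed dimension, and the effectiveness of the ${\rm Aut}({\bf q})$-action imposed by Definition \ref{obbundeldata1} (6) and Definition \ref{symstab} (5) lets us pass to the quotient orbifold $U({\bf q},\epsilon_1,\EuScript B)$.

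For the bundle $E({\bf q},\epsilon_1,\EuScript B)$, I would argue fiberwise smoothness as follows. By Definition \ref{obbundeldata1} (5) and Remark \ref{rem:412} (2), each family $E_{{\bf p}_c,{\rm v}}(\frak y_{{\bf p}_c})$ varies smoothly in $\frak y_{{\bf p}_c}$, and the gluing construction produces a smooth family of source curves $\Sigma'$ and embeddings $\frak v_{\frak Y,\frak y;{\rm v}}$. The parallel transport map ${\rm Pal}_z$ in \eqref{form2633} is a smooth function of $(\frak Y,u',\varphi')$ since $u'$ varies in $C^{10}$ and we are well inside the injectivity radius. Consequently the maps $I_{{\bf p}_c,{\rm v};\Sigma',u',\varphi'}$ depend smoothly on the point of $U({\bf q},\epsilon_1,\EuScript B)$, and the direct sum in \eqref{obspacedefHFHF} remains a direct sum for small $\epsilon_c$ after the small perturbation provided by \cite[Lemma 11.7]{fooo:const1}. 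This equips $E({\bf q},\epsilon_1,\EuScript B)$ with the required smooth orbibundle structure (${\rm Aut}({\bf q})$-equivariance follows from the invariance of $\bigoplus_{\rm v}E_{{\bf p}_c,{\rm v}}$). Smoothness of the section $s_{({\bf q},\epsilon_1,\EuScript B)}$ is then immediate from its definition, once we use the smooth dependence of $u'\circ \varphi'_{\rm v}$ on the gluing parameters established in \cite[Chapter 8]{foooanalysis}, whose applicability to the Hamiltonian setting was already checked within the proof of Lemma \ref{lem2653}.

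It remains to verify that $\psi_{({\bf q},\epsilon_1,\EuScript B)}$ is a homeomorphism onto an open neighborhood of ${\bf q}$. Continuity is clear from the definitions of the topologies. For injectivity, observe that if two zeros of $s_{({\bf q},\epsilon_1,\EuScript B)}$ map to the same point of ${\mathcal M}_{\ell}(X,H;\alpha_-,\alpha_+)$, then by Definition \ref{3equivrel} they differ by a biholomorphic map of source curves respecting $u'$ and $\varphi'$ up to translations on mainstream components; such a map, combined with the invariance of the stabilization data, is precisely an element of ${\rm Aut}({\bf q})$, so the two representatives coincide in the quotient $U$. For surjectivity onto a neighborhood, given ${\bf q}'$ close to ${\bf q}$ in ${\mathcal M}_{\ell}(X,H;\alpha_-,\alpha_+)$, I would first apply Lemma \ref{lem2446} with ${\bf p}={\bf q}$ to find the unique collection $\vec w'_{\bf q}$ of additional marked points satisfying the transversal constraint with respect to the stabilization data at ${\bf q}$, and then apply the same lemma for each $c\in \EuScript E({\bf q})$ to obtain the unique $\vec w'_c$ satisfying the transversal constraint with respect to $\frak E_{{\bf p}_c}$; this exhibits ${\bf q}'$ as $\psi_{({\bf q},\epsilon_1,\EuScript B)}$ of a zero of $s_{({\bf q},\epsilon_1,\EuScript B)}$. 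The inverse is continuous by the uniqueness part of Lemma \ref{lem2446}.

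The main obstacle, strictly speaking, is already absorbed into Lemma \ref{lem2653}, namely the verification that the thickened moduli space carries a compatible smooth structure across all corner strata, including the corner stratum containing ${\bf q}$ itself. This is where the exponential decay estimates of \cite[Chapter 8]{foooanalysis} and the smooth gluing analysis of \cite[Sections 5, 6]{foooanalysis} do the real work; in the present proposition these analytic inputs are used as a black box. The only genuinely new verification here, beyond the assembly described above, is that shrinking $\epsilon_1$ further ensures simultaneously (a) the transversal-constraint cut-out of Definition \ref{constrainttt2} is a transverse intersection for every $c\in \EuScript E({\bf q})$, (b) the obstruction spaces for distinct $c$ remain in direct sum position, and (c) the image of $\psi_{({\bf q},\epsilon_1,\EuScript B)}$ is an open neighborhood of ${\bf q}$; all three hold since each is an open condition satisfied at ${\bf q}$ itself, and the index set $\EuScript E({\bf q})$ is finite.
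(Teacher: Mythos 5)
Your proposal is correct and follows essentially the same route as the paper, which simply reduces the verification to Lemma \ref{lem2653} and the gluing analysis of \cite[Section 8]{foooanalysis} (with the Hamiltonian term already handled there); you have merely written out the routine chart axioms — effectiveness of the ${\rm Aut}({\bf q})$-action, smoothness of the obstruction bundle and section, and the homeomorphism property of $\psi$ via the uniqueness in Lemma \ref{lem2446} — that the paper leaves to the cited reference.
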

\begin{proof}
Taking into account of the point mentioned in the proof of
Lemma \ref{lem2653},
the proof is the same as in \cite[Section 8]{foooanalysis}.
\end{proof}
\begin{lem}\label{lem2655}
We assume that ${\bf q} \in S_k({\mathcal M}_{\ell}(X,H;{\alpha_-},{\alpha_+}))$.
Then $S_k(V({\bf q},\epsilon_1,\EuScript B))$ is the set of
$(\frak Y\cup\bigcup_{c\in \EuScript E({\bf q})}\vec w'_c \cup \vec w'_{\bf q},u', \varphi')
\in V({\bf q},\epsilon_1,\EuScript B, u)$
such that $\frak Y$ has at least $k+1$ mainstream components.
\end{lem}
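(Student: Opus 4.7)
The plan is to reduce the claim to the corner structure of the single factor $D(k;\vec T_0)$ appearing in the parametrization \eqref{form418rev0}, using the smooth manifold-with-corners structure on $V(\mathbf q,\epsilon_1,\EuScript B)$ established in the proof of Lemma \ref{lem2653}. First I would note that, by that proof, the source-curve parametrization $\Phi_{\bf q}$ exhibits a neighborhood of $[\mathbf q\cup\bigcup_c \vec w^{\bf q}_c\cup \vec w_{\bf q}]$ in $V(\mathbf q,\epsilon_1,\EuScript B)$ as a smooth (admissible) manifold with corners whose coordinate factors split into three types: (i) the factors $\mathcal V(\frak x_{\rm v}\cup\vec w_{\bf q,\rm v}\cup\vec w_{{\bf q},{\rm can},{\rm v}})$ which are smooth (open in a moduli space of marked stable curves) and carry no boundary, (ii) the gluing factor $D(k;\vec T_0)$ which is a manifold with codimension-$\le k$ corners, and (iii) the factors $((T_{0,j},\infty]\times S^1)/{\sim}$ for smoothing non-transit nodes, which by Remark \ref{rem2247} are admissible charts \emph{without} boundary (the apex is an interior point in the admissible smooth structure).

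Next I would observe that the transversality constraints of Definition \ref{constrainttt2} and the equations \eqref{Fleqobstincl} cut out $V(\mathbf q,\epsilon_1,\EuScript B)$ inside this product by transverse conditions — this is exactly the content of the implicit function theorem step in the proof of Lemma \ref{lem2653}, which uses the open surjectivity condition of Choice \ref{choice2650} together with the positivity facts (i)(ii) of Lemma \ref{lem2446}. Since transverse cutting preserves the stratification by corner codimension, the corner structure of $V(\mathbf q,\epsilon_1,\EuScript B)$ is induced entirely from the $D(k;\vec T_0)$ factor.

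Then I would compute the corner stratification of $D(k;\vec T_0)$ in the coordinates $s'_a = 1/\log(T_{a+1}-T_a)$, $a=1,\dots,k$, in which $D(k;\vec T_0)\cong\prod_{a=1}^k[0,1/\log T_{0,(a)})$. A point $(s'_1,\dots,s'_k)$ lies in the (closure of the) codimension-$j$ corner iff at least $j$ of the coordinates $s'_a$ vanish, which is precisely the condition that at least $j$ of the transit points between the original $k+1$ mainstream components of $\mathbf q$ remain unsmoothed in $\Sigma'$. Equivalently, the source curve $\mathfrak Y$ has at least $j+1$ mainstream components. Taking $j=k$ gives the stated description of $S_k(V(\mathbf q,\epsilon_1,\EuScript B))$; since $\mathbf q$ itself has exactly $k+1$ mainstream components and smoothing only decreases this number, ``at least $k+1$'' actually forces exactly $k+1$ mainstream components.

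The only subtle point — and the one I would treat most carefully — is to verify that the non-transit smoothing factors in (iii) really do not contribute any corner directions to $V(\mathbf q,\epsilon_1,\EuScript B)$; this is where the exponential-decay-based admissible smooth structure of \cite[Section 8]{foooanalysis} and the coordinate convention of Remark \ref{rem2247} are essential. Once this is accepted, the lemma is a purely combinatorial reading of the $D(k;\vec T_0)$ factor and no further analysis is required.
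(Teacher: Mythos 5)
Your proposal is correct and follows essentially the same route as the paper: the paper's proof simply observes that, as explained in the proof of Lemma \ref{lem2653}, the corner structure of $V({\bf q},\epsilon_1,\EuScript B)$ comes entirely from the ${D}(k;\vec T_0)$ factor of the parametrization \eqref{form418rev0}, whose codimension-$k$ corner is exactly the locus where all $k$ transit points remain unsmoothed, i.e.\ where $\frak Y$ retains $k+1$ mainstream components. Your additional checks (that the $\mathcal V$-factors and the non-transit smoothing factors $((T_{0,j},\infty]\times S^1)/\sim$ contribute no boundary, and that the transversal constraints cut transversally so the stratification is preserved) are the details the paper leaves implicit, and they are all accurate.
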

\begin{proof}
As explained in the proof of Lemma \ref{lem2653}, 
the codimension $k$ corner of
${\mathcal M}_{\ell}(X,H;{\alpha_-},{\alpha_+})$
corresponds to the
codimension $k$ corner of the ${D}(k;\vec T_0)$ factor of the left hand side of
(\ref{form418}).
Lemma \ref{lem2655} immediately follows from this fact.
\end{proof}
We also observe the following fact.
\begin{lem}\label{lem2656}
If ${\bf q} \in S_k({\mathcal M}_{\ell}(X,H;{\alpha_-},{\alpha_+}))$
and ${c} \in \EuScript E({\bf q})$,
then we have
${\bf p}_c \in  S_k({\mathcal M}_{\ell}(X,H;{\alpha_-},{\alpha_+}))$.
\end{lem}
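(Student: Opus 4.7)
The plan is to deduce the result directly from the geometry of the smoothing map $\Phi_{{\bf p}_c}$ appearing in \eqref{form418}, using the characterization of the corner stratum given in Lemma \ref{lem2655}.

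First, I would unpack what $c \in \EuScript E({\bf q})$ means. By Definition \ref{defn2646} (1) and Choice \ref{choice2650}, this is the statement that ${\bf q} \in W({\bf p}_c)$, and hence there exist additional marked points $\vec w^{\bf q}_{{\bf p}_c}$ such that ${\bf q} \cup \vec w^{\bf q}_{{\bf p}_c}$ is $\epsilon_c$-close to ${\bf p}_c \cup \vec w_{{\bf p}_c} \cup \vec w_{{\rm can}}$ in the sense of Definition \ref{orbitecloseness}. In particular, the (stabilized) source curve of ${\bf q}$ lies in the image of the smoothing map ${\Phi}_{{\bf p}_c}$ of \eqref{form418}, so it is obtained from the source curve of ${\bf p}_c$ by a (possibly partial) smoothing of its singularities.

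Next I would observe that smoothing can only (weakly) decrease the number of mainstream components. Indeed, suppose ${\bf p}_c$ has $k_c+1$ mainstream components, i.e.\ $k_c$ transit points other than $z_{\pm}$. The parameter $\vec T \in D(k_c; \vec T_0)$ in \eqref{form418} records, via the coordinate $s'_a = 1/\log(T_{a+1}-T_a)$, how the $a$-th transit point is smoothed: when $s'_a > 0$, two adjacent mainstream components get glued into a single mainstream component, and when $s'_a = 0$ the transit point persists. Thus the source curve of ${\bf q}$, lying in the image of ${\Phi}_{{\bf p}_c}$, has a number of mainstream components equal to $1$ plus the number of indices $a$ with $s'_a({\bf q})=0$, which is at most $k_c+1$. (The non-transit gluing parameters $\vec\theta$ affect only bubble components, not mainstream ones.)

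Finally, since ${\bf q} \in S_k({\mathcal M}_{\ell}(X,H;{\alpha_-},{\alpha_+}))$, by the same principle (cf.\ Lemma \ref{lem2655} and the statement of Lemma \ref{lem2653}) the source curve of ${\bf q}$ has at least $k+1$ mainstream components. Combining this with the inequality in the previous step gives $k_c + 1 \ge k + 1$, so ${\bf p}_c$ has at least $k+1$ mainstream components; applying the characterization of the corner stratum once more yields ${\bf p}_c \in S_k({\mathcal M}_{\ell}(X,H;{\alpha_-},{\alpha_+}))$. No technical obstacle is expected; the lemma is essentially bookkeeping built into the gluing chart ${\Phi}_{{\bf p}_c}$.
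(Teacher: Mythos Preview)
Your proof is correct and follows the same approach as the paper, which simply asserts that if ${\bf q}$ is $\epsilon$-close to ${\bf p}$ then the number of mainstream components of ${\bf q}$ is at most that of ${\bf p}$. You have spelled out the reason for this inequality via the smoothing chart $\Phi_{{\bf p}_c}$, which is exactly the underlying mechanism the paper leaves implicit.
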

\begin{proof}
This follows from the following fact.
If ${\bf q}$ is $\epsilon$-close to ${\bf p}$, then
the number of mainstream components of ${\bf q}$ is not
greater than
the number of mainstream components of ${\bf p}$.
\end{proof}
\subsection{Coordinate change}
Next we discuss coordinate changes of the Kuranishi charts.
We refer \cite[Definition 3.6]{fooonewbook} for the definition of coordinate changes.
\begin{lem}\label{lem2657}
For each ${\bf q}_1 \in {\mathcal M}_{\ell}(X,H;{\alpha_-},{\alpha_+})$
there exists $\epsilon_1 >0$ such that the following holds.
Suppose ${\bf q}_2 \in {\rm Im} (\psi_{({\bf q}_1,\epsilon_1,\EuScript B)})$, then 
\begin{enumerate}
\item
$\EuScript E({\bf q}_2) \subseteq \EuScript E({\bf q}_1)$.
\item
Let $\EuScript B_2 \subseteq  \EuScript E({\bf q}_2)$,
$\EuScript B_1 \subseteq  \EuScript E({\bf q}_1)$
and $\EuScript B_2 \subseteq \EuScript B_1$.
Then there exists $\epsilon_2 >0$
such that there exists a coordinate change
from
$$
(U({\bf q}_2,\epsilon_2,\EuScript B_2),E({\bf q}_2,\epsilon_2,\EuScript B_2),
s_{({\bf q}_2,\epsilon_2,\EuScript B_2)},\psi_{({\bf q},\epsilon_2,\EuScript B_2)})
$$
to
$$
(U({\bf q}_1,\epsilon_1,\EuScript B_1),E({\bf q}_1,\epsilon_1,\EuScript B_1),
s_{({\bf q}_1,\epsilon_1,\EuScript B_1)},\psi_{({\bf q},\epsilon_1,\EuScript B_1)}).
$$
\end{enumerate}
\end{lem}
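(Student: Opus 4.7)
The plan is to handle the two parts separately. For (1), I would use that each $W({\bf p}_c)$ is closed in the moduli space and that $\EuScript C_{\ell}(\alpha_-,\alpha_+)$ is finite (Remark \ref{rem:order}). For every $c \in \EuScript C_{\ell}(\alpha_-,\alpha_+) \setminus \EuScript E({\bf q}_1)$ the complement $\mathcal M_{\ell}(X,H;\alpha_-,\alpha_+) \setminus W({\bf p}_c)$ is an open neighborhood of ${\bf q}_1$. By shrinking $\epsilon_1 >0$ I would force ${\rm Im}(\psi_{({\bf q}_1,\epsilon_1,\EuScript B)})$ into the intersection of these finitely many open sets; this is possible because the image shrinks to $\{{\bf q}_1\}$ as $\epsilon_1\to 0$. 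Then any ${\bf q}_2$ in this image satisfies ${\bf q}_2\notin W({\bf p}_c)$ for every $c\notin \EuScript E({\bf q}_1)$, proving $\EuScript E({\bf q}_2)\subseteq \EuScript E({\bf q}_1)$.

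For (2), the plan is to build the coordinate change by adjusting marked points. Given a representative $(\frak Y\cup\bigcup_{c\in \EuScript E({\bf q}_2)} \vec w'_c\cup\vec w'_{{\bf q}_2}, u',\varphi')$ of a point of $V({\bf q}_2,\epsilon_2,\EuScript B_2)$, after shrinking $\epsilon_2$ I can ensure that $(\frak Y, u',\varphi')$ is simultaneously $\epsilon_{{\bf p}_c}$-close to ${\bf p}_c\cup\vec w_{{\bf p}_c}\cup\vec w_{\rm can}$ for every $c\in \EuScript E({\bf q}_1)\setminus \EuScript E({\bf q}_2)$ and $\epsilon_1$-close to the stabilization data centered at ${\bf q}_1$. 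Lemma \ref{lem2446} then uniquely produces the missing additional marked points $\vec w'_c$ (for $c\in \EuScript E({\bf q}_1)\setminus \EuScript E({\bf q}_2)$) and a replacement $\vec w'_{{\bf q}_1}$ for $\vec w'_{{\bf q}_2}$, each satisfying the corresponding transversal constraints of Definition \ref{constrainttt2}. The resulting map $\widetilde\varphi: V({\bf q}_2,\epsilon_2,\EuScript B_2)\to V({\bf q}_1,\epsilon_1,\EuScript B_1)$ is smooth by the implicit function theorem applied to the $\mathcal D_i$-constraints and the canonical-marked-point constraints, exactly as in Lemma \ref{lem2653}. Passing to quotients by ${\rm Aut}({\bf q}_2)\hookrightarrow {\rm Aut}({\bf q}_1)$ (valid once ${\bf q}_2$ is close enough to ${\bf q}_1$ that its combinatorial type specializes to that of ${\bf q}_1$) gives the orbifold embedding $\varphi:U({\bf q}_2,\epsilon_2,\EuScript B_2)\to U({\bf q}_1,\epsilon_1,\EuScript B_1)$.

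On the bundle side, the inclusion $\EuScript B_2\subseteq \EuScript B_1$ together with formula \eqref{obspacedefHFHF} makes $E((\frak Y\cup\cdots,u',\varphi');{\bf q}_2;\EuScript B_2)$ a direct summand of $E((\frak Y\cup\cdots,u',\varphi');{\bf q}_1;\EuScript B_1)$, so the tautological inclusion provides $\widehat\varphi:E({\bf q}_2,\epsilon_2,\EuScript B_2)\to \varphi^* E({\bf q}_1,\epsilon_1,\EuScript B_1)$. The compatibilities $\widehat\varphi\circ s_{({\bf q}_2,\epsilon_2,\EuScript B_2)}=s_{({\bf q}_1,\epsilon_1,\EuScript B_1)}\circ \varphi$ and $\psi_{({\bf q}_2,\epsilon_2,\EuScript B_2)}=\psi_{({\bf q}_1,\epsilon_1,\EuScript B_1)}\circ \varphi$ on the zero locus are then immediate, since both Kuranishi sections are defined by the same nonlinear operator (Floer's equation on mainstream components, $\overline{\partial}$ on bubble components) and the $\psi$-maps forget the extra marked points.

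The main obstacle, and the last step, is verifying the tangent--cokernel compatibility required by \cite[Definition 3.6]{fooonewbook}: the differential $D\varphi$ must identify the normal bundle of $\varphi$ with the quotient $\varphi^* E({\bf q}_1,\epsilon_1,\EuScript B_1)/\widehat\varphi(E({\bf q}_2,\epsilon_2,\EuScript B_2))$. By Lemma \ref{lem2653} the dimension discrepancy and the obstruction-rank discrepancy between the two charts both equal $\sum_{c\in \EuScript B_1\setminus \EuScript B_2}\sum_{\rm v}\rank E_{{\bf p}_c,{\rm v}}$, so the dimensions match. The required isomorphism is then produced from the surjectivity of the linearization $D_{{\bf q}_2}\overline{\partial}_{J,H}$ modulo the larger obstruction bundle guaranteed by Choice \ref{choice2650}, combined with the gluing/implicit function estimate in Lemma \ref{lem2653}, along the lines of the coordinate-change argument in \cite[Section 10]{fooo:const1}. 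This linear-algebraic identification is the technical heart of the lemma and requires care to handle uniformly over the neck regions and corners described in Lemmas \ref{lem2655} and \ref{lem2656}.
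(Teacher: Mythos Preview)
Your proposal is correct and follows the standard route; the paper itself gives no argument beyond the sentence ``The proof is the same as \cite[Sections 9 and 10]{fooo:const1}, \cite[Subsection 8.3]{foooanalysis},'' so your sketch is in fact an unpacking of precisely those references. One small wording issue: the phrase ``its combinatorial type specializes to that of ${\bf q}_1$'' is backwards (it is ${\bf q}_1$ whose source curve can degenerate, while ${\bf q}_2$ is obtained by smoothing some transit points), but the conclusion you draw about the isotropy groups, namely that ${\rm Aut}({\bf q}_2)$ injects into ${\rm Aut}({\bf q}_1)$ via the stabilizer of the lift of ${\bf q}_2$ in $V({\bf q}_1,\epsilon_1,\EuScript B_1)$, is the correct one and is what the coordinate-change axioms require.
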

\begin{proof}
The proof is the same as \cite[Sections 9 and 10]{fooo:const1}
\cite[Subsection 8.3]{foooanalysis}.
\end{proof}
\begin{lem}\label{lem2658}
We may choose $\epsilon_1>0$ and $\epsilon_2>0$ in
Lemma \ref{lem2657} such that the following holds.
\begin{enumerate}
\item
When we replace ${\bf q}_1$, $\epsilon_1$ by ${\bf q}_2$,
$\epsilon_2$ in Lemma \ref{lem2657},
the same conclusion as Lemma \ref{lem2657} holds.
\item
If ${\bf q}_3 \in {\rm Im} (\psi_{({\bf q}_2,\epsilon_2,\EuScript B_2)})$,
then there exists $\epsilon_3 >0$ such that
we have the conclusion of
Lemma \ref{lem2657}  with
${\bf q}_1$, $\epsilon_1$, ${\bf q}_2$
and $\epsilon_2$
replaced by
${\bf q}_2$, $\epsilon_2$, ${\bf q}_3$
and $\epsilon_3$, respectively.
\par
We also
have the conclusion of
Lemma \ref{lem2657}  with
${\bf q}_1$, $\epsilon_1$, ${\bf q}_2$
and $\epsilon_2$
replaced by
${\bf q}_1$, $\epsilon_1$, ${\bf q}_3$
and $\epsilon_3$, respectively.
\item
Let $(i,j)$ be one of $(i,j) = (1,2), (2,3), (1,3)$ and
$\Phi_{{\bf q}_i,{\bf q}_j}$ be the coordinate change
from
$$
(U({\bf q}_j,\epsilon_j,\EuScript B_j),E({\bf q}_j,\epsilon_j,\EuScript B_j),
s_{({\bf q}_j,\epsilon_j,\EuScript B_j)},\psi_{({\bf q},_j\epsilon_j,\EuScript B_j)})
$$
to
$$
(U({\bf q}_i,\epsilon_i,\EuScript B_i),E({\bf q}_i,\epsilon_i,\EuScript B_i),
s_{({\bf q}_i,\epsilon_i,\EuScript B_i)},\psi_{({\bf q},_i\epsilon_i,\EuScript B_i)})
$$
obtained by Lemma \ref{lem2657}.
Then we have
$$
\Phi_{{\bf q}_1,{\bf q}_3} = \Phi_{{\bf q}_1,{\bf q}_2} \circ \Phi_{{\bf q}_2,{\bf q}_3}.
$$
\end{enumerate}
\end{lem}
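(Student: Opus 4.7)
The plan is to establish (1)--(3) by combining the openness of the conditions defining $V(\cdot,\cdot,\cdot)$ with the uniqueness part of Lemma \ref{lem2446}, and then to read off the cocycle identity in (3) directly from the explicit form of the coordinate changes produced by Lemma \ref{lem2657}.

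For (1), I would shrink $\epsilon_1 > 0$ so that the Kuranishi chart at ${\bf q}_1$ is small enough for every ${\bf q}_2$ in its image to lie deep inside $\bigcap_{c \in \EuScript E({\bf q}_1)} {\rm Int}\,W({\bf p}_c)$; then ${\bf q}_2$ satisfies the hypotheses of Lemma \ref{lem2446} with respect to the same family of ${\bf p}_c$'s, and Lemma \ref{lem2657} applies to ${\bf q}_2$ with some $\epsilon_2 > 0$. The symmetric role of ${\bf q}_1$ and ${\bf q}_2$ follows because ${\bf q}_2$ lies in the interior of the image of $\psi_{({\bf q}_1,\epsilon_1,\EuScript B_1)}$, so shrinking $\epsilon_2$ further keeps the image of $\psi_{({\bf q}_2,\epsilon_2,\EuScript B_2)}$ inside that of $\psi_{({\bf q}_1,\epsilon_1,\EuScript B_1)}$. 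For (2), I would choose $\epsilon_3 > 0$ by invoking Lemma \ref{lem2657} twice; the inclusion $\EuScript E({\bf q}_3) \subseteq \EuScript E({\bf q}_2) \subseteq \EuScript E({\bf q}_1)$ holds automatically because each $W({\bf p}_c)$ is closed (its complement is open).

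The substantive content is (3). The coordinate change $\Phi_{{\bf q}_i,{\bf q}_j}$ acts on a representative $(\frak Y \cup \bigcup_{c \in \EuScript E({\bf q}_j)} \vec w'_c \cup \vec w'_{{\bf q}_j}, u', \varphi')$ of $V({\bf q}_j, \epsilon_j, \EuScript B_j)$ by (a) forgetting the marked points $\vec w'_{{\bf q}_j}$ and those $\vec w'_c$ with $c \in \EuScript E({\bf q}_j) \setminus \EuScript B_i$, (b) adding the uniquely determined marked points $\vec w'_{{\bf q}_i}$ and $\vec w'_c$ for $c \in \EuScript B_i \setminus \EuScript B_j$ via the transversal constraint in Definition \ref{constrainttt2}, and (c) identifying fibers of the obstruction bundles through the direct sum \eqref{obspacedefHFHF}, which is itself built from the parallel transport maps ${\rm Pal}_z$ of \eqref{form2633}. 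On the level of the underlying curve $(\Sigma', u', \varphi')$, both $\Phi_{{\bf q}_1, {\bf q}_3}$ and $\Phi_{{\bf q}_1, {\bf q}_2} \circ \Phi_{{\bf q}_2, {\bf q}_3}$ produce the same marked points by the uniqueness assertion of Lemma \ref{lem2446}; and on the bundle level they are both assembled from the same maps $I_{{\bf p}_c, {\rm v}; \Sigma', u', \varphi'}$, $c \in \EuScript B_1$, which depend only on $(\Sigma', u', \varphi')$ and not on any intermediate base point. This forces the two compositions to coincide.

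The main obstacle is to choose the $\epsilon_i$'s consistently so that all the intermediate operations --- forgetting and adding marked points, solving the transversal constraints uniquely as in Lemma \ref{lem2446}, and embedding the obstruction bundles via parallel transport --- stay within the range where the implicit function argument underlying Lemma \ref{lem2653} applies and where the geodesics in \eqref{form2633} are unique. This is the standard repeated shrinking bookkeeping carried out in \cite[Sections 9 and 10]{fooo:const1} and \cite[Subsection 8.3]{foooanalysis}, after which the cocycle identity at the level of underlying maps forces it for the vector bundle maps by the uniqueness of parallel transport along minimal geodesics.
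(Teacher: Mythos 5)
Your proposal is correct and is essentially the paper's argument: the paper's proof is a one-line citation to \cite[Section 7]{fooo:const1} (the ``ambient set'' method, recalled in the remark at the end of Section 4), and the content of that method is exactly the canonicity you spell out — the added marked points are uniquely determined by the transversal constraints (Lemma \ref{lem2446}), and the obstruction-space embeddings $I_{{\bf p}_c,{\rm v};\Sigma',u',\varphi'}$ depend only on ${\bf p}_c$ and the underlying object, not on the intermediate chart center, so the cocycle identity holds on the nose after the standard shrinking of the $\epsilon_i$.
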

\begin{proof}
The proof is the same as in \cite[Section 7]{fooo:const1}.
\end{proof}

We can use Lemmas \ref{lem2654}, \ref{lem2657}
and \ref{lem2658} to construct
a required Kuranishi structure on
${\mathcal M}_{\ell}(X,H;{\alpha_-},{\alpha_+})$
in exactly the same way as in \cite{fooo:const1}.
\par 
The isomorphism on the orientation bundle stated in 
Theorem \ref{kuraexists} (1) can be proved as follows.
Since the marked points are parametrized by complex coordinates, hence even dimensional and canonically oriented,  
it is enough to consider the case that $\ell = 0$.  
The  fiber of the orientation bundle of $\mathcal{M}(X, H; \alpha_-, \alpha_+)$ 
at ${\bf p} \in \mathcal{M}(X, H; \alpha_-, \alpha_+)$
is defined by 
\begin{equation} \label{orionM}
o_{\mathcal{M}(X, H; \alpha_-, \alpha_+)}|_{\bf p} \otimes {\mathbb R}_{\alpha_-, \alpha_+} 
= \det T_{\gamma_+} R_{\alpha_+}  \otimes \det D_{\bf p} \overline{\partial}_{J,H} \otimes \det T_{\gamma_-}R_{\alpha_-},  
\end{equation} 
where $D_{\bf p} \overline{\partial}_{J,H} $ is the linearized operator at $\mathbf p$ in \eqref{eq:linearized} with fixed asymptotics $\gamma_{\pm} \in \overline{R}_{\alpha_{\pm}}$ and 
${\mathbb R}_{\alpha_-, \alpha_+}$ stands for the translation action on $\widetilde{\mathcal M}(X, H; \alpha_-, \alpha_+)$.  
Since the linearization operator \eqref{linearizationJholo} 
for bubble components is a complex linear Fredholm operator and the matching condition $(\heartsuit)$ is taken in 
complex vector spaces, we concentrate on the linearization operator \eqref{ducomponentssphererevrev}
for mainstream components with fixed asymptotics $\gamma_{\pm}$ here.    

For $D_{\bf p} \overline{\partial}_{J,H}$, we have 
\begin{equation} \label{relorisystem}
\det D_{\bf p} \overline{\partial}_{J,H} \otimes \det TR_{\alpha_-} \otimes \det P(\gamma_-;{\mathfrak t}_{w_-}) \cong \det P(\gamma_+; t_{w_+}).  
\end{equation}
Fixing\footnote{See \cite[Convention 8.3.1]{fooobook2}.} an orientation on ${\mathbb R}$ once and for all, we can drop 
the factor ${\mathbb R}_{\alpha_-, \alpha_+}$ from the left hand side of \eqref{orionM}.   
(This factor is important for (16.8) in \cite[Condition X]{fooonewbook} as we will see below.)
Combining \eqref{orionM} and \eqref{relorisystem}, we obtain 
\begin{equation}
\det T_{\gamma_+} R_{\alpha_+} \otimes \det P(\gamma_+; t_{w_+}) \cong o_{\mathcal{M}(X, H; \alpha_-, \alpha_+)}|_{\bf p} \otimes \det P(\gamma_-;{\mathfrak t}_{w_-}).  
\end{equation}
Recalling the definition of $o_{R_{\alpha}}$ from Definition \ref{oricricial}, it is the desired orientation isomorphism.  
\par
Next we prove  (16.8) in \cite[Condition X]{fooonewbook}.  
Suppose that $\mathbf p \in {\mathcal M}(X,H;\alpha_-, \alpha_+)$ decomposes into $({\mathbf p}_1, {\mathbf p}_2) \in  {\mathcal M}(X,H;\alpha_-, \alpha_+) \times_{R_{\alpha}}  {\mathcal M}(X,H;\alpha_-, \alpha)$.  Then we find that 
\begin{equation}\label{oribreak}
\det D_{\mathbf p} \overline{\partial}_{J,H} \cong \det D_{{\mathbf p}_2} \overline{\partial}_{J,H}  \otimes \det T_{\gamma}R_{\alpha} \otimes D_{{\mathbf p}_1} \overline{\partial}_{J,H},   
\end{equation}
where $\gamma$ is the positive asymptotic limit of ${\mathbf p}_1$ and the negative asymptotic limit of ${\mathbf p}_2$.  
Then we have the following 
\begin{equation}
\aligned
& o_{{\mathcal M}(X,H;\alpha_-, \alpha_+)} |_{\mathbf p} \otimes {\mathbb R}_{\alpha_-, \alpha_+} \\
\cong & \det T_{\gamma_+}R_{\alpha_+} \otimes \det D_{{\mathbf p}}  \overline{\partial}_{J, H} 
\otimes \det T_{\gamma_-}R_{\alpha_-} \\
\cong & \det T_{\gamma_+}R_{\alpha_+} \otimes \det D_{{\mathbf p}_2} \overline{\partial}_{J,H}  \otimes T_{\gamma}R_{\alpha} \otimes \det D_{{\mathbf p}_1} \overline{\partial}_{J,H}  \otimes \det T_{\gamma_-}R_{\alpha_-} \\
\cong & o_{{\mathcal M}(X,H;\alpha_-, \alpha_+)} |_{{\mathbf p}_2} \otimes {\mathbb R}_{\alpha, \alpha_+} \otimes \det D_{{\mathbf p}_1}  \overline{\partial}_{J,H} \otimes \det T_{\gamma_-}R_{\alpha_-} \\
\cong & o_{{\mathcal M}(X,H;\alpha, \alpha_+)} |_{{\mathbf p}_2} \otimes {\mathbb R}_{\alpha, \alpha_+}  \otimes (\det T_{\gamma}R_{\alpha})^* \otimes o_{{\mathcal M}(X,H;\alpha_-, \alpha)} |_{{\mathbf p}_1} \otimes {\mathbb R}_{\alpha_-, \alpha} \\
\cong &  (-1)^{\dim {\mathcal M}(X,H;\alpha, \alpha_+)} {\mathbb R}_{\alpha, \alpha_+} \otimes o_{{\mathcal M}(X,H;\alpha, \alpha_+) \times_{R_{\alpha}} {\mathcal M}(X,H;\alpha_-, \alpha)} \otimes {\mathbb R}_{\alpha_-, \alpha} \\
\cong &  (-1)^{\dim {\mathcal M}(X,H;\alpha, \alpha_+)} {\mathbb R}_{\rm out} \otimes o_{{\mathcal M}(X,H;\alpha, \alpha_+) \times_{R_{\alpha}} {\mathcal M}(X,H;\alpha_-, \alpha)} \otimes {\mathbb R}_{\alpha_-, \alpha_+}.
\endaligned
\end{equation}
Here ${\mathbb R}_{\rm out}$ is the outward normal direction of $\partial {\mathcal M}(X,H;\alpha_-, \alpha_+)$. 
The first, third and fourth isomorphisms follows from \eqref{orionM}.  The second follows from \eqref{oribreak}.  
The fifth is due to the definition of the fiber product orientation on K-spaces, 
see \cite[Convention 8.2.1 (4)]{fooobook2}.  
For the sixth isomorphism, see the proof of \cite[Proposition 8.3.3]{fooobook2}. 
Thus we have proved Theorem \ref{kuraexists} (1).
\par
To prove Theorem \ref{kuraexists} (2)
we can use Lemmas \ref{lem2655} and \ref{lem2656}
to show that 
the Kuranishi structure constructed above 
induces a Kuranishi structure on the codimension $k-1$ normalized corner
$\widehat S_{k-1}({\mathcal M}_{\ell}(X,H;{\alpha_-},{\alpha_+}))$
which is the disjoint union of the spaces
${\mathcal M}_{(\ell_1,\dots,\ell_k)}(X,H;\alpha_0,\alpha_1.,\dots,\alpha_k)$
for various
 $\alpha_-=\alpha_0,\alpha_1,\dots,\alpha_{k-1},\alpha_k = \alpha_+
\in \frak A$.
Thus we have Theorem \ref{kuraexists} (2).
\par
Theorem \ref{kuraexists} (3) follows from
Definition \ref{obbundeldata1} (6) and Remark \ref{rem:412} (1).
Theorem \ref{kuraexists} (4) is a consequence of Lemma \ref{lem2612}.
\par
Therefore the proof of Theorem \ref{kuraexists} is complete.
\qed
\begin{rem}
In \cite{fooo:const1} we introduced ambient `set'  
and used it to prove cocycle condition for the coordinate 
changes between Kuranishi charts. We can adopt the method 
in our situation as follows.
\par
We first define the set ${\mathcal X}_{\ell}(X,H;{\alpha_-},{\alpha_+})$
as the set of equivalence classes of 
$(\Sigma,(z_-,z_+,\vec z),u,\varphi)$ 
which satisfies the conditions in Definition \ref{defn210} 
except (4)(5)(6)(9). In other words, we do not require 
the condition that the equations 
(pseudo-holomorphic curve equation or Floer's equation) are 
satisfied or the stability.
The definition of two such objects being equivalent is the same as 
$\sim_2$ in Definition \ref{3equivrel}.
By definition ${\mathcal M}_{\ell}(X,H;{\alpha_-},{\alpha_+})$
is a subset of ${\mathcal X}_{\ell}(X,H;{\alpha_-},{\alpha_+})$.
For any ${\bf p}\in {\mathcal M}_{\ell}(X,H;{\alpha_-},{\alpha_+})$
we define its $\epsilon$-neighborhood in ${\mathcal X}_{\ell}(X,H;{\alpha_-},{\alpha_+})$
as the subset consisting of element which is $\epsilon$-close to ${\bf p}$
in the sense of Definition \ref{orbitecloseness}.
It then defines a partial topology of the pair 
$({\mathcal X}_{\ell}(X,H;{\alpha_-},{\alpha_+}),{\mathcal M}_{\ell}(X,H;{\alpha_-},{\alpha_+}))$
in the sense of \cite[Definition 4.1]{fooo:const1}.
(This is a consequence of Lemma \ref{lem2658}.)
Thus in the same way as in \cite[Section 7]{fooo:const1}
we obtain a Kuranishi structure.
\end{rem}

\section{Compatibility of Kuranishi structures}
\label{subsec;KuramodFloercor}

In this section we complete the proof of Theorem \ref{theorem266}.
Namely, we modify the Kuranishi structures in 
Theorem \ref{kuraexists} for the Morse-Bott case so that Kuranishi structures are compatible on boundary and corners under the fiber product. 
The argument presented here has not been given in detail for the moduli space 
of solutions to Floer's equation \eqref{Fleq} 
in the previous literature.
For the case of the moduli spaces of pseudo-holomorphic disks it is written in detail in \cite{fooo:const2}.
The method of \cite{fooo:const2} is different from that of this section.
We take a different route to illustrate two different methods. Both methods in \cite{fooo:const2} and 
in this section can be applied to both situations. 
\subsection{Outer collar}\label{subsec:outer}
The statements (1) (2) of Theorem \ref{theorem266} are a part of Theorem \ref{kuraexists}
and already proved in Section \ref{subsec;KuraFloer}.
To prove (3) we will introduce 
an enhanced space ${\mathcal M}_{\ell}(X,H;{\alpha_-},{\alpha_+})^{\boxplus 1}$
of ${\mathcal M}_{\ell}(X,H;{\alpha_-},{\alpha_+})$ by putting 
a collar `outside' of ${\mathcal M}_{\ell}(X,H;{\alpha_-},{\alpha_+})$ 
and modify the Kuranishi structure on the outer collar 
${\mathcal M}_{\ell}(X,H;{\alpha_-},{\alpha_+})^{\boxplus 1} \setminus {\mathcal M}_{\ell}(X,H;{\alpha_-},{\alpha_+})$ in 
the sense of \cite[Chapter 17]{fooonewbook}.
As a topological space 
we define the space ${\mathcal M}_{\ell}(X,H;{\alpha_-},{\alpha_+})^{\boxplus 1}$ 
as follows.\footnote{See \cite[Lemma-Definition 17.29]{fooonewbook}
for the definition of the outer collar for a general K-space.}
\begin{defn}\label{def:outcollar}
We consider $(\Sigma,(z_-,z_+,\vec z),u,\varphi;\vec t)$
where 
\begin{enumerate}
\item[$\bullet$]
$((\Sigma,(z_-,z_+,\vec z),u,\varphi) \in {\mathcal M}_{\ell}(X,H;{\alpha_-},{\alpha_+})$,
\item[$\bullet$]
$\vec t$ assigns a number $t_z \in [-1,0]$ to each transit point $z$ of $\Sigma$.
\end{enumerate}
We denote by ${\mathcal M}_{\ell}(X,H;{\alpha_-},{\alpha_+})^{\boxplus 1}$ 
the set of isomorphism classes of such objects 
$(\Sigma,(z_-,z_+,\vec z),u,\varphi;\vec t)$. 
We call it the {\it outer collaring} of 
${\mathcal M}_{\ell}(X,H;{\alpha_-},{\alpha_+})$.
\par
We say a sequence $(\Sigma^j,(z^j_-,z^j_+,\vec z^{\, j}),u^j,\varphi^j;\vec t^{\, j})
\in {\mathcal M}_{\ell}(X,H;{\alpha_-},{\alpha_+})^{\boxplus 1}$
converges to $(\Sigma,(z_-,z_+,\vec z),u,\varphi;\vec t)$
if the following holds.
\begin{enumerate}
\item
$(\Sigma^j,(z^j_-,z^j_+,\vec z^{\, j}),u^j,\varphi^j)$ 
converges to 
$(\Sigma,(z_-,z_+,\vec z),u,\varphi)$ 
in the sense of Definition \ref{def2626}. 
\item
Let $z$ be a transit point of $\Sigma$.
\begin{enumerate}
\item
Suppose $j_n \to \infty$ and there exists a sequence of transit points 
$z_n \in \Sigma^{j_n}$ which converges to $z$ in an obvious sense, 
then $t^{j_n}_{z_{j_n}}$ converges to $t_{z}$.
\item
If there is no such a sequence then $t_{z} = 0$.
\end{enumerate}
\end{enumerate}
It is easy to see that we can define a topology in this way.
\end{defn}
We can show that ${\mathcal M}_{\ell}(X,H;{\alpha_-},{\alpha_+})^{\boxplus 1}$ is 
compact and Hausdorff. 
The evaluation map also extends to the outer collar so that  
it does not depend on $\vec{t}$.
Moreover by inspecting the discussion in \cite[Chapter 17]{fooonewbook}
we can show that ${\mathcal M}_{\ell}(X,H;{\alpha_-},{\alpha_+})^{\boxplus 1}$ is the 
underlying topological space 
of the outer collared space of ${\mathcal M}_{\ell}(X,H;{\alpha_-},{\alpha_+})$
with respect to the Kuranishi structure defined in the last section.
(However, we do not use this fact in this paper.)
\par
Now the main part of this construction is Proposition \ref{prop2661}.
To state it we prepare some notations.
Let 
$$
\alpha_-=\alpha_0,\alpha_1,\dots,\alpha_{m-1},\alpha_m = \alpha_+
\in \frak A.
$$
In Section \ref{subsec;compactFloer}
we denoted by 
$
{\mathcal M}_{(\ell_1,\dots,\ell_m)}(X,H;\alpha_0,\alpha_1.,\dots,\alpha_m)
$
the fiber product (\ref{2622})
\begin{equation}\label{form2641}
\aligned
&{\mathcal M}_{\ell_1}(X,H;\alpha_0,\alpha_1)
\,{}_{{\rm ev}_{+}}\times_{{\rm ev}_-} {\mathcal M}_{\ell_2}(X,H;\alpha_1,\alpha_2)
\,{}_{{\rm ev}_{+}}\times_{{\rm ev}_-} \dots \\
&{}_{{\rm ev}_+}\times_{{\rm ev}_-} {\mathcal M}_{\ell_{i+1}}(X,H;\alpha_i,\alpha_{i+1})
\,{}_{{\rm ev}_{+}}\times_{{\rm ev}_-}
\dots
\times_{{\rm ev}_-} {\mathcal M}_{\ell_m}(X,H;\alpha_{m-1},\alpha_{m}).
\endaligned
\end{equation}
Hereafter we write (\ref{form2641}) as
$$
{\mathcal M}_{\vec \ell}(X,H; \vec \alpha).
$$
We will construct a certain Kuranishi structure on
$
{\mathcal M}_{\vec \ell}(X,H; \vec \alpha) \times [-1,0]^{m-1}.
$
\begin{defn}
We put $\underline{m-1} = \{1,\dots,m-1\}$.
Let 
$$
A \sqcup B  \sqcup C = \underline{m-1}
$$ 
be a decomposition into a disjoint union.
We put $B = \{j_1,\dots,j_{b}\}$.
We define an embedding
$$
\mathcal I_{A,B,C} : [-1,0]^{b} \to [-1,0]^{m-1}
$$
by
$\mathcal I_{A,B,C}(t_1,\dots,t_b) = (s_1,\dots,s_{m-1})$ where
\begin{equation}\label{form2642}
s_i =
\begin{cases}
-1 &\text{if $i \in A$,}\\
t_k &\text{if $i = j_k$, $(k=1,\dots,b)$,\,} \\
0  &\text{if $i \in C$}.
\end{cases}
\end{equation}
We put
$$
{\rm Part }_3(m-1) = \{(A,B,C) \mid A \sqcup B  \sqcup C = \underline{m-1}\}.
$$
\end{defn}
We note that the set of images of $\mathcal I_{A,B,C}$ for various $(A,B,C) \in {\rm Part}_3(m-1)$
coincides with the set of cells of the standard cell decomposition of $[-1,0]^{m-1}$.
\par
The compatibility condition we formulate below (Condition \ref{conds2660}) describes the restriction of the
Kuranishi structure
$\widehat{\mathcal U}_{\vec \ell}(X,H; \vec \alpha)$ on
the product space
$
{\mathcal M}_{\vec \ell}(X,H; \vec \alpha) \times [-1,0]^{m-1}
$
to the image of the embedding:
\begin{equation}\label{2642form}
{\rm id} \times \mathcal I_{A,B,C} :
{\mathcal M}_{\vec \ell}(X,H; \vec \alpha) \times [-1,0]^{b}
\to
{\mathcal M}_{\vec \ell}(X,H; \vec \alpha) \times [-1,0]^{m-1}.
\end{equation}
We need more notations.
We write elements of the set $A$ as 
$A = \{ i(A,1),\dots, i(A,a)\}$ with
$i(A,1)<i(A,2) < \dots< i(A,a-1) < i(A,a)$ and consider the fiber product
\begin{equation}\label{264222}
\aligned
&{\mathcal M}_{\vec \ell_{A,1}}(X,H;\alpha_0,\dots,\alpha_{ i(A,1)})
 \\
&
\,{}_{{\rm ev}_{+}}\times_{{\rm ev}_-} {\mathcal M}_{\vec \ell_{A,2}}(X,H;\alpha_{ i(A,1)},\dots,\alpha_{ i(A,2)})
\,{}_{{\rm ev}_{+}}\times_{{\rm ev}_-} \dots \\
&\,{}_{{\rm ev}_{+}}\times_{{\rm ev}_-} {\mathcal M}_{\vec \ell_{A,j+1}}(X,H;\alpha_{ i(A,j)},\dots,\alpha_{ i(A,j+1)})
\,{}_{{\rm ev}_{+}}\times_{{\rm ev}_-}
\dots \\
&
\,{}_{{\rm ev}_{+}}\times_{{\rm ev}_-} {\mathcal M}_{\vec\ell_{A,a+1}}(X,H;\alpha_{ i(A,a)},\dots,\alpha_{m}).
\endaligned
\end{equation}
Here
\begin{equation}\label{eq:54}
\vec\ell_{A,j}
=(\ell_{i(A,j-1)+1},\dots,\ell_{i(A,j)}),
\end{equation}
and
$i(A,0) = 0$ and $i(A,a+1) = m$ by convention.
Actually the fiber product (\ref{264222}) is nothing but ${\mathcal M}_{\vec \ell}(X,H; \vec \alpha)$.
Therefore we can use \eqref{264222} to define a fiber product Kuranishi structure
on ${\mathcal M}_{\vec \ell}(X,H; \vec \alpha)$.
\begin{notation}\label{not2659}
Let $(A,B,C) \in {\rm Part}_3(m-1)$ and $j=1,\dots,a+1$.
\begin{enumerate}
\item
We put
$$
{\mathcal M}_{\vec \ell}(X,H; \vec \alpha)^+
=
{\mathcal M}_{\vec \ell}(X,H; \vec \alpha) \times [-1,0]^{m-1},
$$
where $m+1$ is the number of components of $\vec\alpha$.
Note that ${\mathcal M}_{\vec \ell}(X,H; \vec \alpha)^{\boxplus 1}$ 
in Definition \ref{def:outcollar} is a union of ${\mathcal M}_{\vec \ell}(X,H; \vec \alpha)^+$ for various $\vec\alpha$.
\item For $a,b \in \Z$ we put $[a,b]_{\Z} = [a,b] \cap \Z$
and $(a,b)_{\Z} = [a,b]_{\Z} \setminus \{a,b\}$.
\item
We decompose $C$ into
$C'_j(A) = 
[i(A,j-1),i(A,j)]_{\Z} \cap C$
and put $C_j(A) = \{ i - i(A,j-1) \mid i \in C'_j(A)\}$,
$c_j(A) = \#C_j(A)$.
(Recall $i(A,0) = 0$ and $i(A,a+1) = m$ as above.)
\item
We also put 
$$
\vec\alpha_{A,j}
= (\alpha_{i(A,j-1)},\dots,\alpha_{i(A,j)}).
$$ 
We put $\alpha_0 = \alpha_-$ and $\alpha_m = \alpha_+$
by convention. 
We include them as elements of
$\vec\alpha_{A,0}$ and $\vec\alpha_{A,a+1}$, respectively.
Note that
\begin{equation}\label{formula26505050}
{\mathcal M}_{\vec\ell_{A,j}}(X,H;\alpha_{ i(A,j-1)},\dots,\alpha_{i(A,j)})
= {\mathcal M}_{\vec\ell_{A,j}}(X,H;\vec\alpha_{A,j}).
\end{equation}
See Figure \ref{Fig2}.
\begin{figure}[htbp]
\centering
\includegraphics[scale=0.5]{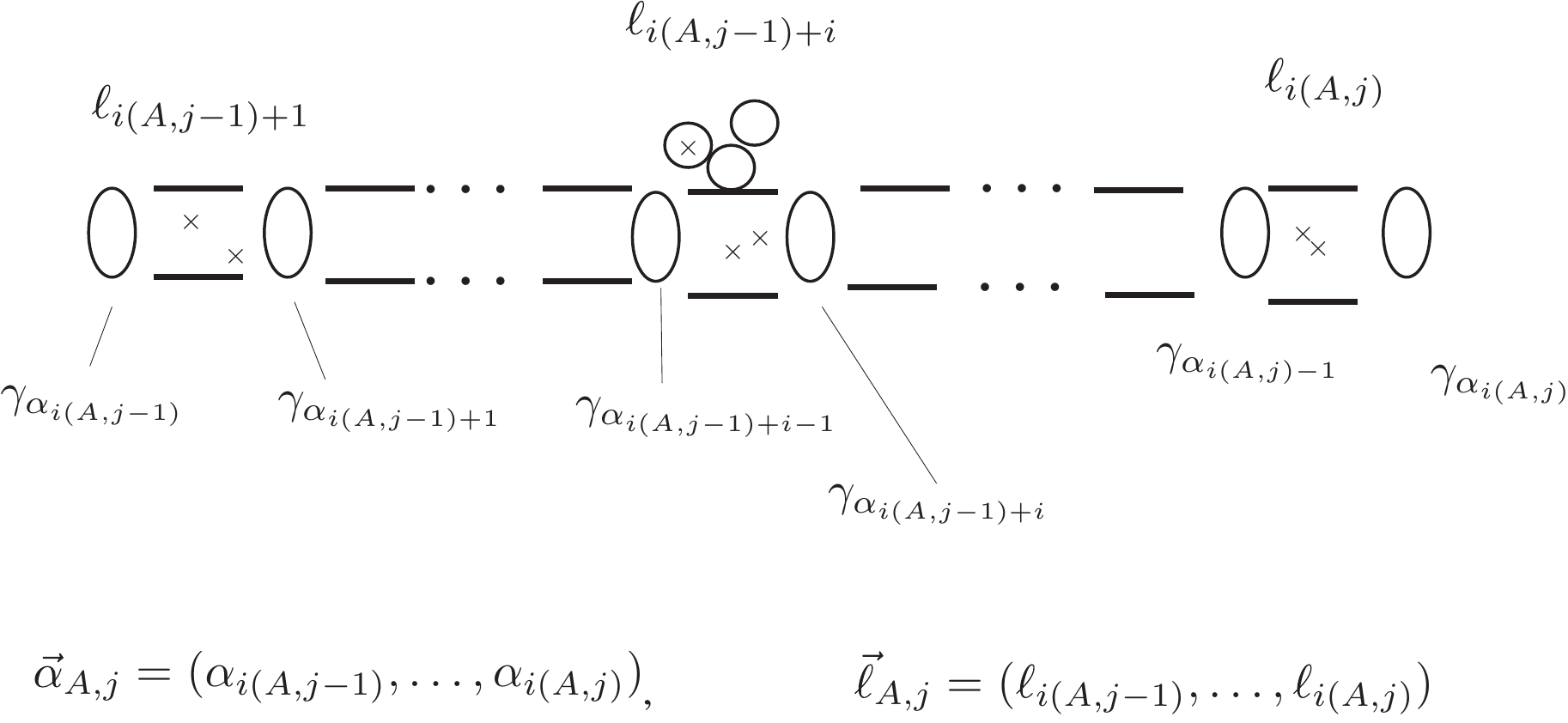}
\caption{${\mathcal M}_{\vec\ell_{A,j}}(X,H;\alpha_{ i(A,j-1)},\dots,\alpha_{ i(A,j)})
$}
\label{Fig2}
\end{figure}
\item
We remove $\{\alpha_i \mid i \in C'_j(A)\}$ from $\vec\alpha_{A,j}$ to
obtain $\vec\alpha_{A,j,C}$.
\item
We put $m_j(A) = i(A,j) - i(A,j-1)$ and 
$$
m_j(A,C) =  \# (B\cap  (i(A,j-1),i(A,j))_{\Z}) + 1.
$$
Then we have 
\begin{equation}\label{form264555}
\sum_{j=0}^{a+1} (m_j(A,C)-1)
= \#B=b.
\end{equation}
\item
We define $\vec \ell_{A,j,C}$ as follows.
Let
$\vec{\alpha}_{A,j,C}
= \{\alpha_{i(A,j-1) + k_s} \mid s =0, \dots, m_j(A,C)\}$.
Here $k_0 = 0 < k_1 < \dots < 
k_{m_j(A,C)}= i(A,j) - i(A,j-1)$.
Note if
$i \in ( i(A,j-1) + k_s,i(A,j-1) + k_{s+1})_{\Z}$, then $i \in C'_j(A)$.
We put
$$
\ell_{A,j,C,s} = \ell_{i(A,j-1)+k_{s-1} +1} + \dots
+ \ell_{i(A,j-1)+k_{s}}
$$
for $s=1,\dots , m_j(A,C)$ 
and
$\vec \ell_{A,j,C}
= (\ell_{A,j,C,1},\dots,\ell_{A,j,C,m_j(A,C)})$.
See Figure \ref{Fig3}.
\begin{figure}[htbp]
\centering
\includegraphics[scale=0.4]{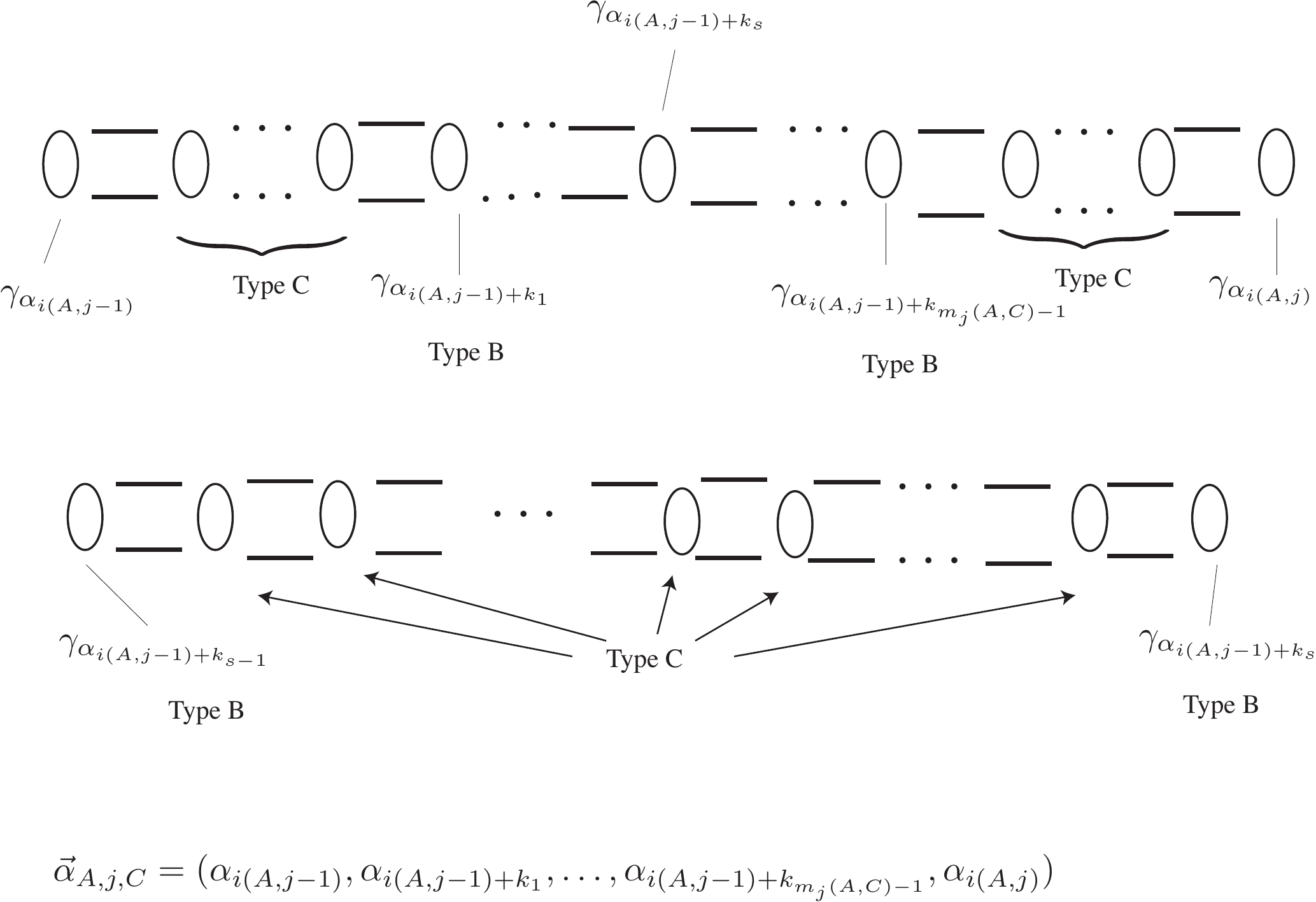}
\caption{$\vec{\alpha}_{A,j,C}$ and $\vec \ell_{A,j,C}$}
\label{Fig3}
\end{figure}
\end{enumerate}
\end{notation}
\par
Now the compatibility condition we require is described as follows.
\begin{conds}\label{conds2660}
The restriction of the Kuranishi structure
$\widehat{\mathcal U}_{\vec \ell}(X,H; \vec \alpha)$ on the space
$
{\mathcal M}_{\vec \ell}(X,H; \vec \alpha)^+
$
to the image of the embedding
(\ref{2642form})
coincides with the fiber product of the restrictions of the
Kuranishi structures $\widehat{\mathcal U}_{\vec \ell_{A,j,C}}(X,H; \vec \alpha_{A,j,C})$
to ${\mathcal M}_{\vec \ell_{A,j}}(X,H; \vec \alpha_{A,j})
\times [-1,0]^{m_j(A,C)-1}$.
\end{conds}
We note that 
$\widehat{\mathcal U}_{\vec \ell_{A,j,C}}(X,H; \vec \alpha_{A,j,C})$
is a Kuranishi structure on the direct product space
${\mathcal M}_{\vec \ell_{A,j,C}}(X,H; \vec \alpha_{A,j,C})^+ =
{\mathcal M}_{\vec \ell_{A,j,C}}(X,H; \vec \alpha_{A,j,C}) \times 
[-1,0]^{m_{j}(A,C)-1}$ $(j=1,\dots,a+1)$
and
${\mathcal M}_{\vec\ell_{A,j}}(X,H;\vec\alpha_{A,j})$ is a
component of the normalized corner\footnote{See \cite[Definition 24.18]{fooonewbook} for the normalized corner.} 
$$
\widehat{S}_{c_j(A)}({\mathcal M}_{\vec\ell_{A,j,C}}(X,H;\vec\alpha_{A,j,C})).
$$
\par
Therefore we can restrict the Kuranishi structure
$\widehat{\mathcal U}_{\vec \ell_{A,j,C}}(X,H; \vec \alpha_{A,j,C})$
to a Kuranishi structure on
$ {\mathcal M}_{\vec\ell_{A,j}}(X,H;\vec\alpha_{A,j}) \times  [-1,0]^{m_j(A,C)-1}$.
We take the fiber product of them for various $j$  using
(\ref{264222})
and obtain a Kuranishi structure on
$
{\mathcal M}_{\vec \ell}(X,H; \vec \alpha) \times [-1,0]^{b}
$.
(Note we use (\ref{form264555}) here.)
\par
Condition \ref{conds2660} requires
that this Kuranishi structure
coincides with the restriction of
$\widehat{\mathcal U}_{\vec \ell}(X,H; \vec \alpha)$
to the image of (\ref{2642form}).
Let us elaborate on Condition \ref{conds2660}.
Formula
(\ref{form2642}) shows that at the image of
(\ref{2642form}) we have $s_i = -1$  for $i \in A$
and $s_i = 0$ for $i \in C$.
\par
We are taking a fiber product over $R_{\alpha_{i(A,j)}}$.
Therefore we take the fiber product corresponding to
the singular points for which $s_i = -1$.
This is related to the compatibility of the Kuranishi structures at the boundary and corners.
(Theorem \ref{theorem266} (3) and \cite[Condition 16.1 (X)]{fooonewbook}.)
\par
Let us consider the part $s_i = 0$.
For simplicity of notation, we explain the case $m=2$ and $C = \{1\}$.
We consider
${\mathcal M}(X,H; \alpha_-,\alpha) \times_{R_{\alpha}} {\mathcal M}(X,H; \alpha,\alpha_+) \times \{0\}$.
Condition \ref{conds2660} in this case means that the Kuranishi structure there is the restriction of the Kuranishi structure on 
${\mathcal M}(X,H; \alpha_-,\alpha_+)$.
This condition is used to glue
${\mathcal M}(X,H; \alpha_-,\alpha) \times_{R_{\alpha}} {\mathcal M}(X,H; \alpha,\alpha_+) \times [-1,0]$ with
${\mathcal M}(X,H; \alpha_-,\alpha_+)$
there.

\begin{prop}\label{prop2661}
There exists a K-system  
$$
\{({\mathcal M}_{\vec \ell}(X,H; \vec \alpha)^+, ~\widehat{\mathcal U}_{\vec \ell}(X,H; \vec \alpha))\}
$$
for various $\vec \ell, \vec \alpha$
with the following properties.
\begin{enumerate}
\item
They satisfy Condition \ref{conds2660}.
\item
Let $\frak C$ be\footnote{In \cite[Situation 18.1]{fooonewbook} the symbol 
$\frak C$ is used to indicate a certain specific set of components among boundary components and to define the notion of `$\frak C$-partial outer collars' (\cite[Definition 18.9]{fooonewbook}). In the current situation this $\frak C$ stands for the boundary components arising from the 
$[-1,0]^{m-1}$ factors.} the union of the components of
$
\partial{\mathcal M}_{\vec \ell}(X,H; \vec \alpha)^+
$
which are in ${\mathcal M}_{\vec \ell}(X,H; \vec \alpha)
\times \partial ([-1,0]^{m-1})$.
Then
$\widehat{\mathcal U}_{\vec \ell}(X,H; \vec \alpha)$
is $\frak C$-collared in the sense of Remark \ref{defn1531revrev} below.
\item
For the case $\vec \alpha = (\alpha_-,\alpha_+)$, 
$\widehat{\mathcal U}_{\ell}(X,H; (\alpha_-,\alpha_+))$
coincides with the Kuranishi structure we produced in
Theorem \ref{kuraexists}.
\end{enumerate}
\end{prop}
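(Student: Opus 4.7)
The plan is to construct the K-system by a double induction: the outer induction is on the energy difference $E(\alpha_+) - E(\alpha_-)$, and the inner induction is on the length $m+1$ of $\vec\alpha$, ordered lexicographically. For the base case $m=1$ the cube $[-1,0]^{m-1}$ collapses to a point and conditions (1), (2) are vacuous; I take $\widehat{\mathcal U}_{\ell}(X,H;(\alpha_-,\alpha_+))$ to be the Kuranishi structure produced by Theorem~\ref{kuraexists}, which simultaneously discharges condition (3). Gromov compactness guarantees that only finitely many classes with a given energy bound contribute, so the induction is well-founded.

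For the inductive step, suppose all K-structures of strictly smaller complexity have been constructed. Using Condition~\ref{conds2660} at each face $(A,B,C) \in {\rm Part}_3(m-1)$ with $A \cup C \ne \emptyset$, the restriction of the desired $\widehat{\mathcal U}_{\vec\ell}(X,H;\vec\alpha)$ to the image of ${\rm id} \times \mathcal I_{A,B,C}$ is forced: it must be the fiber product, taken over the critical submanifolds $R_{\alpha_{i(A,j)}}$, of the inductively constructed Kuranishi structures $\widehat{\mathcal U}_{\vec\ell_{A,j,C}}(X,H;\vec\alpha_{A,j,C})$ restricted to the appropriate corners via \eqref{264222}. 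Since every $\vec\alpha_{A,j,C}$ is strictly shorter than $\vec\alpha$ (or has strictly smaller energy), these are available by induction. I first check that these face-wise prescriptions are mutually consistent on overlapping faces; this follows from the associativity of fiber products of K-spaces together with the chain of identifications furnished by Condition~\ref{conds2660} at smaller complexity. The inductive $\frak C$-collared property (2) ensures that near each face the Kuranishi structure decomposes as a genuine direct product with the collar factor, so the fiber products being compared are literal products in that neighborhood and the identifications are tautological.

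With a well-defined Kuranishi structure on the boundary ${\mathcal M}_{\vec\ell}(X,H;\vec\alpha) \times \partial([-1,0]^{m-1})$ thus obtained, I invoke the outer collaring construction of \cite[Chapter 17]{fooonewbook} to extend it to a $\frak C$-collared tubular neighborhood of that boundary in ${\mathcal M}_{\vec\ell}(X,H;\vec\alpha)^+$, which automatically secures condition (2) at the current stage. The extension from this collared neighborhood to the whole product is then carried out by reapplying the obstruction bundle data construction of Section~\ref{subsec;KuraFloer}: I choose additional obstruction bundle data centered at points in the interior of $[-1,0]^{m-1}$, declare the given collared data as the data on the rest, and stitch the resulting charts together via the cocycle-type argument of Lemmas~\ref{lem2657} and~\ref{lem2658}. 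The isomorphism of orientation bundles, as well as Theorem~\ref{kuraexists}(3)(4) which propagate to the current K-structure, are preserved since both constructions are compatible with them.

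The main obstacle is the consistency verification in the second step: a single face $(A,B,C)$ can be reached in many equivalent ways as an iterated corner of lower strata, and the associated iterated fiber-product Kuranishi structures coming from different routes must all coincide. Arranging this combinatorial coherence is precisely why the outer collar $[-1,0]^{m-1}$ is introduced — it provides the room to deploy $\frak C$-collared structures, which reduce the fiber-product comparisons to products of manifolds with corners, where associativity is immediate from the commutativity of direct products. Without the collar the competing fiber-product Kuranishi structures would in general fail to agree on the nose, which is exactly the incompatibility pointed out in Remark~\ref{rem628}; the whole point of the outer collar machinery here is to absorb that mismatch into the collar parameter.
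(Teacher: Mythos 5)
Your overall skeleton (fix the structure for $\vec\alpha=(\alpha_-,\alpha_+)$ by Theorem~\ref{kuraexists}, treat the faces of $[-1,0]^{m-1}$ as forced by Condition~\ref{conds2660} applied to shorter data, and induct on the length of $\vec\alpha$) is the right shape, and it matches the inductive organization of the paper's Lemma~\ref{lem26689}. But the step you dispose of in one sentence --- ``choose additional obstruction bundle data centered at points in the interior \dots and stitch'' --- is where essentially all of the content of the paper's proof lives, and as written it has a genuine gap. A Kuranishi chart at a point of a face must be an open chart in the \emph{whole} space ${\mathcal M}_{\vec\ell}(X,H;\vec\alpha)\times[-1,0]^{m-1}$, so each face prescription already commits you to obstruction spaces on a full neighborhood of that face in the interior of the cube; these neighborhoods overlap each other and overlap the neighborhoods of the new interior charts, and on every overlap the relevant obstruction space is the \emph{sum} of all contributing spaces, which must be a direct sum. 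You cannot arrange this by generic perturbation, because the face data is frozen by your induction (and, via Condition~\ref{conds2626}, by the choices already made for shorter $\vec\alpha$ and for the glued moduli spaces). The paper's solution is the combinatorial covering $\{\mathcal V(A,B,C)\}$ of the entire cube (Definition~\ref{defn2659}), the resulting index sets $\mathcal B(\vec t)$ and $\frak F({\bf q},\vec t)$ (Definitions~\ref{def:B}, \ref{defn27677}) which specify exactly which face prescriptions contribute at a given interior $\vec t$, and Lemma~\ref{lem26689}, whose proof orders the contributions by the face relation on $\mathcal B(\vec t)$ so that only the single maximal (``new'') summand remains to be perturbed. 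Nothing in your proposal plays the role of this covering or of the direct-sum lemma, and without it the ``stitching'' of charts does not go through.

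Two smaller corrections. First, your appeal to ``the outer collaring construction of Chapter 17'' to extend the boundary data to a collared neighborhood is a misattribution: that construction attaches a collar to the outside of a given K-space; it does not extend a boundary-prescribed Kuranishi structure inward. The collaredness in Proposition~\ref{prop2661}~(2) is instead secured by building the product behavior into the covering itself (Definition~\ref{defn2659}~(6)). Second, your consistency check on overlapping faces is under-argued: the intersection of a $t_i=-1$ face (fiber product) with a $t_j=0$ face (restriction to a corner of a moduli space with the $j$-th transit point smoothed) requires compatibility of fiber products with restriction to normalized corners \emph{and} the inductively known Condition~\ref{conds2660} for the shorter data; associativity of fiber products alone does not cover the mixed case. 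The energy induction in your outer loop is harmless but plays no role in the paper's argument, where the induction is purely on $m$.
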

We remark that
$
{\mathcal M}_{\ell}(X,H; (\alpha_-,\alpha_+))^+
=
{\mathcal M}_{\ell}(X,H; (\alpha_-,\alpha_+))
$.
So the statement (3) above makes sense.
\begin{rem}\label{defn1531revrev}
\begin{enumerate}
\item
The definition of $\frak C$-collared-ness of a K-space 
can be seen in \cite[Definition 18.9]{fooonewbook} in the abstract setting.
However, in our current situation we can describe 
the $\frak C$-collared-ness of the Kuranishi structure more explicitly as follows: 
\par
For a decomposition of the set $\underline{m-1}$ into three disjoint union $(S,M,L) \in {\rm Part}_3(m-1)$,
we define $I(S,M,L)$ to be the set of $(t_1,\dots,t_{m-1}) \in [-1,0]^{m-1}$
such that
$t_i \in [-1,-1+\tau]$ if $i \in S$, $t_i \in [-\tau,0]$ if $i \in L$,
and $t_i \in [-1+\tau,-\tau]$ if $i \in M$.
We consider the retraction $I(S,M,L) \to [-1+\tau,-\tau]^{\#M}$ 
by forgetting $t_i$ for $i\notin M$.
We embed  $[-1+\tau,-\tau]^{\#M}$ into $I(S,M,L)$ 
by putting $t_i =-1$ if $i\in S$, $t_i = 0$ if $i \in L$.
Let $\pi: \partial{\mathcal M}_{\vec \ell}(X,H; \vec \alpha)^+ 
\to [-1,0]^{m-1}$ be the obvious projection.
Now we require that the restriction of our Kuranishi structure 
to $\pi^{-1}(I(S,M,L))$ is obtained by a pull back 
of one on the image of $\pi^{-1}([-1+\tau,-\tau]^{\#M})$.
See Figure \ref{FigRemark56}.
\begin{figure}[htbp]
\centering
\includegraphics[scale=0.5]{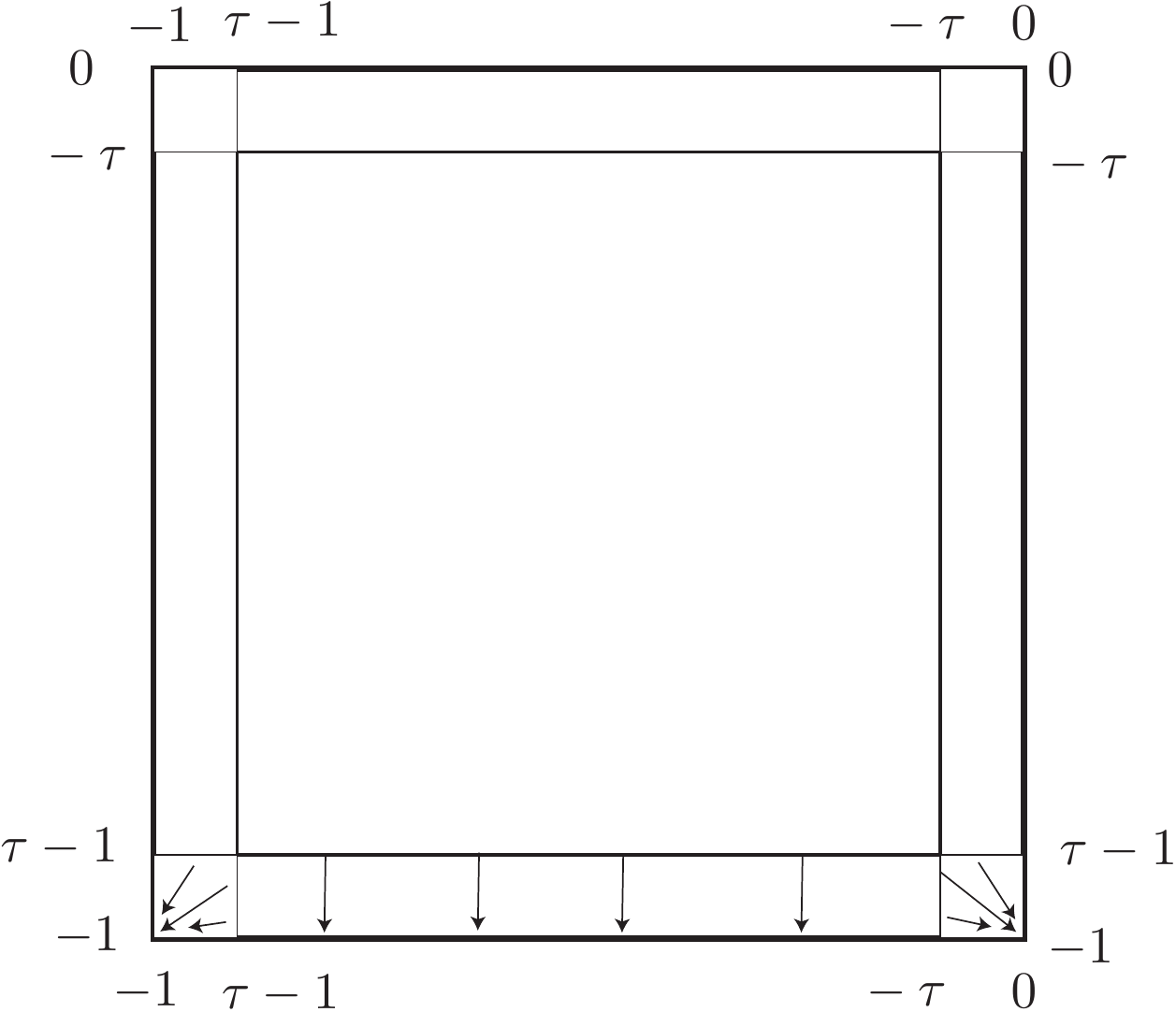}
\caption{$\frak C$-collared-ness and the retraction $I(S,M,L) \to [-1+\tau,-\tau]^{\#M}$}
\label{FigRemark56}
\end{figure}
\item
Indeed we use the $\frak C$-collared-ness in the proof of Theorem \ref{theorem266} below.
\end{enumerate}
\end{rem}
\begin{proof}
[Proof of
Theorem \ref{theorem266} assuming Proposition \ref{prop2661}]
We note that there is a retraction:
\begin{equation}\label{retraction}
{\mathcal R} : {\mathcal M}_{\ell}(X,H; (\alpha_-,\alpha_+))^{\boxplus 1} \to 
{\mathcal M}_{\ell}(X,H; (\alpha_-,\alpha_+)).
\end{equation}
This is a map which sends 
$(\Sigma,(z_-,z_+,\vec z),u,\varphi,\vec t)$
to $(\Sigma,(z_-,z_+,\vec z),u,\varphi)$. 
It is easy to see that the inverse image 
${\mathcal R}^{-1}(z)$ is $[-1,0]^m$ if and only if
$
z \in \overset{\circ}S_m({\mathcal M}_{\ell}(X,H; (\alpha_-,\alpha_+)))
$
Here $\overset{\circ}S_m({\mathcal M}_{\ell}(X,H; (\alpha_-,\alpha_+)))$
is the set of points in codimension $m$ corners 
of ${\mathcal M}_{\ell}(X,H; (\alpha_-,\alpha_+))$ which do not lie in higher
codimension corners. 
In fact, $\overset{\circ}S_m({\mathcal M}_{\ell}(X,H; (\alpha_-,\alpha_+)))$
consists of $(\Sigma,(z_-,z_+,\vec z),u,\varphi)$ 
such that $\Sigma$ has exactly $m$ transit points.
\par
By definition the original Kuranishi structure 
on $
{\mathcal M}_{\vec \ell}(X,H; \vec \alpha)^+
$
is the direct product of the Kuranishi structure on
$
{\mathcal M}_{\vec \ell}(X,H;\vec \alpha)
$
(which is the restriction of the Kuranishi structure on
${\mathcal M}_{\vec \ell}(X,H; \alpha_-,\alpha_+)$ given in
Theorem  \ref{kuraexists}) and the trivial
Kuranishi structure on $[-1,0]^{m-1}$.
(See \cite[Lemma-Definition 17.38]{fooonewbook}.)
\par
We replace the direct product Kuranishi structure on 
$
{\mathcal M}_{\vec \ell}(X,H; \vec \alpha) \times [-1,0]^{m-1}
$ by $\widehat{\mathcal U}_{\vec \ell}(X,H; \vec \alpha)$
given in Proposition \ref{prop2661}.
\par
We first check that those Kuranishi structures $\widehat{\mathcal U}_{\vec \ell}(X,H; \vec \alpha)$
can be glued to give a Kuranishi structure on ${\mathcal M}_{\ell}(X,H; (\alpha_-,\alpha_+))^{\boxplus 1}$.
\par
Because of Proposition \ref{prop2661} (2),
it suffices to check that they are
compatible for various
$
{\mathcal M}_{\vec \ell}(X,H; \vec \alpha) \times [-1,0]^{m-1}
$ at their intersection points. (Here the $\frak C$-collared-ness is used as we  mentioned in Remark \ref{defn1531revrev} (2).)
\par
The compatibility of two different members of the set $\{\widehat{\mathcal U}_{\vec \ell}(X,H; \vec \alpha)\}$
at the overlapped part 
is a consequence of
Condition \ref{conds2660}.
Indeed it follows from the case when $A = \emptyset$ of Condition \ref{conds2660}.
(Note that they are glued at the part where a certain coordinate 
of the $[-1,0]^{m-1}$ factor is $0$.)
The compatibility of $\widehat{\mathcal U}_{\vec \ell}(X,H; \vec \alpha)$
with the Kuranishi structure on 
$
{\mathcal M}_{\ell}(X,H;\alpha_-, \alpha_+)
$
given by Theorem \ref{kuraexists}
follows from Condition \ref{conds2660} and Proposition \ref{prop2661} (3).
\par
We thus obtain a Kuranishi structure on ${\mathcal M}_{\ell}(X,H; (\alpha_-,\alpha_+))^{\boxplus 1}$.
It is immediate from construction that it satisfies Theorem \ref{theorem266} (4).
\par
It remains to prove Theorem \ref{theorem266} (3).
The main point we need to prove is the compatibility of our
Kuranishi structures at the boundary and corners
\cite[Conditions 16.1 (X) (IX)]{fooonewbook}. 
This is the condition on the Kuranishi structure at the point where
a certain coordinate of the $[-1,0]^{m-1}$ factor is $-1$.
This is a consequence of Condition \ref{conds2660}.
More precisely, the case $C = \emptyset$ of Condition \ref{conds2660}
implies 
\cite[Conditions 16.1 (X) (IX)]{fooonewbook}.
\par
The other defining conditions for our Kuranishi structures to form 
a linear K-system are easy to check.
(The periodicity of linear K-system \cite[Conditions 16.1 (VIII)]{fooonewbook}
follows from Remark \ref{rem43}.)
The proof of Theorem \ref{theorem266} is now complete.
\end{proof}

\subsection{Proof of Proposition \ref{prop2661} I: Obstruction space with outer collar}\label{subsec:outerObst}
This subsection and the next are occupied with the proof of Proposition \ref{prop2661}.
In this subsection we define an obstruction space of Kuranishi chart at each point in 
$\mathcal M_{\vec\ell}(X,H;\alpha)^+$ 
(Definition \ref{def:513}). 
Then we will construct 
a desired Kuranishi structure and complete the proof of Proposition \ref{prop2661} in the next subsection.
\begin{proof}[Proof of Proposition \ref{prop2661}]
Let $(A,B,C) \in {\rm Part}_3(m-1)$, $\#B = b$ and $(D,E,F) \in {\rm Part}_3(b)$.
We define $(A,B(D,E,F),C) \in {\rm Part}_3(m-1)$ as follows.
We put $B = \{j_1,\dots,j_{b}\}$.
Then $(A,B(D,E,F),C) = (A',B',C')$ such that
\begin{enumerate}
\item
$A' \supseteq A$, $C' \supseteq C$.
\item
$j_i \in A'$ if $i \in D$.
\item
$j_i \in B'$ if $i \in E$.

\item $j_i \in C'$ if $i \in F$.
\end{enumerate}
Note (1)-(4) above is equivalent to $\mathcal I_{A,B,C} \circ \mathcal I_{D,E,F} = \mathcal I_{A',B',C'}$.
\begin{defn}\label{defn2659}
We consider a system of closed subsets ${\mathcal V}(A,B,C)$ of $[-1,0]^{m-1}$
over $m=1,2, \dots$ and  $(A,B,C) \in {\rm Part}_3(m-1)$ with the following properties.
\begin{enumerate}
\item
$$
\bigcup_{(A,B,C) \in {\rm Part}_3(m-1) } {\rm Int}\,{\mathcal V}(A,B,C)
= [-1,0]^{m-1}.
$$
\item
If $(A,B(D,E,F),C) = (A',B',C')$, then
$$
(\mathcal I_{A,B,C})^{-1}({\mathcal V}(A',B',C')) = {\mathcal V}(D,E,F).
$$
\item
If $\sigma \in {\rm Perm}(m-1)$, then
$$
\sigma({\mathcal V}(A,B,C)) = {\mathcal V}(\sigma A,\sigma B,\sigma C).
$$
Here $\sigma$ acts on $[-1,0]^{m-1}$ by permutation of the factors
and to ${\rm Part}_3(m-1)$ in an obvious way.
\item
If $(A,B,C), (A',B',C') \in {\rm Part}_3(m-1)$ and
${\mathcal V}(A,B,C) \cap {\mathcal V}(A',B',C')$ is nonempty, then
either ${\rm Im}(\mathcal I_{A,B,C})$ is a face of ${\rm Im}(\mathcal I_{A',B',C'})$
or  ${\rm Im}(\mathcal I_{A',B',C'})$  is a face of ${\rm Im}(\mathcal I_{A,B,C})$.
\item
Let $\vec t=(t_1,\dots,t_{m-1}) \in {\mathcal V}(A,B,C)$.
If $t_i =-1$ then $i \in A$. If $t_i=0$ then $i \in C$.
\item
${\mathcal V}(A,B,C)$ is a direct product in each sufficiently small neighborhood of
the strata of $[-1,0]^{m-1}$ in the following sense.
Let $\vec t \in S_{\ell}([-1,0]^{m-1})$ and
take its neighborhood which is isometric to 
$\sigma(U \times (-\epsilon,0]^p \times [-1,-1+\epsilon)^{q})$
for some $\sigma \in {\rm Perm}(m-1)$, $\epsilon >0$ with $p+q=\ell$.
(Here $U$ is isometric to an open set of $\R^{m-1-\ell}$.)
We then require 
$$
{\mathcal V}(A,B,C) \cap (\sigma(U \times (-\epsilon,0]^p \times [-1,-1+\epsilon)^{q}))
$$
is isometric to the direct product $({\mathcal V}(A,B,C) \cap \sigma(U \times \{0\}\times \{0\})) \times [0,\epsilon)^{\ell}$.
\end{enumerate}
\end{defn}
\begin{lem}
There exists a system of closed subsets ${\mathcal V}(A,B,C)$ satisfying
(1)-(6) in Definition \ref{defn2659}.
\end{lem}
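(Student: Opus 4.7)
The plan is to construct the sets $\mathcal V(A, B, C)$ explicitly by interval constraints on each coordinate. I will fix a strictly decreasing positive sequence $\psi(1) > \psi(2) > \cdots > 0$ with $\psi(1) < 1/2$, and for each $m \ge 1$ set $\delta_k^{(m)} := \psi(m - k)$ for $k = 0, 1, \ldots, m$; this makes $\delta_k^{(m)}$ strictly increasing in $k$ for fixed $m$. Then I define
\[
\mathcal V(A, B, C) := \Big\{ \vec t \in [-1, 0]^{m-1} : t_i \in J_i \text{ for each } i \Big\},
\]
where, with $k = |A|+|C|$, the interval $J_i$ is $[-1, -1 + \delta_k^{(m)}]$ if $i \in A$, $[-\delta_k^{(m)}, 0]$ if $i \in C$, and $[-1 + \delta_{k+1}^{(m)}, -\delta_{k+1}^{(m)}]$ if $i \in B$. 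Geometrically, faces of smaller dimension receive thicker product neighborhoods, and at each level there is a deliberate gap between the $A/C$-ranges and the $B$-range.

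Conditions (3) and (5) will be immediate from the formula. Condition (6) will follow because, in any neighborhood of a boundary stratum where some coordinates are forced into $[-1, -1+\epsilon)$ or $(-\epsilon, 0]$ with $\epsilon$ smaller than all the $\delta$'s, condition (5) forces those coordinates into the $A$- or $C$-role, so their defining intervals swallow the entire collar and $\mathcal V(A, B, C)$ factors as a product. Condition (2) will rest on the identity
\[
\delta_{k+k''}^{(m)} = \psi((m-k) - k'') = \delta_{k''}^{(b+1)} \qquad \text{for } b = m - 1 - k = |B|,
\]
which makes the pull-back of the ambient intervals along $\mathcal I_{A, B, C}$ coincide exactly with the intrinsic intervals on $[-1, 0]^b$. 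For the nerve condition (4), I will show that if $\mathcal V(A, B, C) \cap \mathcal V(A', B', C') \ne \emptyset$ with $k \le k'$, then the smallness $\psi(1) < 1/2$ forces $A \cap C' = C \cap A' = \emptyset$ while the strict inequality $\delta_{k+1}^{(m)} > \delta_k^{(m)}$ (coming from $\psi$ decreasing) forces $A \cap B' = C \cap B' = \emptyset$, so $A \subseteq A'$ and $C \subseteq C'$, making $F_{(A', B', C')}$ a sub-face of $F_{(A, B, C)}$. For the cover condition (1), given $\vec t$ I will sort $d_i := \min(t_i + 1, -t_i)$ in ascending order (with convention $d_{(m)} := +\infty$), pick the smallest $k^* \in \{0, \ldots, m-1\}$ with $d_{(k^*+1)} \ge \delta_{k^*+1}^{(m)}$, and assign the $k^*$ indices of smallest $d_i$ to $A$ or $C$ according to the sign of $t_i + 1/2$; one then checks directly that $\vec t$ lies in the corresponding $\mathcal V(A, B, C)$.

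The main obstacle to anticipate is reconciling conditions (2) and (4) simultaneously: condition (2) forces $\delta_k^{(m)}$ to depend only on the combination $m - k$, whereas condition (4) demands strict monotonicity of $\delta_k^{(m)}$ in $k$ for fixed $m$. The ansatz $\delta_k^{(m)} = \psi(m - k)$ with $\psi$ strictly decreasing is essentially the unique scaling that satisfies both, and it is this precise choice which makes the six conditions of Definition \ref{defn2659} fit together into a coherent system.
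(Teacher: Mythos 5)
Your overall strategy --- writing $\mathcal V(A,B,C)$ as an explicit product of intervals whose widths depend only on $b=\#B$ (equivalently on $m-k$), and verifying (2) via the identity $\delta_{k+k''}^{(m)}=\delta_{k''}^{(b+1)}$ --- is exactly the route the paper takes in its explicit Example \ref{ex:V} (the paper's formal proof is a soft induction on $m$, but the example it records is precisely your construction with $\psi(j)=3^{-j}$ and a different $B$-margin). Your verifications of (2), (3), (4), (5), (6) are sound.

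However, your choice of the $B$-interval makes condition (1) fail. You put the $B$-coordinates in $[-1+\delta_{k+1}^{(m)},-\delta_{k+1}^{(m)}]$, while the triple obtained by moving one $B$-coordinate $i$ into $A$ has level $k+1$ and assigns to that coordinate the interval $[-1,-1+\delta_{k+1}^{(m)}]$. These two intervals meet \emph{only} at the endpoint $-1+\delta_{k+1}^{(m)}$, so their interiors are disjoint and the point in between is uncovered. Concretely, for $m=2$ the three sets are $[-1,-1+\psi(1)]$, $[-1+\psi(1),-\psi(1)]$, $[-\psi(1),0]$, and the two points $t=-1+\psi(1)$, $t=-\psi(1)$ lie in no interior, so $\bigcup\operatorname{Int}\mathcal V(A,B,C)\ne[-1,0]$. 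Your covering recipe conceals this: minimality of $k^*$ gives only $d_{(k^*+1)}\ge\delta_{k^*+1}^{(m)}$, which places $\vec t$ in the closed set but not in its interior when equality holds. The fix is to push the $B$-interval outward past where the refined triples' $A/C$-intervals end: take the $B$-coordinates to lie in $[-1+\mu_k,-\mu_k]$ with $\delta_k^{(m)}<\mu_k<\delta_{k+1}^{(m)}$ and $\mu_k$ a function of $m-k$ only (e.g.\ $\mu_k=2\psi(m-k)$ with $\psi(j)=3^{-j}$, which is the paper's choice). This preserves your arguments for (2)--(6) --- for (4) one only needs $\mu_{k'}>\delta_k^{(m)}$ when $k\le k'$, which still holds --- and restores the open cover. (A cosmetic point: $\delta_m^{(m)}=\psi(0)$ is undefined by your sequence, though it is never needed since $B=\emptyset$ when $k=m-1$.)
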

\begin{proof}
The proof is by induction on $m$.
There is nothing to prove in the case $m=1$.
In the case $m=2$ we consider the interval $[-1,0] = [-1,0]^{2-1}$.
${\rm Part}_3(1)$ consists of exactly 3 elements
such that the barycenter of the image of $\mathcal I_{A,B,C}$ are $-1, -1/2, 0$, respectively.
We take ${\mathcal V}(A,B,C) =
[-1,-3/5]$, $[-4/5,-1/5]$, $[-2/5,0]$, respectively, for example.
\par
Suppose we defined  ${\mathcal V}(A,B,C)$ for $(A,B,C)  \in {\rm Part}_3(m'-1)$ with $m' < m$.
We consider ${\mathcal V}(A,B,C)$ for $(A,B,C) \in {\rm Part}_3(m-1)$.
The induction hypothesis and Definition \ref{defn2659} (2)(5)
determine ${\mathcal V}(A,B,C) \cap \partial [-1,0]^{m-1}$.
For example, in the case  $\mathcal A_0 = (A,B,C)$ where $B = \underline{m-1}$, 
$A= C = \emptyset$,
we have
${\mathcal V}(\mathcal A_0) \cap \partial [-1,0]^{m-1} = \emptyset$.
We can define ${\mathcal V}(A,B,C)$
for the case $(A,B,C) \ne \mathcal A_0$ by
taking a small neighborhood of ${\mathcal V}(A,B,C) \cap \partial [-1,0]^{m-1}$.
Properties (2)(4)(5) hold by construction and we can choose 
${\mathcal V}(A,B,C)$
so that (3)(6) hold also.
The union of the interiors of such ${\mathcal V}(A,B,C)$'s contains $\partial [-1,0]^{m-1}$.
Now we can choose ${\mathcal V}(\mathcal A_0)$ so that all of (1)-(6) are satisfied.
See Figure \ref{Fig23.2}.
\end{proof}
\begin{exm}\label{ex:V}
Here is an example of $\mathcal V(A,B,C)$.
Recall $b=\# B$.
If $i \in A$, $-1 \leq t_i \leq (1-3^{b+1})/3^{b+1}$.
If $i \in B$, $(2-3^{b+1})/3^{b+1} \leq t_i \leq -2/3^{b+1}$. 
If $i \in C$, $-1/3^{b+1} \leq t_i \leq 0$.
\end{exm}
\begin{figure}[htbp]
\centering
\includegraphics[scale=0.5]{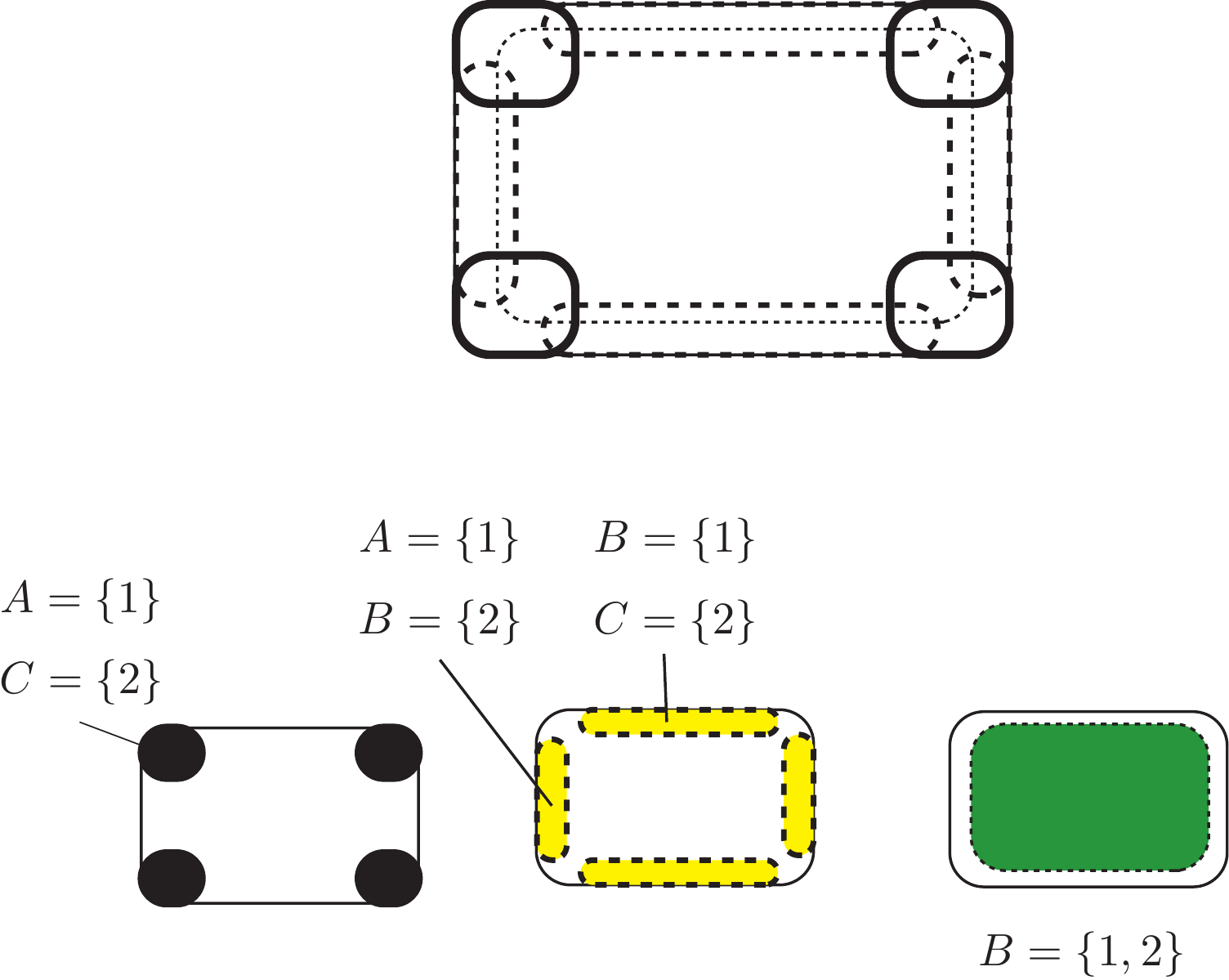}
\caption{${\mathcal V}(A,B,C)$}
\label{Fig23.2}
\end{figure}
\par\smallskip
Now we take and fix a system $\{{\mathcal V}(A,B,C)\}$ of closed subsets 
in Definition \ref{defn2659}.
Let $({\bf q},\vec t) \in {\mathcal M}_{\vec \ell}(X,H; \vec \alpha)^+$.
\begin{defn}\label{def:B}
For each given point $\vec t \in [-1,0]^{m-1}$, 
we put
$$
\mathcal B(\vec t)
=
\{(A,B,C) \in {\rm Part}_3(m-1) \mid \vec t \in {\mathcal V}(A,B,C)\}.
$$
\end{defn}
To each  $({\bf q},\vec t) \in {\mathcal M}_{\vec \ell}(X,H; \vec \alpha)^+$ and $\mathcal A \in \mathcal B(\vec t)$,
we are going to define a finite dimensional linear subspace
$$
E_{({\bf q},\vec t)}({\bf q};\mathcal A)
\subset
C^{\infty}(\Sigma_{\bf q},u_{\bf q}^*TX \otimes \Lambda^{0,1}).
$$
Their direct sum will be the obstruction space of the Kuranishi chart
at $({\bf q},\vec t)$. 
See Definition \ref{def:513}.
The way we do so is similar to the argument in Section \ref{subsec;KuraFloer}
but is slightly more complicated
because of describing 
combinatorial patterns of corners of the moduli spaces in terms of outer collars.
\par
We recall 
$$
\alpha_-= \alpha_0,\alpha_1,\dots,\alpha_{m-1},\alpha_m = \alpha_+
\in \frak A
$$
and put 
$$
\vec{\alpha}  = (\alpha_0,\dots,\alpha_{m}).
$$
Let $B \subseteq \underline{m-1}$.
We will construct a Kuranishi structure
on $\mathcal M_{\ell}(X,H;\vec{\alpha})$.
The way we do so is the same as the proof of Theorem  \ref{kuraexists}.
The construction in the proof of Theorem  \ref{kuraexists} involves various
choices. We take different choices for different  $B$.
The choice in the case when $A = \emptyset$ and
$B = \emptyset$ is exactly the same as
one taken during the proof of Theorem  \ref{kuraexists}.
The detail follows.
First we consider the case $A=\emptyset$.\footnote{When $A \ne \emptyset$, we will apply Choice \ref{choice2665} 
for each moduli space 
$\mathcal M(X,H,\alpha_{i(A,j-1)}, \alpha_{i(A,j)})$
by restricting $B$ to the subset $B_j'(A):=B \cap (i(A,j-1), i(A,j))_{\Z}$ with $j=1,\dots , a+1$.}
\begin{choi}\label{choice2665}
\begin{enumerate}
\item
We  take a finite set.
$$
\EuScript A_{\vec{\ell}}(\vec{\alpha};B)
= \{ {\bf p}_c \mid c \in \EuScript C_{\vec{\ell}}(\vec{\alpha},B) \}
\subset \mathcal M_{\vec\ell}(X,H;\vec{\alpha}).
$$
Here $\EuScript C_{\vec{\ell}}(\vec{\alpha},B)$ is a certain index set
which will be taken as in Condition \ref{conds2626}.
\item
For each ${\bf p}_c \in \EuScript A_{\vec{\ell}}(\vec{\alpha};B)$
we take its closed neighborhood $W({\bf p}_c;\vec{\ell},\vec{\alpha},B)$ in $\mathcal M_{\vec\ell}(X,H;\vec{\alpha})$
which is sufficiently small so that Lemma \ref{lem2446} holds for ${\bf p}={\bf p}_c$.
We also take obstruction bundle data $\frak E_{{\bf p}_c}(\vec{\ell},\vec{\alpha},B)$
centered at ${\bf p}_c$ for each $c \in \EuScript C_{\vec{\ell}}(\vec{\alpha},B)$.
\end{enumerate}
\end{choi}
Let $B = \{i(B,1),\dots,i(B,b)\}$ with 
$1\le i(B,1) < \dots < i(B,b)\le m-1$ 
and we put
$$
\aligned
\vec{\alpha}(B) & = (\alpha_-,\alpha_{i(B,1)},\dots,\alpha_{i(B,b)},\alpha_+), \\
\ell_j(B) & = \ell_{i(B,j-1)+1}
+ \dots + \ell_{i(B,j)}, \\
\vec{\ell}(B) & = (\ell_1(B),\dots,\ell_{b+1}(B)).
\endaligned
$$
Here we put $i(B,0) = 0$ and $i(B,b+1) = m$ by convention.
We note
$$
\mathcal M_{\vec\ell}(X,H;\vec{\alpha})
\subseteq
\mathcal M_{\vec{\ell}(B)}(X,H;\vec{\alpha}(B)).
$$
\begin{conds}\label{conds2626}
We require that the objects taken in Choice \ref{choice2665}
have the following properties.
\begin{enumerate}
\item If $ \vec{\alpha} = (\alpha_-,\alpha_+)$,  the choices are exactly
the same as we took during the proof of Theorem \ref{kuraexists}.
Namely the choices of ${\EuScript A}_{\ell}(\alpha_-,\alpha_+), \EuScript C_{\ell}(\alpha_-,\alpha_+)$ and $W({\bf p}_c)$ are made as in Choice \ref{choice2650}.
\item
\begin{equation}\label{form2645}
\EuScript A_{\vec{\ell}}(\vec{\alpha};B)
= \EuScript A_{\vec{\ell}(B)}(\vec{\alpha}(B);\underline b) \cap \mathcal M_{\vec\ell}(X,H;\vec{\alpha}).
\end{equation}
\item
If ${\bf p}_c$ is an element of (\ref{form2645}), we have 
$$
W({\bf p}_c;\vec{\ell},\vec{\alpha},B)
=
W({\bf p}_c;\vec{\ell}(B),\vec{\alpha}(B),\underline b) \cap \mathcal M_{\vec\ell}(X,H;\vec{\alpha}).
$$
Moreover
$$
\frak E_{{\bf p}_c}(\vec{\ell},\vec{\alpha},B)
=
\frak E_{{\bf p}_c}(\vec{\ell}(B),\vec{\alpha}(B),\underline b).
$$
\item
For each $B$
\begin{equation}
\bigcup_{{\bf p}_c \in \EuScript A_{\vec{\ell}}(\vec{\alpha},B)} {\rm Int}\,\,W({\bf p}_c;\vec{\ell},\vec{\alpha},B)
= \mathcal M_{\vec\ell}(X,H;\vec{\alpha}).
\end{equation}
\end{enumerate}
\end{conds}
We need to
require one more condition
(Lemma \ref{lem26689} Condition $(*)$) which will be given later.
\par
Now we describe the procedure of associating an obstruction space
to an element $({\bf q},\vec t)$ when Choice \ref{choice2665} is given.
We first review the procedure we have taken in Section \ref{subsec;KuraFloer}.
Let ${\bf q} \in  \mathcal M_{\vec\ell}(X,H;\vec{\alpha})$.
\begin{enumerate}
\item[(i)]
We put
\begin{equation}
G({\bf q};\vec{\ell},\vec{\alpha},B)
= \{{\bf p}_c \mid {\bf q} \in W({\bf p}_c;\vec{\ell},\vec{\alpha},B)\}.
\end{equation}
This is the same as in Definition \ref{defn2646} (1).
\item[(ii)]
For each ${\bf p}_c \in G({\bf q};\vec{\ell},\vec{\alpha},B)$
we take $\vec w_{{\bf p}_c}^{\bf q} \subset \Sigma_{\bf q}$ such that
${\bf q} \cup \vec w_{{\bf p}_c}^{\bf q}$ is $\epsilon_c$-close to
${\bf p}_c \cup \vec w_{{\bf p}_c} \cup \vec w_{\rm can}$
and satisfies the transversal constraint.
(Definition \ref{constrainttt2}.)
We note that such $\vec w_{{\bf p}_c}^{\bf q}$ uniquely exists if we take
$W({\bf p}_c;\vec{\ell},\vec{\alpha},B)$ sufficiently small.
(Lemma \ref{lem2446}.)
\item[(iii)]
If $(\frak Y,u',\varphi') \cup \vec w'_c$ is $\epsilon$-close to ${\bf q} \cup \vec w_{{\bf p}_c}^{\bf q}$,
then by Definition \ref{defn2642} we have a complex linear embedding
\begin{equation}\label{form2648}
I_{{\bf p}_c,{\rm v};\Sigma',u',\varphi'}^{\vec{\ell},\vec{\alpha},B} : 
E_{{\bf p}_c,{\rm v}}(\frak y_{{\bf p}_c};\vec{\ell},\vec{\alpha},B) \to
C^{\infty}(\Sigma';(u')^*TX \otimes \Lambda^{0,1}).
\end{equation}
See Definition \ref{defn2646} (3).
Here 
$(\frak Y\cup \vec w'_c,\varphi')=\Phi_{{\bf p}_c}(\frak y_{{\bf p}_c}, 
\vec T_{{\bf p}_c}, \vec \theta_{{\bf p}_c})$ as in Notation \ref{nota:415}.
We include $\vec{\ell},\vec{\alpha},B$ in the notation since the map depends on them.
\end{enumerate}
Now we go back to our situation.
We will sum up the images of the maps (\ref{form2648}) not only
for various ${\bf p}_c$ and ${\rm v}$ but also for various
$(A,B,C) \in \mathcal B(\vec t)$.
We describe this process now.
\par
Let $({\bf q},\vec t) \in \mathcal M_{\vec\ell}(X,H;\vec{\alpha})^+$
and $\vec\alpha = (\alpha_0,\dots,\alpha_m)$ as before.
Since  $\mathcal M_{\vec\ell}(X,H;\vec{\alpha})$ is a fiber product with
$m$-factors, ${\bf q}$ is decomposed into  factors
${\bf q}_i$, $i=1,\dots,m$. 
Let $(A,B,C) \in \mathcal B(\vec t)$.
We put $A = \{i(A,1),\dots,i(A,a)\}$ with $i(A,1) < \dots <i(A,a)$.
We define
\begin{equation}\label{formula2657567}
{\bf q}_{(A,j)}
=
({\bf q}_{i(A,j-1) +1},\dots,{\bf q}_{i(A,j)})
\in \mathcal M_{\vec{\ell}_{A,j}}(X,H;\vec{\alpha}_{A,j}).
\end{equation}
Recall $\vec{\alpha}_{A,j} = ({\alpha}_{i(A,j-1)},\dots,{\alpha}_{i(A,j)})$ 
and $\vec{\ell}_{A,j} = (\ell_{i(A,j-1)+1},\dots,\ell_{i(A,j)})$.
We put $m_j(A)= i(A,j) - i(A,j-1)$.
We also put $B'_j(A) = B \cap (i(A,j-1),i(A,j))_{\Z}$ and 
$$
B_j(A) = \{ i - i(A,j-1) \mid i \in B'_j(A)\}.
$$
Note that we made choices 
for $\vec{\ell}_{A,j},\vec{\alpha}_{A,j}$ and $B_j(A)$
as in Choice \ref{choice2665}.
\begin{defn}\label{defn27677}
For $({\bf q},\vec t) \in \mathcal M_{\vec \ell}(X,H,\vec \alpha)^+$
we define a set $\widetilde{\frak F}({\bf q},\vec t)$ by
$$
\widetilde{\frak F}({\bf q},\vec t)
=
\{((A,B,C),j) \mid (A,B,C) \in \mathcal B(\vec t),\,\,
j=1, \dots ,a+1
=\#A+1
\}.
$$
We define an equivalence relation $\sim$ on it as follows.
Let $((A(k),B(k),C(k)),j(k))$ be elements of this set for $k=1,2$.
Then
$$
((A(1),B(1),C(1)),j(1)) \sim ((A(2),B(2),C(2)),j(2))
$$
if and only if the following (1) (2) hold.
We put
$A(k) = \{i(A(k),1),\dots, i(A(k),a(k))\}$ with
$i(A(k),1)< \dots < i(A(k),a(k))$.
\begin{enumerate}
\item
$i(A(1),j(1)-1) = i(A(2),j(2)-1)$. $i(A(1),j(1)) = i(A(2),j(2))$.
\item
$B(1) \cap (i(A(1),j(1)-1),i(A(1),j(1)))_{\Z}  = B(2) \cap (i(A(2),j(2)-1),i(A(2),j(2)))_{\Z}$.
\end{enumerate}
(Note that it automatically implies
$\vec{\ell}_{A(1),j(1)} = \vec{\ell}_{A(2),j(2)}$.)
The conditions (1)(2) imply that the map \eqref{form2648rev} below
is independent of the $\sim$ equivalence class.
\par
Now we put\footnote{The sets $\mathcal V(A,B,C)$ are used to define $ \mathcal B(\vec t)$.
Then  $\mathcal B(\vec t)$ is used to define $ \widetilde{\frak F}({\bf q},\vec t)$.}
$$
\frak F({\bf q},\vec t) = \widetilde{\frak F}({\bf q},\vec t)/\sim.
$$
\par
For $\frak z \in \frak F({\bf q},\vec t)$ the three objects $\vec{\alpha}_{A,j}$, $B_j(A)$, 
$\vec{\ell}_{A,j}$ and
${\bf q}_{(A,j)}$
are determined in a way independent of the representatives.
We write them as  $\vec{\alpha}_{\frak z}$, $B(\frak z)$, $\vec{\ell}_{\frak z}$ and ${\bf q}_{\frak z}$.
\end{defn}
Let  $\frak z \in \frak F({\bf q},\vec t)$ and
${\bf p}_c \in G({\bf q}_{\frak z};\vec{\ell}_{\frak z},\vec{\alpha}_{\frak z},B({\frak z}))$.
Then we obtain the linear map (\ref{form2648}) which is
\begin{equation}\label{form2648rev}
I_{{\bf p}_c,{\rm v};\Sigma_{{\bf q}_{\frak z}},u_{{\bf q}_{\frak z}},\varphi_{\bf q_{\frak z}}}^{\vec{\ell}_{\frak z},\vec{\alpha}_{\frak z},B({\frak z})} : 
E_{{\bf p}_c,{\rm v}}(\frak y_{{\bf p}_c};\vec{\ell},\vec{\alpha},B) \to
C^{\infty}(\Sigma_{{\bf q}_{\frak z}};u_{{\frak z}}^*TX \otimes \Lambda^{0,1}).
\end{equation}
Here 
$(\Sigma_{{\bf q}_{\frak z}}\cup \vec z_{{\bf q}_{\frak z}} \cup 
\vec w_{{\bf p}_c}^{{\bf q}_{\frak z}})
=\Phi_{{\bf p}_c}(\frak y_{{\bf p}_c},
\vec T_{{\bf p}_c}, \vec\theta_{{\bf p}_c})$\footnote{As we noticed in 
Notation \ref{nota:415}, $\frak y_{{\bf p}_c}$ etc on the right hand side also depend on ${\bf q}_{\frak z}$ 
in this case, but we omit ${\bf q}_{\frak z}$ 
from the notations when no confusion can occur.} and
$\Sigma_{{\bf q}_{\frak z}}$, $u_{{\bf q}_{\frak z}}$,
$\varphi_{\bf q_{\frak z}}$ are the source curve, the map to $X$, and the parametrization of the mainstream,
which are parts of ${\bf q}_{\frak z}$, respectively.
Note that the target space of the map (\ref{form2648rev}) is a subset of
$C^{\infty}(\Sigma_{{\bf q}};u^*TX \otimes \Lambda^{0,1})$
and is the sum of the set of smooth sections of the irreducible components.
\begin{rem}
The notion of stabilization data is similar to the obstruction bundle data (Definition \ref{obbundeldata1}),
except we do not include obstruction spaces (Definition \ref{obbundeldata1} (5)(6)).
\end{rem}
\begin{lem}\label{lem26689}
We can achieve the choices laid out in 
Choice \ref{choice2665} so that Condition \ref{conds2626} and the following
condition $(*)$ are satisfied.
\begin{enumerate}
\item[$(*)$]
The sum of the images of the map (\ref{form2648rev}) for various $\frak z  \in \frak F({\bf q},\vec t)$,
${\bf p}_c \in G({\bf q}_{\frak z};\vec{\ell}_{\frak z},\vec{\alpha}_{\frak z},B({\frak z}))$ and irreducible components
${\rm v}$ of ${\bf p}_c$ is a direct sum in $C^{\infty}(\Sigma_{{\bf q}};u^*TX \otimes \Lambda^{0,1})$.
\end{enumerate}
\end{lem}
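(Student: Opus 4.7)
The plan is to proceed by induction on the length $m+1$ of $\vec\alpha = (\alpha_0,\ldots,\alpha_m)$. The base case $m=1$ is governed by Condition \ref{conds2626} (1), which forces the data to be exactly that of Choice \ref{choice2650}; for these choices, the direct sum property $(*)$ coincides with the one used in Section \ref{subsec;KuraFloer} (see the parenthetical remark right after \eqref{obspacedefHFHF}) and follows from \cite[Lemma 11.7]{fooo:const1}. For the inductive step I will first make the choice for $B = \underline{m-1}$; Condition \ref{conds2626} (2)(3) then determines the data for every smaller $B$ uniquely as a restriction. For each ${\bf p} \in \mathcal M_{\vec\ell}(X,H;\vec\alpha)$ I apply Lemma \ref{lem2635} to obtain obstruction bundle data $\frak E_{\bf p}(\vec\ell,\vec\alpha,\underline{m-1})$, and using compactness of the moduli space together with Lemma \ref{lem2446} I extract a finite net $\{{\bf p}_c\}$ with closed neighborhoods $W({\bf p}_c;\vec\ell,\vec\alpha,\underline{m-1})$ and constants $\epsilon_c > 0$ such that Choice \ref{choice2665} and Condition \ref{conds2626} (4) hold.

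The main step is to arrange condition $(*)$. For each $({\bf q},\vec t) \in \mathcal M_{\vec\ell}(X,H;\vec\alpha)^+$ the set $\frak F({\bf q},\vec t)$ of Definition \ref{defn27677} is finite, and so is each $G({\bf q}_{\frak z};\vec\ell_{\frak z},\vec\alpha_{\frak z},B(\frak z))$. For every $\frak z$ except the one corresponding to $(A,B,C) = (\emptyset,\underline{m-1},\emptyset)$ with $j=1$, the tuple $\vec\alpha_{\frak z}$ is strictly shorter than $\vec\alpha$; so by the inductive hypothesis the images of the maps \eqref{form2648rev} coming from these $\frak z$'s already form a direct sum, whose total I call $E_{\rm old}({\bf q},\vec t)$. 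It then remains to arrange that the new contributions, namely the images ${\rm Im}\, I_{{\bf p}_c,{\rm v};\Sigma_{\bf q},u_{\bf q},\varphi_{\bf q}}^{\vec\ell,\vec\alpha,\underline{m-1}}$ for ${\bf p}_c \in G({\bf q};\vec\ell,\vec\alpha,\underline{m-1})$ and ${\rm v} \in {\rm Irr}({\bf p}_c)$, are pairwise disjoint and disjoint from $E_{\rm old}({\bf q},\vec t)$.

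This is achieved by a small generic perturbation of the finite-dimensional subspaces $E_{{\bf p}_c,{\rm v}}(\cdot;\vec\ell,\vec\alpha,\underline{m-1})$ inside the infinite-dimensional function space $C^\infty_0({\rm Int}\, K^{\rm obst}_{\rm v}; u^*TX\otimes \Lambda^{0,1})$, where the direct-sum condition with any fixed finite-dimensional subspace is open and dense in the Grassmannian of finite-dimensional subspaces. Since a perturbation valid at one $({\bf q},\vec t)$ remains valid in a neighborhood by continuity of the map \eqref{form2648rev} in the source curve, and since $\mathcal M_{\vec\ell}(X,H;\vec\alpha)^+$ is compact, a partition-of-unity patching on the parameter spaces $\mathcal V(\frak x_{{\rm v}} \cup \vec w_{{\rm v}} \cup \vec w_{{\rm can},{\rm v}})$ assembles the local perturbations into a single global choice, after shrinking the $W({\bf p}_c;\vec\ell,\vec\alpha,\underline{m-1})$ if necessary. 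This is precisely the argument of \cite[Lemma 11.7]{fooo:const1} and \cite[Lemma 18.8]{foootech}, now carried out simultaneously over all $(A,B,C) \in {\rm Part}_3(m-1)$. I expect the principal obstacle to be the bookkeeping: one must verify that perturbing $E_{{\bf p}_c,{\rm v}}(\cdot;\vec\ell,\vec\alpha,\underline{m-1})$ is consistent with Condition \ref{conds2626} (3), but this is automatic since the latter \emph{defines} the obstruction data for smaller $B$ by restriction, so a perturbation made at the $B = \underline{m-1}$ level propagates unambiguously to the data for all other $B$.
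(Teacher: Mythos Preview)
Your induction scheme is correct in outline, and the base case and the final perturbation step (citing \cite[Lemma 11.7]{fooo:const1}) match the paper. But the heart of the inductive step has a gap.

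You assert that for every $\frak z \in \frak F({\bf q},\vec t)$ other than the one corresponding to $(\emptyset,\underline{m-1},\emptyset)$, the tuple $\vec\alpha_{\frak z}$ is strictly shorter than $\vec\alpha$. This is false: whenever $(\emptyset,B,C)\in\mathcal B(\vec t)$ with $C\ne\emptyset$, the unique $\frak z$ coming from it has $A=\emptyset$, $j=1$, and hence $\vec\alpha_{\frak z}=\vec\alpha$ with full length (only $B(\frak z)=B\subsetneq\underline{m-1}$ is smaller). More seriously, even granting that each ``old'' $\frak z$ is governed by data inherited from a shorter moduli space via Condition~\ref{conds2626} (2)(3), you have not explained why the images from \emph{all} of these $\frak z$'s together form a direct sum $E_{\rm old}({\bf q},\vec t)$. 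The inductive hypothesis is the statement $(*)$ at points $({\bf q}',\vec t')$ of shorter moduli spaces $\mathcal M_{\vec\ell'}(X,H;\vec\alpha')^+$; at the present point $({\bf q},\vec t)$, several different shorter pieces are in play simultaneously, and you need a mechanism to assemble the separate direct sums into one.

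The paper supplies exactly this mechanism, and it is not a mere bookkeeping matter. By Definition~\ref{defn2659} (4) the set $\mathcal B(\vec t)$ is \emph{linearly ordered} by the face relation, so it has a maximal element $(A_0,B_0,C_0)$. If $A_0\ne\emptyset$, pick $i\in A_0$; then $i\in A$ for \emph{every} $(A,B,C)\in\mathcal B(\vec t)$, which forces $\frak F({\bf q},\vec t)\cong\frak F({\bf q}_1,\vec t_1)\sqcup\frak F({\bf q}_2,\vec t_2)$ with $({\bf q}_r,\vec t_r)$ in genuinely shorter moduli spaces; the two summands live on the two sides of the transit point labelled~$i$, so they are automatically transverse, and each is direct by induction. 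If instead $C_0\ne\emptyset$, one similarly gets a bijection $\frak F({\bf q},\vec t)\cong\frak F({\bf q}',\vec t')$ for a single shorter $\vec\alpha'$. Only after this reduction does one handle the case $B_0=\underline{m-1}$ by removing the maximal element, applying the previous argument to the remaining chain, and then perturbing the new data as you describe. Your proposal jumps directly to this last perturbation without establishing that $E_{\rm old}$ is direct; the missing ingredient is the use of the linear order on $\mathcal B(\vec t)$ to force a common index $i\in A$ (or $i\in C$) across all relevant cells.
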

We will prove Lemma \ref{lem26689} in the next subsection.
Assuming it for the moment, 
we continue the proof of Proposition \ref{prop2661}.
\par
Note that for each $\frak z \in \frak F({\bf q},\vec t)$ and
${\bf p}_c \in G({\bf q}_{\frak z};\vec{\ell}_{\frak z},\vec{\alpha}_{\frak z},B({\frak z}))$
we took additional marked points $\vec w_{{\bf p}_c}^{{\bf q}_{\frak z}}$
on ${{\bf q}_{\frak z}}$.
These marked points can be regarded as marked points on ${\bf q}$.
We also take stabilization data centered at ${\bf q}$. In particular, we take
$\vec w_{\bf q}$.
\begin{shitu}\label{situ2669dd}
$\bullet$ 
We have $(\frak Y,u',\varphi')$ and marked points $\vec w'_{{\bf p}_c}$ on the source curve
of $\frak Y'$ for each $\frak z \in \frak F({\bf q},\vec t)$ and
${\bf p}_c \in G({\bf q}_{\frak z};\vec{\ell}_{\frak z},\vec{\alpha}_{\frak z},B({\frak z}))$.
We also take $\vec w'_{\bf q}$.
\par\noindent
$\bullet$ 
For each
$\frak z \in \frak F({\bf q},\vec t)$ and
${\bf p}_c \in G({\bf q}_{\frak z};\vec{\ell}_{\frak z},\vec{\alpha}_{\frak z},B({\frak z}))$,
we assume that $(\frak Y,u',\varphi') \cup \vec w'_{{\bf p}_c}$
is $\epsilon_1$-close to  ${\bf q} \cup \vec w_{{\bf p}_c}^{{\bf q}_{\frak z}}$.
\par\noindent
$\bullet$ 
We assume that $(\frak Y,u',\varphi') \cup \vec w'_{\bf q}$ is $\epsilon_1$-close to
${\bf q} \cup \vec w_{\bf q}$.
\par\noindent
$\bullet$ 
We assume that $\frak Y$ is decomposed into $m$ extended mainstream components
$\frak Y_i$ ($i=1,\dots ,m$),
and $(\frak Y_i,u',\varphi') \cup (\vec w'_{\bf q} \cap \frak Y_i)$ is $\epsilon_1$-close to
${\bf q}_i \cup (\vec w_{\bf q} \cap {\bf q}_i)$. 
$\blacksquare$
\end{shitu}
In Situation \ref{situ2669dd} we have
\begin{equation}\label{form2648revrev}
I_{{\bf p}_c,{\rm v};\Sigma',u',\varphi'}^{\vec{\ell}_{\frak z},\vec{\alpha}_{\frak z},B({\frak z})} : E_{{\bf p}_c,{\rm v}}(\frak y'_{{\bf p}_c};\vec{\ell},\vec{\alpha},B) \to
C^{\infty}(\Sigma';(u')^*TX \otimes \Lambda^{0,1})
\end{equation}
in the same way as in (\ref{form2648}). 
Here ${\mathfrak Y} \cup \vec{w}^{\prime}_{{\bf p}_c} = 
\Phi_{{\bf p}_c} ({\mathfrak y}^{\prime}_{{\bf p}_c}, \vec{T}^{\prime}_{{\bf p}_c}, 
\theta^{\prime}_{{\bf p}_c})$ so
this map depends also on $\vec w'_{{\bf p}_c}$.
\begin{defn}\label{def:513}
For each $({\bf q}, \vec{t}) \in \mathcal M_{\vec\ell}(X,H;\vec{\alpha})^+$
we define
a linear subspace $E((\frak Y,u',\varphi')\cup \bigcup \vec w'_{{\bf p}_c};{\bf q})$ of 
$C^{\infty}(\Sigma';(u')^*TX \otimes \Lambda^{0,1})$
by  the sum of all the images of the map
(\ref{form2648revrev}) for various ${\rm v}$,
$\frak z \in \frak F({\bf q},\vec t)$ and 
${\bf p}_c \in G({\bf q}_{\frak z};\vec{\ell}_{\frak z},\vec{\alpha}_{\frak z},B({\frak z}))$.
We call it the {\it obstruction space} of 
Kuranishi chart at $({\bf q}, \vec{t})$.
\end{defn}
\subsection{Proof of Proposition \ref{prop2661} II: Kuranishi chart and coordinate change}
We now define Kuranishi charts of our Kuranishi structure.
\begin{defn}\label{defn2650rev}
Let $({\bf q}, \vec{t}) \in \mathcal M_{\vec\ell}(X,H;\vec{\alpha})^+$.
In Situation \ref{situ2669dd} we consider the following conditions on
an object
$(((\frak Y,u',\varphi') \cup\bigcup_{{\frak z}, c}\vec w'_{{\bf p}_c} \cup \vec w'_{\bf q}),\vec t')$:
\begin{enumerate}
\item
If $\Sigma_a$ is the mainstream component of $\frak Y$ and $\varphi'_a$ is a parametrization
of this  mainstream component
(which is a part of given $\varphi'$), the following equation is satisfied on
$\R \times S^1$.
\begin{equation}\label{Fleqobstinclrev}
\aligned
&\frac{\partial(u'\circ \varphi'_a)}{\partial \tau} \\
&+  J \left( \frac{\partial(u'\circ \varphi'_a)}{\partial t} - \frak X_{H_t}
\circ u'  \circ \varphi'_a\right) \equiv 0 \mod E((\frak Y,u',\varphi')\cup \bigcup \vec w'_{{\bf p}_c};{\bf q}).
\endaligned
\end{equation}
\item
If $\Sigma'_{\rm v}$  is a bubble component of $\frak Y$, the following equation is satisfied on $\Sigma'_{\rm v}$.
\begin{equation}\label{form2653}
\overline{\partial} u' \equiv 0  \mod  E((\frak Y,u',\varphi')\cup \bigcup \vec w'_{{\bf p}_c};{\bf q}).
\end{equation}
\item For each $\frak z \in \frak F({\bf q},\vec t)$ and
${\bf p}_c \in G({\bf q}_{\frak z};\vec{\ell}_{\frak z},\vec{\alpha}_{\frak z},B({\frak z}))$ the additional marked points $\vec w'_{{\bf p}_c}$
satisfy the transversal constraint in Definition \ref{constrainttt2} with respect to ${\bf p}_c$.
\item The additional marked points $\vec w'_{\bf q}$
satisfy the transversal constraint in Definition \ref{constrainttt2} with respect to ${\bf q}$.
\item
For each ${\frak z}$ and $c \in \EuScript E({\bf q})$, 
$(\frak Y,u',\varphi')\cup\vec w'_{{\bf p}_c} \cup \vec w'_{\bf q}$ is
$\epsilon_1$-close to
${\bf q} \cup \vec w_{{\bf p}_c}^{{\bf q}_{\frak z}} \cup \vec w_{\bf q}$.
\item $\vert \vec t' - \vec t\vert < \epsilon_1$.
\end{enumerate}
\par
The set of isomorphism classes of
$(((\frak Y,u',\varphi')\cup\bigcup_{{\frak z}, c}\vec w'_{{\bf p}_c} \cup \vec w'_{\bf q}),\vec t')$ 
satisfying  Conditions
(1) - (6) above is denoted by
$$
V(({\bf q},\vec t),\epsilon_1)
$$ 
where the isomorphism is defined in the same way as in Definition \ref{defn2650}.
\end{defn}
We can prove that $V(({\bf q},\vec t),\epsilon_1)$ is a smooth manifold in the way similar to Lemma \ref{lem2653}.
It is easy to see that it is ${\rm Aut}({\bf q})$ invariant and so we obtain an orbifold
$U(({\bf q},\vec t),\epsilon_1) = V(({\bf q},\vec t),\epsilon_1)/{\rm Aut}({\bf q})$.
We define a vector bundle $E(({\bf q},\vec t),\epsilon_1)$ on it by taking
$E((\frak Y,u',\varphi')\cup \bigcup \vec w'_{{\bf p}_c};{\bf q})$ as the fiber.
Then the left hand side of (\ref{Fleqobstinclrev}) and (\ref{form2653})
define its smooth section, which we denote by $s_{({\bf q},\vec t),\epsilon_1}$.
An element of its zero set determines an element of
$\mathcal M_{\vec\ell}(X,H;\vec{\alpha})^+$.
We thus obtain $\psi_{({\bf q},\vec t),\epsilon_1}$.
\par
We can prove that $(U(({\bf q},\vec t),\epsilon_1),E(({\bf q},\vec t),\epsilon_1),s_{({\bf q},\vec t),\epsilon_1},
\psi_{({\bf q},\vec t),\epsilon_1})$ is a Kuranishi chart of $({\bf q},\vec t)$ in the same way as in Proposition \ref{lem2654}.
\par
We can define a coordinate change among them and show the compatibility among them
in the same way as in Lemmas \ref{lem2657} and \ref{lem2658}.
We finally adjust the size of the Kuranishi neighborhoods 
$\{V(({\bf q},\vec t),\epsilon_1)\}_{({\bf q},\vec t)}$ to obtain a Kuranishi structure on $\mathcal M_{\vec\ell}(X,H;\vec{\alpha})^+$
in the same way again as in 
Lemmas \ref{lem2657} and \ref{lem2658}.
\par
Condition \ref{conds2660} is a consequence of the Property (5) in Definition \ref{defn2659} and
the way we used $\mathcal V(A,B,C)$ to define $\mathcal B(\vec t)$ and ${\frak F}({\bf q},\vec t)$.
Let us elaborate on this point.
\par
If $t_i = 0$, then $i \in C$. Therefore we can apply Condition \ref{conds2626} (2)(3) to see that
the obstruction spaces are  restrictions on those which we defined on the moduli space
obtained by performing the gluing at the corresponding transit points.
\par
If $t_i = -1$, then $i \in A$. Therefore we are taking fiber product
Kuranishi structure at the corresponding transit points.
(See (\ref{formula2657567}) and Definition \ref{defn27677}.)
\par
Proposition  \ref{prop2661} (2) is a consequence of Definition \ref{defn2659} (6) and the
construction.
\par
Proposition  \ref{prop2661} (3) is a consequence of Condition \ref{conds2626} (1).
\par
Therefore to complete the proof of Proposition \ref{prop2661} and of Theorem \ref{theorem266}
it remains to prove Lemma \ref{lem26689}.
\begin{proof}[Proof of Lemma \ref{lem26689}]
The proof is by induction on $m$, where we recall $\#\vec\alpha =m-1$.
If $m=1$ then Choice \ref{choice2665}  is given during the
proof of Theorem  \ref{kuraexists}. 
We suppose that we made the choice for $m'$ that is smaller than $m$ and we will
prove the case of $m$.
We also assume that, as a part of the induction hypothesis,
the conclusion of  Lemma \ref{lem26689} holds
if the number of components of $\vec{\alpha}$ is strictly smaller than $m+1$.
\par
We now consider $\vec{\alpha}$ with $(m+1)$ components
and $\vec{\ell}$.
Let $\vec t \in [-1,0]^{m-1}$ and ${\bf q} \in \mathcal M_{\vec\ell}(X,H;\vec{\alpha})$.
We define a relation $<$ on the set 
$\mathcal B(\vec t)$
in Definition \ref{def:B}
such that $(A',B',C') < (A,B,C)$ if and only
if $\partial({\rm Im}\,\mathcal I_{A,B,C}) \supseteq
{\rm Im}\,\mathcal I_{A',B',C'}$.
\par
By Definition \ref{defn2659} (4) the set 
$\mathcal B(\vec t)$
is linearly ordered by this relation $<$. Therefore there exists a
maximal element, which we denote by $(A_0,B_0,C_0)$.
\par\smallskip
\noindent {\bf Case 1:} $B_0 \ne \underline{m-1}$.
\par\smallskip
\noindent {\bf Case 1-1:} $A_0 \ne \emptyset$.
Let $i \in A_0$.
We note that if $(A,B,C) \in \mathcal B(\vec t)
$
then ${\rm Im}\,\mathcal I_{A,B,C}$ is a face of
${\rm Im}\,\mathcal I_{A_0,B_0,C_0}$.
Since ${\rm Im}\,\mathcal I_{A_0,B_0,C_0} \subseteq \{ t_i=-1\}$,
we have ${\rm Im}\,\mathcal I_{A,B,C} \subseteq \{ t_i=-1\}$.
Therefore $i \in A$ by Definition \ref{defn2659} (4).
\par
We consider $\vec{\alpha}_1
= (\alpha_0,\dots,\alpha_i)$,
$\vec{\alpha}_2
= (\alpha_i,\dots,\alpha_m)$
and
$\vec{\ell}_1
= (\ell_0,\dots,\ell_i)$,
$\vec{\ell}_2
= (\ell_i,\dots,\ell_m)$.
We have
$$
({\bf q},\vec t) \in \mathcal M_{\vec\ell_1}(X,H;\vec{\alpha}_1)^+
\times_{R_{\alpha_i}}
\mathcal M_{\vec\ell_2}(X,H;\vec{\alpha}_2)^+.
$$
We decompose $({\bf q},\vec t)$ into $({\bf q}_1,\vec t_1)$
and $({\bf q}_2,\vec t_2)$.
Since $i \in A$ for all the elements $(A,B,C) \in \mathcal B(\vec t)
$, we find easily that
$$
\frak F({\bf q},\vec t)
\cong \frak F({\bf q}_1,\vec t_1) 
\sqcup
\frak F({\bf q}_2,\vec t_2).
$$
Therefore by induction hypothesis we can show that
Lemma \ref{lem26689} Condition $(*)$ holds at this element $({\bf q},\vec t)$.
\par\smallskip
\noindent {\bf Case 1-2:} $C_0 \ne \emptyset$.
Let $i \in C_0$. In the same way as in Case 1-1
we can show $i \in C$ for all $(A,B,C) \in 
\mathcal B(\vec t)$.
Then we put $\vec{\alpha}'
= (\alpha_0,\dots,\alpha_{i-1},\alpha_{i+1},\dots,\alpha_m)$,
 $\vec{\ell}'
= (\ell_1,\dots,\ell_{i-1},\ell_i + \ell_{i+1},\ell_{i+2},\dots,\ell_m)$.
We may identify $({\bf q},\vec t)$
as an element $({\bf q}',\vec t')$ of
$\mathcal M_{\vec\ell'}(X,H;\vec{\alpha}')^+$.
Then we have
$
\frak F({\bf q}',\vec t')
\cong \frak F({\bf q},\vec t)
$.
Therefore we can check Lemma \ref{lem26689} Condition $(*)$ by
induction hypothesis.
\par\smallskip
\noindent {\bf Case 2:} $B_0 = \underline{m-1}$.
In this case $A_0 = C_0 = \emptyset$.
Let $(A_1,B_1,C_1)$ be the maximal element of
$\mathcal B(\vec t)
\setminus \{(A_0,B_0,C_0)\}$.
Using the argument of Case 1, we can show that the
sum of the images of
$I_{{\bf p}_c,{\rm v};\Sigma_{{\bf q}_{\frak z}},u_{{\bf q}_{\frak z}},\varphi_{{\bf q}_{\frak z}}}^{\vec{\ell}_{\frak z},\vec{\alpha}_{\frak z},B({\frak z})}$
for $\frak z \in 
\mathcal B(\vec t)
\setminus \{(A_0,B_0,C_0)\}$,
${\bf p}_c \in G({\bf q};\vec{\ell},\vec{\alpha},B)$
and ${\rm v}$ (irreducible components of ${\bf p}_c$),
is a direct sum.
\par
Now we can make our choice for $(A_0,B_0,C_0)$
so that Lemma \ref{lem26689} Condition $(*)$ also holds in this case.
(See \cite[Lemma 11.7]{fooonewbook} for example.)
\par
The proof of Lemma \ref{lem26689} is now complete.
\end{proof}
The proof of Proposition \ref{prop2661} is now complete.
\end{proof}

\section{Construction of morphism}
\label{subsec;KuramodFloermor}
\subsection{Statement}
\label{subsec:statement}
Let $H^1, H^2$ be two functions $X \times S^1 \to \R$ which
are Morse-Bott non-degenerate in the sense of 
Condition \ref{weaknondeg}.
We put
\begin{equation}\label{267formrev}
\widetilde{\rm Per}(H^j)
= \coprod_{\alpha \in \frak A^j}R^j_{\alpha}
\end{equation}
as in (\ref{267form}) for $j=1,2$.
We define a local system $o_{R_{\alpha}^j}$
on each $R^j_{\alpha}$ as in Definition \ref{oricricial}.
Let $J_1, J_2$ be two almost complex structures tamed by $\omega$.
Using them we obtain linear K-systems $\mathcal F_j :=\mathcal F_{X}(H^j,J_j)$ ($j=1,2$)
by Theorem \ref{theorem266} whose spaces of connecting orbits are
$\mathcal M(X,J_j,H^j;\alpha_-,\alpha_+)^{\boxplus 1}$.
In this section we will construct a morphism
$\mathfrak{N}_{21} : \mathcal F_1 \to \mathcal F_2$ of linear K-systems
\cite[Definition 16.19]{fooonewbook}.
\begin{shitu}\label{situ2676}
We consider a smooth function\footnote{The reason we put $2,1$ in this order 
in the notations $H^{21}$ and $\mathfrak{N}_{21}$ is to be 
consistent with the order of compositions.}
$H^{21} : X \times \R \times S^1  \to \R$ such that:
\begin{enumerate}
\item
If $\tau < -1$, then
$H^{21}(x,\tau,t) = H^1(x,t)$.
\item
If $\tau > 1$, then
$H^{21}(x,\tau,t) = H^2(x,t)$.
\end{enumerate}
We put $H^{21}_{\tau,t}(x) = H(x,\tau,t)$ and denote by 
$\frak X_{H^{21}_{\tau,t}}$ the Hamiltonian vector field
associated to $H_{\tau,t}$.
\par
We also consider a one parameter
family $\mathcal J^{21} = \{J_{\tau}^{21}\}$
of almost complex structures tamed by $\omega$ such that:
\begin{enumerate}
\item[(i)]
If $\tau < -1$, then
$J_{\tau}^{21} = J_1$.
\item[(ii)]
If $\tau > 1$, then
$J_{\tau}^{21} = J_2$.
\end{enumerate} 
When no confusion can occur, we simply write $\mathcal J=\mathcal J^{21}$ in this section.
$\blacksquare$
\end{shitu}
\begin{defn}\label{def2677}
Suppose we are in Situation \ref{situ2676}.
Let $\alpha_- \in \frak A_1$ and $\alpha_+\in \frak A_2$.
We consider the set of smooth maps
$u : \R \times S^1 \to X$ with the following properties.
\begin{enumerate}
\item It satisfies
\begin{equation}\label{Fleqref}
\frac{\partial u}{\partial \tau} + J_{\tau}^{21} \left( \frac{\partial u}{\partial t} - \frak X_{H^{21}_{\tau,t}}
\circ u \right) = 0.
\end{equation}
Here $\tau$ and $t$ are the coordinates of $\R$ and $S^1 = \R/\Z$, respectively.
\item
There exist $\tilde{\gamma}^{-} = ({\gamma}^{-},w^{-}) \in R^1_{\alpha_-}$ and
$\tilde{\gamma}^{+} = ({\gamma}^{+},w^{+}) \in R^2_{\alpha_+}$
such that
\begin{equation}\label{bdlatinfrev}
\lim_{\tau \to \pm \infty}u(\tau,t) = \gamma^{\pm}(t)
\end{equation}
and $w^{-} \# u \sim w^{+}$.
\end{enumerate}
\par
We denote by $\mathcal N^{\rm reg}(X,\mathcal J,H^{21};\alpha_-,\alpha_+)$ the totality of such maps $u$.
We define ${\rm ev}_{\pm} : \mathcal N^{\rm reg}(X,\mathcal J,H^{21};\alpha_-,\alpha_+)
\to R^1_{\alpha_-}$, $R^2_{\alpha_+}$ by
${\rm ev}_{\pm}(u) = \tilde{\gamma}^{\pm}$.
\end{defn}
\begin{lem}\label{prof26rev3}
Suppose $u : \R \times S^1 \to X$ satisfies (\ref{Fleqref}). We assume
$$
\int_{\R \times S^1}
\left\Vert\frac{\partial u}{\partial \tau}\right\Vert^2 d\tau dt < \infty.
$$
Then
there exist $\tilde{\gamma}^{-} = ({\gamma}^{-},w^{-}) \in R^1_{\alpha_-}$
and
$\tilde{\gamma}^{+} = ({\gamma}^{+},w^{+}) \in R^2_{\alpha_+}$ such that (\ref{bdlatinfrev})
is satisfied.
\end{lem}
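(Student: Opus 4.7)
The plan is to reduce this statement to Proposition \ref{prof263} applied separately on the two half-cylinders $(1,\infty)\times S^1$ and $(-\infty,-1)\times S^1$, where the continuation equation \eqref{Fleqref} becomes autonomous. Indeed, by Situation \ref{situ2676} (1)(2) and (i)(ii), the restriction $u|_{(1,\infty)\times S^1}$ satisfies Floer's equation \eqref{Fleq} for $(H^2,J_2)$, and $u|_{(-\infty,-1)\times S^1}$ satisfies \eqref{Fleq} for $(H^1,J_1)$. Moreover, the hypothesis
$$
\int_{\R\times S^1}\Vert \partial u/\partial\tau\Vert^2\,d\tau\,dt<\infty
$$
immediately gives finite energy on each of these half-cylinders, so there is no difficulty controlling the total energy after cutting.

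Next, I would establish the asymptotic convergence on the half-cylinder. Proposition \ref{prof263} is stated for maps defined on all of $\R\times S^1$, but its proof (in the general Morse-Bott case, referring to \cite[Lemma 11.2]{FO}) is purely local at $\tau=+\infty$: one uses the finite $L^2$-energy together with an elliptic bootstrap to obtain $C^k$ bounds on the translates $u(\tau_n+\cdot,\cdot)$, extracts a subsequential limit which is a periodic orbit in ${\rm Per}(H^2)$ lying in some critical submanifold $\overline{R^2_{\alpha}}$, and then invokes the Morse-Bott non-degeneracy (Condition \ref{weaknondeg}) to upgrade subsequential convergence to genuine convergence with exponential decay. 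This argument is insensitive to whether the cylinder is one- or two-sided and applies verbatim to $u|_{(1,\infty)\times S^1}$, yielding $\gamma^+\in{\rm Per}(H^2)$ with $\lim_{\tau\to+\infty}u(\tau,t)=\gamma^+(t)$. The same argument applied to $u|_{(-\infty,-1)\times S^1}$ produces $\gamma^-\in{\rm Per}(H^1)$.

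To promote the periodic orbits $\gamma^\pm$ to elements of $\widetilde{\rm Per}(H^j)$ in the required components, I would choose any bounding disk $w^-:D^2\to X$ with $w^-(e^{2\pi it})=\gamma^-(t)$, which determines $\alpha_-\in\frak A^1$ with $\tilde\gamma^-=(\gamma^-,w^-)\in R^1_{\alpha_-}$. Then define $w^+:=w^-\# u$ by the obvious concatenation of $w^-$ with the cylinder $u:[-\infty,+\infty]\times S^1\to X$; this is a bounding disk for $\gamma^+$, and $\tilde\gamma^+=(\gamma^+,w^+)$ lies in some unique component $R^2_{\alpha_+}$ of $\widetilde{\rm Per}(H^2)$.

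The one point that deserves attention, and which I expect to be the main (albeit mild) obstacle, is the Morse-Bott asymptotic analysis on a \emph{half}-infinite cylinder with a \emph{$\tau$-independent} equation but a positive-dimensional critical locus. This is exactly the extension of \cite[Lemma 11.2]{FO} hinted at in the proof of Proposition \ref{prof263}; as observed there, to prove the inequality \eqref{statementarnold} it suffices to treat only the two cases of Remark \ref{rem2622}, so a full treatment of the general Morse-Bott case on a half-cylinder can be omitted, and the argument proceeds as sketched.
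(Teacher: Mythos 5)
Your proposal is correct and follows essentially the route the paper intends: the paper omits the proof entirely, stating only that it is "similar to the proof of Proposition \ref{prof263}", whose own proof reduces to Floer's asymptotic analysis (resp. removable singularity, resp. \cite[Lemma 11.2]{FO}) in the cases of Remark \ref{rem2622}. Your reduction to the two autonomous half-cylinders $\{\pm\tau>1\}$ and the subsequent concatenation of capping disks is exactly the standard filling-in of that omitted argument.
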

The proof is similar to the proof of Proposition \ref{prof263} and is omitted.

\begin{thm}\label{theorem266rev}
Suppose we are in the situation of Definition \ref{def2677}.
\begin{enumerate}
\item
The space
$\mathcal N^{\rm reg}(X,\mathcal J,H^{21};\alpha_-,\alpha_+)$ has a compactification
$$\mathcal N(X,\mathcal J,H^{21};\alpha_-,\alpha_+).$$
\item
The compact space $\mathcal N(X,\mathcal J,H^{21};\alpha_-,\alpha_+)$ has a Kuranishi structure with
corners.
The map ${\rm ev}$ is extended to it as a strongly smooth map.
\item
There exists a morphism of linear K-systems
$\frak N_{21} : \mathcal F_1 \to \mathcal F_2$
whose interpolation space
is $\mathcal N(X,\mathcal J,H^{21};\alpha_-,\alpha_+)^{\boxplus 1}$
given in Definition \ref{3equivrerevl} (2).
\item
The Kuranishi structure on $\mathcal N(X,\mathcal J,H^{21};\alpha_-,\alpha_+)^{\boxplus 1}$
in (3) coincides with the given one in (2) on
 $\mathcal N(X,\mathcal J,H^{21};\alpha_-,\alpha_+)
 \subset \mathcal N(X,\mathcal J,H^{21};\alpha_-,\alpha_+)^{\boxplus 1}$.
\end{enumerate}
\end{thm}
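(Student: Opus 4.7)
The plan is to parallel the proofs of Theorems \ref{theorem266} and \ref{kuraexists}, replacing Floer's equation (\ref{Fleq}) by the non-autonomous equation (\ref{Fleqref}); I outline only the modifications.

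\textbf{Compactification.} An element of $\mathcal N(X,\mathcal J,H^{21};\alpha_-,\alpha_+)$ will be a stable object $((\Sigma,(z_-,z_+,\vec z)),u,\varphi)$ consisting of a genus zero semistable curve with a mainstream from $z_-$ to $z_+$ in which exactly one mainstream component---the \emph{principal} component---carries a solution of (\ref{Fleqref}), while each mainstream component strictly closer to $z_-$ (resp.\ $z_+$) carries an $(H^1,J_1)$- (resp.\ $(H^2,J_2)$-)Floer trajectory, and bubble components are $J_\tau$-holomorphic for the relevant value of $\tau$. Asymptotic matching at all transit points and the homotopy class condition $w^-\#u\sim w^+$ are required. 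Stability is defined as in Definition \ref{stable26}, except that the principal component is automatically stable (the equation (\ref{Fleqref}) has no $\R$-translation symmetry), so no canonical marked point is placed on it. Lemma \ref{prof26rev3} supplies the asymptotic convergence needed to define $\epsilon$-closeness exactly as in Section \ref{subsec;compactFloer}, giving the compact Hausdorff compactification of item (1).

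\textbf{Kuranishi structure.} For items (2) and (4), I introduce obstruction and stabilization data as in Definitions \ref{obbundeldata1} and \ref{stabilizationdefdata}. The only change is that on the principal component the underlying Fredholm operator is the linearization of (\ref{Fleqref}), and the canonical-marked-point constraint of Definition \ref{constrainttt2}(2)(3) is omitted there. The arguments of Lemmas \ref{lem2653}--\ref{lem2658} run verbatim using the gluing analysis of \cite{foooanalysis}, since $H^{21}$ differs from $H^1$ (resp.\ $H^2$) only on the compact piece $\tau\in[-1,1]$ of the principal component, so the exponential decay on the neck regions is unaffected. The codimension $k$ normalized corner decomposes as
\[
\widehat{S}_{k-1}(\mathcal N(X,\mathcal J,H^{21};\alpha_-,\alpha_+))=\bigsqcup \mathcal M(X,H^1;\vec\alpha^-)\,{}_{\mathrm{ev}_+}\!\!\times_{\mathrm{ev}_-}\!\!\mathcal N(X,\mathcal J,H^{21};\alpha,\alpha')\,{}_{\mathrm{ev}_+}\!\!\times_{\mathrm{ev}_-}\!\!\mathcal M(X,H^2;\vec\alpha^+),
\]
and the orientation isomorphism is established exactly as at the end of Section \ref{subsec;KuraFloer}, taking $P(\gamma^\pm;\frak t_{w^\pm})$ from the respective $H^j$-side.

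\textbf{Morphism via outer collaring.} For item (3), I follow Section \ref{subsec;KuramodFloercor}: define $\mathcal N(X,\mathcal J,H^{21};\alpha_-,\alpha_+)^{\boxplus 1}$ by attaching a collar factor $[-1,0]$ at every transit point (on both sides of the principal component), and construct the Kuranishi structure on the collar as in Proposition \ref{prop2661}, replacing Choice \ref{choice2665} by a version indexed by subsets $B$ of transit points on both sides and imposing a pinning analogous to Condition \ref{conds2626}(3) at each end. Gluing these charts as in the proof of Theorem \ref{theorem266}, the resulting interpolation spaces then satisfy the axioms of a morphism of linear K-systems in the sense of \cite[Definition 16.19]{fooonewbook}, producing $\frak N_{21}:\mathcal F_1\to\mathcal F_2$; item (4) is immediate from the construction.

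\textbf{Main obstacle.} The crux is the two-sided rigidity of the pinning condition: whereas Section \ref{subsec;KuramodFloercor} matched boundary obstruction data against the single system $\mathcal F_X(H)$, here one must simultaneously match $\mathcal F_1$ on the $z_-$-end and $\mathcal F_2$ on the $z_+$-end of every collar. The resolution is a logical ordering of the choices---first fix all data underlying $\mathcal F_1$ and $\mathcal F_2$, and only then make the choices for $\frak N_{21}$ so as to restrict correctly at both ends---combined with a two-sided refinement of Example \ref{ex:V} in which the partition $\{\mathcal V(A,B,C)\}$ of $[-1,0]^{m-1}$ respects the bi-partition of transit points into $H^1$- and $H^2$-sides. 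Granted such a coherent choice, the direct-sum statement of Lemma \ref{lem26689} goes through by the same induction on $m$, and the compatibility Condition \ref{conds2660} at interior faces of the collar follows as before from Definition \ref{defn2659}(5), completing the construction.
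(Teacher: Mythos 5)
Your proposal follows essentially the same route as the paper: a distinguished main component carrying the non-autonomous equation whose parametrization is rigid (so isomorphisms must satisfy $\tau_{a_0}=0$, no canonical marked point is placed there, and the transit-point gluing parameters become $(T_0,\infty]^k$ rather than $D(k;\vec T_0)$), with the Kuranishi charts, coordinate changes, and corner decomposition into $H^1$-trajectories, the interpolation piece, and $H^2$-trajectories built exactly as in Sections 4--5. Your treatment of the two-sided collar compatibility via partitions respecting the bi-partition of transit points into the $H^1$- and $H^2$-sides is precisely the content of the paper's Condition \ref{conds2660rev} and Proposition \ref{prop2661rev}.
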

\begin{proof}
The proof of Theorem \ref{theorem266rev} occupies the rest of this section.

\subsection{Proof of Theorem \ref{theorem266rev} (1)(2): Kuranishi structure}
\label{subsec:compatification}
We begin with defining the compactification $\mathcal N_{\ell}(X,\mathcal J,H^{21};\alpha_-,\alpha_+)$.
Let $(\Sigma,(z_-,z_+,\vec z))$
be a genus zero semistable curve with $\ell + 2$ marked points.
We define the notion of mainstream as in Definition \ref{defn41}.
Let $\Sigma_a$ and $\Sigma_{a'}$ be two mainstream components.
\begin{defn}\label{defn2682}
We say $a < a'$ if the connected component of
$\Sigma \setminus \{z_{a',-}\}$ containing $z_-$ contains $\Sigma_{a}$.\footnote{$z_{a',-}$
and $z_{a',+}$ are transit points of $\Sigma_{a'}$ which are defined in 
Definition \ref{defn41}.}
\end{defn}
We observe that one of $a<a'$, $a' < a$ or $a=a'$ holds for any pair of mainstream components $
(\Sigma_a,\Sigma_{a'})$.
\begin{defn}\label{defn210rev}
The set $\widehat{\mathcal N}_{\ell}(X,\mathcal J,H^{21};\alpha_-,\alpha_+)$
consists of triples 
$$((\Sigma,(z_-,z_+,\vec z),a_0),u,\varphi)$$ satisfying
the following conditions: Here $\ell =\# \vec z$.
\begin{enumerate}
\item
$(\Sigma,(z_-,z_+,\vec z))$ is a genus zero semi-stable curve with 
$\ell + 2$ marked points.
\item
$\varphi$ is a parametrization of the mainstream.
\item
$\Sigma_{a_0}$ is one of the mainstream components.
We call it the {\it main component}.
\item
For each extended mainstream component $\widehat{\Sigma}_a$, the map
$u$ induces
$u_a : \widehat{\Sigma}_a \setminus\{z_{a,-},z_{a,+}\} \to X$
which is a continuous map.\footnote{In other words
$u$ is a continuous map from the complement of the set of the transit points.}
\item
If $\Sigma_a$ is a mainstream component and
$\varphi_a : \R \times S^1 \to \Sigma_a$ is as above, then
the composition $u_a \circ \varphi_a$ satisfies the equation
\begin{equation}\label{Fleq26621}
\frac{\partial (u_a \circ \varphi_a)}{\partial \tau} +  J_{a,\tau} \left( \frac{\partial (u_a \circ \varphi_a)}{\partial t} - \frak X_{H^a_{\tau,t}}
\circ (u_a \circ \varphi_a) \right) = 0,
\end{equation}
where
$$
H^a_{\tau,t} =
\begin{cases}
H^1_t  &\text{if $a < a_0$}, \\
H^{21}_{\tau,t}  &\text{if $a = a_0$}, \\
H^2_t  &\text{if $a > a_0$},
\end{cases}
$$
and
$$
J_{a,\tau} =
\begin{cases}
J_1 &\text{if $a < a_0$}, \\
J_{\tau}^{21}  &\text{if $a = a_0$}, \\
J_2 &\text{if $a > a_0$}.
\end{cases}
$$
\item
$$
\int_{\R \times S^1}
\left\Vert\frac{\partial (u \circ \varphi_a)}{\partial \tau}\right\Vert^2 d\tau dt < \infty.
\footnote{Condition (6) follows from the rest of the conditions in Definition \ref{defn210rev} using 
e.g., \cite[(2.14)]{On}.  
The same remark holds for 
Definitions \ref{defn210revrev}, \ref{defn210revto0}, \ref{defn210revto0rev}, \ref{defn210revto01}, \ref{defn210revto022}, \ref{defn210revto044}.}
$$
\item
Suppose $\Sigma_{\rm v}$  is a bubble component in $\widehat{\Sigma}_a$.
Let   $\varphi_a(\tau,t)$ be the root of the tree of sphere bubbles
containing $\Sigma_{\rm v}$.  Then $u$ is $J$-holomorphic on  $\Sigma_{\rm v}$ where
$$
J =
\begin{cases}
J_1 &\text{if $a < a_0$}, \\
J_{\tau}^{21}  &\text{if $a = a_0$}, \\
J_2 &\text{if $a > a_0$}.
\end{cases}
$$
\item
If
${\Sigma}_{a_1}$ and ${\Sigma}_{a_2}$ are
mainstream components and
$z_{a_1,+} = z_{a_2,-}$, then
$$
\lim_{\tau\to+\infty} (u_{a_1} \circ \varphi_{a_1})(\tau,t)
=
\lim_{\tau\to-\infty} (u_{a_2} \circ \varphi_{a_2})(\tau,t)
$$
holds for each $t \in S^1$. ((6) and Lemma \ref{prof26rev3} imply that the 
left and right hand sides both converge.)
\item
If
${\Sigma}_{a}$, ${\Sigma}_{a'}$
are
mainstream components and $z_{a,-} = z_-$, $z_{a',+}
= z_+$, then there exist $(\gamma_{\pm},w_{\pm})
\in R_{\alpha_{\pm}}$ such that
$$
\aligned
\lim_{\tau\to-\infty} (u_{a} \circ \varphi_{a})(\tau,t)
&= \gamma_-(t), \\
\lim_{\tau\to +\infty} (u_{a'} \circ \varphi_{a'})(\tau,t)
&= \gamma_+(t).
\endaligned
$$
Moreover
$$
[u_*[\Sigma]] \# w_- = w_+,
$$
where $\#$ is the obvious concatenation.
\item
We assume that $((\Sigma,(z_-,z_+,\vec z),a_0),u,\varphi)$
is stable in the sense of Definition \ref{stable26rev} below.
\end{enumerate}
\end{defn}
To define stability we first define the group of automorphisms.
\begin{defn}\label{defn2615rev}
Assume that $((\Sigma,(z_-,z_+,\vec z),a_0),u,\varphi)$
satisfies (1)-(9) above.
The {\it extended automorphism group}
${\rm Aut}^+((\Sigma,(z_-,z_+,\vec z),a_0),u,\varphi)$
of $((\Sigma,(z_-,z_+,\vec z),a_0),u,\varphi)$
consists of maps $v : \Sigma \to \Sigma$
with the following properties:
\begin{enumerate}
\item
$v(z_{-}) = z_{-}$ and $v(z_{+}) = z_{+}$.
In particular, $v$ preserves each of the mainstream component $\Sigma_a$
of $\Sigma$.
Moreover $v$ fixes each of the transit points.
\item
$u = u\circ v$ holds outside the set of the transit points.
\item
If $\Sigma_a$ is a mainstream component of $\Sigma$, 
there exists $\tau_a \in \R$ such that
\begin{equation}\label{preserveparame22}
(v \circ \varphi_a)(\tau,t) = \varphi_a(\tau+\tau_a,t)
\end{equation}
on $\R \times S^1$.
\item We require $\tau_{a_0} = 0$.
\item
There exists $\sigma \in {\rm Perm}(\ell)$ such that
$v(z_i) = z_{\sigma(i)}$.
\end{enumerate}
\par
The {\it automorphism group} denoted by 
${\rm Aut}((\Sigma,(z_-,z_+,\vec z),a_0),u,\varphi)$
of $((\Sigma,(z_-,z_+,\vec z),a_0),u,\varphi)$
consists of the elements of ${\rm Aut}^+((\Sigma,(z_-,z_+,\vec z),a_0),u,\varphi)$
such that $\sigma$ in Item (5) above is the identity.
\end{defn}
\begin{rem}
Definition \ref{defn2615rev} is mostly the same as
Definition \ref{defn2615}. The most important difference is Item (4)
where we assume $\tau_{a_0} = 0$. Note the equation (\ref{Fleq26621})
is {\it not} invariant under the translation of the $\R$ direction on the main component.
\end{rem}
\begin{defn}\label{stable26rev}
We say $((\Sigma,(z_-,z_+,\vec z),a_0),u,\varphi)$ is {\it stable}
if ${\rm Aut}((\Sigma,(z_-,z_+,\vec z),a_0),u,\varphi)$ is a finite group.
(This is equivalent to the finiteness of
${\rm Aut}^+((\Sigma,(z_-,z_+,\vec z),a_0),u,\varphi)$.)
\end{defn}
\begin{defn}\label{3equivrerevl}
\begin{enumerate}
\item 
On the set $\widehat{\mathcal N}_{\ell}(X,\mathcal J,H^{21};\alpha_-,\alpha_+)$  we define two
equivalence relations $\sim_1$, $\sim_2$.
The definition of $\sim_1$ is the same as Definition \ref{3equivrel}.
The definition of $\sim_2$ is the same as Definition \ref{3equivrel} except
we require $\tau_{a_0} = 0$, in addition.
We put
$$\aligned
\widetilde{\mathcal N}_{\ell}(X,\mathcal J,H^{21};\alpha_-,\alpha_+) &=
\widehat{\mathcal N}_{\ell}(X,\mathcal J,H^{21};\alpha_-,\alpha_+)/\sim_1, \\
{\mathcal N}_{\ell}(X,\mathcal J,H^{21};\alpha_-,\alpha_+) &=
\widehat{\mathcal N}_{\ell}(X,\mathcal J,H^{21};\alpha_-,\alpha_+)/\sim_2.
\endaligned$$
In the case $\ell = 0$ we write
${\mathcal N}(X,H;\alpha_-,\alpha_+)$ etc.
\item
We define 
$\mathcal N(X,\mathcal J,H^{21};\alpha_-,\alpha_+)^{\boxplus 1}$
in the same way as in Definition \ref{def:outcollar}.
Namely, it is the set of equivalence classes of 
objects 
$(((\Sigma,(z_-,z_+,\vec z),a_0),u,\varphi),\vec t)$
where 
$((\Sigma,(z_-,z_+,\vec z),a_0),u,\varphi) \in 
\mathcal N(X,\mathcal J,H^{21};\alpha_-,\alpha_+)$ 
and $\vec t$ assigns numbers 
$t_p \in [-1,0]$ to each transit points.\footnote{
This space is the outer collaring of $\mathcal N(X,\mathcal J,H^{21};\alpha_-,\alpha_+)$ in the sense of \cite[Definition 17.29]{fooonewbook}.}
\end{enumerate}
\end{defn}
\begin{defn}\label{defn2688Nsource}
We put $X = \text{one point}$ and $H^{21}\equiv 0$.
Then we obtain the space ${\mathcal N}_{\ell}(\text{one point},\mathcal J,0;\alpha_0,\alpha_0)$.
Here $\alpha_0$ is the unique point in ${\rm Per}(0)$.
We denote this space by ${\mathcal N}_{\ell}(\text{source})$.
\end{defn}
We remark that ${\mathcal N}_{\ell}(\text{source})$
is similar to but is different
from ${\mathcal M}_{\ell}(\text{source})$.
In fact, ${\mathcal N}_{\ell}(\text{source})$ includes the data
that specify which mainstream component is the main component
and also the isomorphism between two elements of ${\mathcal N}_{\ell}(\text{source})$ is required to be strictly compatible with
the parametrization of the main component.
\begin{exm}
${\mathcal N}_{0}(\text{source})$ is one point.
${\mathcal N}_{1}(\text{source})$ is $S^1 \times [0,1]$.
In fact, if there is only one mainstream component and the marked point is
$\varphi(\tau,t)$, then the coordinates $(\tau,t)$ determine an element of ${\mathcal N}_{1}(\text{source})$.
We compactify it by including the case when there are two mainstream components.
In such a case the marked point can not be on the main component.
So the $S^1$ factor of the coordinates of the marked points determine an element of ${\mathcal N}_{1}(\text{source})$.
There are two cases: $a < a_0$ or $ a > a_0$. 
(Here $a$ is the mainstream component which is not the
main component.) 
Thus ${\mathcal N}_{1}(\text{source})$ is a union of $\R \times S^1$ and two
copies of $S^1$.
\end{exm}
We can define a topology on
${\mathcal N}_{\ell}(X,\mathcal J,H^{21};\alpha_-,\alpha_+)$
in the same way as in Definition \ref{def2626} and can prove the following:
\begin{lem}\label{cpthausdorff}
The space ${\mathcal N}_{\ell}(X,\mathcal J,H^{21};\alpha_-,\alpha_+)$ is compact and
Hausdorff.
\end{lem}
Next, we define a Kuranishi structure on the compactification ${\mathcal N}_{\ell}(X,\mathcal J,H^{21};\alpha_-,\alpha_+)$.
We define the notion of symmetric stabilization of an element $[((\Sigma,(z_-,z_+,\vec z),a_0),u,\varphi)]$
of ${\mathcal N}_{\ell}(X,\mathcal J,H^{21};\alpha_-,\alpha_+)$ in exactly the same way as in Definition \ref{symstab}.
\par
We define the notion of {\it canonical marked point} $w_{a,{\rm can}}$ of a mainstream component $\Sigma_a$
of $\Sigma$ such that there is no marked or singular points on $\Sigma_a$  other than transit point, as follows.
If $a\ne a_0$, the definition of $w_{a,{\rm can}}$ is exactly the same as
Definition \ref{defncanmark}.
For $a = a_0$, we do not define canonical marked points.
\begin{rem}\label{rem269494}
Note that $\mathcal M_0({\rm source}) = \emptyset$. This is the
reason we need to introduce the canonical marked points.
On the other hand, $\mathcal N_0({\rm source})$ consists of one point and is not an empty set.
Namely the unique element in it is represented by a stable object.
This is the reason we do not need to introduce the canonical marked points
on the main component.
\end{rem}
Let $\vec w_{\rm can}$ be the totality of all the canonical marked points.
In the same way as in Lemma \ref{lemma2639},  
we can prove that
$((\Sigma,(z_-,z_+,\vec z),a_0) \cup \vec w \cup \vec w_{\rm can},\varphi)$
is stable\footnote{We say 
$((\Sigma,(z_-,z_+,\vec z),a_0)\cup \vec w\cup \vec w_{\rm can},\varphi)$ is {\it stable} if the automorphism group 
${\rm Aut}^+((\Sigma,(z_-,z_+,\vec z),a_0),\varphi)$ defined as in Definition \ref{defn2615rev} by removing $u$ (the condition (2)) is finite.}, where
$[((\Sigma,(z_-,z_+,\vec z),a_0),u,\varphi)] \in
{\mathcal N}_{\ell}(X,\mathcal J,H^{21};\alpha_-,\alpha_+)$ and
$\vec w$ is a symmetric stabilization.
\par
We then define the notion of obstruction bundle data $\frak C_{\bf p}$
for each element
${\bf p} \in {\mathcal N}_{\ell}(X,\mathcal J,H^{21};\alpha_-,\alpha_+)$
in the same way as in Definition \ref{obbundeldata1}.
Its existence can be proved easily.
(See for example \cite[Lemma 17.11]{foootech}.)
\par
We next explain how we use the obstruction bundle data $\frak C_{\bf p}$
to define an obstruction space
for each object close to ${\bf p} \in {\mathcal N}_{\ell}(X,\mathcal J,H^{21};\alpha_-,\alpha_+)$.
\par
Let ${\bf p} = [((\Sigma,(z_-,z_+,\vec z),a_0),u,\varphi)]$.
We assume that $\Sigma$ has exactly $k$ transit points.
Taking the condition that the main component is equipped with 
the parametrization $\varphi_{a_0}$ into account, we obtain a map 
\begin{equation}\label{form418rev}
\aligned
{\Phi}_{\bf p} : \prod_{\rm v} \mathcal V(\frak x_{\rm v} \cup \vec w_{\rm v}\cup \vec w_{{\rm can},{\rm v}}) \times (T_0,\infty]^k
\times &\prod_{j=1}^m\left( ((T_{0,j},\infty] \times S^1)/\sim \right)
\\
&\to {\mathcal N}_{\ell+\ell'+\ell''}(\text{source})
\endaligned\end{equation}
in the same way as in \eqref{form418}.
Here $\mathcal V(\frak x_{\rm v} \cup \vec w_{\rm v}\cup \vec w_{{\rm can},{\rm v}})$
is an open subset of 
$\overset{\circ}{\mathcal M_*^{\rm cl}}$
or
$\overset{\circ}{\mathcal M}_*({\rm source})$. See Definition \ref{obbundeldata1} (2) (a)(b).
\par
The factor $(T_0,\infty]^k$ parametrizes the way
how we smooth the singular points that are the transit points.
In (\ref{form418}) the similar factor is ${D}(k;\vec T_0)$.
The difference is that in the situation of (\ref{form418}) the isomorphism
$v : \Sigma \to \Sigma$ includes the translation,
which is a map $v$ such that $v\circ \varphi_a(\tau,t) =
\varphi_a(\tau+\tau_0,t)$. This shifts $T_i \in (T_{0,j},\infty]$
by $\tau_0$.  (Here $\tau_0$ is independent of $a$.)
On the other hand, the isomorphism $v$ here is required
to  commute strictly with $\varphi_{a_0}$. So
there is not such a shift.
Taking this point into account, the map (\ref{form418rev})
is defined in the same way as in \eqref{form418}.
\par
Now let
\begin{equation}\label{form2663}
({\frak Y} \cup \vec w',\varphi', a_0') = {\Phi}_{\bf p}(\frak y,\vec T,\vec \theta) \in {\mathcal N}_{\ell+\ell'+\ell''}(\text{source})
\end{equation}
where $\frak y = (\frak y_{\rm v})$. ($\rm v$ is in the set of irreducible components of
${\bf p}$.)
Here $\vec w'$ is the set of the additional marked points corresponding to
$\vec w$ and $\vec w_{\rm can}$.
The notation $\frak Y$ includes the marked points corresponding to $\vec z$ and $z_{\pm}$.
$a'_0$ is the datum to specify the main component and $\varphi'$ is the
parametrization of the mainstream.
$(\vec T,\vec \theta) \in (T_0,\infty]^k
\times \prod_{j=1}^m \left(((T_{0,j},\infty] \times S^1)/\sim \right).
$
\par
Let $u' : \Sigma' \setminus \{\text{transit points}\} \to X$.
We assume
that $(\frak Y,u',\varphi',a_0')$ satisfies
Definition \ref{defn210rev} (1)(2)(3)(4)(8)(9).
We can then define the notion
that $(\frak Y\cup \vec w',u',\varphi',a_0')$ is $\epsilon$-close to
${\bf p} \cup \vec w \cup \vec w_{\rm can}$ in the same way as in Definition \ref{orbitecloseness}.
(Note  Definition \ref{orbitecloseness} (2) is the pseudo-holomorphicity
(or pseudo-holomorphicity with Hamiltonian term)
at the neck region. We use the almost complex structure specified in
Definition \ref{defn210rev}
(5).)
\begin{defn}\label{transdef2695}
We define the {\it transversal constraint} for
$(\frak Y\cup \vec w',u',\varphi',a'_0)$ as follows.
Let $w'_i$ be one of the points of
$\vec w'$. If $w'_i$ corresponds to a point in $\vec w$,
we require Definition \ref{constrainttt2} (1).
If $w'_i$ corresponds to $w_{a,{\rm can}}$ with $a \ne a'_0$, we
require Definition \ref{constrainttt2} (2)(3).
\end{defn}\label{defn269955}
Suppose $(\frak Y\cup \vec w',u',\varphi',a'_0)$ is $\epsilon$-close to
${\bf p} \cup \vec w \cup \vec w_{\rm can}$.
In the same way as in (\ref{defn2642}), we define a map
\begin{equation}\label{form2664}
I_{{\bf p},{\rm v};\Sigma',u',\varphi',a'_0} : E_{{\bf p},{\rm v}}(\frak y) \to
C^{\infty}(\Sigma';(u')^*TX \otimes \Lambda^{0,1})
\end{equation}
where $\frak y$ is as in (\ref{form2663}) and $\rm v$ is its
irreducible component, and 
$E_{{\bf p},{\rm v}}(\frak y)$ is defined as a part of the obstruction bundle data 
$\frak C_{\bf p}$ as in \eqref{eq:obbunddata} (see Definition \ref{obbundeldata1} (5)).
\par
Now in the same way as in Choice \ref{choice2650},
we proceed as follows.
We first observe that for any 
${\bf p} \in {\mathcal N}_{\ell}(X,\mathcal J,H^{21};\alpha_-,\alpha_+)$
there exist $\epsilon_{\bf p} >0$ and a closed small neighborhood 
$W({\bf p})$ of $\bf p$ such that 
if ${\bf q}  \in W({\bf p})$
there exists $\vec w^{\bf q}_{{\bf p}}$ {\rm uniquely} with the following
properties:
\begin{enumerate}
\item
${\bf q} \cup \vec w^{\bf q}_{{\bf p}}$ is $\epsilon_{\bf p}$-close to
${\bf p} \cup w_{{\bf p}} \cup \vec w_{{\rm can}}$.
\item
${\bf q} \cup \vec w^{\bf q}_{{\bf p}}$ satisfies the
transversal constraint.
\item
The linearization operator 
$D_{\bf q} \overline{\partial}_{J, H^{21}}$ 
at $\bf q$ as in \eqref{eq:linearized} 
is surjective $\mod \oplus_{\rm v}\operatorname{Im}~ I_{{\bf p},{\rm v};{\bf q}}$,
where $I_{{\bf p},{\rm v};{\bf q}}$ is the map in \eqref{form2664}
for ${\bf q}=(\Sigma', u', \varphi')$.

\end{enumerate}
The proof of this fact is the same as that of Lemma \ref{lem2446}.
Then we have 
$$
\bigcup_{\bf p} 
 {\rm Int}\,\,W({\bf p})
= {\mathcal N}_{\ell}(X,\mathcal J,H^{21};\alpha_-,\alpha_+).
$$
Therefore by compactness of the moduli space 
we can take a finite subset indexed by a finite set 
$\EuScript C_{\ell}(H^{21},\mathcal J;\alpha_-,\alpha_+)$ 
$$\EuScript A_{\ell}(H^{21},\mathcal J;\alpha_-,\alpha_+)
= \{{\bf p}_c \mid c \in \EuScript C_{\ell}(H^{21},\mathcal J;\alpha_-,\alpha_+) \}
\subset {\mathcal N}_{\ell}(X,\mathcal J,H^{21};\alpha_-,\alpha_+)
$$
such that 
for each $c \in \EuScript C_{\ell}(H^{21},\mathcal J;\alpha_-,\alpha_+)$
we take obstruction bundle data $\frak E_{{\bf p}_{c}}$
centered at ${\bf p}_c$,
and
a closed neighborhood $W({\bf p}_c)$ of ${\bf p}_c$
in ${\mathcal N}_{\ell}(X,\mathcal J,H^{21};\alpha_-,\alpha_+)$
with the following property. For each element  ${\bf q}  \in W({\bf p}_c)$
there exists $\vec w^{\bf q}_{{\bf p}_c}$ such that
\begin{enumerate}
\item
${\bf q} \cup \vec w^{\bf q}_{{\bf p}_c}$ is $\epsilon_c$-close
to ${\bf p}_c \cup w_{{\bf p}_c} \cup \vec w_{{\rm can}}$.
\item
${\bf q} \cup \vec w^{\bf q}_{{\bf p}_c}$ satisfies the
transversal constraint.
\item
\begin{equation}\label{coversururev}
\bigcup_{c\in \EuScript C_{\ell}(H^{21},\mathcal J;\alpha_-,\alpha_+)} 
 {\rm Int}\,\,W({\bf p}_c)
= {\mathcal N}_{\ell}(X,\mathcal J,H^{21};\alpha_-,\alpha_+).
\end{equation}
\end{enumerate}
\begin{defn}\label{defn2646rev}
\begin{enumerate}
\item
For each ${\bf q} \in {\mathcal N}_{\ell}(X,\mathcal J,H^{21};\alpha_-,\alpha_+)$
we put
$$
\EuScript E({\bf q}) = \{ c \in \EuScript C_{\ell}(H^{21},\mathcal J;\alpha_-,\alpha_+) \mid {\bf q} \in W({\bf p}_c)\}.
$$
\item
Let $\EuScript B \subset \EuScript E({\bf q})$ be a nonempty subset.
\item
We consider $(\frak Y\cup\bigcup_{c\in \EuScript B}\vec w'_c,u',\varphi',a'_0)$
such that for each $c$,
$(\frak Y\cup \vec w'_c,u',\varphi',a'_0)$ is $\epsilon$-close to ${\bf q} \cup \vec w_{c}^{\bf q}$.
If $\epsilon >0$ is small, then $(\frak Y\cup \vec w'_c,u',\varphi',a'_0)$ is $\epsilon$-close to 
${\bf p}_c \cup \vec w_{c}$
and we can define the map
$$
I_{{\bf p}_c,{\rm v};\Sigma',u',\varphi',a'_0} : 
E_{{\bf p}_c,{\rm v}}(\frak y_{{\bf p}_c}) \to
C^{\infty}(\Sigma';(u')^*TX \otimes \Lambda^{0,1})
$$
as in \eqref{form2664} for each irreducible component ${\rm v}$ of ${\bf p}_c$.
Here
$(\frak Y\cup \vec w'_c,\varphi',a'_0)=  {\Phi}_{{\bf p}_{c}}(\frak y_{{\bf p}_c},\vec T_{{\bf p}_c},\vec \theta_{{\bf p}_c})$
as in Notation \ref{nota:415}.
We now put
\begin{equation}\label{obspacedefHFHFrev}
E((\frak Y\cup\bigcup_{c \in \EuScript B} \vec w'_c,u',\varphi',a'_0);{\bf q};\EuScript B)
=
\bigoplus_{c \in \EuScript B}\bigoplus_{\rm v} {\rm Im} \,\, 
I_{{\bf p}_c,{\rm v};\Sigma',u',\varphi',a'_0}.
\end{equation}
\end{enumerate}
\end{defn}
We can define the notion of the {\it stabilization data} centered at ${\bf q}
\in {\mathcal N}_{\ell}(X,\mathcal J,H^{21};\alpha_-,\alpha_+)$
in the same way as in 
Definition \ref{stabilizationdefdata}.
\par
Using those data we fixed, we will define
a Kuranishi chart of ${\bf q}$ as follows.
\begin{defn}\label{defn2650rev}
We consider the following conditions on
an object
$(\frak Y\cup\bigcup_{c\in \EuScript E({\bf q})}\vec w'_c \cup \vec w'_{\bf q},u',\varphi',a'_0)$:
\begin{enumerate}
\item
If $\Sigma'_a$ is the mainstream component and $\varphi'_{a}$ is a parametrization
of this  mainstream component, the following equation is satisfied on
$\R \times S^1$.
\begin{equation}\label{Fleqobstincl1rev}
\aligned
&\frac{\partial(u'\circ \varphi'_{a})}{\partial \tau} 
+  J_1 \left( \frac{\partial(u'\circ \varphi'_{a})}{\partial t} - \frak X_{H^1_t}
\circ u'  \circ \varphi'_{a}\right) \\
& \equiv 0 
\mod E((\frak Y\cup\bigcup_{c\in \EuScript B} \vec w'_c,u',\varphi',a'_0);{\bf q};\EuScript B)
\endaligned
\end{equation}
for $a < a'_0$, 
\begin{equation}\label{Fleqobstincl2rev}
\aligned
&\frac{\partial(u'\circ \varphi'_{a})}{\partial \tau} 
+  J_{2} \left( \frac{\partial(u'\circ \varphi'_{a})}{\partial t} - \frak X_{H^2_t}
\circ u'  \circ \varphi'_{a}\right) \\
& \equiv 0 
\mod E((\frak Y\cup \bigcup_{c\in \EuScript B}\vec w'_c,u',\varphi',a'_0);{\bf q};\EuScript B)
\endaligned
\end{equation}
for $a > a'_0$,
\begin{equation}\label{Fleqobstincl3rev}
\aligned
&\frac{\partial(u'\circ \varphi'_{a})}{\partial \tau} 
+ J_{\tau}^{21} \left( \frac{\partial(u'\circ \varphi'_{a})}{\partial t} - \frak X_{H^{21}_{(\tau,t)}}
\circ u'  \circ \varphi'_{a}\right)\\
& \equiv 0 \mod E((\frak Y\cup \bigcup_{c\in \EuScript B}\vec w'_c,u',\varphi',a'_0);{\bf q};\EuScript B)
\endaligned
\end{equation}
for $a = a'_0$.
\item
If $\Sigma'_{\rm v}$  is a bubble component, the following equation is satisfied on $\Sigma'_{\rm v}$.
\begin{equation}\label{eq:614}
\overline{\partial}_J u' \equiv 0  \mod E((\frak Y\cup \bigcup_{c\in \EuScript B}\vec w'_c,u',\varphi',a'_0);{\bf q};\EuScript B).
\end{equation}
Here the almost complex structure $J$ is as follows.
Let $\widehat\Sigma'_a$ be the extended mainstream component containing
$\Sigma'_{\rm v}$. If $a < a'_0$, then $J = J_1$. If $a > a'_0$, then
$J = J_2$. If $a = a'_0$ and $\varphi_{a'_0}(\tau,t)$ is the root of the
tree of sphere components containing $\Sigma'_{\rm v}$,
then $J = J_{\tau}^{21}$.
\item For each $c \in \EuScript E({\bf q})$ the additional marked points $\vec w'_c $
satisfy the transversal constraint with respect to ${\bf p}_c$.
\item The additional marked points $\vec w'_{\bf q}$
satisfy the transversal constraint with respect to ${\bf q}$.
\item
$(\frak Y\cup\bigcup_{c\in \EuScript E({\bf q})}\vec w'_c \cup \vec w'_{\bf q},u',
\varphi',a'_0)$ is
$\epsilon_1$-close to
${\bf q} \cup \bigcup_{c\in \EuScript E({\bf q})}\vec w^{\bf q}_c \cup \vec w_{\bf q}$.
\end{enumerate}
\par
The set of isomorphism classes of
$(\frak Y\cup\bigcup_{c\in \EuScript E({\bf q})}\vec w'_c \cup \vec w'_{\bf q},u',\varphi',a'_0)$ satisfying the conditions
(1)-(5) above is denoted by
$$
V({\bf q},\epsilon_1,\EuScript B),
$$
where $(\frak Y'\cup\bigcup_{c\in \EuScript E({\bf q})}\vec w'_c \cup \vec w'_{\bf q},u',
\varphi',a'_0)$
is said to be {\it isomorphic} to $(\frak Y''\cup\bigcup_{c\in \EuScript E({\bf q})}\vec w''_c \cup \vec w''_{\bf q},u'',\varphi'',a''_0)$
if there exists a biholomorphic map $v : \Sigma' \to \Sigma''$ such that
\begin{enumerate}
\item[(a)]
$u'' = u'\circ v$ holds outside the set of the transit points.
\item[(b)]
If $\Sigma'_a$ is a mainstream component of $\Sigma'$
and $v(\Sigma'_a) = \Sigma''_{a'}$, then we have
$
(v \circ \varphi'_a)(\tau+\tau_a,t) = \varphi''_{a'}(\tau,t)
$
for some $\tau_a$.
\item[(c)]
We assume that if $a = a'_0$ and $v(\Sigma'_a) = \Sigma''_{a'}$, then $a' = a''_0$.
Moreover $\tau_{a'_0} = 0$.
\item[(d)]
$v(z'_i) = z''_i$ and $v(w'_i) = w''_i$.
\end{enumerate}
\end{defn}
In the same way as in Lemma \ref{lem2653}, we can prove that
$V({\bf q},\epsilon_1,\EuScript B)$
is a smooth manifold with boundary and corner
if $\epsilon_1 >0$ and $\epsilon_c >0$ are small enough.
\par
We note that the group ${\rm Aut}^+({\bf q})$ acts
on $V({\bf q},\epsilon_1,\EuScript B)$ since
the stabilization data are assumed to preserve it.
In particular, ${\rm Aut}({\bf q})$ acts on it.
By the condition in Definition \ref{obbundeldata1} (6)
this action is effective.
Therefore the quotient space
$V({\bf q},\epsilon_1,\EuScript B)/{\rm Aut}({\bf q})$
is an effective orbifold,
which we denote by 
$$
U({\bf q},\epsilon_1,\EuScript B).
$$
We define a vector bundle on $U({\bf q},\epsilon_1,\EuScript B)$
such that its fiber at
$(\frak Y,u',\bigcup_{c\in \EuScript E({\bf q})}\vec w'_c \cup \vec w'_{\bf q},\varphi',
a'_0)$ is
$E((\frak Y\cup \bigcup_{c\in \EuScript B}\vec w'_c,u',\varphi',
a'_0);{\bf q};\EuScript B)$.
We denote this vector bundle by 
$$
E({\bf q},\epsilon_1,\EuScript B).
$$
We can define its section
$s_{({\bf q},\epsilon_1,\EuScript B)}$ by using the left hand side of
(\ref{Fleqobstincl1rev})-(\ref{eq:614}).
An element of its zero set represents an element of
${\mathcal N}_{\ell}(X,\mathcal J,H^{21};\alpha_-,\alpha_+)$.
Thus we obtain:
$$
\psi_{({\bf q},\epsilon_1,\EuScript B)}
: s_{({\bf q},\epsilon_1,\EuScript B)}^{-1}(0)
\to {\mathcal N}_{\ell}(X,\mathcal J,H^{21};\alpha_-,\alpha_+).
$$
We thus obtain a Kuranishi chart
$$
(U({\bf q},\epsilon_1,\EuScript B),E({\bf q},\epsilon_1,\EuScript B),
s_{({\bf q},\epsilon_1,\EuScript B)},\psi_{({\bf q},\epsilon_1,\EuScript B)}).
$$
In the way similar to the proof of Lemmas \ref{lem2657}, \ref{lem2658},
and also by using the exponential decay estimates in the same way as in
\cite{foooanalysis},
we can define coordinate change among them.
We thus obtain a Kuranishi structure on
${\mathcal N}_{\ell}(X,\mathcal J,H^{21};\alpha_-,\alpha_+)$.
We have proved Theorem \ref{theorem266rev} (1)(2).
\subsection{Proof of Theorem \ref{theorem266rev} (3)(4): Kuranishi structure with outer collar}
\label{subsec:proof}
The strategy of the proof of Theorem \ref{theorem266rev} (3)(4) is similar to that in Section \ref{subsec;KuramodFloercor}.
Namely we first take
the outer collaring
${\mathcal N}_{\ell}(X,\mathcal J,H^{21};\alpha_-,\alpha_+)^{\boxplus 1}$
as in Section \ref{subsec;KuramodFloercor}
and modify them on
$S_k({\mathcal N}_{\ell}(X,\mathcal J,H^{21};\alpha_-,\alpha_+)) \times [-1,0]^k$.
(Note that the union of 
$S_k({\mathcal N}_{\ell}(X,\mathcal J,H^{21};\alpha_-,\alpha_+)) \times [-1,0]^k$
for various $k$ is 
${\mathcal N}_{\ell}(X,\mathcal J,H^{21};\alpha_-,\alpha_+)^{\boxplus 1}$.)
The details are as follows.
\par
Let $\frak A_r$ be the index set of the critical submanifolds of $H^r$ ($r=1,2$).
Let 
$$
\aligned
\alpha_-=\alpha_{1,0},\alpha_{1,1},\dots,\alpha_{1,m_1-1},\alpha_{1,m_1}
& \in \frak A_1, \\
\alpha_{2,1},\dots,\alpha_{2,m_2},\alpha_{2,m_2+1} = \alpha_+
& \in \frak A_2.
\endaligned
$$
We consider the fiber product
\begin{equation}\label{form2641rev}
\aligned
&{\mathcal M}_{\ell_{1,1}}(X,J_1,H^1;\alpha_{1,0},\alpha_{1,1})
\,{}_{{\rm ev}_{+}}\times_{{\rm ev}_-}  \dots \\
&\qquad\qquad\qquad\qquad\dots
\,{}_{{\rm ev}_{+}}\times_{{\rm ev}_-}
{\mathcal M}_{\ell_{1,m_1}}(X,J_1,H^1;\alpha_{1,m_1-1},\alpha_{1,m_1})
\\
&
\,{}_{{\rm ev}_{+}}\times_{{\rm ev}_-}
{\mathcal N}_{\ell'}(X,\mathcal J,H^{21};\alpha_{1,m_1},\alpha_{2,1})
\\
&\,{}_{{\rm ev}_{+}}\times_{{\rm ev}_-}{\mathcal M}_{\ell_{2,2}}(X,J_2,H^2;\alpha_{2,1},\alpha_{2,2})
\,{}_{{\rm ev}_{+}}\times_{{\rm ev}_-}  \dots \\
&\qquad\qquad\qquad\qquad\dots
\,{}_{{\rm ev}_{+}}\times_{{\rm ev}_-}
{\mathcal M}_{\ell_{2,m_2+1}}(X,J_2,H^2;\alpha_{2,m_2},\alpha_{2,m_2+1})
\endaligned
\end{equation}
which we denoted by
$
{\mathcal N}_{\vec \ell_1,\ell',\vec \ell_2}(X,\mathcal J,H^{21};\vec \alpha_1,\vec \alpha_2).
$
We observe that
$$
\aligned
&S_m({\mathcal N}_{\ell}(X,\mathcal J,H^{21};\alpha_-,\alpha_+)) \\
&=
\bigcup_{\vec \ell_1,\ell',\vec \ell_2
\atop \vert\vec \ell_1\vert + \ell' + \vert\vec \ell_2 \vert= \ell}
\bigcup_{\vec \alpha_1,\vec \alpha_2
\atop m_1 + m_2 = m,
\,\,\alpha_{1,0} = \alpha_-, \alpha_{2,m_2+1} = \alpha_+ }
 {\mathcal N}_{\vec \ell_1,\ell',\vec \ell_2}(X,\mathcal J,H^{21};\vec \alpha_1,\vec \alpha_2).
\endaligned
$$
Note the sum is taken for $m_1 = \#\vec \alpha_1-1 \ge 0$ and $m_2 = \#\vec \alpha_2-1 \ge 0$.
We will construct a Kuranishi structure for each of
$$
{\mathcal N}_{\vec \ell_1,\ell',\vec \ell_2}(X,\mathcal J,H^{21};\vec \alpha_1,\vec \alpha_2)^+
=
{\mathcal N}_{\vec \ell_1,\ell',\vec \ell_2}(X,\mathcal J,H^{21};\vec \alpha_1,\vec \alpha_2)
\times  [-1,0]^{m_1} \times  [-1,0]^{m_2}.
$$
Let $A_r \sqcup B_r  \sqcup C_r = \underline{m_r}$ for $r=1,2$.
It induces $
\mathcal I_{A_r,B_r,C_r} : [-1,0]^{b_r} \to [-1,0]^{m_r}
$
by (\ref{form2642}).
\par
We will formulate the compatibility condition below (Condition \ref{conds2660rev}),
which describes the restriction of the
Kuranishi structure
$\widehat{\mathcal U}_{\vec \ell_1,\ell',\vec \ell_2}(X,\mathcal J,H^{21};\vec \alpha_1,\vec \alpha_2)$ of
the product space
$
{\mathcal N}_{\vec \ell_1,\ell',\vec \ell_2}(X,\mathcal J,H^{21};\vec \alpha_1,\vec \alpha_2)^+
$
to the image of the embedding:
\begin{equation}\label{2642formrev}
\aligned
{\rm id} \times \mathcal I_{A_1,B_1,C_1} \times \mathcal I_{A_2,B_2,C_2}:
&{\mathcal N}_{\vec \ell_1,\ell',\vec \ell_2}(X,\mathcal J,H^{21};\vec \alpha_1,\vec \alpha_2) \times [-1,0]^{b_1+b_2} \\
&\to
{\mathcal N}_{\vec \ell_1,\ell',\vec \ell_2}(X,\mathcal J,H^{21};\vec \alpha_1,\vec \alpha_2)^+.
\endaligned
\end{equation}
We put
$A_r = \{ i(A_r,1),\dots, i(A_r,a_r)\}$ with 
$i(A_r,1)<i(A_r,2) < \dots< i(A_r,a_r-1) < i(A_r,a_r)$ and consider the fiber product
\begin{equation}\label{264222rev}
\aligned
&{\mathcal M}_{\vec \ell_{1,A_1,1}}(X,J_1,H^1;\alpha_{1,0},\dots,\alpha_{1, i(A_1,1)})
 \\
&
\,{}_{{\rm ev}_{+}}\times_{{\rm ev}_-} {\mathcal M}_{\vec \ell_{1,A_1,2}}(X,J_1,H^1;\alpha_{1, i(A_1,1)},\dots,\alpha_{ i(A_1,2)})
\,{}_{{\rm ev}_{+}}\times_{{\rm ev}_-} \dots \\
&\,{}_{{\rm ev}_{+}}\times_{{\rm ev}_-} {\mathcal M}_{\vec \ell_{1,A_1,j+1}}(X,J_1,H^1;\alpha_{ i(A_1,j)},\dots,\alpha_{ i(A_1,j+1)})
\,{}_{{\rm ev}_{+}}\times_{{\rm ev}_-}
\dots \\
&
\,{}_{{\rm ev}_{+}}\times_{{\rm ev}_-} {\mathcal M}_{\vec\ell_{1,A_1,a_1}}(X,J_1,H^1;\alpha_{1, i(A_1,a_1-1)},\dots,\alpha_{1,i(A_1,a_1)}) \\
&\,{}_{{\rm ev}_{+}}\times_{{\rm ev}_-}
{\mathcal N}_{\vec\ell_{1,A_1,a_1+1},\ell',\vec\ell_{2,A_2,1}}(X,\mathcal J,H^{21}; \\
&\qquad\qquad\qquad\qquad\qquad\qquad(\alpha_{1, i(A_1,a_1)},\dots,\alpha_{1,m_1}),
(\alpha_{2,1},\dots,\alpha_{2,i(A_2,1)}))
 \\
&
\,{}_{{\rm ev}_{+}}\times_{{\rm ev}_-} {\mathcal M}_{\vec \ell_{2,A_2,2}}(X,J_2,H^2;\alpha_{2, i(A_2,1)},\dots,\alpha_{ i(A_2,2)})
\,{}_{{\rm ev}_{+}}\times_{{\rm ev}_-} \dots \\
&\,{}_{{\rm ev}_{+}}\times_{{\rm ev}_-} {\mathcal M}_{\vec \ell_{2,A_2,j+1}}(X,J_2,H^2;\alpha_{ i(A_2,j)},\dots,\alpha_{ i(A_2,j+1)})
\,{}_{{\rm ev}_{+}}\times_{{\rm ev}_-}
\dots \\
&
\,{}_{{\rm ev}_{+}}\times_{{\rm ev}_-} {\mathcal M}_{\vec\ell_{2,A_2,a_2+1}}(X,J_2,H^2;\alpha_{2, i(A_2,a_2)},\dots,\alpha_{2,m_2+1}).
\endaligned
\end{equation}
Here
$\vec\ell_{r,A_r,j}
=(\ell_{r,i(A_r,j-1)+1},\dots,\ell_{r,i(A_r,j)}).
$
\begin{rem}\label{rem2292}
Here and hereafter ${i(A_1,0)} = 0$ and ${i(A_2,a_2+1)} = m_2+1$ by convention.
\end{rem}
The fiber product (\ref{264222rev}) is nothing but ${\mathcal M}_{\vec \ell_1,\ell',\vec \ell_2}(X,\mathcal J,H^{21};\vec \alpha_1,\vec \alpha_2)$.
Therefore we can use  (\ref{264222rev}) to define a fiber product Kuranishi structure
on the space ${\mathcal N}_{\vec \ell_1,\ell',\vec \ell_2}(X,\mathcal J,H^{21};\vec \alpha_1,\vec \alpha_2)$.
\par
We need more notation.
Although the notation is rather heavy, 
its geometric meaning is simple.
Namely we will consider the moduli space
${\mathcal M}_{\vec\ell_{r,A_r,j,C_r}}(X,J_r,H^r;\vec\alpha_{r,A_r,j,C_r} )$
etc., which is obtained by allowing to smooth the singularities
at the transit points corresponding to the indices belonging to $C_r$.
\begin{notation}\label{not2659rev}
Let $j=1,\dots ,a_r +1$ ($r=1,2$).
\begin{enumerate}
\item
We decompose $C_r$ into
$C'_j(A_r) = [i(A_r,j-1),i(A_r,j)]_{\Z} \cap C_r$
and put $C_j(A_r) = \{ i - i(A_r,j-1) \mid i \in C'_j(A_r)\}$,
$c_j(r,A_r) = \#C_j(A_r)$.
Here for $r=1$
$$
0=i(A_1,0)< i(A_1,1) < \dots < i(A_1,a_1) \le m_1
$$ with $i(A_1,0)=0, i(A_1, a_1 +1)=m_1$ as convention. 
For $r=2$,  
$$
i(A_2,0)=1 \le i(A_2,1) < \dots < i(A_2,a_2) < m_2+1
$$ 
with $i(A_2, 0)=1, i(A_2,a_2+1)=m_2+1$ as convention.
\item
We put $\vec\alpha_{r,A_r,j}
= (\alpha_{r,i(A_r,j-1)},\dots,
\alpha_{r,i(A_r,j)})$.
Note that\footnote{In case 
$i(A_1,a_1)=i(A_1,a_1+1)$ or $i(A_2,0)=i(A_2,1)$, we do not consider 
$\mathcal M_{\vec{\ell}_{r,j}}(X,J_r,H^r;\vec{\alpha}_{r,A_{r},j})$.}
\begin{equation}\label{formulain2696}
\aligned
&{\mathcal M}_{\vec\ell_{A_r,j}}(X,J_r,H^r;\alpha_{r, i(A_r,j-1)},
\dots,\alpha_{r, i(A_r,j)}) \\
&= {\mathcal M}_{\vec\ell_{A_r,j}}(X,J_r,H^r;\vec\alpha_{r,A_r,j} ).
\endaligned
\end{equation}
\item
We remove $\{\alpha_{r,i} \mid i \in C'_j(A_r)\}$ from $\vec\alpha_{r,A_r,j}$ to
obtain $\vec\alpha_{r,A_r,j,C_r}$.
\item
Noticing $i(A_1,a_1+1) = m_1$ for $r=1$, we apply (2)(3) to obtain
$\vec\alpha_{1,A_1,
a_1+1}$, $\vec\alpha_{1,A_1,a_1+1,C_1}$.
Also noticing $i(A_2,0) = 1$ for $r=2$,
we apply (2)(3) to obtain
$\vec\alpha_{2,A_2,1}$, $\vec\alpha_{2,A_2,1,C_2}$.
\par
Note that $\vec\alpha_{A_1,0}$, $\vec\alpha_{1,A_1,0,C_1}$
and
$\vec\alpha_{2,A_2,a_2}$, $\vec\alpha_{2,A_2,a_2,C_2}$
are defined according to Remark \ref{rem2292}.
\item
We put $m_j(r,A_r) = i(A_r,j) - i(A_r,j-1)$,
$$m_j(r,A_r,C_r) = \# (B_r\cap (i(A_r,j-1),i(A_r,j))_{\Z}) + 1.$$
Therefore
\begin{equation}\label{form264555rev}
\sum_{j=1}^{a_r +1} (m_j(r,A_r,C_r)-1)
= \#B_r=b_r.
\end{equation}
\item
We define $\vec \ell_{r,A_r,j,C_r}$ as follows.
Let
$$
\vec{\alpha}_{r,A_r,j,C_r}
= \{\alpha_{r,i(A_r,j-1) + k_s} \mid s =0, \dots, m_j(r,A_r,C_r)\}.
$$
Here 
$0\le k_0 < k_1 < \dots < k_{m_j(r,A_r,C_r)}$.\footnote{
Note that $k_s$ depends on $r,A_r,B_r,C_r$.
We also note that $k_0 = 0$ holds unless $r=2, j=1$ and $i(A_2,1)=1$.
Also $k_{m_j(r,A_r,C_r)} = i(A_r,j) - i(A_r,j-1)$ holds unless $r=1, j=a_1+1$ and $i(A_1,a_1)=m_1$.}
\par
Note if
$i \in (i(A_r,j-1) + k_s,i(A_r,j-1) + k_{s+1})_{\Z} $, then $i \in C'_j(A_r)$.
We put
$$
\ell_{r,A_r,j,C_r,s} = \ell_{r,i(A_r,j-1)+
k_{s-1} +1} + \dots
+ \ell_{r,i(A_r,j-1)+k_{s}}
$$
and
$$
\vec \ell_{r,A_r,j,C_r}
=
( \ell_{r,A_r,j,C_r,1}, \dots,
\ell_{r,A_r,j,C_r,m_j(r,A_r,C_r)})
$$
Finally we put
\begin{equation}
\aligned
\ell''
=
&\ell_{1,i(A_1,a_1)+k_{m_{a_1}(1,A_1,C_1)}+1}  + \dots
+ \ell_{1,m_1} + \\
&+\ell'
+ \ell_{2,2} + \dots
+ \ell_{2,{\rm min} (i(A_2,1), i(B_2,1))}.
\endaligned
\end{equation}
See Figure \ref{Fig3r}.
\begin{figure}[htbp]
\centering
\includegraphics[scale=0.5]{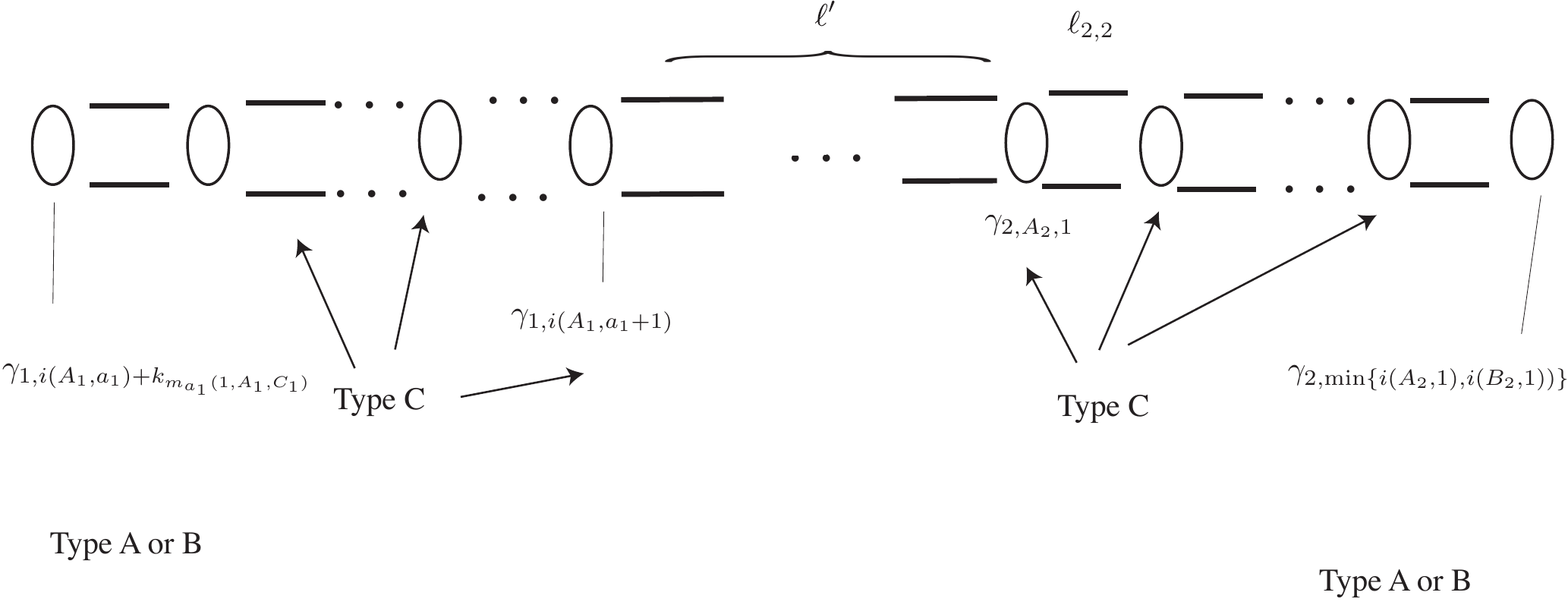}
\caption{The case $m_1 \in C_1$ and $1\in C_2$}
\label{Fig3r}
\end{figure}
\par
It is easy to check
\begin{equation}
\sum_{r,s,j} \ell_{r,A_r,j,C_r,s} + \ell''
=
\sum_{r,i} \ell_{r,i} + \ell'.
\end{equation}
\end{enumerate}
\end{notation}
We note that in Proposition \ref{prop2661}, we determined a Kuranishi structure on
\begin{equation}\label{form2677}
\aligned
&{\mathcal M}_{\vec\ell_{r,A_r,j,C_r}}(X,J_r,H^r;\vec\alpha_{r,A_r,j,C_r} )^+ \\
&=
{\mathcal M}_{\vec\ell_{r,A_r,j,C_r}}(X,J_r,H^r;\vec\alpha_{r,A_r,j,C_r} ) \times [-1,0]^{m_j(r,A_r,C_r)-1}.
\endaligned\end{equation}
We denote it by
$\widehat{\mathcal U}_{\vec\ell_{r,A_r,j,C_r}}(X,J_r,H^r;\vec\alpha_{r,A_r,j,C_r})$.
By construction we have
$$
{\mathcal M}_{\vec\ell_{r,A_r,j}}(X,J_r,H^r;\vec\alpha_{r,A_r,j} )
\subseteq
\widehat S_{c_j(r,A_r)}(
{\mathcal M}_{\vec\ell_{r,A_r,j,C_r}}(X,J_r,H^r;\vec\alpha_{r,A_r,j,C_r})).
$$
\par
By restriction, $\widehat{\mathcal U}_{\vec\ell_{r,A_r,j,C_r}}(X,J_r,H^r;\vec\alpha_{r,A_r,j,C_r})$ determines a Kuranishi structure of
(\ref{264222rev}) times $[-1,0]^*$ except one of the factors
\begin{equation}\label{form2678}
{\mathcal N}_{\vec\ell_{1,A_1,a_1+1},\ell',\vec\ell_{2,A_2,1}}(X,\mathcal J,H^{21}; \vec\alpha_{1, A_1, a_1+1},\vec\alpha_{2, A_2,1}).
\end{equation}
By construction, we can easily show that (\ref{form2678}) is
a component of
$$
\widehat S_{c_{a_1+1}(1,A_1)+c_{1}(2,A_2)}(
{\mathcal N}_{\vec\ell_{1,A_1,a_1+1,C_1},\ell'',\vec\ell_{2,A_2,1,C_2}}(X,\mathcal J,H^{21}; \vec\alpha_{1,A_1, a_1+1,C_1}, \vec\alpha_{2,A_2,1,C_2})).
$$
(See Notation \ref{not2659rev} (1) for the notations.)
Note the sum of the exponent $[-1,0]$ of appearing in (\ref{form2677}) plus 
$m_{a_1+1}(1,A_1,C_1)-1 + m_{1}(2,A_2,C_2)-1$
is $b_1 + b_2$. This is a consequence of (\ref{form264555rev}).
\par
Now the compatibility condition we require is described as follows.
\begin{conds}\label{conds2660rev}
We require the K-system 
$$
\{({\mathcal N}_{\vec \ell_1,\ell',\vec \ell_2}(X,\mathcal J,H^{21};\vec \alpha_1,\vec \alpha_2)^+, ~\widehat{\mathcal U}_{\vec \ell_1,\ell',\vec \ell_2}(X,\mathcal J,H^{21};\vec \alpha_1,\vec \alpha_2)^+ )\}
$$
satisfies the following.
\par
Its restriction
to the image of the embedding
(\ref{2642formrev})
is the fiber product of the following factors.
(Here we use the fiber product description
(\ref{264222rev}).)
\begin{enumerate}
\item
The restriction of the Kuranishi structure 
$\widehat{\mathcal U}_{\vec\ell_{r,A_r,j},C_r}(X,J_r,H^r;\vec\alpha_{r,A_r,j,C_r})$
to ${\mathcal M}_{\vec\ell_{r,A_r,j}}(X,J_r,H^r;\vec\alpha_{r,A_r,j})\times [-1,0]^{m_j(r, A_r, C_r) -1}$.
\item
The restriction of the
Kuranishi structure
$$
\widehat{\mathcal U}_{\vec\ell_{1,A_1,a_1+1,C_1},\ell'',\vec\ell_{2,A_2,1,C_2}}(X,\mathcal J,H^{21};
\vec\alpha_{1,A_1,a_1+1,C_1}, \vec\alpha_{2,A_2,1,C_2})
$$
to \eqref{form2678} 
$\times [-1,0]^{m_{a_1 +1}(1,A_1,C_1)-1} \times 
[-1,0]^{m_1(2,A_2,C_2)-1}$.
\end{enumerate}
\end{conds}
\begin{prop}\label{prop2661rev}
There exists a K-system 
$$
\{ ({\mathcal N}_{\vec \ell_1,\ell',\vec \ell_2}(X,\mathcal J,H^{21};\vec \alpha_1,\vec \alpha_2)^+,
~\widehat{\mathcal U}_{\vec \ell_1,\ell',\vec \ell_2}(X,\mathcal J,H^{21};\vec \alpha_1,\vec \alpha_2) )\}
$$
for various 
$\vec \ell_1,\ell',\vec \ell_2, \vec \alpha_1,\vec \alpha_2$
with the following properties.
\begin{enumerate}
\item
They satisfy Condition \ref{conds2660rev}.
\item
Let $\frak C$ be the union of the components of
$
{\mathcal N}_{\vec \ell_1,\ell',\vec \ell_2}(X,\mathcal J,H^{21};\vec \alpha_1,\vec \alpha_2)^+
$
which are in ${\mathcal N}_{\vec \ell_1,\ell',\vec \ell_2}(X,\mathcal J,H^{21};\vec \alpha_1,\vec \alpha_2)
\times \partial ([-1,0]^{m-1})$.
Then the Kuranishi structure
$\widehat{\mathcal U}_{\vec \ell_1,\ell',\vec \ell_2}(X,\mathcal J,H^{21};\vec \alpha_1,\vec \alpha_2)
$
is $\frak C$-collared 
in the sense of 
Remark \ref{defn1531revrev} (by replacing 
${\mathcal M}_{\vec \ell}(X,H; \vec \alpha)^+$
by
${\mathcal N}_{\vec \ell_1,\ell',\vec \ell_2}(X,\mathcal J,H^{21};\vec \alpha_1,\vec \alpha_2)^+$).
\item
For the case $\vec \alpha_1 = (\alpha_-)$ and $\vec \alpha_2 = (\alpha_+)$,
$\widehat{\mathcal U}_{\emptyset,\ell,\emptyset}(X,\mathcal J,H^{21};\vec \alpha_1,\vec \alpha_2)$
coincides with the Kuranishi structure we
constructed during the proof of
Theorem \ref{theorem266rev} (1)(2).
\end{enumerate}
\end{prop}
\begin{proof}
The proof is entirely the same as the proof of
Proposition \ref{prop2661}.
\end{proof}
Now we replace
the Kuranishi structures
of $S_m({\mathcal N}_{\ell}(X,\mathcal J,H^{21};\alpha_-,\alpha_+)) \times [-1,0]^m$
with ones in Proposition \ref{prop2661rev}
to obtain the Kuranishi structures in
Theorem \ref{theorem266rev} (3)(4).
The proof of Theorem \ref{theorem266rev} is complete.
\end{proof}
\begin{rem}\label{remenergylosss}
We may choose $H^{21}$ so that
$$
H^{21}(x,\tau,t) = (1-\chi(\tau)) H^1(x,t) + \chi(\tau) H^2(x,t),
$$
where $\tau : \R \to [0,1]$ is an increasing function which is $0$ when $\tau < -1$ and
is $1$ when $\tau > 1$.
In this case
the energy loss of the morphism obtained in Theorem \ref{theorem266rev}
is estimated from above by
$$
\int_{t \in S^1} \sup_{x \in X} \vert H^{1}(x,t) - H^{2}(x,t) \vert dt.
$$
This is a well established result. See for example, 
\cite[Section 2, $3^{\circ}$]{C}
\cite[Lemma 4.1]{Sch},
\cite[Lemma 9.3]{fooospectr} for a proof of this
inequality.
\end{rem}

\section{Construction of homotopy}
\label{subsec;homotpy}
\begin{shitu}\label{situ2697}
\begin{enumerate}
\item
Let $H^1$, $H^2$ be two Hamiltonians $X\times S^1 \to \R$
which are Morse-Bott non-degenerate in the sense of Condition \ref{weaknondeg}.
We consider a family of Hamiltonians parametrized by an interval $[0,1]$.
\item
Suppose
$H^{21,[0,1]} : X \times \R \times S^1 \times [0,1] \to \R$
is a smooth function
and $\mathcal J^{[0,1]} = \{J_{\tau,s} \mid \tau \in \R, s \in [0,1]\}$
is an  $\R \times [0,1]$ parametrized smooth family of tame
almost complex structures on $X$.
\item
For each $s \in [0,1]$ we assume the pair
$(H^{21,s},\mathcal J^{s}(=\mathcal J^{21,s}))$ is
as in Situation \ref{situ2676}.
\item
We assume that the families $H^{21,[0,1]}$ and $\mathcal J^{[0,1]}$
are collared in the following sense:
We consider the retraction $\mathcal R : [0,1] \to [\tau,1-\tau]$ such that
$\mathcal R(s) = \tau$ if $s \in [0,\tau]$, $\mathcal R(s) = 1-\tau$ if $s \in [1-\tau,1]$
and $\mathcal R(s) = s$ otherwise.
Then $H^{21,s} = H^{21,\mathcal R(s)}$
and $\mathcal J^{s} = \mathcal J^{\mathcal R(s)}$.
\end{enumerate} 
$\blacksquare$
\end{shitu}
We note that such families always exist.
More precisely, we have the following.
\begin{lem}\label{lem2698}
Suppose we are given $H^{21,s_0}$ 
and $\mathcal J^{s_0}$ for $s_0 = 0,1$.
Then there exist $H^{21,[0,1]}$ and $\mathcal J^{[0,1]}$
as in Situation \ref{situ2697}, whose
restrictions to $s_0$
coincide with $H^{21,s_0}$
and $\mathcal J^{s_0}$ for $s_0 = 0,1$ respectively.
\end{lem}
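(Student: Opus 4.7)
\smallskip

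The plan is to interpolate the Hamiltonians and the almost complex structures separately, using a cut-off in the parameter $s$ to guarantee the collaring condition. Fix a small $\tau_0 > 0$ (the collaring parameter built into Situation~\ref{situ2697}~(4)), and choose a smooth non-decreasing function $\chi:[0,1]\to[0,1]$ with $\chi(s)=0$ for $s\in[0,\tau_0]$ and $\chi(s)=1$ for $s\in[1-\tau_0,1]$. Throughout, $\chi$ plays the role of the reparametrization enforcing the retraction $\mathcal R$.

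For the Hamiltonian, I would simply set
\begin{equation}
H^{21,s}(x,\tau,t) \;=\; (1-\chi(s))\,H^{21,0}(x,\tau,t) + \chi(s)\,H^{21,1}(x,\tau,t).
\end{equation}
Convex combination preserves the asymptotic conditions at $\tau\to\pm\infty$ because both $H^{21,0}$ and $H^{21,1}$ already equal $H^1$ (resp.\ $H^2$) outside $[-1,1]\times S^1$, so items~(1)(2) of Situation~\ref{situ2676} hold for every $s$. The choice of $\chi$ makes the family constant in $s$ near $s=0,1$, yielding the collared condition, and $\chi(0)=0$, $\chi(1)=1$ give the correct boundary values.

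For the almost complex structures, the key point is to produce a smooth path $\{\widetilde{\mathcal J}^s\}_{s\in[0,1]}$ interpolating $\mathcal J^0$ and $\mathcal J^1$ inside the space $\mathcal P$ of smooth $\R$-families $\{J_\tau\}$ of $\omega$-tame almost complex structures on $X$ that agree with $J_1$ for $\tau<-1$ and with $J_2$ for $\tau>1$; once such a path is chosen I would set $\mathcal J^s := \widetilde{\mathcal J}^{\chi(s)}$, and the same argument as above shows this is collared and has the required restrictions. The existence of the path rests on the fact that the space $\mathcal J(X,\omega)$ of $\omega$-tame almost complex structures on $X$ is a contractible Fr\'echet manifold (see e.g.\ McDuff--Salamon), and that imposing the fixed asymptotic values $J_1,J_2$ at $\tau=\pm\infty$ cuts out a still contractible subspace. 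Concretely, one may fix a reference tame $\widetilde J$ on $X \times \R$ adapted to the asymptotics, use the fact that near $\widetilde J$ the tame almost complex structures form a convex open neighborhood in the natural linear chart on endomorphisms anti-commuting with $\widetilde J$, and interpolate via a partition of unity in the $s$-direction; alternatively one uses the standard formula expressing any tame $J'$ as $\widetilde J \cdot \exp(A)$ with $A$ in an explicit open convex subset of endomorphisms, and linearly interpolates the $A$'s.

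The main (and only) obstacle is this concrete construction of a smooth path of tame almost complex structures respecting the asymptotic constraints, which is where one must invoke contractibility (or equivalently perform a partition-of-unity/exponential-chart argument); the Hamiltonian and the final collaring step are formal. Once the path is in hand, all the required conditions in Situation~\ref{situ2697} are verified directly from the construction.
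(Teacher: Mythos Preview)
Your proposal is correct and follows essentially the same approach as the paper, which simply invokes the contractibility of the space of tame almost complex structures and of smooth functions. You have just made the argument more explicit by writing down the convex interpolation for the Hamiltonians and indicating concretely how to produce the path of tame almost complex structures, with the cutoff $\chi$ enforcing the collaring condition.
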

\begin{proof}
This is an immediate consequence of the facts that the set of
tame almost complex structures is contractible and
the set of smooth functions is contractible.
\end{proof}
\begin{defn}\label{def:NP}
Let $R_{\alpha_-} \in \widetilde{\rm Per}(H^1)$
and $R_{\alpha_+} \in \widetilde{\rm Per}(H^2)$.
We put
$$
\mathcal N_{\ell}(X,\mathcal J^{[0,1]},H^{21,[0,1]};\alpha_-,\alpha_+)
=
\bigcup_{s \in [0,1]}
\mathcal N_{\ell}(X,\mathcal J^{s},H^{21,s};\alpha_-,\alpha_+)
\times \{s\}.
$$
Here $\mathcal N_{\ell}(X,\mathcal J^{s},H^{21,s};\alpha_-,\alpha_+)$
is defined in Definition \ref{3equivrerevl}.
\end{defn}
We can define a topology of
$\mathcal N_{\ell}(X,\mathcal J^{[0,1]},H^{21,{[0,1]}};\alpha_-,\alpha_+)$
in the way similar to one in Definition \ref{def2626}
and show that it is Hausdorff and compact.
\par
We consider the boundary components of
$\mathcal N_{\ell}(X,\mathcal J^{[0,1]},H^{21,{[0,1]}};\alpha_-,\alpha_+)$
consisting of the disjoint union 
$$
\left(\mathcal N_{\ell}(X,\mathcal J^{0},H^{21,0};\alpha_-,\alpha_+)
\times \{0\}\right)
\sqcup \left(\mathcal N_{\ell}(X,\mathcal J^{1},H^{21,1};\alpha_-,\alpha_+)
\times \{1\}\right).
$$
We call it the {\it vertical boundary}
and denote it by $\frak C^{v}$.
We consider the projection 
$\mathcal N_{\ell}(X,\mathcal J^{[0,1]},H^{21,{[0,1]}};\alpha_-,\alpha_+)\to {[0,1]} $.
In the definition of $\frak C$-collared-ness (see Remark \ref{defn1531revrev}) we  replace 
$[-1,0]^{m-1}$
by ${[0,1]}$ to define $\frak C^v$-collared-ness. Then   
$\mathcal N_{\ell}(X,\mathcal J^{[0,1]},H^{21,{[0,1]}};\alpha_-,\alpha_+)\to {[0,1]}$
is $\frak C^v$-collared, by Situation \ref{situ2697} (4).
(This is the collared-ness in the ${[0,1]}$ direction.)
\par
The complement of the vertical boundary is written as $\frak C^{h}$
and we call it the {\it horizontal boundary}.
We denote by
$\mathcal N_{\ell}(X,\mathcal J^{[0,1]},H^{21,{[0,1]}};\alpha_-,\alpha_+)^{\frak C^h\boxplus 1}$
the space
$$
\bigcup_{s \in {[0,1]}}
\mathcal N_{\ell}(X,\mathcal J^{s},H^{21,s};\alpha_-,\alpha_+)^{\boxplus 1}
\times \{s\}.
$$
This space coincides with the `partial outer collaring'
of $\mathcal N_{\ell}(X,\mathcal J^{[0,1]},H^{21,{[0,1]}};\alpha_-,\alpha_+)$ 
in the horizontal direction, which is introduced in
\cite[Chapter 19]{fooonewbook}. (However we do not use 
\cite[Chapter 19]{fooonewbook} in this article.
The symbol ${}^{\frak C^h\boxplus 1}$ here can be regarded just as a notation.)
\begin{thm}\label{thmPparaexist}
We can define a Kuranishi structure on
$$
\mathcal N_{\ell}(X,\mathcal J^{[0,1]},H^{21,{[0,1]}};\alpha_-,\alpha_+)^{\frak C^h\boxplus 1}
$$
so that it will be an interpolation spaces
of a $[0,1]$-parametrized family of morphisms between
linear K-systems associated to $H^1$ and $H^2$ 
obtained by Theorem \ref{theorem266}
in the sense of \cite[Condition 16.21]{fooonewbook}.
\end{thm}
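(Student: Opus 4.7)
The plan is to adapt the strategy of Section \ref{subsec;KuramodFloermor} by adding a parameter $s \in [0,1]$ everywhere, and then applying the outer collaring construction of Section \ref{subsec;KuramodFloercor} only in the horizontal direction. First I would describe the compactification $\mathcal N_{\ell}(X,\mathcal J^{[0,1]},H^{21,{[0,1]}};\alpha_-,\alpha_+)$ explicitly as a set of isomorphism classes of tuples $((\Sigma,(z_-,z_+,\vec z),a_0),u,\varphi;s)$ where $s \in [0,1]$ and $(u,\varphi)$ satisfies the Floer/Cauchy--Riemann equation of Definition \ref{defn210rev} with the data $(H^{21,s},\mathcal J^s)$, and then verify that this is compact and Hausdorff by the same argument used in Lemma \ref{cpthausdorff} with an additional trivial bookkeeping for $s$.

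Next I would construct obstruction bundle data at each point ${\bf q}=((\Sigma,(z_-,z_+,\vec z),a_0),u,\varphi;s_{\bf q})$ in the parametrized moduli space. This is essentially Definition \ref{obbundeldata1}, except that the linearization operator $D_{\bf q}\overline{\partial}_{J,H^{21,s_{\bf q}}}$ at $\bf q$ is surjective only modulo the obstruction space. I would require the following three additional compatibility conditions on the system of obstruction bundle data. First, at points with $s_{\bf q} \in [0,\tau]$ we use the obstruction bundle data coming from Theorem \ref{theorem266rev} at $s=0$ via the $\frak C^v$-collaring (and similarly at $s_{\bf q} \in [1-\tau,1]$). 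Second, at points with $s_{\bf q} \in (\tau,1-\tau)$ the obstruction bundle data are chosen as in Section \ref{subsec;KuramodFloermor}, treating $s$ as a free parameter. Third, near any fiber-product stratum in the horizontal direction, we arrange the obstruction spaces to behave as in Lemma \ref{lem26689}, so that the outer collaring construction of Section \ref{subsec;KuramodFloercor} can be applied. The finite cover by neighborhoods $W({\bf p}_c)$ is obtained by the compactness of the parametrized moduli space, exactly as in Choice \ref{choice2650} extended over $[0,1]$.

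Then I would define Kuranishi charts $(U({\bf q},\epsilon_1,\EuScript B),E({\bf q},\epsilon_1,\EuScript B),s_{({\bf q},\epsilon_1,\EuScript B)},\psi_{({\bf q},\epsilon_1,\EuScript B)})$ in the same way as in Definition \ref{defn2650rev}, with the thickened moduli space having an extra one-dimensional direction corresponding to the parameter $s$. The smoothness of these charts and the existence of coordinate changes follow from the gluing analysis of \cite{foooanalysis} and \cite[Chapter 8]{foooanalysis} exactly as in the proof of Theorem \ref{theorem266rev} (1)(2), because the parameter $s$ enters only through the smooth family of data $(H^{21,s},\mathcal J^s)$ and does not affect the Fredholm analysis except by increasing the virtual dimension by one. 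The horizontal corners of $\mathcal N_{\ell}(X,\mathcal J^{[0,1]},H^{21,{[0,1]}};\alpha_-,\alpha_+)$ are fiber products with factors coming from the moduli spaces of Theorem \ref{kuraexists}, identical in combinatorics to those appearing in Section \ref{subsec:proof}.

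The horizontal outer collaring $\mathcal N_{\ell}^{\frak C^h\boxplus 1}$ is then constructed by the exact same procedure as in Section \ref{subsec;KuramodFloercor}: we add a collar $[-1,0]^k$ to each stratum of horizontal codimension $k$, formulate the analog of Condition \ref{conds2660rev} for the Kuranishi structures on the thickened spaces, and produce a compatible system by inducting on the number of transit points and using a choice of coverings $\mathcal V(A,B,C)$ as in Definition \ref{defn2659}. Compatibility with the morphisms produced at $s=0$ and $s=1$ is automatic from the $\frak C^v$-collaring of the families, which propagates to $\frak C^v$-collaring of the resulting Kuranishi structures by our choice of obstruction data near $s=0,1$; this ensures the Kuranishi structure on the vertical boundary matches those constructed in Theorem \ref{theorem266rev} and so gives an interpolation space for a $[0,1]$-parametrized family of morphisms in the sense of \cite[Condition 16.21]{fooonewbook}. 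The main obstacle is bookkeeping: ensuring that the three layers of compatibility (horizontal fiber-product corners, horizontal outer collar, and vertical $\frak C^v$-collaring) can be arranged simultaneously without interfering with each other. This is handled by doing the horizontal outer collar construction first at each fixed $s$, then using the $s$-independent obstruction bundle choices in the collared regions $s \in [0,\tau]$ and $s \in [1-\tau,1]$ so that the parametrized construction automatically inherits the $\frak C^v$-collaring from Situation \ref{situ2697} (4).
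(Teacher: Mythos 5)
Your proposal follows essentially the same two-step route as the paper: first build a Kuranishi structure on the parametrized moduli space that matches the given ones on the vertical boundary (using Situation \ref{situ2697} (4) to extend the coverings $W({\bf p}_c)$ from $s=0,1$ as products into the collar and adding finitely many interior charts disjoint from the vertical boundary), and then modify it on the horizontal outer collar via the analogs of Condition \ref{conds2660rev} and Proposition \ref{prop2661rev}. The only caveat is your closing remark about performing the horizontal outer collar construction ``at each fixed $s$'' — the paper carries out the collar modification for the whole $[0,1]$-family at once so that smoothness in $s$ is automatic, but this is a matter of phrasing rather than substance.
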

We need certain additional properties on our morphism
for applications. We will state them below.

\begin{prop}\label{collaredPparapara}
We may choose our Kuranishi structure on 
$$
\mathcal N_{\ell}(X,\mathcal J^{[0,1]},H^{21,[0,1]};\alpha_-,\alpha_+)^{\frak C^h\boxplus 1}
$$
so that it is $\frak C^{v}$-collared.
\end{prop}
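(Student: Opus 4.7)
The plan is to carry out the parametrized version of the construction from Sections~\ref{subsec;KuramodFloermor} and \ref{subsec;KuramodFloercor} while arranging every choice to respect the retraction $\mathcal R:[0,1]\to[\tau,1-\tau]$ from Situation \ref{situ2697}\,(4). Because $H^{21,s}$ and $\mathcal J^s$ are independent of $s$ on $[0,\tau]$ and on $[1-\tau,1]$, we have a canonical identification
\begin{equation*}
\mathcal N_\ell(X,\mathcal J^s,H^{21,s};\alpha_-,\alpha_+) \;=\; \mathcal N_\ell(X,\mathcal J^{\mathcal R(s)},H^{21,\mathcal R(s)};\alpha_-,\alpha_+)
\end{equation*}
for such $s$, and the same holds for every fiber product component appearing in the normalized corners. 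It is this product structure of the underlying spaces that we must now propagate to the Kuranishi data.

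First, I would fix the output of the non-parametrized construction at the endpoints $s_0=0,1$: the finite sets $\EuScript A_{\ell}(H^{21,s_0},\mathcal J^{s_0};\alpha_-,\alpha_+)$, the obstruction bundle data $\frak E_{\mathbf p_c}$, the transversal submanifolds $\mathcal D_i$, the closed neighborhoods $W(\mathbf p_c)$, the stabilization data, and the cell decompositions $\mathcal V(A,B,C)$ of the outer collar cubes, obtained via Proposition \ref{prop2661rev} for $(H^{21,s_0},\mathcal J^{s_0})$. Second, I would prove a parametrized analogue of Lemma \ref{lem2446} by the implicit function theorem: for $(\mathbf p,s_0)$ with $s_0\in\{0,1\}$, the obstruction bundle data centered at $\mathbf p$ still makes $D_{\mathbf q}\overline{\partial}_{J,H^{21,s}}$ surjective and yields unique marked points $\vec w^{\mathbf q}_{\mathbf p_c}$ for all $(\mathbf q,s)$ in a neighborhood of $(\mathbf p,s_0)$ in the parametrized moduli space. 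Combined with the constancy of the family in $s$ on the two collars, this lets me take these neighborhoods to be of product form $W(\mathbf p_c)\times[0,\tau]$ and $W(\mathbf p_c)\times[1-\tau,1]$.

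Third, I would assemble the parametric finite set of base points as the union $(\EuScript A_\ell^{s_0}\times\{s_0\})_{s_0=0,1}$ (viewed inside the parametrized moduli space via the identification above) together with finitely many further points in the interior $s\in(\tau,1-\tau)$, the latter chosen by compactness so that the covering property (\ref{coversururev}) holds. For each base point $(\mathbf p_c,s)$ with $s\in[0,\tau]\cup[1-\tau,1]$, I take the obstruction bundle data, $\mathcal D_i$, $W(\cdot)$, stabilization data, and the attached cell decomposition $\mathcal V(A,B,C)$ to be pulled back via $\mathcal R$ from the values at $\mathcal R(s)$. Since every ingredient of the obstruction space $E((\frak Y,u',\varphi',a_0');\mathbf q;\EuScript B)$ and of the transversal constraint cutting out the Kuranishi chart depends only on this data and on $(\mathbf q,s)$ through $\mathbf q$ and the family at time $s$ (which equals the family at time $\mathcal R(s)$ in the collars), the Kuranishi chart at $(\mathbf q,s)$ coincides with the pullback under $\mathcal R$ of the chart at $(\mathbf q,\mathcal R(s))$; the coordinate changes, being canonically determined, inherit the same property. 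This is precisely the $\frak C^v$-collaredness in the sense of Remark \ref{defn1531revrev}.

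The main obstacle will be the simultaneous bookkeeping required to make these choices compatible with Condition \ref{conds2660rev} (horizontal collaring) and with the inductive construction of the cell decompositions $\mathcal V(A,B,C)$ in Definition \ref{defn2659}. I would handle this by an induction on the corner codimension $k$ analogous to the proof of Lemma \ref{lem26689}: on each stratum $\mathcal N_{\vec\ell_1,\ell',\vec\ell_2}(\cdots)^+$ appearing in $\widehat{S}_k$, the inductive hypothesis is that the finite base point set, obstruction data, and $\{\mathcal V(A,B,C)\}$ already fixed on its boundary are pulled back from $s=\tau$ and $s=1-\tau$ on the two vertical collars; the extension to the interior of the stratum is performed freely, and then pushed into the collars by $\mathcal R^\ast$. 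The direct sum property $(\ast)$ of Lemma \ref{lem26689} is open, hence preserved under pullback, so the final perturbation step needed to ensure transversality of the sum of obstruction spaces can be carried out only in the slab $s\in[\tau,1-\tau]$ without disturbing the collared structure. Once this is in place, Proposition \ref{collaredPparapara} follows immediately from the construction.
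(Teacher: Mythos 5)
Your proposal is correct and follows essentially the same route as the paper: both exploit Situation \ref{situ2697}(4) to make the endpoint base-point data (neighborhoods $W(\mathbf p_c)$, obstruction bundle data, stabilization data) of product form over the vertical collars, place the remaining base points so that their neighborhoods stay away from the vertical boundary (cf. Condition \ref{conds26103}(2) and \eqref{form268181}), and then handle the interaction with the outer-collar modification by the same inductive scheme as Proposition \ref{prop2661}. The only cosmetic difference is that you phrase the collar condition as pulling back all data via $\mathcal R$, while the paper phrases it as excluding interior base points from a neighborhood of $\{0,1\}$; these yield the same $\frak C^{v}$-collared Kuranishi structure (for a possibly smaller collar parameter).
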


\begin{proof}[Proof of Theorem \ref{thmPparaexist}
and Proposition \ref{collaredPparapara}]
We are given
morphisms associated to $s_0 = 0, 1$.
We made various choices in Section \ref{subsec;KuramodFloermor}.
Below we will show that we can define a $[0,1]$ parametrized morphism
so that its boundary becomes the union of two morphisms
associated to $s_0 = 0, 1$.
\par
Again the proof is by two steps. In the first step we
construct a Kuranishi structure on
$\mathcal N_{\ell}(X,\mathcal J^{[0,1]},H^{21,{[0,1]}};\alpha_-,\alpha_+)$
which may not be compatible with the fiber product description
at the horizontal boundary.
(We construct our Kuranishi structure on $\mathcal N_{\ell}(X,\mathcal J^{[0,1]},H^{21,{[0,1]}};\alpha_-,\alpha_+)$
so that it is compatible with the given Kuranishi structure
at the vertical boundary $\{0,1\} = \partial {[0,1]}$.)
We then modify it on the collar so that it becomes a
${[0,1]}$-parametrized interpolation space.
The details are as follows.
\begin{rem}
Since the proof is a repetition of the construction in the 
previous sections, the readers may skip it and
go directly to Section \ref{subsec;KuramodFloercom}.
We provide the details of the proof here for the sake of completeness.
\end{rem}
We first define the notion of $\epsilon$-closeness.
Let 
$$
\widehat{\bf q} = ({\bf q},s_{\bf q}) \in \mathcal N_{\ell}(X,\mathcal J^{s},H^{21,s};\alpha_0,\alpha_+)
\times \{s_{\bf q}\}
$$
and we take stabilization data at $\bf q$.
Hereafter we omit the symbol $\,\,\widehat{}\,\,$,  and write 
$$
{\bf q} = ({\bf q},s_{\bf q})
$$
by an abuse of notation.
Then
we consider
$$
({\frak Y} \cup \vec w',\varphi',a_0) = {\Phi}_{\bf q}(\frak y,\vec T,\vec \theta) \in {\mathcal N}_{\ell+\ell'+\ell''}(\text{source})
$$
as in (\ref{form2663}).
We consider $(\frak Y\cup \vec w',u',\varphi',a'_0,s')$
where $u'$ is a map from the curve $\frak Y$ to $X$ and $s' \in {[0,1]}$.
We say that it is {\it $\epsilon$-close} to ${\bf q} \cup \vec w \cup \vec w_{\rm can}$
if Definition \ref{orbitecloseness} (1)-(4) hold and
$\vert s_{\bf q} - s'\vert < \epsilon$.
In (2) we use the Hamiltonian $H^{21,s'}$ and the family of almost complex structures $\mathcal J_{s'}$  as in
Definition \ref{defn210rev} (5).
In (3)  we also use $H^{21,s'}$ to define the redefined connecting orbit map.
\par
Next, for a point 
${\bf p} \in {\mathcal N}_{\ell}(X,\mathcal J,H^{21};\alpha_-,\alpha_+)$
we suppose to be given obstruction bundle data $\frak E_{{\bf p}}$
and  $(\frak Y\cup \vec w',u',\varphi',a'_0,s')$ is
$\epsilon_{\bf p}$-close to ${\bf p} \cup \vec w \cup \vec w_{\rm can}$
for a sufficiently small $\epsilon_{\bf p}>0$.
Then we can define a complex linear map
\begin{equation}\label{eq:71}
I_{{\bf p},{\rm v};\Sigma',u',\varphi',a'_0} : 
E_{{\bf p},{\rm v}}(\frak y_{{\bf p}}) \to
C^{\infty}(\Sigma';(u')^*TX \otimes \Lambda^{0,1})
\end{equation}
in the same way as in the parametrized version of Definition \ref{defn2642}.
\par
We recall from the paragraph around \eqref{coversururev}
that we took a finite set indexed by the set 
$\EuScript C_{\ell}(H^{21},\mathcal J;\alpha_-,\alpha_+)$
$$
\EuScript A_{\ell}(H^{21},\mathcal J;\alpha_-,\alpha_+)
= \{{\bf p}_c \mid c \in \EuScript C_{\ell}(H^{21},\mathcal J;\alpha_-,\alpha_+) \}
\subset {\mathcal N}_{\ell}(X,\mathcal J,H^{21};\alpha_-,\alpha_+)
$$
and for each $c \in \EuScript C_{\ell}(H^{21},\mathcal J;\alpha_-,\alpha_+)$
we took obstruction bundle data $\frak E_{{\bf p}_{c}}$
centered at ${\bf p}_c$.
It corresponds to the case $s_0 = 0,1$ in the current circumstances.
We write
$\EuScript A_{\ell}(H^{21,s_0},\mathcal J_{s_0};\alpha_-,\alpha_+)$,
$\EuScript C_{\ell}(H^{21,s_0},\mathcal J_{s_0};\alpha_-,\alpha_+)$,
$\frak E_{{\bf p}_{c},s_0}$ to specify $s_0$. 
We also took $W({\bf p}_c)$ such that
(\ref{coversururev}) is satisfied.
We denote them by $\overline W({\bf p}_c,s_0)$ here.
Note it is a neighborhood of ${\bf p}_c$ in
$\mathcal N_{\ell}(X,\mathcal J^{s_0},H^{21,s_0};\alpha_-,\alpha_+)$.
\par
We take a closed neighborhood $W({\bf p}_c)$ of ${\bf p}_c$
in
$\mathcal N_{\ell}(X,\mathcal J^{[0,1]},H^{21,[0,1]};\alpha_-,\alpha_+)$
such that
$$
W({\bf p}_c)
\cap \mathcal N_{\ell}(X,\mathcal J^{s_0},H^{21,s_0};\alpha_-,\alpha_+)
=
\overline W({\bf p}_c,s_0).
$$
Moreover using Situation \ref{situ2697} (4) we may assume
\begin{equation}\label{form268181}
\aligned
W({\bf p}_c) = \overline W({\bf p}_c,s_0) \times [0,\epsilon)
\qquad & \text{for $s_0 =0$}, \\
W({\bf p}_c) = \overline W({\bf p}_c,s_0) \times (1-\epsilon,1]
\qquad & \text{for $s_0 =1$}.
\endaligned
\end{equation}
Now we similarly take a finite set 
$$
\aligned
&\EuScript A_{\ell}(H^{21,[0,1]},\mathcal J^{[0,1]};\alpha_-,\alpha_+)
= \{{\bf p}_c \mid c \in \EuScript C_{\ell}(H^{21,{[0,1]}},\mathcal J^{[0,1]};\alpha_-,\alpha_+) \}
\\
&\subset {\mathcal N}_{\ell}(X,\mathcal J^{[0,1]},H^{21,{[0,1]}};\alpha_-,\alpha_+)
\setminus
\partial_{\frak C^v}({\mathcal N}_{\ell}(X,\mathcal J^{[0,1]},H^{21,{[0,1]}};\alpha_-,\alpha_+))
\endaligned
$$
and a closed neighborhood $W({\bf p}_c)$ of ${\bf p}_c$ in
$\mathcal N_{\ell}(X,\mathcal J^{[0,1]},H^{21,{[0,1]}};\alpha_-,\alpha_+)$
for each 
$c \in \EuScript C_{\ell}(H^{21,{[0,1]}},\mathcal J^{[0,1]};\alpha_-,\alpha_+) $
such that the following conditions are satisfied.
\begin{conds}\label{conds26103}
\begin{enumerate}
\item
$$
\aligned
&\mathcal N_{\ell}(X,\mathcal J^{[0,1]},H^{21,{[0,1]}};\alpha_-,\alpha_+)\\
&=
\bigcup_{c \in \EuScript C_{\ell}(H^{21,{[0,1]}},\mathcal J^{[0,1]};\alpha_-,\alpha_+)}
{\rm Int}\,\, W({\bf p}_c)
\cup
\bigcup_{s_0 \in \{0,1\}}\bigcup_{c \in \EuScript C_{\ell}(H^{21,s_0},\mathcal J^{s_0};
\alpha_-,\alpha_+)}
{\rm Int}\,\, \overline W({\bf p}_c,s_0).
\endaligned$$
\item
If
$c \in \EuScript C_{\ell}(H^{21,{[0,1]}},\mathcal J^{[0,1]};\alpha_-,\alpha_+)$,
then
$$
W({\bf p}_c) \cap \partial_{\frak C^v}
({\mathcal N}_{\ell}(X,\mathcal J^{[0,1]},H^{21,{[0,1]}};\alpha_-,\alpha_+))
=\emptyset.
$$
\item
Any element of $W({\bf p}_c)$
together with certain marked points is $\epsilon_c$-close to ${\bf p}_c
\cup \vec w\cup \vec w_{\rm can}$.
\end{enumerate}
\end{conds}
Let ${\bf q} \in \mathcal N_{\ell}(X,\mathcal J^{[0,1]},H^{21,{[0,1]}};\alpha_-,\alpha_+)$.
We put
$$
\aligned
\EuScript E({\bf q})  = & \{ {\bf p}_c
\in \EuScript A_{\ell}(H^{21,{[0,1]}},\mathcal J^{[0,1]};\alpha_-,\alpha_+) \\
& \cup
\bigcup_{s_0 = 0,1} \EuScript A_{\ell}(H^{21,s_0},\mathcal J_{s_0};\alpha_-,\alpha_+)
\mid {\bf q} \in \overline W({\bf p}_c,s_0)
\}.
\endaligned
$$
By taking $\EuScript A_{\ell}(H^{21,{[0,1]}},\mathcal J^{[0,1]};\alpha_-,\alpha_+)$ suitably, 
we may take $\epsilon_c >0$ in Condition \ref{conds26103} (3)
small enough as we wish. 
Therefore for each 
${\bf p}_c \in \EuScript E({\bf q})$
we can uniquely find $\vec w_{c}^{\bf q}$ for any ${\bf q} \in W({\bf p}_c)$ such that
${\bf q} \cup \vec w_{c}^{\bf q}$
is $\epsilon'_c$-close to ${\bf p}_c \cup \vec w_{{\bf p}_c}
\cup \vec w_{{\bf p}_c,{\rm can}}$
and $\vec w_{c}^{\bf q}$ satisfies the transversal constraint, 
and moreover, the linearization operator 
$D_{\bf q} \overline{\partial}_{J,H}$ at $\bf q$ in \eqref{eq:linearized} 
is surjective $\mod \oplus_{\rm v}\operatorname{Im}~ I_{{\bf p}_c,{\rm v};{\bf q}}$,
where $I_{{\bf p}_c,{\rm v};{\bf q}}$ is the map in \eqref{eq:71}
for ${\bf p}={\bf p}_c$, ${\bf q}=(\Sigma', u', \varphi')$.
\par
Now let us consider
$(\frak Y\cup\bigcup_{{\bf p}_c\in \EuScript E({\bf q})}\vec w'_c,u',\varphi',a'_0,s')$
such that
$(\frak Y\cup \vec w'_c,u',\varphi',a'_0,s')$ is $\epsilon$-close to 
${\bf q} \cup \vec w_{c}^{\bf q}$ for each ${\bf p}_c \in \EuScript E({\bf q})$.
We may choose $\epsilon >0$ and $\epsilon_c >0$ small so that we obtain a
complex linear map
$$
I_{{\bf p}_c,{\rm v};\Sigma',u',\varphi',a'_0,s'} : E_{{\bf p}_c,{\rm v}}(\frak y_{{\bf p}_c}) \to
C^{\infty}(\Sigma';(u')^*TX \otimes \Lambda^{0,1})
$$
as in \eqref{eq:71}.
We now put
\begin{equation}\label{obspacedefHFHFrev}
E((\frak Y\cup \bigcup_{c\in \EuScript B}\vec w'_c,u',\varphi',a'_0,s');{\bf q};\EuScript B)
=
\bigoplus_{c \in \EuScript B}\bigoplus_{\rm v} {\rm Im} \,\, 
I_{{\bf p}_c,{\rm v};\Sigma',u',\varphi',a'_0,s'},
\end{equation}
where $\EuScript B$ is a subset of $\{c \mid {\bf p}_c \in \EuScript E({\bf q})\}$.
By perturbing the bundle part of the obstruction bundle data $\frak E_{{\bf p}_{c}}$
for ${\bf p}_c
\in \EuScript A_{\ell}(H^{21,{[0,1]}},\mathcal J^{[0,1]};\alpha_-,\alpha_+)$
slightly we may assume that the right hand side of
(\ref{obspacedefHFHFrev}) is a direct sum.\footnote{See \cite[Subsection 11.4]{fooo:const1}.}
(We can do so without changing the obstruction bundle data which
was already fixed at $s_0 = 0,1$.)
\par
Now we define $V({\bf q},\epsilon_1,\EuScript B)$
to be the set of the isomorphism classes of
$(\frak Y\cup \vec w'_c,u',\varphi',a'_0,s')$
satisfying Definition \ref{defn2650rev} (1)-(5) and $\vert s'-s_{\bf q}\vert
< \epsilon_1$.
Note that we use $s'$ to parametrize a Hamiltonian and a family of almost
complex structures appearing in Definition \ref{defn2650rev} (1),(2)
and $s_{\bf q}$ is the $[0,1]$ component of ${\bf q}$.
\par
In the same way as in Lemma \ref{lem2653}, we can show that
$V({\bf q},\epsilon_1,\EuScript B)$ is a smooth manifold with corners
by choosing various constants sufficiently
small.
Then in the same way as the proof of Theorem \ref{theorem266rev} (2), we can find other objects
so that $V({\bf q},\epsilon_1,\EuScript B)$ together with them
is a Kuranishi chart of ${\bf q}$.
We can also show the existence of coordinate changes in the same way as in
Lemmas \ref{lem2657} and \ref{lem2658}.
We shrink the Kuranishi neighborhood and
discuss in the same way as in 
Lemmas \ref{lem2657}, \ref{lem2658} and 
\cite[Chapter 8]{foooanalysis}
to obtain a Kuranishi structure on
$\mathcal N_{\ell}(X,\mathcal J^{[0,1]},H^{21,{[0,1]}};\alpha_-,\alpha_+)$.
By using Condition \ref{conds26103} (2) and (\ref{form268181}) 
we can show that this Kuranishi structure
is $\frak C^v$-collared and its restriction to
$\partial{[0,1]} = \{0,1\}$ coincides with the given one.
See the paragraphs after Definition \ref{def:NP} for the $\frak C^v$-collared-ness.
\par
We next define the Kuranishi structure on 
$\mathcal N_{\ell}(X,\mathcal J^{[0,1]},H^{21,{[0,1]}};\alpha_-,\alpha_+)^{\boxplus 1}$
extending the one on 
$\mathcal N_{\ell}(X,\mathcal J^{[0,1]},H^{21,{[0,1]}};\alpha_-,\alpha_+)$ 
in the way as in the previous sections as follows.
\par
We consider the following fiber product.
(Here and in the next condition we use Notation \ref{not2659rev}.)
\begin{equation}\label{264222revrev0}
\aligned
&{\mathcal M}_{\vec \ell_{1,A_1,1}}
(X,J_1,H^1;\vec\alpha_{1,A_1,1})
 \\
&
\,{}_{{\rm ev}_{+}}\times_{{\rm ev}_-} {\mathcal M}_{\vec \ell_{1,A_1,2}}(X,J_1,H^1;\vec\alpha_{1,A_1,2})
\,{}_{{\rm ev}_{+}}\times_{{\rm ev}_-} \dots \\
&\,{}_{{\rm ev}_{+}}\times_{{\rm ev}_-} {\mathcal M}_{\vec \ell_{1,A_1,j}}(X,J_1,H^1;\vec\alpha_{1,A_1,j})
\,{}_{{\rm ev}_{+}}\times_{{\rm ev}_-}
\dots \\
&
\,{}_{{\rm ev}_{+}}\times_{{\rm ev}_-} {\mathcal M}_{\vec\ell_{1,A_1,a_1+1}}(X,J_1,H^1;\vec\alpha_{1,A_1,a_1+1}) \\
&\,{}_{{\rm ev}_{+}}\times_{{\rm ev}_-}
{\mathcal N}_{\vec\ell_{1,A_1,a_1+1},\ell',\vec\ell_{2,A_2,1}}(X,\mathcal J^{[0,1]},H^{21,{[0,1]}}; \vec\alpha_{1,A_1,a_1+1},\vec\alpha_{2,A_2,1})
\\
&
\,{}_{{\rm ev}_{+}}\times_{{\rm ev}_-} {\mathcal M}_{\vec \ell_{2,A_2,2}}(X,J_2,H^2;\vec{\alpha}_{2, A_2,2})
\,{}_{{\rm ev}_{+}}\times_{{\rm ev}_-}
\dots\\
&\,{}_{{\rm ev}_{+}}\times_{{\rm ev}_-} {\mathcal M}_{\vec \ell_{2,A_2,j}}(X,J_2,H^2;\vec\alpha_{2,A_2,j})
\,{}_{{\rm ev}_{+}}\times_{{\rm ev}_-}
\dots \\
&
\,{}_{{\rm ev}_{+}}\times_{{\rm ev}_-} {\mathcal M}_{\vec\ell_{2,A_2,a_2+1}}(X,J_2,H^2;\vec\alpha_{2, A_2,a_2+1}).
\endaligned
\end{equation}
(Recall $\vec\alpha_{1,A_1,a_1+1} = (\alpha_{1, i(A_1,a_1)},\dots,\alpha_{1,m_1})$
and $\vec\alpha_{2,A_2,a_2+1} = (\alpha_{1, i(A_2,a_2)},\dots,\alpha_{2,m_2+1})$
and other notations in Notation \ref{not2659rev}.)
The fiber product (\ref{264222revrev0}) is the same as in 
(\ref{264222rev}) except one of the factors in
(\ref{264222rev}) is replaced by
\begin{equation}\label{form2678rev}
{\mathcal N}_{\vec\ell_{1,A_1,a_1+1},\ell',\vec\ell_{2,A_2,1}}(X,\mathcal J^{[0,1]},H^{21,{[0,1]}}; (\vec\alpha_{1, A_1,a_1+1}, \vec\alpha_{2,A_2,1})).
\end{equation}
We use the embedding
\begin{equation}\label{2642formrevrev}
\aligned
{\rm id} \times \mathcal I_{A_1,B_1,C_1} \times \mathcal I_{A_2,B_2,C_2} :
&{\mathcal N}_{\vec \ell_1,\ell',\vec \ell_2}(X,\mathcal J^{[0,1]},H^{21,{[0,1]}};\vec \alpha_1,\vec \alpha_2)\times [-1,0]^{b_1+b_2} \\
&\to
{\mathcal N}_{\vec \ell_1,\ell',\vec \ell_2}(X,\mathcal J^{[0,1]},H^{21,{[0,1]}};\vec \alpha_1,\vec \alpha_2)^+
\endaligned
\end{equation}
which is defined in the same way as in (\ref{2642formrev}).
\par
We use the fact that
(\ref{form2678rev}) is
a component of
$$
 \widehat S_{c_{a_1+1}(1,A_1)+c_{1}(2,A_2)}
({\mathcal N}_{\vec\ell_{1,A_1,a_1+1,C_1},\ell'',\vec\ell_{2,A_2,1,C_2}}(X,\mathcal J^{[0,1]},H^{21,{[0,1]}}; \vec\alpha_{1,A_1,a_1+1}, \vec\alpha_{2,A_2,1})).
$$
\par
As we mentioned already, the fiber product factor of
(\ref{264222revrev0}) other than (\ref{form2678rev})
is
\begin{equation}\label{form268686}
{\mathcal M}_{\vec \ell_{r,A_r,j}}(X,J_r,H^r;\vec\alpha_{r,A_r,j}),
\end{equation}
which is a component of
$$
\widehat S_{c_j(r,A_r)}({\mathcal M}_{\vec \ell_{r,A_1,j},C_r}(X,J_r,H^r;\vec\alpha_{r,A_r,j,C_r})).
$$
We put
$$
{\mathcal N}_{\vec \ell_1,\ell',\vec \ell_2}(X,\mathcal J^{[0,1]},H^{21,{[0,1]}};\vec \alpha_1,\vec \alpha_2)^+
= {\mathcal N}_{\vec \ell_1,\ell',\vec \ell_2}(X,\mathcal J^{[0,1]},H^{21,{[0,1]}};\vec \alpha_1,\vec \alpha_2) \times [-1,0]^{m-1},
$$
where $m = \# \vec \alpha_1 + \# \vec \alpha_2 - 2$.
\begin{conds}\label{conds2660revrev}
We require the K-system 
$$
\{ ( {\mathcal M}_{\vec \ell_1,\ell',\vec \ell_2}(X,\mathcal J^{[0,1]},H^{21,{[0,1]}};\vec \alpha_1,\vec \alpha_2)^+, 
~\widehat{\mathcal U}_{\vec \ell_1,\ell',\vec \ell_2}(X,\mathcal J^{[0,1]},H^{21,{[0,1]}};\vec \alpha_1,\vec \alpha_2) )\}
$$
satisfies the following.
\begin{enumerate}
\item
The restriction of
$
\widehat{\mathcal U}_{\vec \ell_1,\ell',\vec \ell_2}(X,\mathcal J^{[0,1]},H^{21,{[0,1]}};\vec \alpha_1,\vec \alpha_2)
$
to the image of the embedding
(\ref{2642formrevrev})
is the fiber product of the  following
Kuranishi strucrures.
(Here we use the fiber product description
(\ref{264222revrev0}).)
\begin{enumerate}
\item
The restrictions of the
Kuranishi structure $$\widehat{\mathcal U}_{\vec\ell_{A_r,j,C_r}}(X,J_r,H^r; \vec\alpha_{r,A_r,j,C_r})$$
on ${\mathcal M}_{\vec \ell_{r,A_{r},j},C_r}(X,J_r,H^r;\vec\alpha_{r,A_r,j,C_r})^+$
to the space which is a direct product
(\ref{form268686}) $\times [-1,0]^{m_j(r,A_r,C_r) -1}$.
\item
The restrictions of the
Kuranishi structure $$\widehat{\mathcal U}_{\vec\ell_{1,A_1,a_1+1,C_1},\ell'',\vec\ell_{2,A_2,1,C_2}}(X,\mathcal J^{[0,1]},H^{21,{[0,1]}}; \vec\alpha_{1,A_1,a_1+1}, \vec\alpha_{2,A_2,1})$$
on
${\mathcal N}_{\vec\ell_{1,A_1,a_1+1,C_1},\ell'',\vec\ell_{2,A_2,1,C_2}}(X,\mathcal J^{[0,1]},H^{21,{[0,1]}}; \vec\alpha_{1,A_1,a_1+1}, \vec\alpha_{2,A_2,1})^+$
to the space which is a direct product
$$
(\ref{form2678rev}) \times [-1,0]^{m_{a_1}(1,A_1,C_1)} \times [-1,0]^{m_{0}(2,A_2,C_2)}.
$$
\end{enumerate}
\item
The restriction of
$\widehat{\mathcal U}_{\vec \ell_1,\ell',\vec \ell_2}(X,\mathcal J^{[0,1]},H^{21,{[0,1]}};\vec \alpha_1,\vec \alpha_2)$ to the vertical boundary
$\partial_{\frak C^v}({\mathcal N}_{\vec \ell_1,\ell',\vec \ell_2}(X,\mathcal J^{[0,1]},H^{21,{[0,1]}};\vec \alpha_1,\vec \alpha_2)^{\frak C^h\boxplus 1})$
is isomorphic to the Kuranishi structure
$
\bigcup_{s_0=0,1}
\widehat{\mathcal U}_{\vec \ell_1,\ell',\vec \ell_2}(X,\mathcal J^{s_0},H^{21,s_0};\vec \alpha_1,\vec \alpha_2)
$, 
which we produced in Proposition \ref{prop2661rev}.
\end{enumerate}
\end{conds}
\begin{prop}\label{prop2661revrev}
There exists a K-system
$$
\{ ({\mathcal N}_{\vec \ell_1,\ell',\vec \ell_2}(X,\mathcal J^{[0,1]},H^{21,{[0,1]}};\vec \alpha_1,\vec \alpha_2)^+, ~ 
\widehat{\mathcal U}_{\vec \ell_1,\ell',\vec \ell_2}(X,\mathcal J^{[0,1]},H^{21,{[0,1]}};\vec \alpha_1,\vec \alpha_2) )\}
$$
satisfying the following properties.
\begin{enumerate}
\item
They satisfy Condition \ref{conds2660revrev}.
\item
Let $\frak C$ be the union of the components of
$
{\mathcal N}_{\vec \ell_1,\ell',\vec \ell_2}(X,\mathcal J^{[0,1]},H^{21,{[0,1]}};\vec \alpha_1,\vec \alpha_2)^+
$
which are in ${\mathcal N}_{\vec \ell_1,\ell',\vec \ell_2}(X,\mathcal J^{[0,1]},H^{21,{[0,1]}};\vec \alpha_1,\vec \alpha_2)
\times \partial ([-1,0]^{m-1})$.
Then the Kuranishi structure
$\widehat{\mathcal U}_{\vec \ell_1,\ell',\vec \ell_2}(X,\mathcal J^{[0,1]},H^{21,{[0,1]}};\vec \alpha_1,\vec \alpha_2)^+
$
is $\frak C$-collared in the sense of 
Remark \ref{defn1531revrev}.
\item
For the case $\vec \alpha_1 = (\alpha_-)$ and $\vec \alpha_2 = (\alpha_+)$, 
$\widehat{\mathcal U}_{\emptyset,\ell,\emptyset}(X,\mathcal J^{[0,1]},H^{21,{[0,1]}};\vec \alpha_1,\vec \alpha_2)^+$
coincides with the Kuranishi structure we
constructed in the first half of the proof of Theorem \ref{thmPparaexist}
and Proposition \ref{collaredPparapara}.
\end{enumerate}
\end{prop}
\begin{proof}
The proof is the same as that of Proposition \ref{prop2661}.
\end{proof}
Now using Proposition \ref{prop2661revrev}
we modify the Kuranishi structures on the spaces
$\mathcal N_{\ell}(X,H^{21,{[0,1]}},\mathcal J^{[0,1]};\alpha_-,\alpha_+)^{\boxplus 1}$
and complete the proof of Theorem \ref{thmPparaexist} and Proposition \ref{collaredPparapara}.
\end{proof}

\begin{rem}
In this section we have constructed a homotopy between two morphisms.
We can continue this process to obtain a homotopy of homotopies etc.
In this paper we do not need such a higher homotopy by the following reason:
First we note that what we obtain in this paper is a linear K-system.
In \cite[Chapter 16]{fooonewbook} we introduced the notion of an 
inductive system of linear K-systems.\footnote{It is a structure which gives 
a sequence of the partial version of the linear K-system for which the moduli spaces of energy $\le E_n$ $(n=0,1,\dots,)$ are used. It 
gives also a homotopy equivalence (modulo $E_n$) between the structure with the moduli spaces of energy $\le E_n$
and that with the moduli spaces of energy $\le E_{n+1}$. }
Sometimes such a structure is easier to obtain since we need to study only a finite 
number of moduli spaces at each stage. 
The proof we provide in this paper defines a system of {\it Kuranishi structures}
on infinitely many moduli spaces at once. This is the reason we do not need to study 
homotopy of homotopies.
On the other hand, to construct a system of {\it CF-perturbations} on such infinitely many moduli spaces, 
we need to stop at a certain energy level and use a homotopy inductive limit argument.
Such an argument is given in \cite[Chapter 19]{fooonewbook}. (We can quote the statements 
of \cite{fooonewbook} literary.)  Homotopy of homotopies we used in the homotopy inductive limit argument of \cite[Chapter 19]{fooonewbook} 
is a direct product 
of the homotopy obtained in Theorem \ref{thmPparaexist} and an interval $[0,1]$ in our case.
\end{rem}

\section{Composition of morphisms}
\label{subsec;KuramodFloercom}
\subsection{Statement}\label{subsec:statement2}
\begin{shitu}\label{situ26106}
\begin{enumerate}
\item
Let $H^r$ ($r=1,2,3$) be periodic Hamiltonian functions which are
Morse-Bott non-degenerate in the sense of Condition \ref{weaknondeg}.
\item
For each pair of $r,r'$ with $r\ne r' \in \{1,2,3\}$, 
let
$H^{r' r} : X \times \R \times S^1 \to \R$
be a smooth function
and $\mathcal J^{r' r} = \{J_{\tau;r' r} \mid \tau \in \R\}$
be an $\R$-parametrized smooth family of tame
almost complex structures on $X$.
\item
We assume that
$H^{r'r},$ $\mathcal J^{r' r} $ are
as in Situation \ref{situ2676} with $H^1$,$H^2$, $J_1$, $J_2$, $\mathcal J$
replaced by $H^r$,$H^{r'}$, $J_r$, $J_{r'}$, $\mathcal J^{r'r}$, respectively.
\item
Let $\mathcal F_r:=\mathcal F_X(H^r,J_r)$ be the linear K-system 
constructed by Theorem \ref{theorem266} from $H^r$ and $J_r$.
We made the choices during the constructions.
\item
Let $\frak N_{r' r} : \mathcal F_r \to \mathcal F_{r'}$ be the
morphism defined by Theorem \ref{theorem266rev}  using $H^{r' r}$
and $\mathcal J^{r' r}$ in place of
$H^{21}$
and $\mathcal J$.
We made the choices during the construction.
\end{enumerate}
$\blacksquare$
\end{shitu}
In this section, we prove the following.
\begin{thm}\label{tjm26108}
In Situation \ref{situ26106} the composition
$\frak N_{32} \circ \frak N_{21}$ is homotopic to
$\frak N_{31}$.
\end{thm}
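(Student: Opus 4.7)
The plan is to construct a one-parameter family of Hamiltonian-almost complex structure pairs, parametrized by a gluing parameter $T \in [T_0, \infty]$, that realizes the composition $\frak N_{32} \circ \frak N_{21}$ as $T \to \infty$ and a Hamiltonian homotopic to $H^{31}$ at $T = T_0$. Combined with Theorem \ref{thmPparaexist}, this will produce a homotopy of morphisms in the sense of \cite[Condition 16.21]{fooonewbook}.

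\textbf{Step 1 (Gluing family).} For a sufficiently large constant $T_0 > 1$, define for each $T \in [T_0, \infty)$ a Hamiltonian $H^{31,T}$ and an almost complex structure family $\mathcal J^{31,T}$ by concatenation:
\begin{equation*}
H^{31,T}(x,\tau,t) =
\begin{cases}
H^{21}(x,\tau+T,t) & \tau \le -1, \\
H^2(x,t) & -1 \le \tau \le 1 \text{ (interpolation region shrinks as } T\to \infty), \\
H^{32}(x,\tau-T,t) & \tau \ge 1,
\end{cases}
\end{equation*}
and analogously for $\mathcal J^{31,T}$. Using Situation \ref{situ2676}, $H^{31,T}$ coincides with $H^1$ for $\tau \ll -T$ and with $H^3$ for $\tau \gg T$, so it satisfies the hypothesis of Definition \ref{def2677} (with $H^1,H^3$ replacing $H^1,H^2$). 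We partially compactify the parameter interval at $T=\infty$ by declaring that the limiting "broken" configuration corresponds to a pair $(u_1,u_2)$ with $u_1$ an element of $\mathcal N(X,\mathcal J^{21},H^{21};\alpha_-,\alpha_{\rm mid})$ and $u_2$ an element of $\mathcal N(X,\mathcal J^{32},H^{32};\alpha_{\rm mid},\alpha_+)$, for some intermediate $\alpha_{\rm mid} \in \frak A_2$. Reparametrizing $T \mapsto 1/\log T$ as in Remark \ref{rem2247} converts $[T_0,\infty]$ into a compact interval $[0,s_0]$ with $T=\infty$ corresponding to $s=0$.

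\textbf{Step 2 (Moduli space at $T = \infty$ is the fiber product).} By the standard gluing analysis (as used in Sections \ref{subsec;KuraFloer}--\ref{subsec;KuramodFloercor}), for $T$ large the moduli space $\mathcal N_\ell(X,\mathcal J^{31,T},H^{31,T};\alpha_-,\alpha_+)$ is related by a gluing homeomorphism to the fiber product
$$
\bigsqcup_{\alpha_{\rm mid}\in \frak A_2}
\mathcal N_{\ell_1}(X,\mathcal J^{21},H^{21};\alpha_-,\alpha_{\rm mid})
{}_{{\rm ev}_+}\!\times_{{\rm ev}_-}
\mathcal N_{\ell_2}(X,\mathcal J^{32},H^{32};\alpha_{\rm mid},\alpha_+)
$$
(with appropriate distributions of marked points). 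This is exactly the space of connecting orbits that defines the composition $\frak N_{32}\circ \frak N_{21}$ in the sense of \cite[Definition 16.24]{fooonewbook}. Using the outer collar construction of Section \ref{subsec;KuramodFloercor}, we identify this fiber product with the vertical boundary of the parametrized moduli space at $s=0$, on which the Kuranishi structure is the fiber product of the ones for $\frak N_{21}$ and $\frak N_{32}$.

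\textbf{Step 3 (Connecting to $H^{31}$).} At $s = s_0$ (i.e., $T = T_0$ finite), the pair $(H^{31,T_0}, \mathcal J^{31,T_0})$ satisfies the hypothesis of Situation \ref{situ2676} for $H^1,H^3$, and by Lemma \ref{lem2698} it can be joined to the originally chosen $(H^{31},\mathcal J^{31})$ by a family of such pairs parametrized by $[s_0, 1]$. Concatenating with the family from Step 1 gives a single $[0,1]$-parametrized family satisfying Situation \ref{situ2697} (after a collared reparametrization).

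\textbf{Step 4 (Parametrized Kuranishi structure).} Apply Theorem \ref{thmPparaexist} and Proposition \ref{collaredPparapara} to this $[0,1]$-parametrized family to obtain a $\frak C^v$-collared Kuranishi structure on the parametrized moduli space. Then apply the outer-collar compatibility construction of Proposition \ref{prop2661rev} in the parametrized setting (as in Section \ref{subsec:proof}) to ensure the Kuranishi structure is compatible with the fiber-product description at both vertical boundary components. This gives a morphism of linear K-systems $\mathcal F_1 \to \mathcal F_3$ parametrized by $[0,1]$, whose restriction to $s=0$ is (isomorphic to) $\frak N_{32}\circ \frak N_{21}$ and whose restriction to $s=1$ is $\frak N_{31}$.

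\textbf{Main obstacle.} The principal difficulty is Step 2: ensuring that the gluing-theoretic identification at $T=\infty$ is compatible, \emph{as K-spaces}, with the fiber product Kuranishi structure that defines composition. This is exactly the phenomenon addressed by the outer-collar machinery of Section \ref{subsec;KuramodFloercor}; the point is to repeat Proposition \ref{prop2661rev} for the \emph{two-parameter} situation (the $[-1,0]^{m-1}$ corners of the underlying fiber-product stratification and the additional $[0,1]$ homotopy parameter). The obstruction space and combinatorial choices $\mathcal V(A,B,C)$ must be taken consistently with those used for $\frak N_{21}$, $\frak N_{32}$, and $\frak N_{31}$, so that at $s=0$ the obstruction space literally decomposes as the direct sum of the obstruction spaces used to construct $\frak N_{21}$ and $\frak N_{32}$. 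Once this combinatorial bookkeeping is arranged following the template of Lemma \ref{lem26689}, the remaining verification that one obtains a homotopy of morphisms (\cite[Condition 16.21]{fooonewbook}) is formal.
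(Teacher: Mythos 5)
Your proposal follows essentially the same route as the paper: the paper also proves the theorem by first using Lemma \ref{lem2698} and Theorem \ref{thmPparaexist} to reduce to a specific concatenated choice of $(H^{31},\mathcal J^{31})$, then introduces the stretched family $H^{31,T},\mathcal J^{31,T}$ (formulas (\ref{formula2688})--(\ref{formula2689})), compactifies the $[0,\infty)$-parametrized moduli space at $T=\infty$ by broken configurations with two main components whose limit is the fiber product defining $\frak N_{32}\circ\frak N_{21}$, and runs the outer-collar machinery (Theorem \ref{the26112}, Proposition \ref{prop26120}) to make the Kuranishi structures compatible with that fiber-product description. The only cosmetic difference is that you append an extra homotopy from $H^{31,T_0}$ to the given $H^{31}$, whereas the paper arranges $H^{31,0}=H^{31}$ on the nose; both are fine.
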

\begin{proof}
By Lemma \ref{lem2698} and Theorem \ref{thmPparaexist},
the homotopy class of
$\frak N_{r' r}$ is independent of $\mathcal J^{r' r}$,
$H^{r' r}$ or other choices we made during the construction.
So it suffices to prove Theorem \ref{tjm26108} for certain fixed choices of them.
We take the choices as follows.
We first take $\mathcal J^{21}$,
$H^{21}$ and $\mathcal J^{32}$,
$H^{32}$. 
{\it We assume that they satisfy Situation \ref{situ2676}
(1)(2)(i)(ii) with $\pm 1$ replaced by $\pm 1/4$.}
We then define $\mathcal J^{31}$,
$H^{31}$ as follows.
\begin{equation}
H^{31}_{\tau}
=
\begin{cases}
H^{21}_{\tau +1/2}  &\text{if $\tau \le 0$}, \\
H^{32}_{\tau -1/2}  &\text{if $\tau \ge 0$}.
\end{cases}
\end{equation}
\begin{equation}
J_{\tau}^{31}
=
\begin{cases}
J_{\tau +1/2}^{21}  &\text{if $\tau \le 0$}, \\
J_{\tau -1/2}^{32}  &\text{if $\tau \ge 0$}.
\end{cases}
\end{equation}
It is easy to see that they satisfy Situation \ref{situ2676}.
\par
We will construct a homotopy between
$\frak N_{32} \circ \frak N_{21}$ and
$\frak N_{31}$ where
$\frak N_{32}$, $\frak N_{21}$,
$\frak N_{31}$ are obtained by  Theorem \ref{theorem266rev}
using those choices.
\par
The interpolation space of the homotopy
is obtained by compactifying the
solution space of the $\tau,t,T$ dependent Hamiltonian
perturbed pseudo-holomorphic curve equation.
We will use the following two parameter family
of Hamiltonians.
Let $T \ge 0$ and $\tau \in \R$. We put
\begin{equation}\label{formula2688}
H^{31,T}_{\tau}
=
\begin{cases}
H^{21}_{\tau +1/2+T}  &\text{if $\tau \le 0$}, \\
H^{32}_{\tau -1/2-T}  &\text{if $\tau \ge 0$},
\end{cases}
\end{equation}
\begin{equation}\label{formula2689}
J_{\tau,T}^{31}
=
\begin{cases}
J_{\tau +1/2+T}^{21}  &\text{if $\tau \le 0$}, \\
J_{\tau -1/2-T}^{32}  &\text{if $\tau \ge 0$}.
\end{cases}
\end{equation}
We put $\mathcal J^{31,T} = \{J_{\tau +1/2+T}^{31}\}$
and consider
\begin{equation}\label{2690formula}
\bigcup_{T \ge 0}
\mathcal N_{\ell}(X,\mathcal J^{31,T},H^{31,T};\alpha_-,\alpha_+)
\times \{T\},
\end{equation}
where
$\mathcal N_{\ell}(X,\mathcal J^{31,T},H^{31,T};\alpha_-,\alpha_+)$
is as in Definition \ref{3equivrerevl}.
We can define a topology on it in the same way as in Definition \ref{def2626}.
This space then becomes Hausdorff. However it is not
compact since the domain $[0,\infty)$ of $T$ is not compact.
We compactify it by adding certain space at $T = \infty$ as follows.
\begin{defn}\label{defn210revrev}
Let $\alpha_- \in \frak A_1$ and $\alpha_+ \in \frak A_3$.
(Here $\frak A_r$ be the index set of the critical submanifolds
of the linear K-system $\mathcal F_r$.)
The set
$\widehat{\mathcal N}_{\ell}(X,\mathcal J^{31,\infty},H^{31,\infty};\alpha_-,\alpha_+)$
consists of 
$((\Sigma,(z_-,z_+,\vec z),a_1,a_2),u,\varphi)$ satisfying
the following conditions:
\begin{enumerate}
\item
$(\Sigma,(z_-,z_+,\vec z))$ is a genus zero semi-stable curve with $\ell + 2$ marked points.
\item
$\varphi$ is a parametrization of the mainstream.
\item
$\Sigma_{a_1}$, $\Sigma_{a_2}$ are two of the mainstream components.
We call them the {\it first main component} and the {\it second main component}.
\item
For each extended main stream component $\widehat{\Sigma}_a$, the map
$u$ induces
$u_a : \widehat{\Sigma}_a \setminus\{z_{a,-},z_{a,+}\} \to X$
which is a continuous map.
\item
We define the relation $<$ on the set of mainstream components as in
Definition \ref{defn2682}.
We require $a_1 < a_2$.
If $\Sigma_a$ is a mainstream component and
$\varphi_a : \R \times S^1 \to \Sigma_a$ is as above, then
the composition $u_a \circ \varphi_a$ satisfies the equation
\begin{equation}\label{Fleq266211}
\frac{\partial (u_a \circ \varphi_a)}{\partial \tau} +  J_{a,\tau} \left( \frac{\partial (u_a \circ \varphi_a)}{\partial t} - \frak X_{H^a_{\tau,t}}
\circ (u_a \circ \varphi_a) \right) = 0,
\end{equation}
where
$$
H^a_{\tau,t} =
\begin{cases}
H^1_t  &\text{if $a < a_1$}, \\
H^{21}_{\tau,t}  &\text{if $a = a_1$}, \\
H^2_t  &\text{if $a_1 < a < a_2$}, \\
H^{32}_{\tau,t}  &\text{if $a = a_2$},\\
H^3_t  &\text{if $a > a_2$},
\end{cases}
$$
and
$$
J_{a,\tau} =
\begin{cases}
J_1 &\text{if $a < a_1$}, \\
J^{21}_{\tau}  &\text{if $a = a_1$}, \\
J_2 &\text{if $a_1 < a < a_2$}, \\
J^{32}_{\tau}  &\text{if $a = a_2$}, \\
J_3 &\text{if $a > a_2$}.
\end{cases}
$$
\item
$$
\int_{\R \times S^1}
\left\Vert\frac{\partial (u \circ \varphi_a)}{\partial \tau}\right\Vert^2 d\tau dt < \infty.
$$
\item
Suppose $\Sigma_{\rm v}$ is a bubble component in $\widehat{\Sigma}_a$.
Let   $\varphi_a(\tau,t)$ be the root of the tree of sphere bubbles
containing $\Sigma_{\rm v}$.  Then $u$ is $J$-holomorphic on  $\Sigma_{\rm v}$ where
$$
J =
\begin{cases}
J_1 &\text{if $a < a_1$}, \\
J^{21}_{\tau}  &\text{if $a = a_1$}, \\
J_2 &\text{if $a_1 < a < a_2$}, \\
J^{32}_{\tau}  &\text{if $a = a_2$}, \\
J_3 &\text{if $a > a_2$}.
\end{cases}
$$
\item
If
${\Sigma}_{a}$ and ${\Sigma}_{a'}$ are
mainstream components and
$z_{a,+} = z_{a',-}$, then
$$
\lim_{\tau\to+\infty} (u_{a} \circ \varphi_{a})(\tau,t)
=
\lim_{\tau\to-\infty} (u_{a'} \circ \varphi_{a'})(\tau,t)
$$
holds for each $t \in S^1$.
((6) and Lemma \ref{prof26rev3} imply that the
left and right hand sides both converge.)
\item
If
${\Sigma}_{a}$, ${\Sigma}_{a'}$
are
mainstream components and $z_{a,-} = z_-$, $z_{a',+}
= z_+$, then there exist $(\gamma_{\pm},w_{\pm})
\in R_{\alpha_{\pm}}$ such that
$$
\aligned
\lim_{\tau\to-\infty} (u_{a} \circ \varphi_{a})(\tau,t)
&= \gamma_-(t) \\
\lim_{\tau\to +\infty} (u_{a'} \circ \varphi_{a'})(\tau,t)
&= \gamma_+(t).
\endaligned
$$
Moreover
$$
[u_*[\Sigma]] \# w_- = w_+
$$
where $\#$ is the obvious concatenation.
\item
We assume $((\Sigma,(z_-,z_+,\vec z),a_1,a_2),u,\varphi)$
is stable in the sense of Definition \ref{stable26revrev} below.
\end{enumerate}
\end{defn}
Assume that $((\Sigma,(z_-,z_+,\vec z),a_1,a_2),u,\varphi)$
satisfies (1)-(9) above.
The {\it extended automorphism group}
${\rm Aut}^+((\Sigma,(z_-,z_+,\vec z),a_1,a_2),u,\varphi)$
of $((\Sigma,(z_-,z_+,\vec z),a_1,a_2)),u,\varphi)$
consists of maps $v : \Sigma \to \Sigma$
satisfying (1)(2)(3)(5) of Definition \ref{defn2615rev}
and $\tau_{a_1} = \tau_{a_2} = 0$.
The {\it automorphism group} denoted by 
${\rm Aut}((\Sigma,(z_-,z_+,\vec z), a_1,a_2),u,\varphi)$
of $((\Sigma,(z_-,z_+,\vec z),a_1,a_2),u,\varphi)$
consists of the elements of ${\rm Aut}^+((\Sigma,(z_-,z_+,\vec z),a_1,a_2),u,\varphi)$
such that $\sigma$ in (5) of Definition \ref{defn2615rev} is the identity.
\begin{defn}\label{stable26revrev}
An element $((\Sigma,(z_-,z_+,\vec z),a_1,a_2),u,\varphi)$ in Definition \ref{defn210revrev} is said to be {\it stable}
if the group ${\rm Aut}((\Sigma,(z_-,z_+,\vec z),a_1,a_2),u,\varphi)$ is a finite group.
\end{defn}
We can define the equivalence relation $\sim_2$ on
$\widehat{\mathcal N}_{\ell}(X,\mathcal J^{31,\infty},H^{31,\infty};\alpha_-,\alpha_+)$
in the same way as in Definition \ref{3equivrel}
except we require $\tau_{a_1}  = \tau_{a_2} = 0$.
We put
\begin{equation}\label{formula2692}
{\mathcal N}_{\ell}(X,\mathcal J^{31,\infty},H^{31,\infty};\alpha_-,\alpha_+) =
\widehat{\mathcal N}_{\ell}(X,\mathcal J^{31,\infty},H^{31,\infty};\alpha_-,\alpha_+)/\sim_2.
\end{equation}
\begin{defn}
The set
$\mathcal N_{\ell}(X,\mathcal J^{31,[0,\infty]},H^{31,[0,\infty]};\alpha_-,\alpha_+)$
is the union of (\ref{2690formula})
and
${\mathcal N}_{\ell}(X,\mathcal J^{31,\infty},H^{31,\infty};\alpha_-,\alpha_+)$.
\end{defn}
We can define a topology on $\mathcal N_{\ell}(X,\mathcal J^{31,[0,\infty]},H^{31,[0,\infty]};\alpha_-,\alpha_+)$ in the same way as in Definition \ref{def2626}
and show that it is Hausdorff and compact.
Theorem \ref{tjm26108} will follow from the next result.
We define the topological space 
$$
\mathcal N_{\ell}(X,\mathcal J^{31,[0,\infty]},H^{31,[0,\infty]};\alpha_-,\alpha_+)^{\boxplus 1}
$$
in the same way as in Definition \ref{def:outcollar}.
\begin{thm}\label{the26112}
\begin{enumerate}
\item
There exists a Kuranishi structure on the compact space
$\mathcal N_{\ell}(X,\mathcal J^{31,[0,\infty]},H^{31,[0,\infty]};\alpha_-,\alpha_+)$.
\item
The Kuranishi structure in (1) extends to a Kuranishi structure on the space
$\mathcal N_{\ell}(X,\mathcal J^{31,[0,\infty]},H^{31,[0,\infty]};\alpha_-,\alpha_+)^{\boxplus 1}$,
which becomes the interpolation space of a homotopy between
$\frak N_{32} \circ \frak N_{21}$ and
$\frak N_{31}$.
\end{enumerate}
\end{thm}
\subsection{Proof of Theorem \ref{the26112} (1): Kuranishi structure}
\label{subsec:Kstrcture}
\begin{proof}
The proof of Theorem \ref{the26112} occupies the rest of this
section.
In this subsection we prove (1). The construction of the Kuranishi structure
on the space $\mathcal N_{\ell}(X,\mathcal J^{31,[0,\infty]},H^{31,[0,\infty]};\alpha_-,\alpha_+)$ is mostly the same as that of the
first half of the proof of Theorem \ref{thmPparaexist},
where we constructed the Kuranishi structure on
$$
\mathcal M_{\ell}(X,\mathcal J^{[0,1]},H^{21,[0,1]};\alpha_-,\alpha_+).
$$
In fact, we are studying $[0,\infty) \times \R \times [0,1]$-parametrized family
of Hamiltonians and almost complex structures and
we are also given the choices which we need for the definition of the Kuranishi structure at
$0 \in \partial [0,\infty)$.
\par
The main difference is that we also include $T = \infty$.
So we here concentrate on constructing a Kuranishi neighborhood
of a point at
${\mathcal N}_{\ell}(X,\mathcal J^{31,\infty},H^{31,\infty};\alpha_-,\alpha_+)$.
\par
Let ${\bf p} = ((\Sigma,(z_{-},z_{+},\vec z),a_{1},a_{2}),u,\varphi)$
be a representative of an element of the moduli space
${\mathcal N}_{\ell}(X,\mathcal J^{31,\infty},H^{31,\infty};\alpha_-,\alpha_+)$.
We assume that
$\Sigma$ has $k_1 + k_2 + k_3 + 2$ mainstream components.
Namely  there exit  $k_1$ mainstream components $\Sigma_a$ with
$a < a_{1}$, $k_2$ mainstream components $\Sigma_{a}$ with
$a_{1} < a < a_{2}$ and
$k_3$ mainstream components $\Sigma_{a}$ with
$a_{2} <a$. 
So there are $k_1+k_2+k_3+1$ transit points.
We consider $\Sigma \setminus \Sigma_{a_1}
\setminus \Sigma_{a_2}$ which 
has three connected components.
Among those transit points $k_1$, $k_2+1$, $k_3$ lie
on the closure of each of those connected components.
We take $T_{1,1},\dots,T_{1,k_1}$,
$T_{2,0},\dots,T_{2,k_2}$, $T_{3,1},\dots,T_{3,k_3}$ 
which are parameters $\in (T_0,\infty]$ to smooth those transit points.
\par
We consider the moduli space $\mathcal N_{\ell}({\rm source})$
as in Definition \ref{defn2688Nsource}.
We also define
$\mathcal N_{\ell}({\rm source},\infty)$ as follows.
In Definition \ref{defn210revrev} we consider the case
when $X$ is one point and $H^1=H^2=H^3 =0$.
The space we obtain as ${\mathcal N}_{\ell}(X,\mathcal J^{31,\infty},H^{31,\infty};\alpha_-,\alpha_+)$ in that case is the space $\mathcal N_{\ell}({\rm source},\infty)$ by definition.
We put
\begin{equation}
\mathcal N_{\ell}({\rm source},(T^0,\infty])
=
\mathcal N_{\ell}({\rm source},\infty)
\cup
\bigcup_{T>T_0} (\mathcal N_{\ell}({\rm source})
\times \{T\}).
\end{equation}
\par
We go back to the situation where we have
${\bf p} = ((\Sigma,(z_{-},z_{+},\vec z),a_{1},a_{2}),u,\varphi) \in {\mathcal N}_{\ell}(X,\mathcal J^{31,\infty},H^{31,\infty};\alpha_-,\alpha_+)$.
We take stabilization data at ${\bf p}$.
Especially we fix a symmetric stabilization $\vec w$ of ${\bf p}$.
We also take the canonical marked point $w_{a,{\rm can}}$ on each mainstream component
$\Sigma_{a}$
where there is at most two special points, (that are $z_{a,-}$ and
$z_{z,+}$.)
The canonical marked point $w_{a,{\rm can}}$ is defined as in Definition \ref{defncanmark}
if $a \ne a_1,a_2$. We do not define canonical marked points in case
$a = a_1,a_2$. (See Remark \ref{rem269494}.)
The totality of the canonical marked points is denoted by
$\vec w_{{\rm can}}$.
We can prove that $(\Sigma,(z_{-},z_{+},\vec z))
\cup \vec w \cup \vec w_{{\rm can}}$ is stable in the same way as in 
Lemma \ref{lemma2639}.
We will define a map
\begin{equation}\label{form418revrev}
\aligned
{\Phi}_{\bf p} : \prod_{\rm v} \mathcal V(\frak x_{\rm v} \cup \vec w_{\rm v}\cup \vec w_{{\rm can},{\rm v}}) 
& \times (T_0,\infty]^{k_1+k_2+k_3+1}
\times \prod_{j=1}^m \left(((T_{0,j},\infty] \times S^1)/\sim \right) \\
& \to {\mathcal N}_{\ell+\ell'+\ell''}(\text{source},(T^{\prime 0},\infty]).
\endaligned\end{equation}
We first explain the notation in \eqref{form418revrev}. $\Sigma_{\rm v}$ is an irreducible components of
$\Sigma$.  $\frak x_{\rm v}$ is $\Sigma_{\rm v}$ together with the
part of marked points $(z_{-},z_{+},\vec z)$ on it.
$\vec w_{\rm v}$ and $\vec w_{{\rm can},{\rm v}}$
are intersection of  $\vec w$ and $\vec w_{{\rm can}}$
with $\Sigma_{\rm v}$, respectively.
$\mathcal V(\frak x_{\rm v} \cup \vec w_{\rm v}\cup \vec w_{{\rm can},{\rm v}})$
is a neighborhood of
$\frak x_{\rm v} \cup \vec w_{\rm v}\cup \vec w_{{\rm can},{\rm v}}$
in the moduli space of pointed curves.
Namely :
\begin{enumerate}
\item If $\Sigma_{\rm v}$ is a bubble component, then
$\mathcal V(\frak x_{\rm v} \cup \vec w_{\rm v}\cup \vec w_{{\rm can},{\rm v}})$
is an open set of $\overset{\circ}{\mathcal M^{\rm cl}_{\ell_{\rm v}}}$
for certain $\ell_{\rm v}$. (It is the number of marked or singular points on $\Sigma_{\rm v}$.)
\item If $\Sigma_{\rm v}$ is a  main stream component $\Sigma_a$ and $a\ne a_1,a_2$,
then $\mathcal V(\frak x_{\rm v} \cup \vec w_{\rm v}\cup \vec w_{{\rm can},{\rm v}})$
is an open set of $\overset{\circ}{\mathcal M}_{\ell_{\rm v}}({\rm Source})$.
(We include the parametrization $\varphi_{\rm v}$ in  $\frak x_{\rm v}$ in this case.)
\item  If $\Sigma_{\rm v}$ is a  main stream component $\Sigma_a$ and $a\in  \{a_1,a_2\}$,
then  $\mathcal V(\frak x_{\rm v} \cup \vec w_{\rm v}\cup \vec w_{{\rm can},{\rm v}})$
is an open set of $\overset{\circ}{\mathcal N}_{\ell_{\rm v}}({\rm Source})$.
(We include the parametrization $\varphi_{\rm v}$ in  $\frak x_{\rm v}$ in this case.)
\end{enumerate}
Here we use Notation \ref{nota2642} and
$m$ is the number of non-transit singular points of $\Sigma$.
\par
Recall that the stabilization data at ${\bf p}$ contain 
the local trivialization data in Definition \ref{obbundeldata1} (3), 
which contain the data of a coordinate at
each singular point. 
(See \cite[Definition 3.8 (1)]{fooo:const1}.)
Using it we can associate a marked Riemann surface
to each element of the domain in (\ref{form418revrev}).
Let $\Sigma'$ be this curve.
Other than $\Sigma'$ and marked points on it
we need  to associate a few more data to obtain
an element of
$ {\mathcal N}_{\ell+\ell'+\ell''}(\text{source},(T^{\prime 0},\infty])$.
We will explain how to associate those data to an element of
the domain in (\ref{form418revrev}).
\par
Recall that we take the parameters $T_{1,1},\dots,T_{1,k_1}$,
$T_{2,0},\dots,T_{2,k_2}$, $T_{3,1},\dots,T_{3,k_3}$ $\in (T_0,\infty]$
to smooth the transit points.
\par\smallskip
\noindent{\bf Case 1:}
We first consider the case when
$T_{2,0} + \dots + T_{2,k_2} = \infty$.
This is equivalent to the condition that
at least one of $T_{2,i}$ is infinity.
We will obtain an element of $\mathcal N_{\ell}({\rm source},\infty)$
in this case as follows.
\par
In this case $\Sigma'$ contains two different components
$\Sigma'_{a'_1}$ and $\Sigma'_{a'_2}$ which are
obtained by gluing $\Sigma_{a_1}$ and $\Sigma_{a_2}$ with
other components, respectively.
We take them as the first and the second main components, respectively.
Using marked points on $\Sigma'$ corresponding to $z_{\pm} \in \Sigma$,
we can define the notion of mainstream components of $\Sigma'$.
It is easy to see that $\Sigma'_{a'_1}$ and $\Sigma'_{a'_2}$
are contained in the mainstream.
\par
As mentioned in Definition \ref{obbundeldata1} (3), we
assume 
the conditions (a) (b) for the local coordinates
at singular points. Since we use them for our gluing, that is, the construction of the map (\ref{form418revrev}),
we can show the next lemma
 about the relationship of parametrization of the
mainstreams of $\Sigma$ and of $\Sigma'$.
\begin{lem}\label{lem26113}
Let $\varphi_{a} : \R \times S^1 \to \Sigma_{a}$ be a parametrization
of a mainstream component.
We take a biholomorphic map $\varphi'_{a'} : \R \times S^1 \to \Sigma'_{a'} \setminus
\{z'_{a',-}, z'_{a',+}\}$ such that
$$
\lim_{\tau \to \pm \infty}\varphi'_{a'}(\tau,t) = z'_{a',\pm}.
$$
We use the diffeomorphism $\frak v$ from
a complement of the neck region of $\Sigma$ to $\Sigma'$,
which we obtain by using 
the local trivialization data contained in  
the stabilization data at ${\bf p}$.
Then if the image of $\frak v \circ \varphi_{a}$ and $\varphi'_{a'}$
intersect, we have
\begin{equation}\label{form26101101}
\varphi'_{a'}(\tau,t) = (\frak v\circ\varphi_{a})(\tau+\tau_0,t+t_0)
\end{equation}
for certain $\tau_0 \in \R$ and $t_ 0 \in S^1$.
\end{lem}
Using this lemma we can take parametrizations of the main components
$\varphi_{a'_i}$ ($i=1,2$) so that $\tau_0, t_0$ become zero for them.
By our choice of the local coordinates
at transit points 
given by Definition \ref{obbundeldata1} (3) (a)
we can choose parametrizations of all the mainstream components
so that the number $t_0$ in (\ref{form26101101}) becomes $0$.
Note other than the first and second main components, the parametrization of the
mainstream component is well-defined only up to a shift on $\R$ direction.
Therefore by Lemma  \ref{lem26113} we obtain
a parametrization of the mainstream.
We thus obtain an element of
$\mathcal N_{\ell}({\rm source},\infty)$ in this case.
\par\smallskip
\noindent{\bf Case 2:}
We next consider the case when
$T_{2,0} + \dots + T_{2,k_2} \ne \infty$.
We will obtain an element of
$\mathcal N_{\ell}({\rm source})
\times \{T\}$ for a certain number 
$T \in (T^{\prime 0}, \infty)$. (The number $T$ is determined by the
data in the domain of the map (\ref{form418revrev})).
\par
In this case $\Sigma_{a_1}$ and $\Sigma_{a_2}$ are glued to
become a part of single mainstream component of $\Sigma'$,
which we write as $\Sigma'_{a'_0}$.
This will be our main component.
\par
We apply Lemma \ref{lem26113} to
$a =a_1$ or $a_2$,  and $a' = a'_{0}$.
By Definition \ref{obbundeldata1} (3) (a)
we may take the same $t_0$
for both. (Note we use the same $\varphi'_{a'_0}$ for both
$a =a_1$ or $a_2$.) We then have
$$
\varphi'_{a'_0}(\tau,t) = (\frak v\circ\varphi_{a_i})(\tau -\tau_i,t).
$$
We put
\begin{equation}\label{formula2Tplus1}
2T + 1 = \tau_2 - \tau_1.
\end{equation}
Then by shifting $\varphi'_{a'_0}$ in $\R$ direction,
we may assume
$$
\aligned
\varphi'_{a'_0}(\tau,t) &= (\frak v\circ\varphi_{a_1})(\tau+T+1/2,t), \\
\varphi'_{a'_0}(\tau,t) &= (\frak v\circ\varphi_{a_2})(\tau-T-1/2,t).
\endaligned$$
By comparing this formula with (\ref{formula2688}), the Hamiltonian term
of our equation (\ref{Fleq266211}) is consistent with $\frak v$ by this choice.
We thus determine the parametrization of the mainstream.
Together with $T$ and $a'_{0}$ we already defined, this parametrization gives
an element of $\mathcal N_{\ell}({\rm source})
\times \{T\}$.
\par
We have thus defined (\ref{form418revrev}).
We find that this map defines a structure
of cornered orbifolds to 
$\mathcal N_{\ell}({\rm source},(T^{\prime 0},\infty])$
for which  (\ref{form418revrev}) becomes a diffeomorphism
onto its image.
We can use this fact to obtain an appropriate coordinate.
Namely we identify
$ (T_0,\infty]^{k_1+k_2+k_3+1}$
with $[0,1/\log T_0)^{k_1+k_2+k_3+1}$
by $T_i \mapsto 1/\log T_i$.
(See Section \ref{subsec;KuraFloer}.)
\begin{rem}
Note that $T$ is not a part of the coordinate function in a neighborhood
of $T=\infty$. It is easy to see that
$T = T_{2,0} + \dots + T_{2,k_2} + $ bounded function.
$1/\log T$ is not a smooth function of  $1/\log T_i$.
(See \cite[page 778]{fooobook2} 
\cite[Chapter 25]{fooonewbook} for a related issue.)
When we are away from $T = \infty$, 
$T$ can be taken as a part of coordinates.
\end{rem}
The construction of the Kuranishi structure of
$\mathcal N_{\ell}(X,\mathcal J^{31,[0,\infty]},H^{31,[0,\infty]};\alpha_-,\alpha_+)$
in a neighborhood of
${\mathcal N}_{\ell}(X,\mathcal J^{31,\infty},H^{31,\infty};\alpha_-,\alpha_+)$,
is similar to the first half of the proof of Theorem \ref{thmPparaexist}
using the construction we gave above and proceed as follows.
\par
We take a finite set
$$
\aligned
\EuScript A_{\ell}(H^{32,\infty},\mathcal J^{32,\infty};\alpha_-,\alpha_+)
&= 
\{{\bf p}_c \mid c \in \EuScript C_{\ell}(H^{32,\infty},\mathcal J^{32,\infty};\alpha_-,\alpha_+) \} \\
&\subset {\mathcal N}_{\ell}(X,\mathcal J^{32,\infty},H^{32,\infty};\alpha_-,\alpha_+).
\endaligned
$$
For each $c \in \EuScript C_{\ell}(H^{32,\infty},\mathcal J^{32,\infty};\alpha_-,\alpha_+)$
we take obstruction bundle data $\frak E_{{\bf p}_{c}}$
centered at ${\bf p}_c$\footnote{The notion of
obstruction bundle data for an element of
$ {\mathcal N}_{\ell}(X,\mathcal J^{32,\infty},H^{32,\infty};\alpha_-,\alpha_+)$
can be defined in the similar way as before.}
and
a closed neighborhood $W({\bf p}_c)$ of ${\bf p}_c$
in our moduli space 
$$
\mathcal N_{\ell}(X,\mathcal J^{31,[0,\infty]},H^{31,[0,\infty]};\alpha_-,\alpha_+)
$$ 
with the following property.
For each element  ${\bf q}  \in W({\bf p}_c)$
there exists $\vec w^{\bf q}_{{\bf p}_c}$ such that
${\bf q} \cup \vec w^{\bf q}_{{\bf p}_c}$ is $\epsilon_c$-close
to ${\bf p}_c \cup w_{{\bf p}_c} \cup \vec w_{{\rm can}}$.
(We can use (\ref{form418revrev})
to define the notion of $\epsilon$-closeness
in the same way as in Definition \ref{orbitecloseness}.)
We also require
\begin{equation}\label{coversururevrev}
\bigcup_{c\in \EuScript C_{\ell}(H^{32,\infty},\mathcal J^{32,\infty};\alpha_-,\alpha_+)}  {\rm Int}\,\, W({\bf p}_c)
\supset \mathcal N_{\ell}(X,\mathcal J^{32,\infty},H^{32,\infty};\alpha_-,\alpha_+).
\end{equation}
\par
We can choose $\epsilon_c >0$ above such that 
for any ${\bf q}  \in W({\bf p})$
there exists $\vec w^{\bf q}_{{\bf p}}$ {\rm uniquely} with the following
properties:
\begin{enumerate}
\item
${\bf q} \cup \vec w^{\bf q}_{{\bf p}}$ is $\epsilon_{\bf p}$-close to
 ${\bf p}_c \cup w_{{\bf p}_c} \cup \vec w_{{\rm can}}$.
\item
${\bf q} \cup \vec w^{\bf q}_{{\bf p}}$ satisfies the
transversal constraint.
(The transversal constraint is defined in the same way as in Definition \ref{transdef2695}.)
\item
For each irreducible component $\Sigma_{\rm v}$ of the source curve of ${\bf p}$
there exist a finite dimensional complex subspace 
${\rm Im}\ I_{{\bf p},{\rm v};{\bf q}}$ of $L^2_{m,\delta}(\Sigma_{\rm v};u^{\ast}TX\otimes \Lambda^{0,1}\Sigma_{\rm v})$
such that the linearization operator of the differential equation at ${\bf q}$ in Definition \ref{defn210revrev} (5) (7) is surjective 
$\mod \oplus_{\rm v}{\rm Im} I_{{\bf p},{\rm v};{\bf q}}.$
\end{enumerate}
The proof of this fact is similar to that of Lemma \ref{lem2446}.
\par
For each ${\bf q} \in
\mathcal N_{\ell}(X,\mathcal J^{31,[0,\infty]},H^{31,[0,\infty]};\alpha_-,\alpha_+)$
which lies
in a neighborhood of ${\mathcal N}_{\ell}(X,\mathcal J^{32,\infty},H^{32,\infty};\alpha_-,\alpha_+)$, 
we put
$$
\EuScript E({\bf q}) = \{ {\bf p}_c \mid {\bf q} \in W({\bf p}_c)\}.
$$
Let $\EuScript B$ be a nonempty subset of $\{ c ~\vert~ {\bf p}_c \in \EuScript E({\bf q})\}$.
We consider $(\frak Y\cup\bigcup_{c\in 
\EuScript B}\vec w'_c,u',\varphi',a'_1,a'_2)$
(resp. $(\frak Y\cup\bigcup_{c\in 
\EuScript B}\vec w'_c,u',\varphi',a'_0,T')$)
such that for each $c$,
$(\frak Y\cup \vec w'_c,u',\varphi',a'_1,a'_2)$ and 
$(\frak Y\cup \vec w'_c,u',\varphi',a'_0,T')$
(resp.
$(\frak Y\cup \vec w'_c,u',\varphi',a'_1,a'_2)$)
are $\epsilon$-close to ${\bf q} \cup \vec w_{c}^{\bf q}$.
If $\epsilon>0$ is small, then $(\frak Y\cup \vec w'_c,u',\varphi',a'_0,T')$
(resp.
$(\frak Y\cup \vec w'_c,u',\varphi',a'_1,a'_2)$)  is $\epsilon$-close to ${\bf p}_c \cup \vec w_{c}$.
Therefore we can define a complex linear map
$$
I_{{\bf p}_c,{\rm v};\Sigma',u',\varphi',a'_0,T'} : E_{{\bf p}_c,{\rm v}}(\frak y_{{\bf p}_c}) \to
C^{\infty}(\Sigma';(u')^*TX \otimes \Lambda^{0,1})
$$
(resp.
$$
I_{{\bf p}_c,{\rm v};\Sigma',u',\varphi',a'_1,a'_2} : E_{{\bf p}_c,{\rm v}}(\frak y_{{\bf p}_c}) \to
C^{\infty}(\Sigma';(u')^*TX \otimes \Lambda^{0,1}))
$$
in the same way as in (\ref{form2664}) for each irreducible component ${\rm v}$ of ${\bf p}_c$.
(We use the map (\ref{form418revrev}) here while 
we used the map (\ref{form418rev}) in (\ref{form2664}).)
We put
\begin{equation}\label{obspacedefHFHFrevrev}
E((\frak Y\cup \bigcup_{c\in \EuScript B}\vec w'_c,u',\varphi',a'_0,T');{\bf q};\EuScript B)
=
\bigoplus_{c \in \EuScript B}\bigoplus_{\rm v} {\rm Im} \,\, I_{{\bf p}_c,{\rm v};\Sigma',u',\varphi',a'_0,T'},
\end{equation}
or
\begin{equation}\label{obspacedefHFHFrevre2v}
E((\frak Y\cup \bigcup_{c\in \EuScript B}\vec w'_c,u',\varphi',a'_1,a'_2);{\bf q};\EuScript B)
=
\bigoplus_{c \in \EuScript B}\bigoplus_{\rm v} {\rm Im} \,\, I_{{\bf p}_c,{\rm v};\Sigma',u',\varphi',a'_1,a'_2}.
\end{equation}
In the next definition we fix stabilization data at ${\bf q}$.
Especially the marked points $\vec w_{\bf q}$ are fixed.
\begin{defn}
We define 
$V({\bf q},\epsilon_1,\EuScript B)$
to be the union of isomorphism classes of
the following (A) and (B).
\begin{enumerate}
\item[(A)]
An object
$(\frak Y\cup\bigcup_{c\in \EuScript B}\vec w'_c \cup \vec w'_{\bf q},u',\varphi',a'_1,a'_2)$
which satisfies the following.
\begin{enumerate}
\item[(1)]
If $\Sigma'_a$ is an irreducible component and
$\varphi_a : \R \times S^1 \to \Sigma'_a$ is as above, then
the composition $u_a \circ \varphi_a$ satisfies the equation
\begin{equation}\label{Fleq2662100}
\aligned
\frac{\partial (u_a \circ \varphi_a)}{\partial \tau} +  J_{a,\tau} &\left( \frac{\partial (u_a \circ \varphi_a)}{\partial t} - 
\frak X_{H^a_{\tau,t}}
\circ (u_a \circ \varphi_a) \right) \equiv 0 \\
&\mod E((\frak Y\cup\bigcup_{c\in \EuScript B} \vec w'_c,u',\varphi',a'_1,a'_2);{\bf q};\EuScript B).
\endaligned
\end{equation}
Here $H^a_{\tau,t}$ and $J_{a,\tau}$ are as in
Definition \ref{defn210revrev} (5).
\item[(2)]
If $\Sigma'_{\rm v}$  is a bubble component, the following equation is satisfied on $\Sigma'_{\rm v}$.
\begin{equation}
\overline{\partial}_J u' \equiv 0  
\mod  E((\frak Y\cup\bigcup_{c\in \EuScript B} \vec w'_c,u',\varphi',a'_1,a'_2);{\bf q};
\EuScript B).
\end{equation}
Here the almost complex structure $J$ is as follows.
Let $\widehat\Sigma'_a$ be the extended mainstream component containing
$\Sigma'_{\rm v}$. If $a < a'_1$, then $J = J_1$. If $a'_1 < a < a'_2$, then
$J = J_2$. If $a'_2 < a $, then
$J = J_3$.
If $a = a'_1$ and $\varphi_{a'_1}(\tau,t)$ is the root of the
tree of sphere components containing $\Sigma'_{\rm v}$
then $J = J^{21}_{\tau}$.
If $a = a'_2$ and $\varphi_{a'_2}(\tau,t)$ is the root of the
tree of sphere components containing $\Sigma'_{\rm v}$,
then $J = J^{32}_{\tau}$.
\item[(3)]
For each $c \in \EuScript E({\bf q})$ the additional marked points $\vec w'_c $
satisfy the transversal constraint with respect to ${\bf p}_c$.
(The transversal constraint is defined in the same way as in 
Definition \ref{transdef2695}.)
\item[(4)]
The additional marked points $\vec w'_{\bf q}$
satisfy the transversal constraint with respect to ${\bf q}$.
\item[(5)]
$(\frak Y\cup\bigcup_{c\in \EuScript E({\bf q})}\vec w'_c \cup \vec w'_{\bf q},u',
\varphi',a'_1.a'_2)$ is
$\epsilon_1$-close to
${\bf q} \cup \bigcup_{c\in \EuScript E({\bf q})}\vec w^{\bf q}_c \cup \vec w_{\bf q}$.
\end{enumerate}
\item[(B)]
An object
$(\frak Y\cup\bigcup_{c\in \EuScript B}\vec w'_c \cup \vec w'_{\bf q},u',\varphi',a'_0,T')$
which satisfies the following.
\begin{enumerate}
\item[(1)]
If $\Sigma'_a$ is an irreducible component and
$\varphi_a : \R \times S^1 \to \Sigma_a$ is as above, then
the composition $u_a \circ \varphi_a$ satisfies the equation
\begin{equation}\label{Fleq2662100}
\aligned
\frac{\partial (u_a \circ \varphi_a)}{\partial \tau} +  J_{a,\tau} &\left( \frac{\partial (u_a \circ \varphi_a)}{\partial t} - 
\frak X_{H^a_{\tau,t}}
\circ (u_a \circ \varphi_a) \right) \equiv 0 \\
&\mod E((\frak Y\cup\bigcup_{c\in \EuScript B} \vec w'_c,u',\varphi',a'_0,T');{\bf q};\EuScript B).
\endaligned
\end{equation}
Here $J_{a,\tau}$ and $H^a_{\tau,t}$ are as follows.
(See (\ref{formula2688}) and (\ref{formula2689}).)
$$
J_{a,\tau} =
\begin{cases}
J_1 &\text{if $a < a'_0$}, \\
J^{31}_{\tau,T'}  &\text{if $a = a_0$}, \\
J_3 &\text{if $a > a'_0$}.
\end{cases}
$$
$$
H^a_{\tau,t} =
\begin{cases}
H^1_t  &\text{if $a < a'_0$}, \\
H^{31,T'}_{\tau,t}  &\text{if $a = a'_0$}, \\
H^3_t  &\text{if $a > a'_0$}.
\end{cases}
$$
\item[(2)]
If $\Sigma'_{\rm v}$ is a bubble component, the following equation is satisfied on $\Sigma'_{\rm v}$:
\begin{equation}
\overline{\partial}_J u' \equiv 0  
\mod E((\frak Y\cup \vec w'_c,u',\varphi',a'_0,T');{\bf q};\EuScript B).
\end{equation}
Here the almost complex structure $J$ is as follows.
Let $\widehat\Sigma'_a$ be the extended mainstream component containing
$\Sigma'_{\rm v}$. If $a < a'_0$, then $J = J_1$. If $a'_0 < a$, then
$J = J_3$.
If $a = a'_0$ and $\varphi_{a'_0}(\tau,t)$ is the root of the
tree of sphere components containing $\Sigma'_{\rm v}$,
then $J = J^{31}_{\tau,T'}$.
\item[(3)]
The same as (3) of (A).
\item[(4)]
The same as (4) of (A).
\item[(5)]
$(\frak Y\cup\bigcup_{c\in \EuScript E({\bf q})}\vec w'_c \cup \vec w'_{\bf q},u',
\varphi',a'_0,T')$ is
$\epsilon_1$-close to
${\bf q} \cup \bigcup_{c\in \EuScript E({\bf q})}\vec w^{\bf q}_c \cup \vec w_{\bf q}$.
In particular, $T' > 1/\epsilon_1$.
\end{enumerate}
\end{enumerate}
The isomorphism among the objects of (A) is defined in the same way as the equivalence relation $\sim_2$ on
$\widehat{\mathcal N}_{\ell}(X,\mathcal J^{31,\infty},H^{31,\infty};\alpha_-,\alpha_+)$, which is
the same as Definition \ref{3equivrel},
except we require $\tau_{a_1}  = \tau_{a_2} = 0$.
The isomorphism among objects of (B) is
defined in the same way as in Definition \ref{defn2650rev}
(a)(b)(c)(d). (We require $T'$ coincides for two objects to be equivalent.)
An object of (A) is never equivalent to an object of (B).
\end{defn}
In the same way as in Lemma \ref{lem2653}, we can prove that
$V({\bf q},\epsilon_1,\EuScript B)$
is a smooth manifold with boundary and corner
if $\epsilon_1 >0$ and $\epsilon_c >0$ are small enough.
(We use the fact that (\ref{form418revrev}) is an open embedding
here.)
\par
Then in the same way as in Section \ref{subsec;KuraFloer}, we can find other data
so that $V({\bf q},\epsilon_1,\EuScript B)$ together with them
is a Kuranishi chart of ${\bf q}$.
We can also show the existence of coordinate changes.
We shrink the Kuranishi neighborhood and
discuss in the same way as in 
Lemmas \ref{lem2657}, \ref{lem2658} and 
use the exponential decay estimates in the same way as in \cite[Chapter 8]{foooanalysis}
to obtain a Kuranishi structure on
$\mathcal N_{\ell}(X,\mathcal J^{31,[0,\infty]},H^{31,[0,\infty]};\alpha_-,\alpha_+)$ on a neighborhood of
${\mathcal N}_{\ell}(X,\mathcal J^{31,\infty},H^{31,\infty};\alpha_-,\alpha_+)$.
We can extend it to the whole
$\mathcal N_{\ell}(X,\mathcal J^{31,[0,\infty]},H^{31,[0,\infty]};\alpha_-,\alpha_+)$
in the same way as in the proof of Theorem \ref{thmPparaexist}.
We have thus proved Theorem \ref{the26112} (1).
\subsection{Proof of Theorem \ref{the26112} (2): 
Kuranishi structure with outer collar}
\label{subsec:KstrctureCollar}
In this subsection we prove Theorem \ref{the26112} (2).
\begin{rem}
Since the rest of the proof is again a repetition of the construction of
previous sections, the readers may safely skip it and
go directly to Section \ref{sec;welldefinedness}.
We provide the details of the proof here for the sake of completeness.
The formulas appearing during the proof are lengthy but the argument is a
straightforward analogue.
\end{rem}
We will modify the Kuranishi structure of
$\mathcal N_{\ell}(X,\mathcal J^{31,[0,\infty]},H^{31,[0,\infty]};\alpha_-,\alpha_+)^{\boxplus 1}$
in the collar so that it will be compatible with the fiber product description of
its boundary and corners.
The way to modify our Kuranishi structure
is entirely the same as the proof of Theorem \ref{theorem266rev} (3)(4)
except at the boundary corresponding to $T=\infty$.
(Namely the subset described in Definition \ref{defn210revrev}.) 
We denote this boundary component
$\frak C_{\infty}$ and discuss our construction
only on 
$$
\widehat S_m^{\frak C_{\infty}}
(\mathcal N_{\ell}(X,\mathcal J^{31,[0,\infty]},H^{31,[0,\infty]};\alpha_-,\alpha_+)^{\boxplus 1}).
$$
Let $\frak A_r$ be the index set of the critical submanifolds of $H^r$ ($r=1,2,3$).
Let 
$$
\aligned
\alpha_- =\alpha_{1,0}, \alpha_{1,1},\dots,\alpha_{1,m_1-1},\alpha_{1,m_1}
& \in \frak A_1,\\
\alpha_{2,1},\dots,\alpha_{2,m_2-1},\alpha_{2,m_2}
& \in \frak A_2, \\ 
\alpha_{3,1},\dots,\alpha_{3,m_3},\alpha_{3,m_3+1}
=\alpha_+
& \in \frak A_3.
\endaligned
$$
We put
$\vec \alpha_1 =
(\alpha_{1,0},\alpha_{1,1},\dots,\alpha_{1,m_1-1},\alpha_{1,m_1} )$,
$\vec \alpha_2 = (\alpha_{2,1},\dots,\alpha_{2,m_2-1},\alpha_{2,m_2})$ and
$\vec \alpha_3 = (\alpha_{3,1},\dots,\alpha_{3,m_3},\alpha_{3,m_3+1})$.
\par
We consider the fiber product
\begin{equation}\label{form2641revrev}
\aligned
&{\mathcal M}_{\ell_{1,1}}(X,J_1,H^1;\alpha_{1,0},\alpha_{1,1})
\,{}_{{\rm ev}_{+}}\times_{{\rm ev}_-}  \dots \\
&\qquad\qquad\qquad\qquad\dots
\,{}_{{\rm ev}_{+}}\times_{{\rm ev}_-}
{\mathcal M}_{\ell_{1,m_1}}(X,J_1,H^1;\alpha_{1,m_1-1},\alpha_{1,m_1})
\\
&
\,{}_{{\rm ev}_{+}}\times_{{\rm ev}_-}
{\mathcal N}_{\ell_{(12)}}(X,\mathcal J^{21},H^{21};\alpha_{1,m_1},\alpha_{2,1})
\\
&\,{}_{{\rm ev}_{+}}\times_{{\rm ev}_-}{\mathcal M}_{\ell_{2,2}}(X,J_2,H^2;\alpha_{2,1},\alpha_{2,2})
\,{}_{{\rm ev}_{+}}\times_{{\rm ev}_-}  \dots \\
&\qquad\qquad\qquad\qquad\dots
\,{}_{{\rm ev}_{+}}\times_{{\rm ev}_-}
{\mathcal M}_{\ell_{2,m_2}}(X,J_2,H^2;\alpha_{2,m_2-1},\alpha_{2,m_2})
\\
&
\,{}_{{\rm ev}_{+}}\times_{{\rm ev}_-}
{\mathcal N}_{\ell_{(23)}}(X,\mathcal J^{32},H^{32};\alpha_{2,m_2},\alpha_{3,1})
\\
&\,{}_{{\rm ev}_{+}}\times_{{\rm ev}_-}{\mathcal M}_{\ell_{3,2}}(X,J_3,H^3;\alpha_{3,1},\alpha_{3,2})
\,{}_{{\rm ev}_{+}}\times_{{\rm ev}_-}  \dots \\
&\qquad\qquad\qquad\qquad\dots
\,{}_{{\rm ev}_{+}}\times_{{\rm ev}_-}
{\mathcal M}_{\ell_{3,m_3+1}}(X,J_3,H^3;\alpha_{3,m_3},\alpha_{3,m_3+1}),
\endaligned
\end{equation}
which we denote by
$
{\mathcal N}_{\vec \ell_1,\ell_{(12)},\vec \ell_2
,\ell_{(23)},\vec \ell_3}(X,\mathcal J^{21},\mathcal J^{32},H^{21},H^{32};\vec \alpha_1,\vec \alpha_2,\vec \alpha_3).
$
\par
We note that a neighborhood of $\widehat S_m^{\frak C_{\infty}}
(\mathcal N_{\ell}(X,\mathcal J^{31,[0,\infty]},H^{31,[0,\infty]};\alpha_-,\alpha_+)^{\boxplus 1})$
is a union of the direct products
$$
{\mathcal N}_{\vec \ell_1,\ell_{(12)},\vec \ell_2
,\ell_{(23)},\vec \ell_3}(X,\mathcal J^{21},\mathcal J^{32},H^{21},H^{32};\vec \alpha_1,\vec \alpha_2,\vec \alpha_3)
\times [-1,0]^{m-2}
$$
for
various
$\vec \ell_1,\ell_{(12)},\vec \ell_2
,\ell_{(23)},\vec \ell_3$,
$\vec \alpha_1,\vec \alpha_2,\vec \alpha_3$
such that
$$
\aligned
&\vert \vec \ell_1\vert + \vert \vec \ell_2\vert +
\vert \vec \ell_3 \vert +\ell_{(12)} + \ell_{(23)} = \ell, \\
& \# \vec\alpha_1 + \# \vec\alpha_2 + \# \vec\alpha_3 = m.
\endaligned
$$
Note $m_r + 1 =  \# \vec\alpha_r$ for $r=1,3$ and
 $m_2 =  \# \vec\alpha_2$.
\par
We will construct a Kuranishi structure for each of
$$
\aligned
&{\mathcal N}_{\vec \ell_1,\ell_{(12)},\vec \ell_2
,\ell_{(23)},\vec \ell_3}(X,\mathcal J^{21},\mathcal J^{32},H^{21},H^{32};\vec \alpha_1,\vec \alpha_2,\vec \alpha_3)^+ \\
&=
{\mathcal N}_{\vec \ell_1,\ell_{(12)},\vec \ell_2
,\ell_{(23)},\vec \ell_3}(X,\mathcal J^{21},\mathcal J^{32},H^{21},H^{32};\vec \alpha_1,\vec \alpha_2,\vec \alpha_3)
\\
&\qquad\times  [-1,0]^{m_1}  \times  [-1,0]^{m_2} \times  [-1,0]^{m_3}.
\endaligned
$$
Let $A_r \sqcup B_r  \sqcup C_r = \underline{m_r}$ for $r=1,2,3$.
It induces 
$\mathcal I_{A_r,B_r,C_r} : [-1,0]^{b_r} \to [-1,0]^{m_r}$ 
by (\ref{form2642}).
We will formulate compatibility conditions below (Conditions \ref{conds26117} - \ref{conds26117223}),
which describe the restriction of the
Kuranishi structure
\begin{equation}\label{form2610500}
\widehat{\mathcal U}_{\vec \ell_1,\ell_{(12)},\vec \ell_2
,\ell_{(23)},\vec \ell_3}(X,\mathcal J^{21},\mathcal J^{32},H^{21},H^{32};\vec \alpha_1,\vec \alpha_2,\vec \alpha_3)
\end{equation}
of
the product space
$
{\mathcal N}_{\vec \ell_1,\ell_{(12)},\vec \ell_2
,\ell_{(23)},\vec \ell_3}(X,\mathcal J^{21},\mathcal J^{32},H^{21},H^{32};\vec \alpha_1,\vec \alpha_2,\vec \alpha_3)^+$
to the image of the embedding
${\rm id} \times \mathcal I_{A_1,B_1,C_1} \times \mathcal I_{A_2,B_2,C_2}
\times \mathcal I_{A_3,B_3,C_3}$.
\par
We put
$A_r = \{ i(A_r,1),\dots, i(A_r,a_r)\}$ with 
$$i(A_r,1)<i(A_r,2) < \dots< i(A_r,a_r-1) < i(A_r,a_r).
$$
We define
$C'_j(A_r)$, $C_j(A_r)$, $\vec\alpha_{r,A_r,j}$,
$\vec\alpha_{r,A_r,j,C_r}$
in the same way as in Notation \ref{not2659rev} (1)(2)(3).
Here 
we put $i(A_r,a_r+1) = m_r$,
$i(r,A_r,0) = 1$, except
$i(A_1,0) = 0$, $i(A_3,a_3+1) = m_3+1$.
(Compare Remark \ref{rem2292} and  Notation \ref{not2659rev} (1).)
\par\medskip
\noindent{\bf Case 1:}
We first consider the case $A_2 \ne \emptyset$.
We consider the fiber product
\begin{equation}\label{264222revrev}
\aligned
&{\mathcal M}_{\vec \ell_{1,A_1,1}}(X,J_1,H^1;\vec \alpha_{1,A_1,1})
 \\
&
\,{}_{{\rm ev}_{+}}\times_{{\rm ev}_-} {\mathcal M}_{\vec \ell_{1,A_1,2}}(X,J_1,H^1;
\vec\alpha_{1, A_1,2})
\,{}_{{\rm ev}_{+}}\times_{{\rm ev}_-} \dots \\
&\,{}_{{\rm ev}_{+}}\times_{{\rm ev}_-} {\mathcal M}_{\vec \ell_{1,A_1,j}}(X,J_1,H^1;
\vec\alpha_{1,A_1,j})
\,{}_{{\rm ev}_{+}}\times_{{\rm ev}_-}
\dots \\
&
\,{}_{{\rm ev}_{+}}\times_{{\rm ev}_-} {\mathcal M}_{\vec\ell_{1,A_1,a_1}}(X,J_1,H^1;
\vec\alpha_{1, A_1,a_1}) \\
&\,{}_{{\rm ev}_{+}}\times_{{\rm ev}_-}
{\mathcal N}_{\vec\ell_{1,A_1,a_1+1},\ell_{(12)},\vec\ell_{2,A_2,1}}(X,\mathcal J^{21},H^{21};
\vec\alpha_{1, A_1,a_1+1}.
\vec\alpha_{2,A_2,1})
 \\
&
\,{}_{{\rm ev}_{+}}\times_{{\rm ev}_-} {\mathcal M}_{\vec \ell_{2,A_2,2}}(X,J_2,H^2;
\vec\alpha_{2,A_2,2})
\,{}_{{\rm ev}_{+}}\times_{{\rm ev}_-} \dots \\
&\,{}_{{\rm ev}_{+}}\times_{{\rm ev}_-} {\mathcal M}_{\vec \ell_{2,A_2,j}}(X,J_2,H^2;
\vec\alpha_{2,A_2,j})
\,{}_{{\rm ev}_{+}}\times_{{\rm ev}_-}
\dots \\
&
\,{}_{{\rm ev}_{+}}\times_{{\rm ev}_-} {\mathcal M}_{\vec\ell_{2,A_2,a_2}}(X,J_2,H^2;
\vec\alpha_{2, A_2,a_2})\\
&\,{}_{{\rm ev}_{+}}\times_{{\rm ev}_-}
{\mathcal N}_{\vec\ell_{2,A_2,a_2+1},\ell_{(23)},\vec\ell_{3,A_3,1}}(X,\mathcal J^{32},H^{32};
\vec\alpha_{2, A_2,a_2+1},
\vec\alpha_{3,A_3,1})
 \\
&
\,{}_{{\rm ev}_{+}}\times_{{\rm ev}_-} {\mathcal M}_{\vec \ell_{3,A_3,2}}(X,J_3,H^3;
\vec\alpha_{3,A_3,2})
\,{}_{{\rm ev}_{+}}\times_{{\rm ev}_-} \dots \\
&\,{}_{{\rm ev}_{+}}\times_{{\rm ev}_-} {\mathcal M}_{\vec \ell_{3,A_3,j}}(X,J_3,H^3;
\vec\alpha_{3,A_3,j})
\,{}_{{\rm ev}_{+}}\times_{{\rm ev}_-}
\dots \\
&
\,{}_{{\rm ev}_{+}}\times_{{\rm ev}_-} {\mathcal M}_{\vec\ell_{3,A_3,a_3+1}}(X,J_3,H^3;
\vec\alpha_{3,A_3,a_3+1}).
\endaligned
\end{equation}
Here
$\vec\ell_{r,A_r,j}
=(\ell_{r,i(A_r,j-1)+1},\dots,\ell_{r,i(A_r,j)}).
$
\par
We define $m_j(r,A_r)$ and $m_j(r,A_r,C_r)$ as in
Notation \ref{not2659rev} (5).
Then (\ref{form264555rev}) holds.
We define $\vec \ell_{r,A_r,j,C_r}$
in the same way as in                                                                                                                                                                                                                                                                                                                                                                                                                                                                                                                                                                                                                                                                                                                                                                                                                                                                                                                                                                                                                                                                                                                                                                                                                                                                                                                             Notation \ref{not2659rev} (6).
We put
\begin{equation}
\aligned
\ell'_{(r r+1)}
=
&\ell_{r,i(A_r,a_r)+k_{m_{a_r}(r,A_r,C_r)}+1}  \dots
+ \ell_{r,m_r} + \\
&+\ell_{(r r+1)}
+ \ell_{r+1,2} + \dots
+ \ell_{r+1,{\rm min} (i(A_{r+1},1), i(B_{r+1},1))}.
\endaligned
\end{equation}
We remark that in Proposition \ref{prop2661} we determined 
the Kuranishi structure on
\begin{equation}\label{form2677rev}
\aligned
&{\mathcal M}_{\vec\ell_{r,A_r,j},C_r}(X,J_r,H^r;\vec\alpha_{r,A_r,j,C_r} )^+ \\
&=
{\mathcal M}_{\vec\ell_{r,A_r,j},C_r}(X,J_r,H^r;\vec\alpha_{r,A_r,j,C_r} ) \times [-1,0]^{m_j(r,A_r,C_r)-1}.
\endaligned\end{equation}
By construction we have
$$
{\mathcal M}_{\vec\ell_{r,A_r,j}}(X,J_r,H^r;\vec\alpha_{r,A_r,j} )
\subseteq
\widehat S_{c_j(r,A_r)}(
{\mathcal M}_{\vec\ell_{r,A_r,j},C_r}(X,J_r,H^r;\vec\alpha_{r,A_r,j,C_r}))
$$
and the left hand side is a factor in (\ref{264222revrev}).
\par
By restriction it determines a Kuranishi structure of
(\ref{264222revrev}) times $[-1,0]^*$ except two of the factors
\begin{equation}\label{form2678rev00}
\aligned
{\mathcal N}_{\vec\ell_{r,A_r,a_r+1},\ell_{(r r+1)},\vec\ell_{r+1,A_{r+1},1}}(X,
\mathcal J^{r+1 r},H^{r+1 r}; &\vec\alpha_{r,A_r,a_r+1},
\vec\alpha_{r+1,A_{r+1},1}).
\endaligned
\end{equation}
By construction, we can easily show that (\ref{form2678rev00}) is
a component of
\begin{equation}
\aligned
 &\widehat S_{c_{a_r+1}(r,A_r)+c_{1}(r+1,A_{r+1})}(
{\mathcal N}_{\vec\ell_{r,A_r,a_r+1,C_r},\ell'_{(r r+1)},\vec\ell_{r+1,A_{r+1},1,C_{r+1}}}
\\
&
\qquad\qquad
(X,\mathcal J^{r+1 r},H^{{r+1}r}; \vec\alpha_{r,A_r,a_r+1,C_r}, \vec\alpha_{r+1,A_{r+1},1,C_{r+1}}).
\endaligned
\end{equation}
Note that we defined the Kuranishi structure on
$$
{\mathcal N}_{\vec\ell_{r,A_r,a_r+1,C_r},\ell'_{(r r+1)},\vec\ell_{r+1,A_{r+1},1,C_{r+1}}}
(X,\mathcal J^{r+1 r},H^{{r+1}r}; \vec\alpha_{r,A_r,a_r+1,C_r}, \vec\alpha_{r+1,A_{r+1},1,C_{r+1}})^+
$$
during the construction of the morphism $\frak N_{r+1 r}$ in 
Proposition \ref{prop2661rev}.
We write it as
\begin{equation}\label{form2611111}
\aligned
&\widehat{\mathcal U}_{\vec\ell_{r,A_r,a_r+1,C_r},\ell'_{(r r+1)},\vec\ell_{r+1,A_{r+1},1,C_{r+1}}}  \\
&\qquad(X,\mathcal J^{r+1 r},H^{{r+1}r}; \vec\alpha_{r,A_r,a_r+1,C_r}, \vec\alpha_{r+1,A_{r+1},1,C_{r+1}}).
\endaligned
\end{equation}
\begin{conds}\label{conds26117}
In the case $A_2 \ne \emptyset$ we require the restriction of the
Kuranishi structure (\ref{form2610500}) to the image of
${\rm id} \times \mathcal I_{A_1,B_1,C_1} \times \mathcal I_{A_2,B_2,C_2}
\times \mathcal I_{A_3,B_3,C_3}$
is the fiber product of the following Kuranishi structures.
(Here we use the fiber bundle description (\ref{264222revrev}).)
\begin{enumerate}
\item
The Kuranishi structure on
${\mathcal M}_{\vec\ell_{r,A_r,j}}(X,J_r,H^r;\vec\alpha_{r,A_r,j})
\times [-1,0]^{m_j(r, A_r, C_r) -1}$
which is a restriction of those on (\ref{form2677rev}) produced in Proposition \ref{prop2661}.
\item The Kuranishi structure on
(\ref{form2678rev00})$\times [-1,0]^{m_{a_r +1}(r, A_r, C_r)+ m_1(r+1, A_{r+1}, C_{r+1}) -2}$ 
which is a restriction of those on
 (\ref{form2611111}) produced in Proposition \ref{prop2661rev}.
\end{enumerate}
\end{conds}
\par\medskip
\noindent{\bf Case 2:}
We next consider the case $A_2 = \emptyset$ but $B_2 \ne \emptyset$.
\par
We consider the fiber product
\begin{equation}\label{264222revrev222}
\aligned
&{\mathcal M}_{\vec \ell_{1,A_1,1}}(X,J_1,H^1;\vec\alpha_{1,A_1,1})
 \\
&
\,{}_{{\rm ev}_{+}}\times_{{\rm ev}_-} {\mathcal M}_{\vec \ell_{1,A_1,2}}(X,J_1,H^1;
\vec\alpha_{1,A_1,2})
\,{}_{{\rm ev}_{+}}\times_{{\rm ev}_-} \dots \\
&\,{}_{{\rm ev}_{+}}\times_{{\rm ev}_-} {\mathcal M}_{\vec \ell_{1,A_1,j}}(X,J_1,H^1;
\vec\alpha_{1,A_1,j})
\,{}_{{\rm ev}_{+}}\times_{{\rm ev}_-}
\dots \\
&
\,{}_{{\rm ev}_{+}}\times_{{\rm ev}_-} {\mathcal M}_{\vec\ell_{1,A_1,a_1}}(X,J_1,H^1;
\vec\alpha_{1,A_1,a_1}) \\
&\,{}_{{\rm ev}_{+}}\times_{{\rm ev}_-}
{\mathcal N}_{\vec \ell_{1,A_1,a_1+1},\ell_{(12)},\vec \ell_2
,\ell_{(23)},\vec \ell_{3,A_3,1}}(X,\mathcal J^{21},\mathcal J^{32},H^{21},H^{32} \\
&\qquad\qquad\qquad\qquad\qquad\qquad\qquad\qquad\qquad
;\vec\alpha_{1, A_1,a_1+1},
\vec \alpha_2,
\vec\alpha_{3,A_3,1})\\
&
\,{}_{{\rm ev}_{+}}\times_{{\rm ev}_-} {\mathcal M}_{\vec \ell_{3,A_3,2}}(X,J_3,H^3;
\vec\alpha_{3,A_3,2})
\,{}_{{\rm ev}_{+}}\times_{{\rm ev}_-} \dots \\
&\,{}_{{\rm ev}_{+}}\times_{{\rm ev}_-} {\mathcal M}_{\vec \ell_{3,A_3,j}}(X,J_3,H^3;
\vec\alpha_{3,A_3,j})
\,{}_{{\rm ev}_{+}}\times_{{\rm ev}_-}
\dots \\
&
\,{}_{{\rm ev}_{+}}\times_{{\rm ev}_-} {\mathcal M}_{\vec\ell_{3,A_3,a_3+1}}(X,J_3,H^3;
\vec\alpha_{3,A_3,a_3+1}).
\endaligned
\end{equation}
We define $\vec{\alpha}_{2,C_2}$ by removing $\alpha_{2,i}$, $i\in C_2$ from $\vec\alpha_2$.\par
We define $\vec \ell_{2,C_2}$ as follows.
We put $\vec{\alpha}_{2,C_2}
= \{\alpha_{2, k_s} \mid s =0, \dots, m_{2,C_2}\}$,
$k_0 < k_1 < \dots < k_{m_{2,C_2}}$. (Here $m_{2,C_2} = \#\vec{\alpha}_{2,C_2} -1$.)
Note if
$i \in (k_s,k_{s+1})_{\Z}$, then $i \in C_2$.
We put
$$
\ell_{2,C_2,s} = \ell_{2,k_{s-1} +1} + \dots
+ \ell_{2,k_{s}}
$$
and
$\vec \ell_{2,C_2} = (\ell_{2,C_2,1},\dots,\ell_{2,C_2,m_{2,C_2}})$.
Now we consider the factor 
\begin{equation}\label{26113form}
\aligned
{\mathcal N}_{\vec \ell_{1,A_1,a_1+1},\ell_{(12)},\vec \ell_2
,\ell_{(23)},\vec \ell_{3,A_3,1}}(&X,\mathcal J^{21},\mathcal J^{32},H^{21},H^{32}
\\
&;(\vec\alpha_{1,A_1,a_1+1}, \vec\alpha_2,
\vec\alpha_{3,A_3,1}))
\endaligned
\end{equation}
in \eqref{264222revrev222} and lies in the corner of
\begin{equation}\label{eq:829}
\aligned
{\mathcal N}_{\vec \ell_{1,A_1,a_1+1C_1},\ell'_{(12)},\vec \ell_{2,C_2}
,\ell'_{(23)},\vec \ell_{3,A_3,1,C_3}}(&X,\mathcal J^{21},\mathcal J^{32},
H^{21},H^{32};\\
&\vec\alpha_{1,A_1, a_1+1,C_{1}}, \vec \alpha_{2,C_2},
\vec\alpha_{3,A_3,1,C_{3}}),
\endaligned
\end{equation}
where
$$
\aligned
&\ell'_{(12)} = \ell_{1,i(A_1,a_1) + k_{m_{a_1}(1,A_1,C_1)}+1}
+ \dots + \ell_{1,m_1} + \ell_{(12)} + \ell_{2,1} + \dots + \ell_{2,i(A_2,1)}, \\
&\ell'_{(23)} = \ell_{2,i(A_2,a_2) + k_{m_{a_2}(2,A_2,C_2)}+2}
+ \dots + \ell_{2,m_2} + \ell_{(23)} + \ell_{3,1} + \dots + \ell_{3,i(A_3,1)}.
\endaligned$$
We have a Kuranishi structure
\begin{equation}\label{26114form}
\aligned
\widehat{\mathcal U}_{\vec \ell_{1,A_1,a_1+1,C_1},\ell'_{(12)},\vec \ell_{2,C_2}
,\ell'_{(23)},\vec \ell_{3,A_3,1,C_3}}(&X,\mathcal J^{21},\mathcal J^{32},
H^{21},H^{32};\\
&\vec\alpha_{1,A_1, a_1+1,C_{1}}, \vec \alpha_{2,C_2},
\vec\alpha_{3,A_3, 1,C_{3}})
\endaligned
\end{equation}
on 
\eqref{26113form}$\times [-1,0]^*$.
(More precisely, we are during the process of producing it.
Here $* = \#\vec\alpha_{1,A_1, a_1+1,C_{1}} + \#\vec \alpha_{2,C_2} +
\#\vec\alpha_{3, A_3, 1,C_{3}} -2$.)
\begin{conds}\label{conds2611722}
In the case $A_2 = \emptyset$, $B_2 \ne \emptyset$, we require the restriction of the
Kuranishi structure (\ref{form2610500}) to the image of
${\rm id} \times \mathcal I_{A_1,B_1,C_1} \times \mathcal I_{A_2,B_2,C_2}
\times \mathcal I_{A_3,B_3,C_3}$
is the fiber product of the following Kuranishi structures.
(We use the fiber product description (\ref{264222revrev222}).)
\begin{enumerate}
\item
The Kuranishi structure on
${\mathcal M}_{\vec\ell_{r,A_r,j}}(X,J_r,H^r;\vec\alpha_{r,A_r,j} )\times [-1,0]^{m_j(r, A_r, C_r) -1}$
which is a restriction of those on (\ref{form2677rev}) produced  in Proposition \ref{prop2661}.
\item The Kuranishi structure on
(\ref{26113form})$\times [-1,0]^{*}$ 
($* = \#\vec\alpha_{1,A_1, a_1+1,C_{1}} + \#\vec \alpha_{2,C_2} +
\#\vec\alpha_{3,A_3,1,C_{3}} -2$) which is a restriction of \eqref{26114form}.
\end{enumerate}
\end{conds}
\par\medskip
\noindent{\bf Case 3:}
We finally consider the case when $A_2 = B_2 = \emptyset$.
\par
In this case, using the notation above, we have
$\vec \alpha_{2,C_2} = \emptyset$.
We require the compatibility with the Kuranishi structures on the part
$T < \infty$ in this case, as follows.
\par
We denote the fiber product:
\begin{equation}\label{form2641revrev333}
\aligned
&{\mathcal M}_{\ell_{1,1}}(X,J_1,H^1;\alpha_{1,0},\alpha_{1,1})
\,{}_{{\rm ev}_{+}}\times_{{\rm ev}_-}  \dots \\
&\qquad\qquad\qquad\qquad\dots
\,{}_{{\rm ev}_{+}}\times_{{\rm ev}_-}
{\mathcal M}_{\ell_{1,m_1}}(X,J_1,H^1;\alpha_{1,m_1-1},\alpha_{1,m_1})
\\
&
\,{}_{{\rm ev}_{+}}\times_{{\rm ev}_-}
\mathcal N_{\ell'}(X,\mathcal J^{31,[0,\infty]},H^{31,[0,\infty]};\alpha_{1,m_1},\alpha_{3,1})
\\
&\,{}_{{\rm ev}_{+}}\times_{{\rm ev}_-}{\mathcal M}_{\ell_{3,2}}(X,J_3,H^3;\alpha_{3,1},\alpha_{3,2})
\,{}_{{\rm ev}_{+}}\times_{{\rm ev}_-}  \dots \\
&\qquad\qquad\qquad\qquad\dots
\,{}_{{\rm ev}_{+}}\times_{{\rm ev}_-}
{\mathcal M}_{\ell_{3,m_3+1}}(X,J_3,H^3;\alpha_{3,m_3},\alpha_{3,m_3+1})
\endaligned
\end{equation}
by
$
\mathcal N_{\vec \ell_1,\ell',\vec \ell_3}(X,H^{31,[0,\infty]},\mathcal J^{31,[0,\infty]};\vec\alpha_{1},\vec\alpha_{3}).
$
Note this is also a component of the corner of
$\mathcal N_{\ell}(X,\mathcal J^{31,[0,\infty]},H^{31,[0,\infty]};\alpha_{-},\alpha_{+})$
if $\vert\vec \ell_1\vert + \ell' + \vert\vec \ell_2\vert = \ell$.
So we are during the process of constructing Kuranishi structures on
$\mathcal N_{\ell}(X,\mathcal J^{31,[0,\infty]},H^{31,[0,\infty]};\alpha_{-},\alpha_{+})^+$,
which is the direct product of $\mathcal N_{\ell}(X,\mathcal J^{31,[0,\infty]},H^{31,[0,\infty]};\alpha_{-},\alpha_{+})$
with $[-1,0]^*$. ($* = \#\vec \alpha_1 + \#\vec \alpha_3 - 2$.)
Let us denote by
$\widehat{\mathcal U}_{\vec \ell_1,\ell',\vec \ell_3}(X,\mathcal J^{31,[0,\infty]},H^{31,[0,\infty]};\vec\alpha_{1},\vec\alpha_{3})$
the Kuranishi structure on it.
\par
We put
$$
\ell'_2 = \ell - \vert \vec \ell_{1,C_1}\vert - \vert \vec \ell_{3,C_3}\vert.
$$
We then observe that (\ref{26113form}) lies in the corner of
\begin{equation}\label{form26117}
\mathcal N_{\vec \ell_{1,A_1,a_1+1,C_1},\ell'_2,\vec \ell_{3,A_3,1,C_3}}(X,\mathcal J^{31,[0,\infty]},H^{31,[0,\infty]};\vec\alpha_{1, A_1, a_1+1,C_1}, \vec\alpha_{3, A_3, 1,C_3}).
\end{equation}
\begin{conds}\label{conds26117223}
In the case $A_2 = B_2 = \emptyset$, we require the restriction of the
Kuranishi structure (\ref{form2610500}) to the image of
${\rm id} \times \mathcal I_{A_1,B_1,C_1} \times \mathcal I_{A_2,B_2,C_2}
\times \mathcal I_{A_3,B_3,C_3}$
is the fiber product of the following Kuranishi structures.
(We use the fiber product description (\ref{264222revrev222}).)
\begin{enumerate}
\item
The Kuranishi structure on
${\mathcal M}_{\vec\ell_{r,A_r,j}}(X,J_r,H^r;\vec\alpha_{r,A_r,j} )\times [-1,0]^{m_j(r, A_r, C_r) -1}$
which is a restriction of those on (\ref{form2677rev}) produced  in Proposition \ref{prop2661}.
\item The Kuranishi structure on
\eqref{26113form}$\times [-1,0]^*$ 
which is a restriction of the Kuranishi structure
$$
\widehat{\mathcal U}_{\vec \ell_{1,A_1,a_1+1,C_1},\ell'_2,\vec \ell_{3,A_3,1,C_3}}(X,H^{31,[0,\infty]},\mathcal J^{31,[0,\infty]};\vec\alpha_{1, A_1, a_1+1, C_1}, 
\vec\alpha_{3, A_3,1, C_3}),
$$
on \eqref{form26117}$\times [-1,0]^*$.
Here $* = \#\vec\alpha_{1, A_1, a_1+1, C_1} +
\#\vec\alpha_{3, A_3,1, C_3}-2$.
\end{enumerate}
\end{conds}
We have thus described the conditions we require at $T= \infty$.
\par
There are similar compatibility conditions at $T < \infty$ and $T=0$.
We require such conditions to the Kuranishi structure 
$\widehat{\mathcal U}_{\vec \ell_1,\ell',\vec \ell_3}(X,\mathcal J^{31,[0,\infty]},H^{31,[0,\infty]};\vec\alpha_{1},\vec\alpha_{3})$.
We omit the detailed description of this compatibility condition
since it is the same as that for the case of Condition \ref{conds2660rev} and Proposition \ref{prop2661rev}.
\begin{prop}\label{prop26120}
There exists a K-system 
$$
\aligned
\{ ( & {\mathcal N}_{\vec \ell_1,\ell_{(12)},\vec \ell_2
,\ell_{(23)},\vec \ell_3}(X,\mathcal J^{21},\mathcal J^{32},H^{21},H^{32};\vec \alpha_1,\vec \alpha_2,\vec \alpha_3)^+, \\
~& \widehat{\mathcal U}_{\vec \ell_1,\ell_{(12)},\vec \ell_2
,\ell_{(23)},\vec \ell_3}(X,\mathcal J^{21},\mathcal J^{32},H^{21},H^{32};\vec \alpha_1,\vec \alpha_2,\vec \alpha_3) )\}
\endaligned
$$
whose Kuranishi structure is as in (\ref{form2610500}) 
with the following properties.
\begin{enumerate}
\item
They satisfy Conditions \ref{conds26117} - \ref{conds26117223}.
\item There exists a K-system 
$$
\{(\mathcal N_{\vec \ell_1,\ell',\vec \ell_3}(X,\mathcal J^{31,[0,\infty]},
H^{31,[0,\infty]};\vec\alpha_{1},\vec\alpha_{3}),
~\widehat{\mathcal U}_{\vec \ell_1,\ell',\vec \ell_3}(X,\mathcal J^{31,[0,\infty]},H^{31,[0,\infty]};\vec\alpha_{1},\vec\alpha_{3}) )\}
$$
which satisfies a compatibility condition similar to Condition \ref{conds2660rev}.
\item
A similar compatibility condition is satisfied at $T=0$.
\item
Let $\frak C$ be the union of the boundary components of the underlying topological space $
{\mathcal N}_{\vec \ell_1,\ell_{(12)},\vec \ell_2
,\ell_{(23)},\vec \ell_3}(X,\mathcal J^{21},\mathcal J^{32},H^{21},H^{32};\vec \alpha_1,\vec \alpha_2,\vec \alpha_3)^+$
corresponding to $\partial([0,1]^*)$. Then (\ref{form2610500}) is $\frak C$-collared.
\item
If $\vec{\alpha}_1 = \{\alpha_-\}$ and $\vec{\alpha}_3 = \{\alpha_+\}$, then
$\widehat{\mathcal U}_{\emptyset,\ell,\emptyset}(X,\mathcal J^{31,[0,\infty]},H^{31,[0,\infty]};\alpha_{-},\alpha_{+})$
coincides with the Kuranishi structure we produced in Theorem \ref{the26112} (1).
\end{enumerate}
\end{prop}
\begin{proof}
The proof is entirely similar to the proof of Proposition \ref{prop2661rev} etc.
\end{proof}
We can use Proposition \ref{prop26120} to complete the proof of Theorem \ref{the26112}  (2) in the
same way as before.
(We use smoothing of corners and 
\cite[Lemma 18.40]{fooonewbook}
to show
that at $T=\infty$ we get the Kuranishi structure used to define the composition.)
\end{proof}
The proof of Theorem \ref{tjm26108} is complete.
\end{proof}

\section{Well-definedness of Hamiltonian Floer cohomology}
\label{sec;welldefinedness}

We now use the results of the previous sections to conclude
the well-definedness of the Floer cohomology of a periodic
Hamiltonian system.
Namely we prove the next theorems.

\begin{thm}\label{HFwelldefine}
Let $H : X \times S^1 \to \R$ be a smooth function
such that ${\rm Per}(H)$ is Morse-Bott non-degenerate in the sense of
Condition \ref{weaknondeg}.
Then we can associate the Floer cohomology
$HF(X,H;\Lambda_{0,{\rm nov}})$
which is independent of various choices involved in the definition.
\end{thm}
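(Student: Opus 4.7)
The plan is to combine the four main construction theorems of this paper (Theorems \ref{theorem266}, \ref{theorem266rev}, \ref{thmPparaexist}, \ref{tjm26108}) with the abstract machinery \cite[Theorem 16.39]{fooonewbook} for linear K-systems. Fix $H$ and consider two sets of auxiliary data (tame almost complex structures $J_i$, obstruction bundle data, and the ensuing Kuranishi structures and CF-perturbations) yielding linear K-systems $\mathcal F_i = \mathcal F_X(H, J_i)$ for $i=1,2$. Each $\mathcal F_i$ determines a $\Lambda_{0,{\rm nov}}$-module $HF(X, H; \Lambda_{0,{\rm nov}})_i$ via \cite[Theorem 16.39]{fooonewbook}; the objective is to produce a canonical isomorphism between the two.

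First I would apply Theorem \ref{theorem266rev} to the pair $(H^1, H^2) = (H, H)$, choosing admissible interpolating data $(H^{21}, \mathcal J^{21})$ and $(H^{12}, \mathcal J^{12})$ as in Situation \ref{situ2676}, to obtain morphisms of linear K-systems $\mathfrak N_{21}: \mathcal F_1 \to \mathcal F_2$ and $\mathfrak N_{12}: \mathcal F_2 \to \mathcal F_1$. By Lemma \ref{lem2698} such interpolating data exist, and by Theorem \ref{thmPparaexist} the homotopy class of each $\mathfrak N_{ji}$ is independent of the particular interpolating data chosen, so we may use any convenient choice at any subsequent stage.

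Next, I would apply Theorem \ref{tjm26108} twice: the composition $\mathfrak N_{12} \circ \mathfrak N_{21}$ is homotopic to a self-morphism $\mathfrak N_{11}: \mathcal F_1 \to \mathcal F_1$ built from any interpolating pair $(H^{11}, \mathcal J^{11})$, and symmetrically for $\mathfrak N_{21} \circ \mathfrak N_{12}$. Taking the constant choice $H^{11}_\tau \equiv H$ and $J^{11}_\tau \equiv J_1$, the equation \eqref{Fleqref} on the main component reduces to \eqref{Fleq}, and $\mathfrak N_{11}$ can be identified with the identity morphism of $\mathcal F_1$ in the sense of \cite[Definition 16.19]{fooonewbook}. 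By \cite[Theorem 16.39]{fooonewbook}, the identity morphism induces the identity map on Floer cohomology, and homotopic morphisms induce identical cohomology-level maps. Combining these, one obtains $[\mathfrak N_{12}]_* \circ [\mathfrak N_{21}]_* = \mathrm{id}$ and $[\mathfrak N_{21}]_* \circ [\mathfrak N_{12}]_* = \mathrm{id}$, so each of $[\mathfrak N_{21}]_*, [\mathfrak N_{12}]_*$ is a $\Lambda_{0,{\rm nov}}$-linear isomorphism between the two versions of $HF(X, H; \Lambda_{0,{\rm nov}})$.

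The main obstacle will be the identification of the constant-interpolation morphism $\mathfrak N_{11}$ with the identity of $\mathcal F_1$: the interpolation space $\mathcal N_\ell(X, \mathcal J^{11}, H^{11}; \alpha_-, \alpha_+)$ carries the datum of a distinguished main component, but the equation on it is now translation-invariant, so the resulting moduli space acquires an extra $\R$ factor parametrizing the position of the main component. Matching this against the collared space ${\mathcal M}(X, H; \alpha_-, \alpha_+)^{\boxplus 1}$ of connecting orbits of $\mathcal F_1$, and verifying that the chain map induced by $\mathfrak N_{11}$ agrees with the identity at the linear K-system level, is the step requiring careful bookkeeping; it is, however, directly governed by the abstract formalism of \cite[Chapter 16]{fooonewbook} and demands no further geometric input beyond what has already been developed in Sections \ref{subsec;KuraFloer}--\ref{subsec;KuramodFloercom}.
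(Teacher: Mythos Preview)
Your overall strategy is close to the paper's—using Theorems \ref{theorem266rev}, \ref{thmPparaexist}, and \ref{tjm26108} to construct and compare morphisms—but there is a genuine gap at precisely the point you flag as the ``main obstacle.'' The identification of the constant-interpolation morphism $\frak N_{11}$ with the identity morphism of $\mathcal F_1$ is \emph{not} established in this paper. See Remark \ref{rem138} and Claim \ref{clain26136}: the authors explain that proving $\frak N_{11}(\mathcal J, H^{11})$ is homotopic to the identity morphism requires the Kuranishi structure on $\mathcal N(X,\mathcal J, H^{11};\alpha_-,\alpha_+)$ to be induced by the forgetful map to $\mathcal M(X,J,H;\alpha_-,\alpha_+)$, and this forgetful-map analysis is postponed to \cite{foootech3}. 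Your assertion that the step ``is directly governed by the abstract formalism of \cite[Chapter 16]{fooonewbook} and demands no further geometric input'' is therefore incorrect: it requires exactly the geometric input the paper declines to supply here.

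The paper circumvents this entirely. By Lemma \ref{lem26130}, the morphism $\frak N_{11}(\mathcal J, H^{11})$ (with $H^{11}$ constant in $\tau$) has energy loss $0$: the interpolation space $\mathcal N(X,\mathcal J, H^{11};\alpha_-,\alpha_+)^{\boxplus 1}$ is empty when $E(\alpha_-) > E(\alpha_+)$ or when $E(\alpha_-)=E(\alpha_+)$ with $\alpha_-\neq\alpha_+$, and equals $R_\alpha$ with identity evaluation maps when $\alpha_-=\alpha_+=\alpha$. Applying \cite[Theorem 16.31]{fooonewbook} then yields a chain map $\psi_{\mathcal J, H^{11}}$ between the two Floer chain complexes satisfying $\psi_{\mathcal J, H^{11}}\equiv {\rm id} \bmod T^\epsilon \Lambda_{0,{\rm nov}}$ for some $\epsilon>0$. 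Such a map is automatically invertible over $\Lambda_{0,{\rm nov}}$, and the inverse of a chain map is a chain map; this gives the isomorphism of Floer cohomologies over $\Lambda_{0,{\rm nov}}$ directly, without ever comparing $\frak N_{11}$ to the identity morphism at the K-system level. Your composition argument via Theorem \ref{tjm26108} is essentially what the paper uses for Theorem \ref{HFwelldefine2} (over $\Lambda_{\rm nov}$), but there it relies on the fact—already proved by the energy-loss-$0$ argument—that $\psi_{11}$ and $\psi_{22}$ are isomorphisms, not that $\frak N_{11}$ is the identity.
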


Recall $\Lambda_{0,{\rm nov}}$ is the Novikov ring defined in \eqref{nov0}.
We define the Novikov field $\Lambda_{\rm nov}$ as its field of fractions 
by allowing $\lambda_i$ to be negative.

\begin{thm}\label{HFwelldefine2}
Let $H^{r} : X \times S^1 \to \R$ $(r=1,2)$ be as in Theorem \ref{HFwelldefine}.
Then the  Floer cohomologies
$
HF(X,H^r;\Lambda_{{\rm nov}})
= HF(X,H^r;\Lambda_{0,{\rm nov}}) \otimes_{\Lambda_{0,{\rm nov}}} \Lambda_{{\rm nov}}
$ $(r=1,2)$
over the Novikov field $\Lambda_{{\rm nov}}$ satisfy
$$
HF(X,H^1;\Lambda_{{\rm nov}})
\cong HF(X,H^2;\Lambda_{{\rm nov}}).
$$
\end{thm}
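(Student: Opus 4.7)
The plan is to construct mutually inverse continuation maps using the morphism and homotopy machinery of Sections \ref{subsec;KuramodFloermor}--\ref{subsec;KuramodFloercom}, and then (for the addendum identifying both with $H(X;\Lambda_{\rm nov})$) reduce via this invariance to the trivial Hamiltonian $H\equiv 0$ and apply the Morse-Bott computation of Section \ref{sec;calc}. Throughout I write $\mathcal F_r = \mathcal F_X(H^r, J_r)$ for the linear K-systems produced by Theorem \ref{theorem266}.

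First, I apply Theorem \ref{theorem266rev} in both directions to obtain morphisms of linear K-systems $\frak N_{21}:\mathcal F_1\to\mathcal F_2$ and $\frak N_{12}:\mathcal F_2\to\mathcal F_1$, using auxiliary families $(H^{21},\mathcal J^{21})$ and $(H^{12},\mathcal J^{12})$ as in Situation \ref{situ2676}. By \cite[Theorem 16.39]{fooonewbook}, these induce $\Lambda_{0,{\rm nov}}$-module homomorphisms $(\frak N_{21})_{\ast}$ and $(\frak N_{12})_{\ast}$ on Floer cohomology, which Theorem \ref{thmPparaexist} shows are independent of the chosen interpolation data up to homotopy of morphisms, hence well-defined on cohomology.

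Next I combine Theorem \ref{tjm26108} with the triples $(H^1,H^2,H^1)$ and $(H^2,H^1,H^2)$ to produce homotopies
\[
\frak N_{12}\circ\frak N_{21}\sim \frak N_{11}, \qquad \frak N_{21}\circ\frak N_{12}\sim \frak N_{22},
\]
where by homotopy invariance I may realize each $\frak N_{rr}$ using the constant interpolation $H^{rr}_\tau\equiv H^r$, $\mathcal J^{rr}_\tau\equiv J_r$. The main obstacle—and the hardest step of the argument—is to verify that the induced map $(\frak N_{rr})_{\ast}$ is the identity on $HF(X,H^r;\Lambda_{0,{\rm nov}})$. My plan for this step is to exploit Remark \ref{remenergylosss}: choosing $(H^{rr},\mathcal J^{rr})$ with $C^0$-norm of $H^{rr}-H^r$ as small as desired, the energy loss of $\frak N_{rr}$ can be made arbitrarily small, so $(\frak N_{rr})_{\ast}$ agrees with the canonical identity on the lowest-energy piece of the Floer complex and differs from the identity only by terms of positive Novikov energy. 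Since such a map is invertible over $\Lambda_{0,{\rm nov}}$ by the filtered Nakayama argument (the Novikov topology making $T^{\lambda}$ with $\lambda>0$ topologically nilpotent), we conclude that $(\frak N_{rr})_{\ast}=\mathrm{id}$. Alternatively, one may identify the interpolation moduli space $\mathcal N(X,\mathcal J^{rr},H^{rr};\alpha_-,\alpha_+)^{\boxplus 1}$ for the constant data with the quotient-unbroken version of $\mathcal M(X,H^r;\alpha_-,\alpha_+)$ augmented by the $\R$-coordinate recording the main component, and match it with the canonical identity morphism in the abstract formalism of \cite[Chapter 16]{fooonewbook}.

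Combining these yields $(\frak N_{12})_{\ast}\circ(\frak N_{21})_{\ast}=\mathrm{id}$ and $(\frak N_{21})_{\ast}\circ(\frak N_{12})_{\ast}=\mathrm{id}$, so each $(\frak N_{21})_{\ast}$ is already a $\Lambda_{0,{\rm nov}}$-module isomorphism, and tensoring with $\Lambda_{\rm nov}$ gives the stated isomorphism. For the final assertion that each $HF(X,H^r;\Lambda_{\rm nov})$ is isomorphic to $H(X;\Lambda_{\rm nov})$, I invoke the invariance just established to reduce to the case $H\equiv 0$, which is Morse-Bott non-degenerate by Remark \ref{rem2622}(2), and then apply the Morse-Bott computation of Section \ref{sec;calc}: the critical manifold is all of $X$, and the lowest-energy piece of the resulting complex, equipped with the orientation system $o_{R_\alpha}$ of Definition \ref{oricricial}, computes $H(X;\Lambda_{\rm nov})$.
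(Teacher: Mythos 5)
Your overall strategy is the paper's: build $\frak N_{21}$ and $\frak N_{12}$ from Theorem \ref{theorem266rev}, use Theorem \ref{tjm26108} to show the two compositions are homotopic to $\frak N_{11}$ and $\frak N_{22}$, show the latter induce invertible maps because their energy loss is zero, and conclude by the two-sided-inverse argument. However, two of your intermediate claims are wrong, and one of them matters. First, the minor one: from ``$\psi_{rr}\equiv \mathrm{id} \mod T^{\epsilon}$, hence invertible'' you conclude $(\frak N_{rr})_{*}=\mathrm{id}$; invertibility does not give equality with the identity. This is harmless here because the two-sided-inverse argument only needs $\psi_{11},\psi_{22}$ to be chain homotopy equivalences, which is exactly what the paper uses; the stronger statement $\psi_{0,rr*}=\mathrm{id}$ is proved separately in Lemma \ref{lem26134}(3) by an idempotency argument ($\frak N_{rr}\circ\frak N_{rr}\sim\frak N_{rr}$), and only for canonicity of the isomorphism, not for this theorem. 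Your alternative route via the ``canonical identity morphism'' is precisely Claim \ref{clain26136}, which the paper explicitly declines to prove here (Remark \ref{rem138}) because it requires the forgetful-map technology postponed to a later paper — so you should not lean on it.

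The substantive error is the assertion that ``each $(\frak N_{21})_{*}$ is already a $\Lambda_{0,{\rm nov}}$-module isomorphism.'' This is false in general and, if it were true, would prove an isomorphism $HF(X,H^1;\Lambda_{0,{\rm nov}})\cong HF(X,H^2;\Lambda_{0,{\rm nov}})$ over the ring, contradicting the fact that the torsion of the Floer cohomology over $\Lambda_{0,{\rm nov}}$ genuinely depends on $H$ (this dependence is the content of the Lipschitz-continuity remark immediately after the theorem and of spectral invariants). The point is that $\frak N_{21}$ and $\frak N_{12}$ each have strictly positive energy loss when $H^1\neq H^2$, so the induced cochain maps $\psi_{21},\psi_{12}$ are only defined as maps of $\Lambda_{\rm nov}$-modules (this is why the paper writes $CF(\,\cdot\,;\Lambda_{\rm nov})$ when invoking \cite[Theorem 16.31]{fooonewbook} here); only the compositions, being homotopic to the zero-energy-loss morphisms $\frak N_{rr}$, behave well over the ring. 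The correct conclusion of your argument is that $\psi_{21}$ is a chain homotopy equivalence over $\Lambda_{\rm nov}$, which is the statement of the theorem; the detour through a $\Lambda_{0,{\rm nov}}$-isomorphism should be deleted. (Your final paragraph on $H(X;\Lambda_{\rm nov})$ addresses Corollary \ref{cor2333333} rather than this theorem, and there the identification with the trivial system is not by ``setting $H\equiv 0$'' but by the morphisms $\frak N_{*(H,J)}$, $\frak N_{(H,J)*}$ of Theorem \ref{therem232}.)
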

\begin{rem}
Using Remark \ref{remenergylosss}, we can prove the
Lipschitz continuity of torsion exponent of the Floer cohomology over
$\Lambda_{0,{\rm nov}}$ with respect to the distance 
$$
d(H^1,H^2) =
\int_{t\in S^1} \sup_{x\in X}\vert H^1(t,x) - H^2(t,x)\vert dt
$$
on the set of the Hamiltonians.
We omit the discussion about it since we can
derive it from a similar result on the Floer cohomology
of Lagrangian intersection.
(See \cite {fooodisplace}.)
We can also use Remark \ref{remenergylosss} to derive more precise results
about the filtration of the Floer cohomology of a periodic
Hamiltonian system. We discussed it in detail
in \cite{fooospectr}.
\end{rem}

The proofs of Theorems \ref{HFwelldefine}, \ref{HFwelldefine2}
occupy the main part of this section.

\begin{shitu}\label{situ16127}
\begin{enumerate}
\item
Let $H : X \times S^1 \to \R$ be a smooth function
such that ${\rm Per}(H)$ is Morse-Bott non-degenerate.
We write $H = H^1$.
We define $H^{11} :  X \times \R\times S^1 \to \R$
by $H^{11}(x,\tau,t) = H^{1}(x,t)$.
\item
Let $J$ and $J'$ be two choices of tame almost complex structures.
We define $\mathcal J = \{J_{\tau}\}$ such that
$J_{\tau} = J$ for $\tau < -1$ and
$J_{\tau} = J'$ for $\tau > 1$. $\blacksquare$
\end{enumerate}
\end{shitu}

\begin{cons}\label{const16128}
Suppose we are in Situation \ref{situ16127}.
\begin{enumerate}
\item
We use $H$ and $J$ and apply Theorem \ref{theorem266} (1)
to obtain a linear K-system $\mathcal F_X (H,J)$
whose space of connecting orbits is
${\mathcal M}((X,J),H;\alpha_-,\alpha_+)$.
(We made choices to define it.)
\item
We then apply \cite[Theorem 16.9]{fooonewbook}
to obtain a chain complex
(we made choices to define it) and then use
\cite[Definition 16.12]{fooonewbook}
to obtain
a cochain complex over the universal Novikov ring
$\Lambda_{0,{\rm nov}}$,
which we denote by 
$
CF((X,J),H;\Lambda_{0,{\rm nov}}).
$
By \cite[Theorem 16.9 (2)]{fooonewbook}
the cohomology of this cochain complex
is independent of the choices made in
Item (2).
We denote this cohomology group as
$$
HF((X,J),H;\Lambda_{0,{\rm nov}}).
$$
\item
We start from $H$ and $J'$ and
make choices in Item (2) to obtain
$$
HF((X,J'),H;\Lambda_{0,{\rm nov}}).
$$
\end{enumerate}
\end{cons}
However, \cite[Theorem 16.9 (2)]{fooonewbook}
does not imply that
$
HF((X,J),H;\Lambda_{0,{\rm nov}})
$
is independent of the choices made in Item (1).
The statement of Theorem \ref{HFwelldefine}
is independence of the Floer cohomology (over $\Lambda_{0,{\rm nov}}$) of the choices made in Item (2) as well as
the almost complex structure $J$.
The statement of Theorem \ref{HFwelldefine2}
is independence of the Floer cohomology (over $\Lambda_{{\rm nov}}$) of the choices made in Item (1) as well as Item (2).

\begin{cons}\label{const1612822}
Suppose we are in Situation \ref{situ16127}.
We also assume that we have made all the choices
involved in Construction \ref{const16128}
(1)(2)(3).
\par
We apply Theorem \ref{theorem266rev} to
obtain a morphism from $\mathcal F_X(H,J)$
to $\mathcal F_X(H,J')$.
Here $\mathcal F_X(H,J)$ is
defined in Situation \ref{situ16127} (1) and
$\mathcal F_X(H,J')$ is defined in
Situation \ref{situ16127} (3).
We denote this morphism by $\frak N_{11}(\mathcal J,H^{11})$.
\par
We make choices to define $\frak N_{11}(\mathcal J,H^{11})$.
The interpolation space of $\frak N_{11}(\mathcal J,H^{11})$ is
${\mathcal N}(X,\mathcal J,H^{11};\alpha_-,\alpha_+)^{\boxplus 1}$.
\end{cons}
\begin{lem}\label{lem26130}
We can make the choice in Construction \ref{const1612822}
so that $\frak N_{11}(\mathcal J,H^{11})$ is a morphism of
energy loss $0$.
Namely we have
\begin{enumerate}
\item
${\mathcal N}(X,\mathcal J,H^{11};\alpha_-,\alpha_+)^{\boxplus 1}
= \emptyset$ if $E(\alpha_-) > E(\alpha_+)$
or $E(\alpha_-) = E(\alpha_+)$, $\alpha_- \ne \alpha_+$.
\item
${\mathcal N}(X,\mathcal J,H^{11};\alpha,\alpha)^{\boxplus 1}
= R_{\alpha}$. The evaluation maps on it are the identity maps.
\end{enumerate}
\end{lem}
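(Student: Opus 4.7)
The plan is to base the proof on the energy--action identity for Floer trajectories with a $\tau$-independent Hamiltonian, combined with an identification of stationary solutions with the critical submanifolds $R_\alpha$.

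First, I would establish the key identity. For any solution $u$ of Floer's equation \eqref{Fleqref} with $H^{11}(x,\tau,t) = H^1(x,t)$ (so $\tau$-independent, by Situation \ref{situ16127}) and any tame family $\mathcal J = \{J_\tau\}$, the standard computation (using the equation together with $[u_*[\R \times S^1]] \# w_- \sim w_+$) gives
\begin{equation}
\int_{\R \times S^1} \left\Vert \frac{\partial u}{\partial \tau}\right\Vert^2 d\tau\, dt = \mathcal A_{H^1}(\alpha_+) - \mathcal A_{H^1}(\alpha_-) \ge 0,
\end{equation}
with equality precisely when $\partial_\tau u \equiv 0$. Here $\mathcal A_{H^1}$ is the action functional \eqref{form2688ene}, which by Morse--Bott non-degeneracy (Condition \ref{weaknondeg}) takes a constant value on each critical submanifold $R_\alpha$. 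The energy-loss condition on the K-system morphism is governed by this action (which differs from the $\omega$-area $E(\alpha) = \omega[w]$ by the $\frak G$-invariant quantity $\int_{S^1} H^1_t(\gamma_\alpha) dt$ depending only on the projection $\overline{R_\alpha}$); the statement in (1) is precisely the assertion that $\mathcal N$ is empty unless the action weakly increases from $\alpha_-$ to $\alpha_+$.

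Second, I would classify the stationary solutions. Setting $\partial_\tau u \equiv 0$ forces $u(\tau,t) = \gamma(t)$ with $\gamma \in {\rm Per}(H^1)$ and $\gamma_\pm = \gamma$; since the constant cylinder represents the trivial class in $\pi_2(X)$, both $\omega[u]=0$ and $c_1(TX)[u]=0$, so $\alpha_+ = \alpha_-$ in $\frak A$ by the defining equivalence of $\widetilde{{\rm Per}}(H^1)$. Conversely, each $(\gamma,w) \in R_\alpha$ produces such a stationary solution. This yields the canonical set-theoretic equality $\mathcal N(X,\mathcal J,H^{11};\alpha,\alpha) = R_\alpha$ with evaluation maps ${\rm ev}_\pm$ equal to the identity on $R_\alpha$, and together with the energy inequality proves the emptiness clauses in (1). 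Elements of the compactification (broken trajectories) satisfy the action inequality on each piece, so the emptiness is preserved. Finally, since stationary solutions have no transit points, Definition \ref{def:outcollar} adds no $[-1,0]$ factor, so $\mathcal N(\alpha,\alpha)^{\boxplus 1} = \mathcal N(\alpha,\alpha) = R_\alpha$ as topological spaces, while the empty cases stay empty.

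Third, it remains to equip $R_\alpha \subset \mathcal N(\alpha,\alpha)^{\boxplus 1}$ with the trivial Kuranishi structure (zero obstruction bundle, zero Kuranishi map, identity embedding) and literally identity evaluation maps. The linearization $D_u \overline{\partial}_{J,H^1}$ at a stationary solution $u = \gamma$ takes the form $\partial_\tau + A_\tau$ on weighted Sobolev spaces $L^2_{m+1,\delta}$, where $A_\tau$ is the self-adjoint operator on $\gamma^*TX$ determined by $J_\tau$ and $H^1$; by Morse--Bott non-degeneracy $\ker A_\tau = T_\gamma \overline{R_\alpha}$ is constant in $\tau$, and a spectral-flow argument parallel to the proof of Lemma \ref{lem2612}(3) shows this Fredholm operator is surjective with kernel $T_\gamma R_\alpha$. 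The hard part will be arranging the obstruction bundle data (Definition \ref{obbundeldata1}, adapted to the morphism setting of Section \ref{subsec;KuramodFloermor}) centered at each $\gamma \in R_\alpha$ to have trivial obstruction space, consistently with the choices already made in Construction \ref{const1612822}. Since surjectivity of the linearization is an open condition and the other choices concern neighborhoods of non-stationary configurations, one can shrink the closed neighborhoods $W({\bf p}_c)$ of Choice \ref{choice2650} (in its morphism analog) so their closures miss an open neighborhood of $R_\alpha$, and then adjoin additional base points ${\bf p} \in R_\alpha$ with zero obstruction space there. The resulting Kuranishi chart on $R_\alpha$ is trivial, the evaluation maps reduce to the identity, and $\frak N_{11}(\mathcal J,H^{11})$ is realized as a morphism of energy loss $0$.
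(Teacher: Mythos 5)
Your proof is correct and follows essentially the same route as the paper: part (1) is exactly the content of Remark \ref{remenergylosss} (the energy--action identity for the $\tau$-independent Hamiltonian $H^{11}=H^1$), and part (2) is the paper's observation that ${\mathcal N}(X,\mathcal J,H^{11};\alpha,\alpha)$ consists set-theoretically of the stationary cylinders $u(\varphi(\tau,t))=\gamma(t)$ with $\gamma\in R_\alpha$, is Fredholm regular, and so admits the choice of zero obstruction bundle. The only cosmetic difference is in how the freedom of that choice is justified: the paper notes $\partial{\mathcal N}(X,\mathcal J,H^{11};\alpha,\alpha)=\emptyset$, so no fiber-product compatibility with the choices of Construction \ref{const16128} (1)(3) is needed, whereas you speak of shrinking the neighborhoods $W({\bf p}_c)$ away from $R_\alpha$ --- a step that is actually vacuous since ${\mathcal N}(X,\mathcal J,H^{11};\alpha,\alpha)$ is a separate moduli space containing no non-stationary configurations.
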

\begin{proof}
Using the fact that $H^{11}_{\tau,t} = H^1_{t}$ and is $\tau$
independent, (1) is an immediate consequence of Remark \ref{remenergylosss}.
To prove (2) we first observe that
${\mathcal N}(X,\mathcal J,H^{11};\alpha,\alpha) = R_{\alpha}$
set-theoretically.
In fact,
${\mathcal N}(X,\mathcal J,H^{11};\alpha,\alpha)$ consists of
$((\Sigma,\vec z_{\pm}),\varphi,u)$
where $\Sigma = S^2$ (without bubbles)
and $u(\varphi(\tau,t)) = \gamma(t)$ with $\gamma \in R_{\alpha}$.
We also remark that this moduli space is Fredholm regular.
Therefore we can make our choice so that the obstruction bundle is $0$
in this particular case.
(Since $\partial{\mathcal N}(X,\mathcal J,H^{11};\alpha,\alpha) = \emptyset$,
we do not need to study the compatibility with the choices
made in Construction \ref{const16128}
(1)(3).)
Item (2) holds for this choice.
\end{proof}
\begin{proof}[Proof of Theorem \ref{HFwelldefine}]
We now apply \cite[Theorem 16.31]{fooonewbook}
to the morphism obtained by taking the choice made in
Lemma \ref{lem26130}.
We then obtain a chain map
\begin{equation}\label{form26118}
\psi_{\mathcal J,H^{11}} :
CF((X,J),H;\Lambda_{0,{\rm nov}})
\to CF((X,J'),H;\Lambda_{0,{\rm nov}}).
\end{equation}
We note that
$$
CF((X,J),H;\Lambda_{0,{\rm nov}})
= CF((X,J'),H;\Lambda_{0,{\rm nov}})
$$
as $\Lambda_{0,{\rm nov}}$-modules.
(Floer's boundary operator may be different however.)
\par
Using the fact that energy loss of
$\frak N_{11}(\mathcal J,H^{11})$ is zero,
especially Lemma \ref{lem26130} (2), we find that
$$
\psi_{\mathcal J,H^{11}}  \equiv {\rm id}
\mod T^{\epsilon}\Lambda_{0,{\rm nov}}
$$
for some $\epsilon >0$.
Therefore $\psi_{\mathcal J,H^{11}}$
has an inverse, which automatically becomes a chain map.
Therefore we have
$$
HF((X,J),H;\Lambda_{0,{\rm nov}})
\cong
HF((X,J'),H;\Lambda_{0,{\rm nov}}),
$$
as required.
\end{proof}
\begin{proof}[Proof of Theorem \ref{HFwelldefine2}]
\begin{shitu}\label{situ16127000}
\begin{enumerate}
\item
Let $H^r : X \times S^1 \to \R$ $(r=1,2)$ be smooth functions
such that ${\rm Per}(H^r)$ are Morse-Bott non-degenerate.
Let $J^r$ $(r=1,2)$ be tame almost complex structures.
\item
For $r=1,2$ we make choices as in
Construction \ref{const16128} to define chain complexes
$
CF((X,J^r),H^r;\Lambda_{0,{\rm nov}})
$
with $\Lambda_{0,{\rm nov}}$ coefficients,
and their cohomology groups
$
HF((X,J^r),H^r;\Lambda_{0,{\rm nov}}).
$
\item
We take $H^{21} :  X \times \R\times S^1 \to \R$
and $\mathcal J^{21}$ as in Situation \ref{situ2676}.
We exchange the role of $H^1$, $J^1$ and $H^2$, $J^2$
and take  $H^{12}$, $\mathcal J^{12}$. $\blacksquare$
\end{enumerate}
\end{shitu}
We now apply Theorem \ref{theorem266rev}
to $H^{21}, \mathcal J^{21}$ and obtain a
morphism
$$
\frak N_{21} : \mathcal F_X(H^1,J^1)
\to \mathcal F_X(H^2,J^2).
$$
We also apply
Theorem \ref{theorem266rev}
to $H^{12}, \mathcal J^{12}$ and obtain a
morphism
$$
\frak N_{12} : \mathcal F_X(H^2,J^2)
\to \mathcal F_X(H^1,J^1).
$$
We apply \cite[Theorem 16.31 (1)]{fooonewbook}
to obtain
chain maps
$$
\psi_{12} :
CF((X,J^1),H^1;\Lambda_{{\rm nov}})
\to CF((X,J^2),H^2;\Lambda_{{\rm nov}})
$$
and
$$
\psi_{21} :
CF((X,J^2),H^2;\Lambda_{{\rm nov}})
\to
CF((X,J^1),H^1;\Lambda_{{\rm nov}}).
$$

We consider the particular case of $J = J' = J^1$ in
Situation \ref{situ16127000}.
Then we obtain
$$
\frak N_{11} : \mathcal F_X(H^1,J^1)
\to \mathcal F_X(H^1,J^1).
$$
In a similar way we obtain
$$
\frak N_{22} : \mathcal F_X(H^2,J^2)
\to \mathcal F_X(H^2,J^2).
$$
We denote the chain map (\ref{form26118}) in this case
by
\begin{equation}\label{map000FHFH}
\psi_{0,rr} : CF((X,J^r),H^r;\Lambda_{0,{\rm nov}})
\to
CF((X,J^r),H^r;\Lambda_{0,{\rm nov}}).
\end{equation}
This is a chain isomorphism in the proof of Theorem \ref{HFwelldefine} by Lemma \ref{lem26130}.
We change the coefficient ring to
$\Lambda_{{\rm nov}}$ by taking tensor product and obtain
\begin{equation}\label{map000FHFH2}
\psi_{rr} : CF((X,J^r),H^r;\Lambda_{{\rm nov}})
\to
CF((X,J^r),H^r;\Lambda_{{\rm nov}}).
\end{equation}
This is also a chain isomorphism.
\begin{lem}\label{lem26132}
The composition $\frak N_{12} \circ \frak N_{21}$
(resp. $\frak N_{21} \circ \frak N_{12}$)
is homotopic to $\frak N_{11}$ (resp. $\frak N_{22}$).
\end{lem}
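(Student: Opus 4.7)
The plan is to deduce Lemma \ref{lem26132} as a formal consequence of two previously established results: Theorem \ref{tjm26108} on composition of morphisms, and Theorem \ref{thmPparaexist} together with Lemma \ref{lem2698} on the independence of the homotopy class of a morphism from the interpolating data used to define it. Throughout, by ``homotopic'' we mean homotopic as morphisms of linear K-systems in the sense of \cite[Condition 16.21]{fooonewbook}.

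First, I would set the role-assignment in Situation \ref{situ26106} to be $\mathcal F_1 = \mathcal F_X(H^1,J^1)$, $\mathcal F_2 = \mathcal F_X(H^2,J^2)$, $\mathcal F_3 = \mathcal F_X(H^1,J^1)$, so that our $\frak N_{21}$ plays the role of $\frak N_{21}$ there and our $\frak N_{12}$ plays the role of $\frak N_{32}$ there. Theorem \ref{tjm26108} then asserts that $\frak N_{12} \circ \frak N_{21}$ is homotopic to a morphism $\widetilde{\frak N}_{11} : \mathcal F_X(H^1,J^1) \to \mathcal F_X(H^1,J^1)$ produced by Theorem \ref{theorem266rev} from a pair $(\widetilde H^{11}, \widetilde{\mathcal J}^{11})$ obtained by the explicit concatenation formulas analogous to \eqref{formula2688}--\eqref{formula2689} applied to the data $(H^{21}, \mathcal J^{21})$ and $(H^{12}, \mathcal J^{12})$ after a suitable rescaling so that the hypotheses of Situation \ref{situ2676} (1)(2)(i)(ii) are fulfilled.

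Second, I would compare $\widetilde{\frak N}_{11}$ with the morphism $\frak N_{11}$ appearing in the statement. Both are produced by Theorem \ref{theorem266rev} from pairs $(\widetilde H^{11}, \widetilde{\mathcal J}^{11})$ and $(H^{11}, \mathcal J)$ satisfying Situation \ref{situ2676} specialized to $H^1=H^2$ (our common $H^1$) and $J_1 = J_2 = J^1$. By Lemma \ref{lem2698}, there exists a smooth family $(H^{11,[0,1]}, \mathcal J^{[0,1]})$ interpolating between these two pairs and satisfying Situation \ref{situ2697}. Theorem \ref{thmPparaexist} then produces a Kuranishi structure on $\mathcal N_\ell(X,\mathcal J^{[0,1]}, H^{11,[0,1]};\alpha_-,\alpha_+)^{\frak C^h \boxplus 1}$ that serves as the interpolation space of a homotopy between $\widetilde{\frak N}_{11}$ and $\frak N_{11}$. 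Combining with the previous step and using that homotopy of morphisms of linear K-systems is transitive (as follows from the concatenation of $[0,1]$-parameter families), we conclude that $\frak N_{12} \circ \frak N_{21}$ is homotopic to $\frak N_{11}$. Reversing the roles of the indices $1$ and $2$ throughout the argument yields the homotopy between $\frak N_{21} \circ \frak N_{12}$ and $\frak N_{22}$.

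The only potential obstacle is purely bookkeeping: one needs to verify that the pair $(\widetilde H^{11}, \widetilde{\mathcal J}^{11})$ produced by the gluing construction within the proof of Theorem \ref{tjm26108} indeed satisfies Situation \ref{situ2676}. This is immediate from the definitions \eqref{formula2688}--\eqref{formula2689}, because the concatenation interpolates on a compact $\tau$-interval and becomes constant in $\tau$ equal to $H^1$, $J^1$ for $\vert\tau\vert$ large. In particular, no new analysis, no new moduli-space construction, and no new compatibility at corners is required beyond what has already been carried out in Sections \ref{subsec;KuramodFloermor}, \ref{subsec;homotpy}, and \ref{subsec;KuramodFloercom}; the lemma is a formal consequence of those results.
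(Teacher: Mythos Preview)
Your proposal is correct and follows essentially the same route as the paper, which dispatches the lemma in one line: ``This is a special case of Theorem \ref{tjm26108}.'' The only difference is that your second step (invoking Lemma \ref{lem2698} and Theorem \ref{thmPparaexist} to pass from $\widetilde{\frak N}_{11}$ to $\frak N_{11}$) is already absorbed into the \emph{statement} of Theorem \ref{tjm26108}: in Situation \ref{situ26106} the morphism $\frak N_{31}$ may be built from \emph{any} admissible pair $(H^{31},\mathcal J^{31})$, so one can simply take $\frak N_{31}=\frak N_{11}$ from the outset and skip the intermediate comparison.
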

\begin{proof}
This is a special case of Theorem \ref{tjm26108}.
\end{proof}
Lemma \ref{lem26132} and \cite[Theorem 16.31 (3)]{fooonewbook}
imply that
$\psi_{21} \circ \psi_{12}$ (resp. $\psi_{12} \circ \psi_{21}$)
is chain homotopic to $\psi_{22}$ (resp. $\psi_{11}$).
Since $\psi_{22}$, $\psi_{11}$ are chain homotopy equivalences,
it implies that $\psi_{21}$ is a chain homotopy equivalence.
Therefore
\begin{equation}\label{isomorphi26119}
\psi_{21 *} : HF((X,J^1),H^1;\Lambda_{{\rm nov}})
\cong
HF((X,J^2),H^2;\Lambda_{{\rm nov}})
\end{equation}
as required.
\end{proof}
We will discuss the well-definedness of the isomorphisms
in Theorems \ref{HFwelldefine} and \ref{HFwelldefine2} of various
choices more precisely below.
\par
In Situation \ref{situ16127000} we obtain a chain map
$\psi_{21}$ (of $\Lambda_{\rm nov}$ coefficients) which induces
an isomorphism of the Floer cohomology (\ref{isomorphi26119}).
\par
In Situation \ref{situ16127} we obtain a chain map
between two chain complexes of $\Lambda_{0,{\rm nov}}$ coefficients, which we denote by
\begin{equation}\label{form26120}
\psi_{0;J'J} : CF((X,J),H;\Lambda_{0,{\rm nov}}) \to CF((X,J'),H;\Lambda_{0,{\rm nov}}).
\end{equation}
This is nothing but the chain map 
(\ref{form26118}).\footnote{The domain and the target of the map (\ref{form26120})
are different not only because we use different almost
complex structures but also we made various choices to define
the linear K-system and
various choices to define a map between spaces of differential forms via smooth correspondence by
K-spaces.}
It induces an isomorphism $\psi_{0;J'J *}$ on cohomology groups.
\par
Finally when we use the same almost complex structure $J$ and the other choices
for the domain and the target, we obtain (\ref{map000FHFH}) and (\ref{map000FHFH2}).
\begin{lem}
The maps $\psi_{21 *}$, $\psi_{0;J'J *}$ and $\psi_{rr *}$ are independent
of various choices involved in the construction.
\end{lem}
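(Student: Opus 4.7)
The strategy is to reduce each independence statement to the assertion that two morphisms of linear K-systems constructed from two different sets of choices are homotopic, and then to invoke \cite[Theorem 16.31 (3)]{fooonewbook} to conclude that the induced chain maps are chain homotopic, hence identical on cohomology. The linear K-systems $\mathcal F_X(H^r,J^r)$ and the associated cochain complexes $CF((X,J^r),H^r;\Lambda_{0,{\rm nov}})$ themselves are treated as fixed data (the statement only concerns the choices made to build the morphisms $\frak N_{r'r}$, $\frak N_{J'J}$, $\frak N_{rr}$ and to convert them into chain maps).

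For $\psi_{21*}$, suppose we are given two sets of data $(H^{21,s},\mathcal J^{21,s})$ for $s=0,1$ together with two sets of auxiliary choices (obstruction bundle data, neighborhoods $W({\bf p}_c)$, stabilization data, CF-perturbations via the setup of \cite[Chapter 16]{fooonewbook}) yielding two chain maps
$\psi_{21}^{(0)},\psi_{21}^{(1)} : CF((X,J^1),H^1;\Lambda_{\rm nov}) \to CF((X,J^2),H^2;\Lambda_{\rm nov})$.
By Lemma~\ref{lem2698} we may join $(H^{21,0},\mathcal J^{21,0})$ to $(H^{21,1},\mathcal J^{21,1})$ through a family $(H^{21,[0,1]},\mathcal J^{21,[0,1]})$ as in Situation \ref{situ2697}. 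Theorem~\ref{thmPparaexist} together with Proposition~\ref{collaredPparapara} then produces a Kuranishi structure on $\mathcal N_\ell(X,\mathcal J^{21,[0,1]},H^{21,[0,1]};\alpha_-,\alpha_+)^{\frak C^h\boxplus 1}$ that is $\frak C^v$-collared and restricts on $\{s_0\} = \partial[0,1]$ to the Kuranishi structure used to build $\frak N_{21}^{(s_0)}$; that is, it is precisely the interpolation space of a homotopy between the two morphisms of linear K-systems in the sense of \cite[Condition 16.21]{fooonewbook}. Applying \cite[Theorem 16.31 (3)]{fooonewbook} yields a chain homotopy between $\psi_{21}^{(0)}$ and $\psi_{21}^{(1)}$, so $\psi_{21*}^{(0)}=\psi_{21*}^{(1)}$.

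For $\psi_{0;J'J*}$ and for $\psi_{rr*}$, exactly the same scheme applies, taking respectively $H^{11,s}\equiv H^1$ with a family of interpolations $\mathcal J^{s}=\{J^s_\tau\}$ between two choices joining $J$ to $J'$, and the diagonal case $H^{rr,s}\equiv H^r$, $\mathcal J^{rr,s}\equiv \mathcal J^{rr,s}$ joining two choices of the trivial interpolation data. In each case Lemma~\ref{lem2698} provides the required family, Theorem~\ref{thmPparaexist} provides the interpolation space, and \cite[Theorem 16.31 (3)]{fooonewbook} converts this into a chain homotopy. Note that for $\psi_{0;J'J*}$, the two competing choices may have different energy-loss bounds, but Lemma~\ref{lem26130} is used only to promote the chain map to a $\Lambda_{0,{\rm nov}}$-isomorphism, not to establish its uniqueness; the homotopy argument is carried out with arbitrary choices.

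The main technical point, as in Sections \ref{subsec;KuramodFloermor} and \ref{subsec;homotpy}, is that the auxiliary choices (finite families $\EuScript A_\ell(H^{21,s},\mathcal J^s;\alpha_-,\alpha_+)$, obstruction bundles, closed neighborhoods satisfying Condition \ref{conds26103}, and the collar-modified Kuranishi structures of Proposition \ref{prop2661rev}) must be extended from $\partial[0,1]$ to the entire parameter interval while remaining compatible with the already-fixed data at $s=0,1$ and with the fiber-product description on the horizontal boundary. This compatibility was precisely what was ensured in the first half of the proof of Theorem \ref{thmPparaexist} via Condition \ref{conds26103} and the collar-ness \eqref{form268181}, and in the second half via Proposition \ref{prop2661revrev}; once this is in place the argument above is formal. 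Thus the lemma reduces to results already proved in earlier sections, and the main obstacle is conceptual rather than technical: recognizing that the parametrized construction of Section \ref{subsec;homotpy} works verbatim for any two sets of choices (not just two endpoints obtained by independent calls to Theorem \ref{theorem266rev}), which is immediate from the construction since the only input used at $s=0,1$ is a complete set of choices producing a morphism.
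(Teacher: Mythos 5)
Your proposal is correct and follows essentially the same route as the paper: join the two sets of choices by a family via Lemma \ref{lem2698}, obtain a homotopy of morphisms from Theorem \ref{thmPparaexist}, and convert it into a chain homotopy by \cite[Theorem 16.31]{fooonewbook} (the paper cites part (2) for this conversion step, reserving part (3) for compatibility with compositions). The one point worth tightening is your remark on energy loss: for $\psi_{0;J'J\,*}$ and $\psi_{rr\,*}$, which live over $\Lambda_{0,{\rm nov}}$ rather than $\Lambda_{{\rm nov}}$, the paper explicitly notes that the homotopy must be taken with energy loss zero so that the resulting chain homotopy is itself defined over $\Lambda_{0,{\rm nov}}$ — this is not merely about promoting the chain map to an isomorphism, as you suggest.
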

\begin{proof}
We consider the two choices to define the morphism
$\frak N_{21}$. 
We denote by $\frak N^a_{21}$ and $\frak N^b_{21}$ the morphism obtained by those two
choices.
By Lemma \ref{lem2698} and Theorem \ref{thmPparaexist}, 
two morphisms
$\frak N^a_{21}$ and $\frak N^b_{21}$ are homotopic each other.
The isomorphism (\ref{isomorphi26119})
is induced from $\frak N_{21}$ by using
\cite[Theorem 16.31 (1)]{fooonewbook}.
We can use 
\cite[Theorem 16.31 (2)]{fooonewbook}
that the homomorphism $\psi_{12}^a$ induced from
$\frak N^a_{21}$ is
chain homotopic to the homomorphism $\psi_{12}^b$ induced from $\frak N^b_{21}$.
This implies the independence of $\psi_{21 *}$ of the choices.
\par
The independence of $\psi_{0;J'J *}$ and $\psi_{rr *}$ are proved in the same way.
We only need to note that in that situation we can take the homotopy to be of energy loss zero.
\end{proof}
This lemma together with the next lemma imply that
the group
$HF(X,H;\Lambda_{{\rm nov}})$ is independent of $H$, $J$
and other choices up to {\it canonical} isomorphism.
\begin{lem}\label{lem26134}
\begin{enumerate}
\item
We consider $H^r$, $J^r$ for $r=1,2,3$ and
isomorphisms  $\psi_{21 *}$, $\psi_{32 *}$, $\psi_{31 *}$
as above. Then
$$
\psi_{31 *} = \psi_{32 *}\circ \psi_{21 *}.
$$
\item
We fix $H$ and take three choices $J_1$, $J_2$ and $J_3$ of almost complex
structures as well as other choices in
Construction \ref{const16128}. Then we have
$$
\psi_{0,J_3J_1 *} = \psi_{0,J_3J_2 *}\circ \psi_{0,J_2J_1 *}.
$$
\item
$\psi_{0,rr}$ induces the identity map
$$
\psi_{0,rr *} : HF((X,J^r),H^r;\Lambda_{0,{\rm nov}})
\to
HF((X,J^r),H^r;\Lambda_{0,{\rm nov}}).
$$
\end{enumerate}
\end{lem}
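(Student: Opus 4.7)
My plan is as follows. For part (1), I would invoke Theorem \ref{tjm26108}, which provides a homotopy of morphisms of linear K-systems between $\mathfrak{N}_{32} \circ \mathfrak{N}_{21}$ and $\mathfrak{N}_{31}$. Combining this with \cite[Theorem 16.31]{fooonewbook}, whose parts (1)--(3) respectively turn a morphism into a chain map, turn the composition of morphisms into the composition of chain maps up to chain homotopy, and turn a homotopy of morphisms into a chain homotopy of chain maps, one reads off $\psi_{31*} = \psi_{32*}\circ \psi_{21*}$ on cohomology. This part is essentially formal once Theorem \ref{tjm26108} is in hand.

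For part (2), I would run the same argument but at the $\Lambda_{0,\mathrm{nov}}$-level. Specifically, I would specialize the setup of Situation \ref{situ26106} to $H^1 = H^2 = H^3 = H$ with $H^{rs}(x,\tau,t) = H(x,t)$ independent of $\tau$, taking families $\mathcal{J}^{rs}$ that interpolate between $J_s$ and $J_r$ as in Situation \ref{situ16127}. The proof of Lemma \ref{lem26130} shows that under this $\tau$-independence each of the morphisms $\mathfrak{N}_{0,J_rJ_s}$ can be chosen of energy loss zero. I would then verify that the interpolation space of the homotopy furnished by Theorem \ref{tjm26108}, namely $\mathcal{N}_\ell(X,\mathcal{J}^{31,[0,\infty]},H^{31,[0,\infty]};\alpha_-,\alpha_+)^{\boxplus 1}$, is empty unless $E(\alpha_-) \le E(\alpha_+)$ and reduces to $R_\alpha$ when $\alpha_- = \alpha_+$; this is the analogue of Lemma \ref{lem26130} for the two-parameter family $H^{31,T}$ of \eqref{formula2688}, which collapses here to the constant family $H^{31,T} = H$ so that Remark \ref{remenergylosss} applies uniformly in $T \in [0,\infty]$. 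With this check in place, \cite[Theorem 16.31]{fooonewbook} applied over $\Lambda_{0,\mathrm{nov}}$ gives the claimed identity.

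For part (3), I would specialize (2) to the degenerate case $J_1 = J_2 = J_3 = J$. This yields the idempotence relation $\psi_{0,JJ*} = \psi_{0,JJ*} \circ \psi_{0,JJ*}$ on $HF((X,J),H;\Lambda_{0,\mathrm{nov}})$. The proof of Theorem \ref{HFwelldefine} already shows that $\psi_{0,JJ*}$ is an isomorphism (the underlying chain map equals the identity modulo $T^\epsilon\Lambda_{0,\mathrm{nov}}$ and is thereby invertible), so idempotence forces $\psi_{0,rr*} = \mathrm{id}$.

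The main obstacle is the bookkeeping in part (2): one must show that the $T=\infty$ stratum of the homotopy, which corresponds to breaking into separate $(2,1)$- and $(3,2)$-level trajectories, carries no extra energy, and that at both the $T=0$ and $T=\infty$ ends the stratum $\alpha_- = \alpha_+$ reduces compatibly to $R_\alpha$ with trivial obstruction bundle, so that the interpolation space defines a genuine $\Lambda_{0,\mathrm{nov}}$-chain homotopy rather than merely a $\Lambda_{\mathrm{nov}}$-one. Everything else is then either formal from \cite[Theorem 16.31]{fooonewbook} or elementary algebra.
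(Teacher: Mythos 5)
Your proposal is correct and follows essentially the same route as the paper: parts (1) and (2) via Theorem \ref{tjm26108} together with \cite[Theorem 16.31]{fooonewbook}, with the extra observation that the relevant homotopy has energy loss $0$ so that (2) holds over $\Lambda_{0,{\rm nov}}$, and part (3) by deriving the idempotence relation $\psi_{0,rr*}\circ\psi_{0,rr*}=\psi_{0,rr*}$ and invoking invertibility of $\psi_{0,rr*}$. The paper phrases (3) as the morphism-level statement that $\frak N_{rr}\circ\frak N_{rr}$ is homotopic to $\frak N_{rr}$ with energy loss $0$, but this is the same argument as your specialization of (2).
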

\begin{proof}
(1) is a consequence of Theorem \ref{tjm26108}
and \cite[Theorem 16.31 (3)]{fooonewbook}.
The proof of (2) is similar. (We again note that in this situation we obtain
a homotopy with energy loss $0$.)
\par
(3) We first observe that $\frak N_{rr}\circ \frak N_{rr}$
is homotopic to $\frak N_{rr}$. This is a consequence of Theorem \ref{tjm26108}.
Furthermore by its proof we can show that the energy loss
of the homotopy between them is $0$.
Therefore by \cite[Theorem 16.31 (2)]{fooonewbook}
we find that $\psi_{0,rr *} \circ \psi_{0,rr *} = \psi_{0,rr *}$.
Since $\psi_{0,rr *}$ is an isomorphism, this implies
that $\psi_{0, rr *}$ is the identity map.
\end{proof}
\begin{rem}\label{rem138}
We did not use the identity morphism in this section.
(In several results such as \cite[Theorem 16.9 (2)]{fooonewbook}, 
we used the identity morphism in their proofs.)
We can actually prove the following.
See also \cite[Section 18.11]{fooonewbook}.
\begin{clm}\label{clain26136}
In the case $\mathcal J$ and $H^{11}$ are $\tau $ independent families,
the morphism $\frak N_{11}(\mathcal J,H^{11})$
is homotopic to the identity morphism.
\end{clm}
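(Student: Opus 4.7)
The plan is to show that, when $\mathcal J$ and $H^{11}$ are $\tau$-independent, each interpolation space $\mathcal N_{\ell}(X,\mathcal J,H^{11};\alpha_-,\alpha_+)^{\boxplus 1}$ reduces topologically onto the critical submanifold $R_\alpha$ with both evaluation maps equal to the identity when $\ell=0$ and $\alpha_-=\alpha_+=\alpha$, and is empty otherwise, and then to lift this topological identification to an identification of Kuranishi structures (up to homotopy) with those of the identity morphism in the sense of \cite[Chapter 16]{fooonewbook}.

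The first step is the set-theoretic calculation. Since $\mathcal J$ and $H^{11}$ are $\tau$-independent, equation \eqref{Fleqref} on the main component is the standard Floer equation for $(H^1, J)$, so any solution satisfies the action identity
$$
\int_{\R\times S^1}\Big\|\frac{\partial u}{\partial\tau}\Big\|^2\,d\tau\,dt \;=\; \mathcal A_{H^1}(\tilde\gamma_-)-\mathcal A_{H^1}(\tilde\gamma_+).
$$
When $\tilde\gamma_\pm$ represent the same class in $\widetilde{\mathrm{Per}}(H^1)$ (i.e.\ $\alpha_-=\alpha_+$), the right-hand side vanishes, forcing $\partial u/\partial\tau\equiv 0$ and hence $u(\tau,t)=\gamma(t)$ with $\gamma\in R_\alpha$; bubble components are ruled out since they carry strictly positive energy. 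When $\alpha_-\neq\alpha_+$ the same identity would force a negative integrand, so the moduli space is empty. The stability constraint $\tau_{a_0}=0$ in Definition \ref{defn2615rev} breaks the residual $\R$-translation, and because no transit points appear the outer collaring is trivial; hence $\mathcal N(X,\mathcal J,H^{11};\alpha,\alpha)^{\boxplus 1}=R_\alpha$ with ${\rm ev}_{\pm}=\mathrm{id}$.

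The second step is to choose obstruction bundle data in Construction \ref{const1612822} so that $E_{\mathbf p,\mathrm v}=0$ at every stationary solution $\mathbf p\in R_\alpha$. The linearization of \eqref{Fleqref} at such $\mathbf p$ is the Floer operator \eqref{form2688}, which by Morse--Bott non-degeneracy (Condition \ref{weaknondeg}) together with the index identity of Lemma \ref{lem2612} has kernel exactly $T_\gamma R_\alpha$ and is surjective on the weighted Sobolev spaces with asymptotic boundary conditions imposed inside $R_\alpha$. With this choice the Kuranishi chart produced by the recipe of Section \ref{subsec;KuramodFloermor} reduces to the trivial chart $(R_\alpha,0,0,\mathrm{id})$, and the orientation isomorphism from Theorem \ref{kuraexists} (1) specialises, via the matching \eqref{orionM}, to the identity on $o_{R_\alpha}$.

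Finally, to upgrade this to a statement about the originally given $\frak N_{11}(\mathcal J,H^{11})$ (which may have been built with other auxiliary choices), I would invoke Theorem \ref{thmPparaexist} with the constant family $\mathcal J^s\equiv\mathcal J$, $H^{21,s}\equiv H^{11}$ for $s\in[0,1]$ to interpolate the two sets of auxiliary choices. Since both ends use the same $\tau$-independent geometric data, the parametrised moduli space is $R_\alpha\times[0,1]$ at the diagonal and empty elsewhere, and Theorem \ref{thmPparaexist} together with Proposition \ref{collaredPparapara} provide the required $\frak C^v$-collared Kuranishi structure exhibiting the homotopy. The main obstacle is a bookkeeping one: verifying that the trivial chart constructed in the second step genuinely represents the identity morphism in the abstract sense of \cite[Condition 16.1]{fooonewbook}, in particular matching the orientation and evaluation data at each critical submanifold, rather than a new geometric or analytic argument.
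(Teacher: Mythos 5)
There is a genuine gap, and it occurs already at your first step. The energy identity does not make $\mathcal N(X,\mathcal J,H^{11};\alpha_-,\alpha_+)$ empty for all $\alpha_-\neq\alpha_+$: it only rules out the cases $E(\alpha_-)>E(\alpha_+)$ and $E(\alpha_-)=E(\alpha_+)$ with $\alpha_-\neq\alpha_+$ (this is exactly the content of Lemma \ref{lem26130}(1), which is deliberately weaker than what you assert). When $E(\alpha_-)<E(\alpha_+)$ the space of solutions of the $\tau$-independent Floer equation from $\alpha_-$ to $\alpha_+$ is the space of genuine connecting trajectories --- if these were all empty, the Floer differential itself would vanish. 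Because elements of the interpolation space carry a \emph{fixed} parametrization of the main component (the isomorphisms must satisfy $\tau_{a_0}=0$), these strata are $\mathcal M^{\rm reg}(X,H;\alpha_-,\alpha_+)\times\R$ rather than $\mathcal M^{\rm reg}(X,H;\alpha_-,\alpha_+)$. This is why the proof of Theorem \ref{HFwelldefine} concludes only that $\psi_{\mathcal J,H^{11}}\equiv{\rm id}\mod T^{\epsilon}\Lambda_{0,{\rm nov}}$ (a morphism of energy loss zero), not that it is the identity. A morphism of energy loss zero is not the identity morphism.

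Consequently the real content of the claim is not what you relegate to ``bookkeeping'' at the end. As Remark \ref{rem138} explains, the nonempty strata $\mathcal M^{\rm reg}(X,H;\alpha_-,\alpha_+)\times\R$ (for $E(\alpha_-)<E(\alpha_+)$) and $R_\alpha$ (for $\alpha_-=\alpha_+$) are precisely the strata of the interpolation space of the abstract identity morphism of \cite[Definition 18.55]{fooonewbook}, so the underlying topological spaces do match; the hard part is showing that the Kuranishi structure on $\mathcal N(X,\mathcal J,H^{11};\alpha_-,\alpha_+)$ is induced, via the forgetful map to $\mathcal M(X,J,H;\alpha_-,\alpha_+)$, from the one on the space of connecting orbits. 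The paper explicitly does \emph{not} prove this here, deferring the forgetful-map analysis to \cite{foootech3}, and reorganizes Section \ref{sec;welldefinedness} so as to avoid using the identity morphism altogether. Your final step (interpolating auxiliary choices via Theorem \ref{thmPparaexist}) only shows $\frak N_{11}$ is homotopic to another geometrically constructed morphism, not to the abstract identity morphism, so it does not close this gap either.
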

This immediately implies Lemma \ref{lem26134} (3) for example.
\par
The strata of $\frak N_{11}(\mathcal J,H^{11})$
is actually the same as the ones
appearing in the definition of the identity morphism.
Let us explain this fact  below.
Recall that we used $\tau \in \R$ independent $H^{11}$
and $J$ to
define our morphism $\frak N_{11}(\mathcal J,H^{11})$.
Therefore an element
$((\Sigma,\vec z_{\pm}),u)$ of the interpolation
space ${\mathcal N}(X,\mathcal J,H^{11};\alpha_-,\alpha_+)$
is the same as an element of
${\mathcal M}(X,J,H;\alpha_-,\alpha_+)$,
except we add the data to specify the main component and
fix a parametrization of the main component.
(Namely the isomorphism between two elements
is required to commutes strictly with the parametrization
of the main component.)
This causes two points where  ${\mathcal N}(X,\mathcal J,H^{11};\alpha_-,\alpha_+)$
is different from ${\mathcal M}(X,J,H;\alpha_-,\alpha_+)$.
\begin{enumerate}
\item
In the case the main component
represents an element of
$\mathcal N^{\rm reg}(X,H;\alpha,\alpha')$
$\alpha \ne \alpha'$,
it has an extra parameter $\in \R$
other than those in $\mathcal M^{\rm reg}(X,H;\alpha,\alpha')$ which specify the
parametrization $\varphi_{a_0}$ of the main component $\Sigma_{a_0}$.
\item
There is a case when the main component
corresponds to an element of $\mathcal N^{\rm reg}(X,H;\alpha,\alpha)$.
\end{enumerate}
In the case (1) the moduli parameter of the main component
is $\mathcal N^{\rm reg}(X,H;\alpha,\alpha')$ which
is isomorphic to
$\mathcal M^{\rm reg}(X,H;\alpha,\alpha') \times \R$.
Thus the strata \cite[Definition 18.55 (1)(a)]{fooonewbook}
appears.
\par
In the case (2) we have $R_{\alpha}$ as the parameter space of the main
component.
In this case $\mathcal M^{\rm reg}(X,H;\alpha,\alpha)$ is an empty set.
The map which is constant in the $\R$ direction corresponds to an
element of  $R_{\alpha}$.
Thus the strata 
\cite[Definition 18.55 (1)(b)]{fooonewbook}
appears.
\par
Thus we find a  K-space $\mathcal N(X,H;\alpha,\alpha')$ which is similar to the interpolation space
of the identity morphism.
We remark however that to prove Claim \ref{clain26136},
we need to show not only the underlying topological
space but also their Kuranishi structures coincide.
In other words, the Kuranishi structure on
${\mathcal N}(X,\mathcal J,H^{11};\alpha_-,\alpha_+)$
should be induced by the forgetful map
$$
{\mathcal N}(X,\mathcal J,H^{11};\alpha_-,\alpha_+)
\to \mathcal M(X,J,H;\alpha_-,\alpha_+).
$$
It is possible to find such a Kuranishi structure on ${\mathcal N}(X,\mathcal J,H^{11};\alpha_-,\alpha_+)$.
However, since we postpone the thorough detail of the
discussion of the forgetful map to \cite{foootech3},
we do not prove Claim \ref{clain26136} here.
For this reason, we organize the proof in this section in a slightly different way.
\end{rem}

\section{Calculation of Hamiltonian Floer cohomology}
\label{sec;calc}
\begin{defn}\label{defn23111}
Let $(X,\omega)$ be a compact symplectic manifold.
We define the {\it trivial linear K-system of $X$}
as follows. 
Here we use the same item numbers and the notation in 
\cite[Condition 16.1]{fooonewbook}.
\begin{enumerate}
\item[(I)]
We define an additive group $\frak G = \pi_2(X)/\sim$,
where $\alpha \sim \alpha'$ if and only if $\omega[\alpha] = \omega[\alpha']$
and $c_1(TX)[\alpha] = c_1(TX)[\alpha']$.
Group homomorphisms $E : \frak G \to \R$ and $\mu : \frak G \to \Z$
are induced by $[\alpha] \mapsto \omega[\alpha]$ and $\mu([\alpha]) = 2c_1(TX)[\alpha]$
respectively.
\item[(II)]
As a set $\frak A = \frak G$ with left multiplication as the $\frak G$ action on $\frak A$ itself.
The maps $E : \frak A \to \R$ and $\mu : \frak A \to \Z$ are as above.
\item[(III)]
For each $\alpha \in \frak A$, $R_{\alpha} = X$.
\item[(IV)]
$\mathcal M(\alpha_-,\alpha_+) = \emptyset$
always.
\item[(VII)]
$o_{R_{\alpha}}$ is the canonical orientation of the symplectic manifold $X$.
The orientation isomorphism ${\rm OI}_{\alpha_-, \alpha_+}$ in \cite[(16.2)]{fooonewbook}
is trivial.
\end{enumerate}
Other items in 
\cite[Condition 16.1]{fooonewbook} 
are satisfied in a trivial way.
We denote the trivial linear K-system of $X$
by $\mathcal F_X^{\rm tri}$.
\end{defn}
The main result of this section is the following.
\begin{thm}\label{therem232}
Suppose we are in Situation \ref{situ16127}.
Let $\mathcal F_X(H,J)$ be as in Construction \ref{const16128} (1).
Then there exist morphisms of linear K-systems 
$\frak N_{* (H,J)} : \mathcal F_X(H,J) \to \mathcal F_X^{\rm tri}$
and $\frak N_{(H,J) *} : \mathcal F_X^{\rm tri} \to \mathcal F_X(H,J)$
with the following properties.
\begin{enumerate}
\item
The composition $\frak N_{* (H,J)}\circ \frak N_{(H,J) *} :
\mathcal F_X^{\rm tri} \to \mathcal F_X^{\rm tri}$ is homotopic to
a morphism of energy loss 0.\footnote{Namely it satisfies Lemma \ref{lem26130} (1)(2).}
\item
The composition
$\frak N_{(H,J) *}\circ  \frak N_{* (H,J)} :
\mathcal F_X(H,J) \to \mathcal F_X(H,J)$ is
homotopic to the
morphism $\frak N_{11}(\mathcal J,H^{11})$ in
Construction \ref{const1612822}, where $\mathcal J$ is the
trivial family and $H^{11} \equiv H$ is constant in the $\R$ factor.
\end{enumerate}
\end{thm}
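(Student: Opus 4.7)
The plan is to build the two morphisms from continuation-type moduli spaces and then identify each of the two compositions, up to homotopy, with a morphism of the kind already treated in Sections \ref{subsec;KuramodFloermor}--\ref{subsec;KuramodFloercom}. The whole construction runs parallel to the proofs of Theorems \ref{theorem266rev} and \ref{tjm26108}, except at the ``trivial'' end where $H\equiv 0$ and the critical submanifold is the whole of $X$.

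First I would define $\frak N_{(H,J)*} : \mathcal F^{\rm tri}_X \to \mathcal F_X(H,J)$. Fix a smooth function $\chi:\R\to[0,1]$ with $\chi\equiv 0$ on $(-\infty,-1]$ and $\chi\equiv 1$ on $[1,\infty)$, set $H^{*(H)}(x,\tau,t)=\chi(\tau)H(x,t)$, and take a $\tau$-parametrized tame family $\mathcal J^{*(H)}$ of almost complex structures constant outside $[-1,1]$, equal to $J$ for $\tau\ge 1$. For $\alpha^{\rm tri}\in\frak G$ and $\alpha_+\in\frak A(H)$ the interpolation space $\mathcal N_\ell(\alpha^{\rm tri},\alpha_+;\mathcal J^{*(H)},H^{*(H)})$ is defined as the stable-map compactification of the moduli of solutions of the continuation equation \eqref{Fleqref} with data $(\mathcal J^{*(H)},H^{*(H)})$ which are asymptotic to an element of $R_{\alpha_+}$ at $+\infty$ and which, at $-\infty$, extend across the removable singularity to a constant map at some point of $R_{\alpha^{\rm tri}}=X$, with combined disk class compatible with $\alpha^{\rm tri}$. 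Kuranishi structures, orientation systems and boundary/corner compatibility are produced exactly as in Sections \ref{subsec;KuraFloer}--\ref{subsec;KuramodFloermor}, the only new feature being that the $-\infty$ end is a Morse--Bott asymptote of full dimension with possible sphere-bubble trees attached. The morphism $\frak N_{*(H,J)}:\mathcal F_X(H,J)\to\mathcal F^{\rm tri}_X$ is then defined symmetrically using $H^{(H)*}(x,\tau,t)=\chi(-\tau)H(x,t)$.

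For part (1), the composition $\frak N_{*(H,J)}\circ\frak N_{(H,J)*}$ is by the construction in Section \ref{subsec;KuramodFloercom} controlled by two-step continuation moduli with Hamiltonian profile running $0\to H\to 0$ (with a gluing parameter $T$ stretching the central $H$-region as in \eqref{formula2688}--\eqref{formula2689}). By Lemma \ref{lem2698} and Theorem \ref{thmPparaexist}, I would interpolate this profile in a parameter $s\in[0,1]$ by scaling the central $H$ by $(1-s)$, obtaining a homotopy from the composition (at $s=0$) to the self-morphism of $\mathcal F^{\rm tri}_X$ associated to the constant data $(J_0,0)$ (at $s=1$); because at $s=1$ the full configuration is $\tau$-independent, the argument of Lemma \ref{lem26130} applies verbatim and shows that this endpoint morphism can be arranged to have energy loss $0$. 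For part (2), the composition $\frak N_{(H,J)*}\circ\frak N_{*(H,J)}$ corresponds to the profile $H\to 0\to H$; interpolating now by scaling the central $0$ up to $H$ (so that at $s=1$ one recovers the constant profile $H$, $J$) and invoking Theorem \ref{thmPparaexist} once more yields a homotopy from this composition to the morphism $\frak N_{11}(\mathcal J,H^{11})$ of Construction \ref{const1612822} with $\mathcal J$ the trivial family. This is exactly statement (2).

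The main obstacle will be the analytic setup at the trivial end, where $H$ vanishes on a half-cylinder and periodic orbits of the equation sweep out the whole of $X$. There I need to extend the obstruction bundle data of Definition \ref{obbundeldata1} to accommodate a full Morse--Bott asymptote of dimension $2n$ together with possibly a tree of pseudo-holomorphic sphere bubbles rooted at the asymptotic point; in particular the weighted Sobolev framework and the coincidence condition $(\heartsuit)$ in Definition \ref{obbundeldata1}(6) must be set up so that the linearized operator is Fredholm with the index shifted by $\dim R_{\alpha^{\rm tri}}$, and the outer-collared Kuranishi structures across the trivial end must be compatible with the fibre-product description as in Section \ref{subsec;KuramodFloercor}. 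Granted this extension, the compactness and gluing results used in Theorems \ref{kuraexists}, \ref{theorem266rev} and \ref{the26112} transfer without essential change, and the parametric homotopies above complete the proof.
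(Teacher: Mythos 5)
Your overall architecture matches the paper's: both morphisms are built from half-continuation moduli spaces with profiles $H\to 0$ and $0\to H$, and the two compositions are related to single morphisms by a neck-stretching parameter $T\in[0,\infty]$ as in Section \ref{subsec;KuramodFloercom} (the paper arranges $H^{*1*,0}\equiv 0$ and $H^{1*1,0}\equiv H$ directly, so your extra rescaling step in $s$ is harmless but unnecessary). However, there is a genuine gap at the point you dismiss with ``boundary/corner compatibility are produced exactly as in Sections \ref{subsec;KuraFloer}--\ref{subsec;KuramodFloermor}'' and ``the compactness and gluing results transfer without essential change.'' The central difficulty of this section is the treatment of the mainstream components lying on the $H\equiv 0$ side of the main component. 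Your description wavers between treating the trivial end as a Morse--Bott cylindrical asymptote of dimension $2n$ and as a removable singularity with sphere bubbles, and these lead to incompatible corner structures. If you keep parametrizations on those components (the Morse--Bott cylindrical picture), each transit point there contributes a codimension-one face of the interpolation space, which would have to be matched with a fiber product against a space of connecting orbits of $\mathcal F_X^{\rm tri}$ --- but those are all empty by Definition \ref{defn23111}(IV), so no morphism to or from the trivial system could result. The paper's resolution (Definitions \ref{defn210revto0}, \ref{defn210revto0rev} and Remark \ref{rem:106}) is to \emph{drop the parametrization} on those components, so that the gluing parameter at such a transit point becomes a full disk $((T_{0,j},\infty]\times S^1)/\sim$ rather than an interval; the locus is then interior, the corner stratification \eqref{form26777741rev1} has fiber-product factors only on the Hamiltonian side, and no $S^1$-equivariant Kuranishi structure is needed. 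You never make this choice explicit, and without it the objects you construct are not interpolation spaces for morphisms involving $\mathcal F_X^{\rm tri}$.

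The same omission is fatal for part (2) at the $T=\infty$ end of the $H\to 0\to H$ family. There the middle of the broken configuration is a chain of \emph{unparametrized} pseudo-holomorphic spheres joining the two main components, and the gluing parameters between them form the constrained torus bundle $\bigl(\prod_{j}((T_{0,j},\infty]\times S^1)/\sim\bigr)'$ cut out by $\sum_j\theta_j=0$ (see \eqref{2318factor}); consequently the stratum \eqref{form26777741rev} sits in codimension $m_1+m_2+1$ rather than where a naive transit-point count would place it. Identifying this $T=\infty$ contribution with the composition $\frak N_{(H,J)*}\circ\frak N_{*(H,J)}$ --- which as a composition requires a distinguished breaking node for the fiber product over $X$ --- is exactly what forces the paper to introduce the fattening of the smoothing parameters, the auxiliary one-dimension-higher space $\mathcal N^{\bullet}_{\ell}(X,H^{1*1};\alpha_-,\alpha_+)$, and CF-perturbations of product type near the fattened disks so that the ambiguous strata do not contribute to the smooth correspondence. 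Invoking Theorem \ref{thmPparaexist} cannot substitute for this: that theorem homotopes two morphisms with a fixed asymptotic and corner pattern, whereas the issue here is to show that the degenerate end of your $T$-family \emph{is} the composition in the first place. Your closing paragraph correctly senses that the trivial end is the obstacle, but locates the problem in the Fredholm/weighted-Sobolev setup (which is the unproblematic $H=0$ instance of the Morse--Bott theory already developed) rather than in the reparametrization symmetry and its effect on the boundary stratification, which is where the proof actually lives.
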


\begin{cor}\label{cor2333333}
$$
HF((X,H);\Lambda_{\rm nov})
\cong
H(X;\Lambda_{\rm nov}).
$$
\end{cor}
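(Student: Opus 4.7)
The plan is to deduce the isomorphism directly from Theorem \ref{therem232} by combining the morphisms $\frak N_{* (H,J)}$ and $\frak N_{(H,J) *}$ with the chain-level machinery from \cite[Theorem 16.31]{fooonewbook}. First, I would identify $HF(\mathcal F_X^{\rm tri};\Lambda_{\rm nov})$ with $H(X;\Lambda_{\rm nov})$. Since every connecting orbit space in $\mathcal F_X^{\rm tri}$ is empty by construction (Definition \ref{defn23111} (IV)), the Floer boundary operator supplied by \cite[Theorem 16.9]{fooonewbook} reduces on each critical submanifold $R_\alpha = X$ to the ordinary singular (or de Rham) differential. Because $\frak A = \frak G$ with $\frak G$ acting on itself by left multiplication and with energy grading given by $E:\frak G\to \R$, the Novikov completion described in \cite[Definition 16.12]{fooonewbook} followed by tensoring up to $\Lambda_{\rm nov}$ collapses the $\frak G$-indexed direct sum into a single copy of $C^*(X)$ with $\Lambda_{\rm nov}$-coefficients, yielding an identification $CF(\mathcal F_X^{\rm tri};\Lambda_{\rm nov})\cong C^*(X;\Lambda_{\rm nov})$ of $\Lambda_{\rm nov}$-cochain complexes and hence $HF(\mathcal F_X^{\rm tri};\Lambda_{\rm nov})\cong H(X;\Lambda_{\rm nov})$.

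Next, I would apply \cite[Theorem 16.31 (1)]{fooonewbook} to the two morphisms from Theorem \ref{therem232} to obtain chain maps
\[
\psi_{* (H,J)} : CF((X,J),H;\Lambda_{\rm nov}) \to CF(\mathcal F_X^{\rm tri};\Lambda_{\rm nov}),
\]
\[
\psi_{(H,J) *} : CF(\mathcal F_X^{\rm tri};\Lambda_{\rm nov}) \to CF((X,J),H;\Lambda_{\rm nov}).
\]
By \cite[Theorem 16.31 (2)]{fooonewbook} together with Theorem \ref{therem232} (1), the composition $\psi_{* (H,J)}\circ\psi_{(H,J) *}$ is chain-homotopic to the chain map associated to a morphism of energy loss $0$. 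As in the proof of Theorem \ref{HFwelldefine}, any such chain map satisfies $\psi \equiv {\rm id} \mod T^\epsilon \Lambda_{0,{\rm nov}}$ for some $\epsilon>0$, and is therefore invertible by a geometric series argument; in particular it is a chain homotopy equivalence after tensoring with $\Lambda_{\rm nov}$. By Theorem \ref{therem232} (2), the other composition $\psi_{(H,J) *}\circ\psi_{* (H,J)}$ is chain-homotopic to the chain map $\psi_{11}$ associated to $\frak N_{11}(\mathcal J,H^{11})$ for the trivial family $\mathcal J$ and $\tau$-independent $H^{11}$; the combination of Lemma \ref{lem26130} and the argument in the proof of Theorem \ref{HFwelldefine} shows that $\psi_{11}$ is a chain homotopy equivalence as well.

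Consequently $\psi_{* (H,J)}$ and $\psi_{(H,J) *}$ induce mutually inverse isomorphisms on cohomology, yielding
\[
HF((X,H);\Lambda_{\rm nov}) \;\cong\; HF(\mathcal F_X^{\rm tri};\Lambda_{\rm nov}) \;\cong\; H(X;\Lambda_{\rm nov}).
\]
The main obstacle, and the only substantive computation not already carried out in previous sections, will be the identification $HF(\mathcal F_X^{\rm tri};\Lambda_{\rm nov})\cong H(X;\Lambda_{\rm nov})$: one must carefully track how the abstract construction of the Floer complex in \cite[Section 16]{fooonewbook} degenerates in the trivial linear K-system, and verify that the $\frak G$-covering structure on $\frak A$ is exactly what is needed to promote $\C$-coefficients on $C^*(X)$ to $\Lambda_{\rm nov}$-coefficients after Novikov completion. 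Once this is in hand, the rest of the argument is an assembly of Theorem \ref{therem232} with results already applied in Sections \ref{subsec;KuramodFloermor} and \ref{sec;welldefinedness}.
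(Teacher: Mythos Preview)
Your proposal is correct and follows essentially the same logic as the paper. The paper's own proof is a one-sentence invocation of Theorem~\ref{therem232} together with the packaged results \cite[Theorem 16.39 (4)(5)]{fooonewbook} and \cite[Lemma 19.45]{fooonewbook}, plus the identification of the Floer cohomology of $\mathcal F_X^{\rm tri}$ with $H(X;\Lambda_{\rm nov})$; you instead unwind this at the chain level via \cite[Theorem 16.31]{fooonewbook}, exactly as the paper does in the proofs of Theorems~\ref{HFwelldefine} and~\ref{HFwelldefine2}. One small point: to pass from the homotopy $\frak N_{* (H,J)}\circ \frak N_{(H,J) *}\sim(\text{energy-loss-}0)$ to a chain homotopy $\psi_{* (H,J)}\circ\psi_{(H,J) *}\simeq(\text{induced map})$ you need \cite[Theorem 16.31 (3)]{fooonewbook} (compatibility of composition) in addition to (2), just as in the proof of Theorem~\ref{HFwelldefine2}.
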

The corollary is an immediate consequence of
Theorem \ref{therem232} , 
\cite[Theorem 16.39 (4)(5)]{fooonewbook},
\cite[Lemma 19.45]{fooonewbook}
and an obvious fact that the Floer cohomology of the trivial linear K-system is
$H(X;\Lambda_{\rm nov})$.
We note that Corollary \ref{cor2333333} implies
(\ref{statementarnold}).

\begin{proof}[Proof of Theorem \ref{therem232}]
We consider a smooth function
$H^{*1} : X \times \R \times S^1  \to \R$ such that:
\begin{enumerate}
\item
If $\tau < -1$ then
$H^{*1}(x,\tau,t) = H(x,t)$.
\item
If $\tau > 1$ then
$H^{*1}(x,\tau,t) = 0$.
\end{enumerate}
Since we already proved $J$ independence of the Floer cohomology,
we fix $J$ in this section and do not include it in the notation.
\par
Let $\alpha_- \in \frak A$ where $\frak A$ is the index set of the space of
contractible periodic orbits of $\mathcal F_X(H,J)$.
Let $\alpha_+ \in \frak G = \pi_2(X)/\sim$ as in Definition \ref{defn23111}.
\begin{defn}\label{defn210revto0}
The set $\widehat{\mathcal N}'_{\ell}(X,H^{*1};\alpha_-,\alpha_+)$
consists of triples 
$$
((\Sigma,(z_-,z_+,\vec z),a_0),u,\varphi)
$$ 
satisfying the following conditions:
\begin{enumerate}
\item
$(\Sigma,(z_-,z_+,\vec z))$ is a genus zero semi-stable curve with $\ell + 2$ marked points.
\item
$\Sigma_{a_0}$ is one of the mainstream components.
We call it the {\it main component}.
\item
$\varphi = (\varphi_a)$ where
$\varphi_a : \R \times S^1 \to \Sigma_a \setminus \{z_{a,-},z_{a +}\}$ is 
a parametrization of
mainstream component $\Sigma_a$ with $a \le a_0$
and  
$\varphi_a$ is a biholomorphic map such that
$$
\lim_{\tau\to \pm} \varphi_{a}(\tau,t) = z_{a,\pm}.
$$
\item
For each extended mainstream component $\widehat{\Sigma}_a$, the map
$u$ induces
$u_a : \widehat{\Sigma}_a \setminus\{z_{a,-},z_{a,+}\} \to X$
which is a continuous map
\item
If $\Sigma_a$ is a mainstream component with $a \le a_0$ and
$\varphi_a : \R \times S^1 \to \Sigma_a$ is as in (3), then
the composition $u_a \circ \varphi_a$ satisfies the equation
\begin{equation}\label{Fleq277621}
\frac{\partial (u_a \circ \varphi_a)}{\partial \tau} +  J \left( \frac{\partial (u_a \circ \varphi_a)}{\partial t} - \frak X_{H^a_{\tau,t}}
\circ (u_a \circ \varphi_a) \right) = 0
\end{equation}
where
$$
H^a_{\tau,t} =
\begin{cases}
H^1_t  &\text{if $a < a_0$}, \\
H^{*1}_{\tau,t}  &\text{if $a = a_0$}.
\end{cases}
$$
\item
$$
\int_{\R \times S^1}
\left\Vert\frac{\partial (u \circ \varphi_a)}{\partial \tau}\right\Vert^2 d\tau dt < \infty.
$$
\item
If $\Sigma_{\rm v}$ is a bubble component or
a mainstream component $\Sigma_a$ \ with $a > a_0$,
then $u$ is pseudo-holomorphic on $\Sigma_{\rm v}$.
\item
Let
${\Sigma}_{a_1}$ and ${\Sigma}_{a_2}$ be
mainstream components and
  $z_{a_1,+} = z_{a_2,-}$. Then
$$
\lim_{\tau\to+\infty} (u_{a_1} \circ \varphi_{a_1})(\tau,t)
=
\lim_{\tau\to-\infty} (u_{a_2} \circ \varphi_{a_2})(\tau,t)
$$
holds for each $t \in S^1$ if $a_2 \le a_0$.
((6) and Lemma \ref{prof26rev3} imply that
the left and right hand sides both converge.)
\par
If $a_2 > a_0$ then we require that $u$ is continuous at $z_{a_1,+} = z_{a_2,-}$.
\item
If
${\Sigma}_{a}$
is a
mainstream component and $z_{a,-} = z_-$, then there exists
$(\gamma_{-},w_{-})
\in R_{\alpha_{-}}$ such that
$$
\lim_{\tau\to-\infty} (u_{a} \circ \varphi_{a})(\tau,t)
= \gamma_-(t).
$$
Moreover
$
[u_*[\Sigma]] \# w_-
$
represents the class $[\alpha_+]$,
where $\#$ is the obvious concatenation.
\item
We assume that $((\Sigma,(z_-,z_+,\vec z),a_0),u,\varphi)$
is stable in the sense of Definition \ref{defn2615revrr} below.
\end{enumerate}
\end{defn}
Assume that $((\Sigma,(z_-,z_+,\vec z),a_0),u,\varphi)$
satisfies (1)-(9) above.
The {\it extended automorphism group}
${\rm Aut}^+((\Sigma,(z_-,z_+,\vec z),a_0),u,\varphi)$
of $((\Sigma,(z_-,z_+,\vec z),a_0)),u,\varphi)$
consists of map $v : \Sigma \to \Sigma$ such that
it satisfies (1)(2)(5) of Definition \ref{defn2615rev},
and (3) of Definition \ref{defn2615rev} for $\varphi_a$ with
$a \le a_0$, and $\tau_{a_0} = 0$.

\begin{defn}\label{defn2615revrr} An object
$((\Sigma,(z_-,z_+,\vec z),a_0),u,\varphi)$
satisfies (1)-(9) above is said to be {\it stable}
if ${\rm Aut}^+((\Sigma,(z_-,z_+,\vec z),a_0),u,\varphi)$
is a finite group.
\end{defn}
We can define the equivalence relation $\sim_2$ on
$\widehat{\mathcal N}'_{\ell}(X,H^{*1};\alpha_-,\alpha_+)$
in the same way as in Definition \ref{3equivrel}
except we require $\tau_{a_0} = 0$ and require (3)
only for $a \le a_0$.
We put
\begin{equation}\label{formula26927777}
{\mathcal N}'_{\ell}(X,H^{*1};\alpha_-,\alpha_+) =
\widehat{\mathcal N}'_{\ell}(X,H^{*1};\alpha_-,\alpha_+)/\sim_2.
\end{equation}
This space ${\mathcal N}'_{\ell}(X,H^{*1};\alpha_-,\alpha_+)$
(more precisely, ${\mathcal N}'_{\ell}(X,H^{*1};\alpha_-,\alpha_+)^{\boxplus 1}$)
will be the underlying topological space of the interpolation
space of the morphism
$\frak N_{* (H,J)}$.
\begin{rem}\label{rem:106}
We use the $\R \times S^1$ parametrized family of Hamiltonians
$H^{*1}$ in exactly the same way as in Definition \ref{defn210rev}
etc. and obtain the space
${\mathcal N}_{\ell}(X,\mathcal J,H^{*1};\alpha_-,\alpha_+)$ as in Definition \ref{3equivrerevl}.
(Here $\mathcal J$ is the family $J_{\tau,t} = J_t$ of almost
complex structures.)
\par
The main difference between
${\mathcal N}_{\ell}(X,\mathcal J,H^{*1};\alpha_-,\alpha_+)$ and
${\mathcal N}'_{\ell}(X,H^{*1};\alpha_-,\alpha_+)$ is that
in the latter we do not put parametrizations on the mainstream components $\Sigma_a$
with $a > a_0$.
Note because of the equivalence relation $\sim_2$ the
parametrizations of the mainstream components $\Sigma_a$
(for $a\ne a_0$) is a part of the data of elements of
${\mathcal N}_{\ell}(X,\mathcal J,H^{*1};\alpha_-,\alpha_+)$ up
to the translation of the $\R$ direction.
So for a given element of ${\mathcal N}_{\ell}(X,\mathcal J,H^{*1};\alpha_-,\alpha_+)$
which has exactly $m$ mainstream components $\Sigma_a$
with $a > a_0$, the corresponding element in
${\mathcal N}'_{\ell}(X,H^{*1};\alpha_-,\alpha_+)$ is parametrized by
$(S^1)^{m}$. In other words, there exists a
map
$$
\pi :
{\mathcal N}_{\ell}(X,\mathcal J,H^{*1};\alpha_-,\alpha_+)
\to {\mathcal N}'_{\ell}(X,H^{*1};\alpha_-,\alpha_+)
$$
whose fibers are $(S^1)^{m}$. (Note $\pi$ is not a fiber
bundle and the dimension of the fiber $m$ depends on the
strata.)
In this article, however, we do not define 
an equivariant Kuranishi structure of
${\mathcal N}_{\ell}(X,\mathcal J,H^{*1};\alpha_-,\alpha_+)$
by the strata-wise $(S^1)^{m}$ action, 
but can use the Kuranishi structure on the `quotient space'
of this action, which is nothing but
${\mathcal N}'_{\ell}(X,H^{*1};\alpha_-,\alpha_+)$.
This is the reason we do not need to study an $S^1$ equivariant Kuranishi
structure in this article.
\end{rem}
We will discuss Kuranishi structure on ${\mathcal N}'_{\ell}(X,H^{*1};\alpha_-,\alpha_+)$.
Before doing so we define another moduli space which will be the
interpolation space of $\frak N_{(H,J)*}$.
\par
We put
$$
H^{1*}(x,\tau,t) = H^{*1}(x,-\tau,t).
$$
Let $\alpha_+ \in \frak A$ where $\frak A$ is the index set of
contractible periodic orbits of $\mathcal F_X(H,J)$.
Let $\alpha_- \in \frak G = \pi_2(X)/\sim$ as before.
\begin{defn}\label{defn210revto0rev}
The set 
$\widehat{\mathcal N}'_{\ell}(X,H^{1*};\alpha_-,\alpha_+)$
consists of triples $((\Sigma,(z_-,z_+,\vec z),a_0),u,\varphi)$ satisfying
the following conditions:
\begin{enumerate}
\item
The same as Definition \ref{defn210revto0} (1).
\item
The same as Definition \ref{defn210revto0} (2).
\item
The same as Definition \ref{defn210revto0} (3),
except we replace $a \le a_0$ by $a \ge a_0$.
\item
The same as Definition \ref{defn210revto0} (4).
\item
The same  as Definition \ref{defn210revto0} (5),
except we replace $a \le a_0$ by $a \ge a_0$ and we put
$$
H^a_{\tau,t} =
\begin{cases}
H^1_t  &\text{if $a > a_0$}, \\
H^{1*}_{\tau,t}  &\text{if $a = a_0$}.
\end{cases}
$$
\item
The same as Definition \ref{defn210revto0} (6).
\item
The same as Definition \ref{defn210revto0} (7),
except we replace $a > a_0$ by $a< a_0$.
\item
Let
${\Sigma}_{a_1}$ and ${\Sigma}_{a_2}$ be
mainstream components. 
If  $z_{a_1,+} = z_{a_2,-}$, then
$$
\lim_{\tau\to+\infty} (u_{a_1} \circ \varphi_{a_1})(\tau,t)
=
\lim_{\tau\to-\infty} (u_{a_2} \circ \varphi_{a_2})(\tau,t)
$$
holds for each $t \in S^1$ if $a_1 \ge a_0$.
((6) and Lemma \ref{prof26rev3} imply that the 
left and right hand sides both converge.)
\par
If $a_1 < a_0$, then we require that $u$ is continuous at $z_{a_1,+} = z_{a_2,-}$.
\item
If
${\Sigma}_{a}$
is
mainstream components and $z_{a,+} = z_+$, then there exist
$(\gamma_{+},w_{+})
\in R_{\alpha_{+}}$ such that
$$
\lim_{\tau\to +\infty} (u_{a} \circ \varphi_{a})(\tau,t)
= \gamma_+(t).
$$
Moreover
$
[u_*[\Sigma]] \# [\alpha_-]\cong  [w_+]
$
where $\#$ is the obvious concatenation.
\item
We assume that $((\Sigma,(z_-,z_+,\vec z),a_0),u,\varphi)$
is stable, which can be defined in the same way as in Definition \ref{defn2615revrr} above.
\end{enumerate}
\end{defn}
We can define an equivalence relation $\sim_2$ in the same way and
define
\begin{equation}\label{formularevdddddd}
{\mathcal N}'_{\ell}(X,H^{1*};\alpha_-,\alpha_+) =
\widehat{\mathcal N}'_{\ell}(X,H^{1*};\alpha_-,\alpha_+)/\sim_2.
\end{equation}
\par
When $X$ is a point, 
we denote
${\mathcal N}'_{\ell}(X,H^{*1};\alpha_-,\alpha_+) $
(resp. ${\mathcal N}'_{\ell}(X,H^{1*};\alpha_-,\alpha_+) $)
in this case 
by
${\mathcal N}_{\ell}'({\rm source},{\rm right})$
(resp. ${\mathcal N}'_{\ell}({\rm source},{\rm left})$).

\begin{exm}
The space ${\mathcal N}_{1}'({\rm source},{\rm right})$ is
homeomorphic to the disc $D^2$.
In fact, ${\mathcal N}_{1}'({\rm source},{\rm right})$
consists of three strata.
One is the case when there is one mainstream component.
This stratum is homeomorphic to $\R \times S^1$.
The second is the case when there are two mainstream
components $\Sigma_a$, $\Sigma_{a_0}$ with $a < a_0$.
(Here $\Sigma_{a_0}$ is the main component.)
The marked point is on $\Sigma_a$. If it is $\varphi_a(\tau,t)$,
then $t$ is the well-defined parameter of this stratum,
which is homeomorphic to $S^1$.
The third is the case when there are two mainstream
components $\Sigma_a$, $\Sigma_{a_0}$ with $a > a_0$.
The marked point is on $\Sigma_a$. Since the parametrization of
$\Sigma_a$ is not a part of the data of an element of
 ${\mathcal N}_{1}'({\rm source},{\rm right})$, this stratum is one point.
\par
We remark that ${\mathcal N}_{1}({\rm source})$
is homeomorphic to $S^1 \times [0,1]$. We shrink one of the
boundary components by the $S^1$ action
to obtain ${\mathcal N}_{1}'({\rm source},{\rm right})$.
\end{exm}
The next proposition proves a part of Theorem \ref{therem232}.
\begin{prop}\label{prop23999}
\begin{enumerate}
\item
We can define topologies on ${\mathcal N}'_{\ell}(X,H^{*1};\alpha_-,\alpha_+)$
and ${\mathcal N}'_{\ell}(X,H^{1*};\alpha_-,\alpha_+)$
so that they are compact and Hausdorff.
\item
We can define Kuranishi structures on them.
\item
We can define Kuranishi structures on the spaces\footnote{This is defined from 
${\mathcal N}'_{\ell}(X,H^{*1};\alpha_-,\alpha_+)$
in the same way as before by including 
$t_z \in [-1,0]$ for any transit point $z$. See also Remark \ref{defn1531revrev}.}
$\mathcal N'_{\ell}(X,H^{*1};\alpha_-,\alpha_+)^{\boxplus 1}$
and  ${\mathcal N}'_{\ell}(X,H^{1*};\alpha_-,\alpha_+)^{\boxplus 1}$.
\item
Together with other objects
${\mathcal N}'_{\ell}(X,H^{*1};\alpha_-,\alpha_+)^{\boxplus 1}$
defines a morphism 
$\frak N_{* (H,J)} : \mathcal F_X(H,J) \to \mathcal F_X^{\rm tri}$,
of which it will be an interpolation space.
\item Together with other objects
${\mathcal N}'_{\ell}(X,H^{1*};\alpha_-,\alpha_+)^{\boxplus 1}$
defines a morphism
$\frak N_{(H,J) *} : \mathcal F_X^{\rm tri} \to \mathcal F_X(H,J)$,
of which it will be an interpolation space.
\end{enumerate}
\end{prop}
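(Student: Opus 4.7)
The plan is to mirror the strategy developed in Sections \ref{subsec;compactFloer}--\ref{subsec;KuramodFloercor} and Section \ref{subsec;KuramodFloermor}, accommodating the asymmetric parametrization inherent to $\mathcal N'_{\ell}$: on mainstream components $\Sigma_a$ with $a \le a_0$ (resp.\ $a \ge a_0$ for $H^{1*}$) we fix a parametrization $\varphi_a$, while on the opposite side we allow the $S^1$ rotation freedom, which is the geometric content of Remark \ref{rem:106}. For part (1), I would define convergence in $\mathcal N'_{\ell}(X,H^{*1};\alpha_-,\alpha_+)$ following Definition \ref{def2626}, but dropping the matching condition Definition \ref{stableconvergence}(3) on mainstream components with $a > a_0$ and replacing it by the weaker condition of continuous matching up to rotation. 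Compactness then follows from the same Gromov-type argument as in Lemma \ref{cpthausdorff} combined with Proposition \ref{prof263} for the main and left components, while the right components behave under standard Gromov compactness for $J$-holomorphic spheres; the Hausdorff property is a routine check. Part (3) is obtained by appending parameters $t_z\in[-1,0]$ to each transit point of $\Sigma$ as in Definition \ref{def:outcollar}, with the topology defined accordingly.

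For parts (2) and (3), I would construct Kuranishi charts by introducing obstruction bundle data in the sense of Definition \ref{obbundeldata1} with two modifications. First, for mainstream components $\Sigma_a$ with $a > a_0$, the local trivialization data are taken to contain only an $S^1$-equivariant coordinate, reflecting the fact that $\varphi_a$ is not part of the data; correspondingly the obstruction subspace $E_{\mathbf{p},\mathrm{v}}$ supported on such components and on their bubble trees must be chosen $S^1$-invariant. Second, the map $\Phi_{\mathbf p}$ of \eqref{form418} is replaced by a map of the form
\begin{equation*}
\Phi_{\mathbf p}\colon \prod_{\rm v} \mathcal V(\cdots)\times (T_0,\infty]^{k_{\mathrm{right}}}\times D(k_{\mathrm{left}};\vec T_0)\times\prod_{j=1}^m\bigl(((T_{0,j},\infty]\times S^1)/{\sim}\bigr)\to \mathcal N'_{\ell+\ell'+\ell''}(\mathrm{source},\mathrm{right}),
\end{equation*}
where the factor $(T_0,\infty]^{k_{\mathrm{right}}}$ for the transit points on the right of $\Sigma_{a_0}$ carries only a $T$-parameter and no $S^1$ factor, whereas the factor $D(k_{\mathrm{left}};\vec T_0)$ governing transit points on the left (including the one adjacent to $\Sigma_{a_0}$ from below) keeps the cylindrical parameter of Section \ref{subsec;KuraFloer}. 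With these modifications the definition of $\epsilon$-closeness, the transversal constraints (Definition \ref{constrainttt2}), the gluing construction, and the exponential decay estimates proceed exactly as in Section \ref{subsec;KuraFloer}, yielding a Kuranishi structure on $\mathcal N'_{\ell}(X,H^{*1};\alpha_-,\alpha_+)$. Passing to $\mathcal N'_{\ell}(X,H^{*1};\alpha_-,\alpha_+)^{\boxplus 1}$ and making the resulting Kuranishi structure compatible with the fiber-product description of its boundary is carried out by repeating the argument of Section \ref{subsec;KuramodFloercor} verbatim, using a choice of closed subsets $\mathcal V(A,B,C)\subset [-1,0]^{m-1}$ as in Definition \ref{defn2659} and a system of obstruction bundle data indexed by partitions of the set of transit points.

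The principal obstacle, and the heart of parts (4) and (5), is to verify that the resulting Kuranishi structure satisfies the axioms of a morphism of linear K-systems in the sense of \cite[Definition 16.19]{fooonewbook} with target $\mathcal F_X^{\rm tri}$. Since $\mathcal F_X^{\rm tri}$ has empty connecting-orbit moduli spaces and critical submanifolds $R_\alpha= X$, the only allowed codimension-one boundary decomposition of an interpolation space is of the form
\begin{equation*}
\mathcal M_{\ell_1}(X,H;\alpha_-,\alpha')\,{}_{\mathrm{ev}_+}\!\times_{R_{\alpha'}}\!\mathcal N'_{\ell_2}(X,H^{*1};\alpha',\alpha_+),
\end{equation*}
where the first factor comes from breaking off a left mainstream sub-configuration at a periodic orbit. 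Breakings on the right of $\Sigma_{a_0}$ do not produce codimension-one boundary strata, because the Hamiltonian vanishes there and the limit at each such transit point is a constant orbit; hence the right mainstream components glue into a single tree of $J$-holomorphic spheres (attached by marked points rather than by periodic orbits) and remain in the interior of the compactification. The evaluation map $\mathrm{ev}_+\colon \mathcal N'_{\ell}(X,H^{*1};\alpha_-,\alpha_+)\to X=R_{\alpha_+}$ is the value at $z_+$ of the (now well-defined) continuous extension of $u$, which is smooth by the standard removable singularity theorem, and is weakly submersive by the choice of obstruction bundle data. The orientation isomorphism \cite[Condition 16.1~(VII)]{fooonewbook} is verified exactly as in the end of the proof of Theorem \ref{kuraexists}: the operator $P(\gamma;\mathfrak{t})$ of \eqref{operatorforori} at the trivial end reduces to the $\bar\partial$-operator on a complex vector bundle over $\mathbb{CP}^1$ whose determinant is canonically complex-oriented. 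The sign computation for the fiber-product decomposition follows the same formal manipulation as \eqref{oribreak} and the display after it. The case of $\mathcal N'_{\ell}(X,H^{1*};\alpha_-,\alpha_+)$ is entirely parallel by interchanging the roles of left and right, and hence produces the morphism $\frak N_{(H,J)*}$.
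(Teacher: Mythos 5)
Your overall strategy (treat the two sides of the main component asymmetrically, and arrange that only breakings at periodic orbits of $H$ contribute codimension-one boundary) is the right one, but the concrete mechanism you propose for the gluing parameters is backwards, and as written it would not produce a K-space with the boundary structure required of an interpolation space for a morphism into $\mathcal F_X^{\rm tri}$.

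In the paper's construction (the analogue of \eqref{form418rev} is \eqref{form668rev}), the transit points $z_{a,+}$ with $a<a_0$ --- the side where the components carry parametrizations and converge to periodic orbits of $H$ --- are smoothed by a bare factor $(T_0,\infty]^{k_1}$ with \emph{no} $S^1$; these are exactly the parameters whose $T=\infty$ loci give the codimension-one corners appearing in \eqref{form26777741rev1}. The transit points $z_{a,-}$ with $a>a_0$, where no parametrization is fixed (Remark \ref{rem:106}) and $u$ is merely pseudo-holomorphic, are smoothed by $((T_{0,j},\infty]\times S^1)/\sim$, i.e.\ by a two-dimensional disk including a rotation angle, so that $T=\infty$ is an \emph{interior} point of the parameter space and no boundary stratum arises there. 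You assign the bare $(T_0,\infty]^{k_{\rm right}}$ factor to the right transit points and suppress the $S^1$ everywhere. But $(T_0,\infty]$ is a half-open interval whose endpoint $T=\infty$ is a boundary point of the chart $V({\bf q},\epsilon_1,\EuScript B)$; with your parametrization the unsmoothed right transit points \emph{would} be codimension-one boundary, directly contradicting your own (correct) assertion that such configurations must remain interior, and producing boundary components that cannot be matched with any fiber product over $\mathcal F_X^{\rm tri}$ since its connecting-orbit spaces are empty. The rotation parameter cannot be dropped: without a fixed parametrization on the right components there is no canonical way to align the angular coordinate across the neck, and the natural smoothing parameter is the full disk. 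Relatedly, using $D(k_{\rm left};\vec T_0)$ on the left is also incorrect --- that space is a quotient by the overall $\R$-translation, which is not available here because $\varphi_{a_0}$ is fixed on the main component; the correct factor is $(T_0,\infty]^{k_{\rm left}}$ exactly as in \eqref{form418rev}.

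A second, consequent gap is in part (3): the outer collar at the right transit points is \emph{not} obtained by appending $t_z\in[-1,0]$ in the manner of Definition \ref{def:outcollar}. The paper instead performs the ``fattening'' of the origin of the disk parameter, replacing $([0,s_{0,j})\times S^1)/\sim$ by $([-1,s_{0,j})\times S^1)/\sim'$, precisely so that the collared parameter space remains a disk with empty boundary and the exponential decay estimates can be reused to interpolate Kuranishi structures there. Finally, your suggestion to impose $S^1$-equivariance of the obstruction spaces on the right components is a different (and more delicate) route than the paper's; the paper deliberately avoids equivariant Kuranishi structures by working on the quotient space $\mathcal N'$ where the rotation is absorbed into the disk-shaped gluing parameter, and no equivariance hypothesis on $E_{{\bf p},{\rm v}}$ is needed or used.
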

\begin{proof}
We prove the case of ${\mathcal N}'_{\ell}(X,H^{*1};\alpha_-,\alpha_+)$ since 
the case of ${\mathcal N}'_{\ell}(X,H^{1*};\alpha_-,\alpha_+)$
is entirely similar.
\par
The proof is classical and similar to that of Theorem \ref{theorem266rev}.
Indeed Proposition \ref{prop23999} (1) can be proved in 
the same way as the proof of
Lemma \ref{cpthausdorff}.
(See Definitions \ref{stableconvergence}, \ref{def2626} and
\cite[Lemma 10.4 and Theorem 11.1]{FO} etc.)
Thus we will describe the point where the proof is different below.
\par
The notion of symmetric stabilization $\vec w$ is slightly different.
We put marked points not only on bubble components but also on the
mainstream components $\Sigma_a$ with $a > a_0$
such that $\Sigma_a$ contains no singular or marked points other
than transit points.
\par
We put canonical marked points only on the mainstream component $\Sigma_a$
with $a < a_0$ (such that $\Sigma_a$ contains no singular or marked points other
than transit points.)
\par
The notation of obstruction bundle data is modified as follows.
Definition \ref{obbundeldata1} (1) (symmetric stabilization)
is modified as above.
Definition \ref{obbundeldata1} (2),(a) is changed as follows.
Let $\Sigma_{\rm v} = \Sigma_{a}$ be a mainstream component.
\begin{enumerate}
\item[(i)]
$\mathcal V(\frak x_{a}\cup \vec w_{{\rm can},a})$
is an open subset of
$\overset{\circ}{\mathcal M}_{\ell_{a}+\ell_{a}'+\ell_{a}''}({\rm source})
$
if $a < a_0$.
\item[(ii)]
$\mathcal V(\frak x_{a}\cup \vec w_{a})$
is an open subset of
$\overset{\circ}{\mathcal N}_{\ell_{a}+\ell_{a}'+\ell_{a}''}({\rm source})
$
if $a = a_0$.
\item[(iii)]
$\mathcal V(\frak x_{a} \cup \vec w_{a})$
is an open subset of
$\overset{\circ}{\mathcal M^{\rm cl}}
_{\ell_{a}+\ell_{a}'+\ell_{a}''}$
if $a > a_0$.
\end{enumerate}
\par
Definition \ref{obbundeldata1} (3) is mostly the same 
but Definition \ref{obbundeldata1} (3) (a)
for transit points
$z_{a,-}$ with $a > a_0$
 (or $z_{a,+}$ with $a> a_0$, $z_{a,+} \ne z_+$) is slightly modified as follows.
As we mentioned in Remark \ref{rem:106},  
we did not fix a parametrization $\varphi_{a}$ for each mainstream component $\Sigma_a$ 
with $a> a_0$. 
Instead
we take any biholomorphic map 
$\varphi_a : \R \times S^1 \to \Sigma_a \setminus \{z_{a,-},z_{a,+}\}$
such that $\lim_{\tau\to \pm\infty}\varphi_a (\tau,t) = z_{a,\pm}$.
It is determined up to the $\R \times S^1$ action on the source.
\par
For transit points with extra $S^1$ factor, we perform the process of outer collaring for the factor $(T_{0,j}, \infty] \cong 
[0, s_{0,j})$, where $s_{0,j}= 1/\log T_{0,j}$.  
Namely, we change $\prod ([0,s_{0,j}) \times S^1)/\sim$ to 
$\prod ([-1,s_{0,j}) \times S^1)/ \sim'$, where $\sim'$ is the equivalence relation given by 
$$(s_1,t_1) \sim' (s_2, t_2) \text{ if and only if either } (s_1,t_1) = (s_2, t_2) \text{ or } s_1=s_2 = -1.$$
Write $D_j =  ([-1,s_{0,j}) \times S^1)/ \sim'$.  Namely, we fill a smaller disk around the origin of $([0, s_{0,j}) \times S^1)/\sim$.  
We call this procedure the {\it fattening} of the origin of the complex smoothing parameter.  
See Figure \ref{FigFlatter}.
\begin{figure}[htbp]
\centering
\includegraphics[scale=0.5]{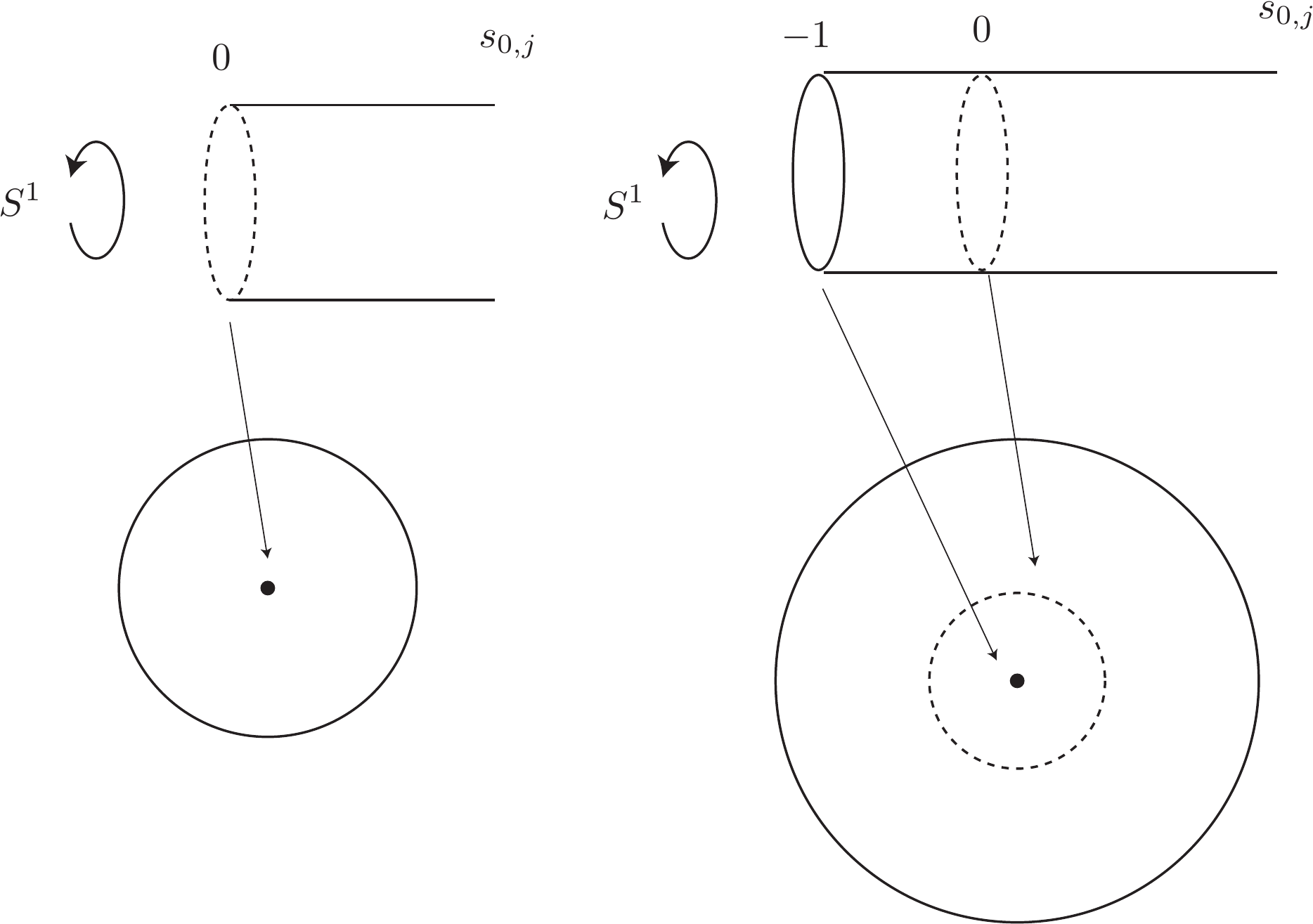}
\caption{Flattering of the origin of the complex smoothing parameter}
\label{FigFlatter}
\end{figure}
Note that the boundary of $\prod D_j$ is empty and does not contribute to the coodimension one boundary of the moduli spaces, 
which we are dealing with in this proof.  
Recall that the exponential decay estimates \cite[Chapter 8]{foooanalysis} were used during the outer collar construction in Sections \ref{subsec;KuramodFloercor}, \ref{subsec;KuramodFloermor}.  
We also have the exponential decay estimates for derivatives involving the $S^1$ directions in the smoothing parameters \cite[Chapter 8]{foooanalysis}.  
Therefore the fattening construction goes in a similar way to the outer collar construction.  The same is also for the interpolation of Kuranishi 
structures among various Kuranishi structures.  Hence we have (3).
\par
Definition \ref{obbundeldata1} (4)-(6) are the same.
\par
Definition \ref{obbundeldata1} (7) is slightly changed as follows.
We also take codimension $2$ submanifolds $\mathcal D_{i}$
for the additional marked points $w_i$ lying
on the mainstream components $\Sigma_a$ with $a > a_0$.
\par
We have thus defined the notion of obstruction bundle data.
We define the notion of stabilization data 
in Definition \ref{stabilizationdefdata} as a part of the obstruction bundle data modified above.
\par
Let ${\bf p} = [((\Sigma,(z_-,z_+,\vec z),a_0),u,\varphi)] \in
{\mathcal N}'_{\ell}(X,H^{1*};\alpha_-,\alpha_+)$
and we take a symmetric stabilization $\vec w$ and the
canonical marked points $\vec w_{\rm can}$ in the above sense.
Suppose we have obstruction bundle data
or stabilization data on {\bf p}.
We will define a map
\begin{equation}\label{form668rev}
\aligned
{\Phi}_{\bf p} : \prod \mathcal V(\frak x_{\rm v} \cup \vec w_{\rm v}\cup \vec w_{{\rm can},{\rm v}}) \times (T_0,\infty]^{k_1}
\times &\prod_{j=1}^{m+k_2}\left(((T_{0,j},\infty] \times S^1)/\sim \right)
\\
&\to {\mathcal N}^{\prime}_{\ell+\ell'+\ell''}({\rm source},{\rm left}),
\endaligned\end{equation}
which is similar to but slightly different from (\ref{form418rev}).
Here $k_1$ is the number of transit points $z_{a,+}$ with $a < a_0$,
$k_2$ is the number of transit points $z_{a,-}$ with $a > a_0$.
(Note $k_1 + k_2$ is the number of all transit points.
$m$ is the number of non-transit singular points.)
\par
The definition of (\ref{form668rev}) is mostly the same as (\ref{form418rev}).
The main difference is that we have an extra $S^1$ factor for each 
transit point $z_{a,-}$ with $a > a_0$.
We use this parameter as follows.
In the case when the components will not be glued with
the main component,
the role of the $S^1$ factor is the same as that for the case of
non-transit singular points 
in the definition of the map \eqref{form418}.
So we consider the case when all the parameters $T_{j}$ corresponding to
transit points   $z_{a,-}$ with $a > a_0$  are finite.
In this case we first use the parameters $(T_{j},\theta_{j})
\in ((T_{0,j},\infty] \times S^1)/\sim$ (which is
associated to those transit points) with local coordinates at those transit points
to glue the spaces to obtain $\Sigma'$.
We define the parametrization of the main component $\Sigma'_{a'_0}$
as follows.
We identify $\Sigma_{a_0} \cong \R \times S^1$. Then
we can embed $[-T_0,T_0] \times S^1 \subset \Sigma_{a_0}$ into
$\Sigma'_{a'_0}$
for sufficiently large number $T_0$ in a canonical way. 
Let $\frak v$ be this embedding.
We require that $\varphi'_{a'_0} = \varphi_{a_0} \circ \frak v$
on  $[-T_0,T_0] \times S^1$.
This condition determines $\varphi'_{a'_0}$ uniquely.
\par
The definition of (\ref{form668rev}) is the same as (\ref{form418rev})
in the other points.
\par
Using  (\ref{form668rev})  we can define the notion of $\epsilon$-closeness
in the same way as in (\ref{orbitecloseness}).
\par
The definition of the notion of transversal constraint is the same as Definition \ref{constrainttt2},
except we apply  Definition \ref{constrainttt2} (1) to the marked points $w'_i$
corresponding to ones on the mainstream component $\Sigma_a$ with
$a > a_0$ also.
\par
We can then define an embedding
$$
I_{{\bf p},{\rm v};\Sigma',u',\varphi'} : E_{{\bf p},{\rm v}}(\frak y) \to
C^{\infty}(\Sigma';(u')^*TX \otimes \Lambda^{0,1})
$$
of obstruction spaces in the same way as in Definition \ref{defn2642}.
\par
In the rest of the construction of the Kuranishi structure, there is nothing to change
the proof of Theorem \ref{kuraexists} (1)
and we obtain a Kuranishi structure on
${\mathcal N}'_{\ell}(X,H^{*1};\alpha_-,\alpha_+)$ in the same way.
\par
Note that the normalized corner
$\widehat S_m({\mathcal N}'_{\ell}(X,H^{*1};\alpha_-,\alpha_+))$
of this K-space is
the disjoint union of the following fiber products.
\begin{equation}\label{form26777741rev1}
\aligned
&{\mathcal M}_{\ell_{1,1}}(X,J_1,H^1;\alpha_{1,0},\alpha_{1,1})
\,{}_{{\rm ev}_{+}}\times_{{\rm ev}_-}  \dots \\
&\dots
\,{}_{{\rm ev}_{+}}\times_{{\rm ev}_-}
{\mathcal M}_{\ell_{1,m}}(X,J_1,H^1;\alpha_{1,m-1},\alpha_{1,m})
\\
&
\,{}_{{\rm ev}_{+}}\times_{{\rm ev}_-}
{\mathcal N}'_{\ell'}(X, H^{*1};\alpha_{1,m},\alpha_{+}),
\endaligned
\end{equation}
where $\alpha_-=\alpha_{1,0},\alpha_{1,1},\dots,\alpha_{1,m-1},\alpha_{1,m}
\in \frak A_1$ and
$\ell_{1,1} +\dots + \ell_{1,m} + \ell' = \ell$.
\par
The important remark here is that there is no fiber product factor
appearing to the `right' of
${\mathcal N}'_{\ell'}(X, H^{*1};\alpha_{1,m},\alpha_{+})$.
Note the fiber product factors such as
${\mathcal M}_{\ell_{1,j}}(X,J_1,H^1;\alpha_{1,j-1},\alpha_{1,j})$
are attached to the main component at the part where $\tau \to -\infty$.
One of the factors of  $(T_0,\infty]^{k_1}$ in (\ref{form668rev}) parametrizes the way we glue it
on the transit points (which are $z_{a,+}$ with $a < a_0$).
On the other hand, the way we glue  mainstream components
$\Sigma_a$ with $a > a_0$ at  the transit points (which are $z_{a,-}$ with $a > a_0$)
are parametrized by
one of the factors in
$\prod_{j=1}^{m+k_2}\left( ((T_{0,j},\infty] \times S^1)/\sim \right)$
in (\ref{form668rev}).
Because of the $S^1$ factors, this parameter does {\it not} correspond
to the boundary or the corner of the K-space.
This is the reason there is no fiber product factor
appearing to the `right'  in (\ref{form26777741rev1}).
\par
We have thus completed the construction of the Kuranishi structure
on 
$$
{\mathcal N}'_{\ell}(X,H^{*1};\alpha_-,\alpha_+).
$$
So we have proved Proposition \ref{prop23999} (2).
\par
We next modify the Kuranishi structure of
${\mathcal N}'_{\ell}(X,H^{*1};\alpha_-,\alpha_+)^{\boxplus 1}$
to obtain an interpolation space of the morphism
$\frak N_{* (H,J)} : \mathcal F_X(H,J) \to \mathcal F_X^{\rm tri}$,
(which is the proof of Proposition \ref{prop23999} (3) (4).)
For this purpose we modify the
Kuranishi structure of
${\mathcal N}'_{\ell}(X,H^{*1};\alpha_-,\alpha_+)^{\boxplus 1}$
obtained by 
\cite[Lemma-Definition 17.38]{fooonewbook}
on
$
{\rm \eqref{form26777741rev1}} \times [-1,0]^m
$.
The way to modify it is the same as the proof of Theorem \ref{theorem266rev} (3)(4).
So we do not repeat it here.
We remark again that  the boundary and corner in (\ref{form26777741rev1})
appear only to the `left' from the main component.
This implies that the spaces ${\mathcal N}'_{\ell}(X,H^{*1};\alpha_-,\alpha_+)^{\boxplus 1}$
(after adjusting the Kuranishi structure so that it is compatible
with the fiber product description (\ref{form26777741rev1}))
will become the interpolation space of the morphism $: \mathcal F_X(H,J) \to \mathcal F_X^{\rm tri}$.
In fact, in the trivial linear K-system $\mathcal F_X^{\rm tri}$ all the spaces
of connecting orbits $\mathcal M(\alpha_-,\alpha_+)$ are empty sets.
\par
The proof of Proposition \ref{prop23999} is now complete.
\end{proof}
\begin{rem}
The constructed Kuranishi structures on ${\mathcal N}'_{\ell}(X,H^{*1};\alpha_-,\alpha_+)^{\boxplus 1}$ and 
${\mathcal N}'_{\ell}(X,H^{*1};\alpha_-,\alpha_+)^{\boxplus 1}$ are 
${\mathfrak C}$-collared in the sense of Remark \ref{defn1531revrev}.  
The Kuranishi structures on ${\mathcal N}'_{\ell}(X,H^{*1};\alpha_-,\alpha_+)^{\boxplus 1}$ and 
${\mathcal N}'_{\ell}(X,H^{1*};\alpha_-,\alpha_+)^{\boxplus 1}$ are also compatible with the fattening in the following sense.  
\end{rem}
\begin{defn}\label{compwithfattening}
We call a Kuranishi structure $\widehat{\mathcal U}$ on $Z \times D^2$ 
{\it compatible with the fattening} if 
$\widehat{\mathcal U}$ is the product of a Kuranishi structure on $Z$ and the trivial Kuranishi structure on $D^2$.  
\end{defn}

We have thus constructed morphisms in Theorem \ref{therem232}.
We will prove their properties (1)(2) of Theorem \ref{therem232}.
The proof is similar to that of Theorem \ref{tjm26108}.
We will define the interpolation space of the
homotopy.
\par
We use smooth functions
$H^{*1*,\cdot} : X \times [0,\infty) \times \R \times S^1  \to \R$,
$H^{1*1,\cdot} : X \times [0,\infty) \times \R \times S^1  \to \R$
as follows.
(Here $H^{*1*,\cdot}(x,T,\tau,t)=H^{*1*,T}(x,\tau,t)$
and $H^{1*1,\cdot}(x,T,\tau,t)=H^{1*1,T}(x,\tau,t)$.
)
\begin{enumerate}
\item
$
H^{*1*,0}(x,\tau,t) \equiv 0
$ and
$
H^{1*1,0}(x,\tau,t) \equiv H(x,t).
$
\item
$$
H^{1*1,T}(x,\tau,t)
=
\begin{cases}
H^{*1}(x,\tau+T,t)
& \text{if $\tau \le 0$}, \\
H^{1*}(x,\tau-T,t)
& \text{if $\tau \ge 0$},
\end{cases}
$$
for sufficiently large $T$.
\item
$$
H^{*1*,T}(x,\tau,t)
=
\begin{cases}
H^{1*}(x,\tau+T,t)
& \text{if $\tau \le 0$}, \\
H^{*1}(x,\tau-T,t)
& \text{if $\tau \ge 0$},
\end{cases}
$$
for sufficiently large $T$.
\end{enumerate}
We define the moduli space
$\mathcal N'_{\ell}(X,H^{1*1,T};\alpha_-,\alpha_+)$
for each $T$ by Definition \ref{3equivrerevl}.
We next define $\mathcal N'_{\ell}(X,H^{*1*,T};\alpha_-,\alpha_+)$.
Let $\alpha_{\pm} \in \pi_2(X)/\sim$, where $\pi_2(X)/\sim$ is
as in Definition \ref{defn23111}.
\begin{defn}\label{defn210revto01}
The set $\widehat{\mathcal N}'_{\ell}(X,H^{*1*,T};\alpha_-,\alpha_+)$
consists of triples 
$$
((\Sigma,(z_-,z_+,\vec z),a_0),u,\varphi)
$$ 
satisfying the following conditions:
\begin{enumerate}
\item
$(\Sigma,(z_-,z_+,\vec z))$ is a genus zero semi-stable curve with $\ell + 2$ marked points.
\item
$\Sigma_{a_0}$ is one of the mainstream components.
We call it the {\it main component}.
\item
$\varphi = \varphi_{a_0}$ where
$\varphi_{a_0} : \R \times S^1 \to \Sigma_{a_0} \setminus \{z_{{a_0},-},z_{{a_0} +}\}$ is 
a parametrization of 
the main component $\Sigma_{a_0}$
and
$\varphi_{a_0}$ is a biholomorphic map such that
$$
\lim_{\tau\to \pm} \varphi_{{a_0}}(\tau,t) = z_{{a_0},\pm}.
$$
\item
For each extended mainstream component $\widehat{\Sigma}_a$, the map
$u$ induces
$u_a : \widehat{\Sigma}_a \setminus\{z_{a,-},z_{a,+}\} \to X$
which is a continuous map.
\item
If $\Sigma_{a_0}$ is the main component and
$\varphi_{a_0} : \R \times S^1 \to \Sigma_{a_0}$ is as in (3), then
the composition $u_{a_0} \circ \varphi_{a_0}$ satisfies the equation
\begin{equation}\label{Fleq27762122}
\frac{\partial (u_{a_0} \circ \varphi_{a_0})}{\partial \tau} +  J \left( \frac{\partial (u_{a_0} \circ \varphi_{a_0})}{\partial t} - \frak X_{H^{*1*,T}_{\tau,t}}
\circ (u_{a_0} \circ \varphi_{a_0}) \right) = 0.
\end{equation}
\item
Void. (In our situation, the finiteness of the energy is a consequence of the continuity of
$u$ in Item (8) below. See \cite[(2.14)]{On}.)
\item
If $\Sigma_{\rm v}$  is a bubble component or
a mainstream component $\Sigma_a$ \ with $a \ne a_0$,
then $u$ is pseudo-holomorphic on $\Sigma_{\rm v}$.
\item
$u$ defines a continuous map on $\Sigma$.
\item
$
[u_*[\Sigma]] \# [\alpha_-] = [\alpha_+].
$
\item
We assume that $((\Sigma,(z_-,z_+,\vec z),a_0),u,\varphi)$
is stable in the sense of Definition \ref{defn2615revrr22} below.
\end{enumerate}
\end{defn}
Assume that $((\Sigma,(z_-,z_+,\vec z),a_0),u,\varphi)$
satisfies (1)-(9) above.
The {\it extended automorphism group}
${\rm Aut}^+((\Sigma,(z_-,z_+,\vec z),a_0),u,\varphi)$
of $((\Sigma,(z_-,z_+,\vec z),a_0)),u,\varphi)$
consists of map $v : \Sigma \to \Sigma$ such that 
it satisfies (1)(2)(5) of Definition \ref{defn2615rev},
and (3) of Definition \ref{defn2615rev} for $\varphi_{a_0}$, 
and $\tau_{a_0} = 0$.
\begin{defn}\label{defn2615revrr22} An object
$((\Sigma,(z_-,z_+,\vec z),a_0),u,\varphi)$
satisfies (1)-(9) above is said to be {\it stable}
if ${\rm Aut}^+((\Sigma,(z_-,z_+,\vec z),a_0),u,\varphi)$
is a finite group.
\end{defn}
We can define the equivalence relation $\sim_2$ on
$\widehat{\mathcal N}_{\ell}(X,H^{*1*};\alpha_-,\alpha_+)$
in the same way as in Definition \ref{3equivrel}
except we require $\tau_{a_0} = 0$ and require (3)
only for $a = a_0$.
We put
\begin{equation}\label{formula269277772222}
{\mathcal N}'_{\ell}(X,H^{*1*{, T}};\alpha_-,\alpha_+) =
\widehat{\mathcal N}'_{\ell}(X,H^{*1*,T};\alpha_-,\alpha_+)/\sim_2.
\end{equation}
When $X$ is a point, we denote 
the space ${\mathcal N}'_{\ell}(X,H^{*1*{, T}};\alpha_-,\alpha_+)$
by
${\mathcal N}'_{\ell}({\rm source};*1*,{\rm finite})$.
\par
For a later purpose, we introduce the following moduli space, which is one-dimensional higher than 
${\mathcal N}'_{\ell}(X,H^{1*1, T};\alpha_-,\alpha_+)$.  
By the definition of $H^{1*1, T}$, when $T > 1$, we have $H^{1*1, T} (x, \tau, t) = 0$ for $|\tau | < T -1$.  
So we can consider the following condition in place of  \eqref{Fleq27762122} in Definition \ref{defn210revto01} (5) 
\begin{equation}\nonumber
\frac{\partial (u_{a_0} \circ \varphi_{a_0})}{\partial \tau} +  J \left( \frac{\partial (u_{a_0} \circ \varphi_{a_0})}{\partial t} - \frak X_{H^{1*1,T}_{\tau,t}}
\circ (u_{a_0} \circ \varphi_{a_0}) \right) = 0  \text{ for } \tau \leq 0, 
\end{equation}
and 
\begin{equation}\nonumber
\frac{\partial (u_{a_0} \circ \varphi_{a_0})}{\partial \tau} +  J \left( \frac{\partial (u_{a_0} \circ \varphi_{a_0})}{\partial t} - \frak X_{H^{1*1,T}_{\tau,t + t_0}}
\circ (u_{a_0} \circ \varphi_{a_0}) \right) = 0  \text{ for } \tau \geq 0, 
\end{equation}
where $t_0 \in {\mathbb R}/{\mathbb Z}$.  
Then we consider the space of 
$$
((\Sigma,(z_-,z_+,\vec z),a_0),u,\varphi, t_0) 
$$ 
satisfying Conditions in Definition \ref{defn210revto01} with (5) replaced by the condition above and obtain the moduli space 
$\mathcal N^{\bullet}_{\ell}(X,H^{1*1,T};\alpha_-,\alpha_+)$.  
(In a later argument, we only need such a one-dimensional higher moduli space for $\mathcal N'_{\ell}(X,H^{1*1,T};\alpha_-,\alpha_+)$ but not for 
$\mathcal N'_{\ell}(X,H^{*1*,T};\alpha_-,\alpha_+)$.)
\par
We consider
\begin{equation}\label{form23999}
\bigcup_{T \in [0,\infty)}\mathcal N'_{\ell}(X,H^{*1*,T};\alpha_-,\alpha_+)
\times \{T\},
\end{equation}
\begin{equation}\label{form23888}
\bigcup_{T \in [0,\infty)}\mathcal N^{\prime}_{\ell}(X,H^{1*1,T};\alpha_-,\alpha_+)
\times \{T\},
\end{equation}
\begin{equation}\label{form23888bullet}
\bigcup_{T \in [10,\infty)}\mathcal N^{\bullet}_{\ell}(X,H^{1*1,T};\alpha_-,\alpha_+)
\times \{T\},
\end{equation}
and will compactify them
by adding certain spaces at $T=\infty$ as follows.
\begin{defn}\label{defn210revto022}
The set $\widehat{\mathcal N}'_{\ell}(X,H^{*1*,\infty};\alpha_-,\alpha_+)$
consists of triples 
$$
((\Sigma,(z_-,z_+,\vec z),a_1,a_2),u,\varphi)
$$ satisfying the following conditions:
\begin{enumerate}
\item
$(\Sigma,(z_-,z_+,\vec z))$ is a genus zero semi-stable curve with $\ell + 2$ marked points.
\item
$\Sigma_{a_1}, \Sigma_{a_2}$ are mainstream components
such that $a_1 < a_2$.
We call them  {\it the first main component} and
{\it the second main component}.
\item
$\varphi = (\varphi_{a})$ where
$\varphi_{a} : \R \times S^1 \to \Sigma_{a} \setminus \{z_{{a},-},z_{{a} +}\}$ is 
a parametrization of
the mainstream component $\Sigma_{a}$ with $a_1 \le a \le a_2$
and 
$\varphi_{a}$ is a biholomorphic map such that
$$
\lim_{\tau\to \pm} \varphi_{{a}}(\tau,t) = z_{{a},\pm}.
$$
\item
For each extended mainstream component $\widehat{\Sigma}_a$, the map
$u$ induces
$u_a : \widehat{\Sigma}_a \setminus\{z_{a,-},z_{a,+}\} \to X$
which is a continuous map.
\item
If $\Sigma_{a}$ is a mainstream component with $a_1 \le a \le a_2$ and
$\varphi_{a} : \R \times S^1 \to \Sigma_{a}$ is as in (3), then
the composition $u_{a} \circ \varphi_{a}$ satisfies the equation
\begin{equation}\label{Fleq2776212233}
\frac{\partial (u_{a} \circ \varphi_{a})}{\partial \tau} +  J \left( \frac{\partial (u_{a} \circ \varphi_{a})}{\partial t} - \frak X_{H^{a,\infty}_{\tau,t}}
\circ (u_{a} \circ \varphi_{a}) \right) = 0.
\end{equation}
Here $H^{a,\infty} = H^{*1}$ if $a = a_1$,
$H^{a,\infty} = H^{1}$ if $a_1 < a < a_2$,
and $H^{a,\infty} = H^{1*}$ if $a = a_2$.
\item
$$
\int_{\R \times S^1}
\left\Vert\frac{\partial (u \circ \varphi_a)}{\partial \tau}\right\Vert^2 d\tau dt < \infty.
$$
\item
If $\Sigma_{\rm v}$  is a bubble component or
a mainstream component $\Sigma_a$ \ with $a < a_1$
or $a > a_2$,
then $u$ is pseudo-holomorphic on $\Sigma_{\rm v}$.
\item
Let
${\Sigma}_{a}$ and ${\Sigma}_{a'}$ be
mainstream components.
If  $z_{a,+} = z_{a',-}$ and
$a_1 \le a < a' \le a_2$, then
$$
\lim_{\tau\to+\infty} (u_{a} \circ \varphi_{a})(\tau,t)
=
\lim_{\tau\to-\infty} (u_{a'} \circ \varphi_{a'})(\tau,t)
$$
holds for each $t \in S^1$.
((6) and Lemma \ref{prof26rev3} imply that the
left and right hand sides both converge.)
\par
If $a < a_1$, then we require that $u$ is continuous at $z_{a,+}$.
If $a > a_2$, then we require that $u$ is continuous at $z_{a,-}$.
\item
$
[u_*[\Sigma]] \# [\alpha_-] = [\alpha_+].
$
\item
We assume that $((\Sigma,(z_-,z_+,\vec z),a_1,a_2),u,\varphi)$
is stable in the sense of Definition \ref{defn2615revrr33} below.
\end{enumerate}
\end{defn}
Assume that $((\Sigma,(z_-,z_+,\vec z),a_1,a_2),u,\varphi)$
satisfies (1)-(9) above.
The {\it extended automorphism group}
${\rm Aut}^+((\Sigma,(z_-,z_+,\vec z),a_1,a_2),u,\varphi)$
of $((\Sigma,(z_-,z_+,\vec z),a_1,a_2)),u,\varphi)$
consists of map $v : \Sigma \to \Sigma$ such that
it satisfies (1)(2)(5) of Definition \ref{defn2615rev},
and (3) of Definition \ref{defn2615rev} for $\varphi_{a}$
with $a_1 \le a \le a_2$, and $\tau_{a} = 0$ if $a = a_1$ or
$a = a_2$.
\begin{defn}\label{defn2615revrr33} An object
$((\Sigma,(z_-,z_+,\vec z),a_1,a_2),u,\varphi)$
satisfies (1)-(9) above is said to be {\it stable}
if ${\rm Aut}^+((\Sigma,(z_-,z_+,\vec z),a_1,a_2),u,\varphi)$
is a finite group.
\end{defn}
\begin{defn}\label{defn210revto044}
The set $\widehat{\mathcal N}'_{\ell}(X,H^{1*1,\infty};\alpha_-,\alpha_+)$
consists of triples 
$$
((\Sigma,(z_-,z_+,\vec z),a_1,a_2),u,\varphi)
$$ 
satisfying the following conditions:
\begin{enumerate}
\item
$(\Sigma,(z_-,z_+,\vec z))$ is a genus zero semistable curve with $\ell + 2$ marked points.
\item
$\Sigma_{a_1}, \Sigma_{a_2}$ are mainstream components
such that $a_1 < a_2$.
We call them  {\it the first main component} and
{\it the second main component}.
\item
$\varphi = (\varphi_{a})$ where
$\varphi_{a} : \R \times S^1 \to \Sigma_{a} \setminus \{z_{{a},-},z_{{a} +}\}$ is 
a parametrization of 
mainstream component $\Sigma_{a}$ with $a \le a_1$ or $a \ge a_2$,
and 
$\varphi_{a}$ is a biholomorphic map such that
$$
\lim_{\tau\to \pm} \varphi_{{a}}(\tau,t) = z_{{a},\pm}.
$$
\item
For each extended mainstream component $\widehat{\Sigma}_a$, the map
$u$ induces
$u_a : \widehat{\Sigma}_a \setminus\{z_{a,-},z_{a,+}\} \to X$
which is a continuous map
\item
If $\Sigma_{a}$ is a mainstream component with $a \le a_1$ or $a \ge a_2$ and
$\varphi_{a} : \R \times S^1 \to \Sigma_{a}$ is as in (3), then
the composition $u_{a} \circ \varphi_{a}$ satisfies the equation
\begin{equation}\label{Fleq2776244444}
\frac{\partial (u_a \circ \varphi_a)}{\partial \tau} +  J \left( \frac{\partial (u_a \circ \varphi_a)}{\partial t} - \frak X_{H^{a,\infty}_{\tau,t}}
\circ (u_a \circ \varphi_a) \right) = 0.
\end{equation}
Here $H^{a,\infty} = H^{1*}$ if $a = a_1$,
$H^{a,\infty} = H^{1}$ if $a < a_1$ or $a > a_2$,
and $H^{a,\infty} = H^{*1}$ if $a = a_2$.
\item
$$
\int_{\R \times S^1}
\left\Vert\frac{\partial (u \circ \varphi_a)}{\partial \tau}\right\Vert^2 d\tau dt < \infty.
$$
\item
If $\Sigma_{\rm v}$  is a bubble component or
a mainstream component $\Sigma_a$ \ with $a_1 < a < a_2$,
then $u$ is pseudo-holomorphic on $\Sigma_{\rm v}$.
\item
Let
${\Sigma}_{a}$ and ${\Sigma}_{a'}$ be
mainstream components. 
If  $z_{a,+} = z_{a',-}$ and
$a  \le a' \le a_1$ or $a_2 \le a \le a'$, then
$$
\lim_{\tau\to+\infty} (u_{a} \circ \varphi_{a})(\tau,t)
=
\lim_{\tau\to-\infty} (u_{a'} \circ \varphi_{a'})(\tau,t)
$$
holds for each $t \in S^1$.
((6) and Lemma \ref{prof26rev3} imply that the 
left and right hand sides both converge.)
\par
If $a_1 < a \le a_2$, then we require that $u$ is continuous at $z_{a,-}$.
\item
If
${\Sigma}_{a}$
is
mainstream components and $z_{a,-} = z_-$ (resp.  $z_{a,+} = z_+$),
 then there exist
$(\gamma_{-},w_{-})
\in R_{\alpha_{-}}$ (resp. $(\gamma_{+},w_{+})
\in R_{\alpha_{+}}$) such that
$$
\lim_{\tau\to \pm\infty} (u_{a} \circ \varphi_{a})(\tau,t)
= \gamma_{\pm}(t)
$$
Moreover
$
[u_*[\Sigma]] \# w_- \sim w_+
$.
\item
We assume that $((\Sigma,(z_-,z_+,\vec z),a_1,a_2),u,\varphi)$
is stable in the sense of Definition \ref{defn2615revrr44} below.
\end{enumerate}
\end{defn}
Assume that $((\Sigma,(z_-,z_+,\vec z),a_1,a_2),u,\varphi)$
satisfies (1)-(9) above.
The {\it extended automorphism group}
${\rm Aut}^+((\Sigma,(z_-,z_+,\vec z),a_1,a_2),u,\varphi)$
of $((\Sigma,(z_-,z_+,\vec z),a_1,a_2)),u,\varphi)$
consists of map $v : \Sigma \to \Sigma$ such that
it satisfies (1)(2)(5) of Definition \ref{defn2615rev},
and (3) of Definition \ref{defn2615rev} for $\varphi_{a}$
with $a \le a_1$ or $a_2 \le a$, and $\tau_{a} = 0$ if $a = a_1$ or
$a = a_2$.
\begin{defn}\label{defn2615revrr44} An object
$((\Sigma,(z_-,z_+,\vec z),a_1,a_2),u,\varphi)$
satisfies (1)-(9) above is said to be {\it stable}
if ${\rm Aut}^+((\Sigma,(z_-,z_+,\vec z),a_1,a_2),u,\varphi)$
is a finite group.
\end{defn}
We can define the equivalence relation $\sim_2$ on the spaces
$\widehat{\mathcal N}'_{\ell}(X,H^{*1*,\infty};\alpha_-,\alpha_+)$
and
$\widehat{\mathcal N}'_{\ell}(X,H^{1*1,\infty};\alpha_-,\alpha_+)$
in the same way as in Definition \ref{3equivrel}
except we require $\tau_{a_1} = \tau_{a_1}= 0$ and require (3)
only for $a$ for which $\varphi_a$ is defined.
We put
\begin{equation}\label{formula2692777tugi}
\aligned
{\mathcal N}'_{\ell}(X,H^{*1*,\infty};\alpha_-,\alpha_+) &=
\widehat{\mathcal N}'_{\ell}(X,H^{*1*,\infty};\alpha_-,\alpha_+)/\sim_2,
\\
{\mathcal N}'_{\ell}(X,H^{1*1,\infty};\alpha_-,\alpha_+) &=
\widehat{\mathcal N}'_{\ell}(X,H^{1*1,\infty};\alpha_-,\alpha_+)/\sim_2.
\endaligned
\end{equation}
As in the case of $T < \infty$, we have the moduli space  
$$
{\mathcal N}^{\bullet}_{\ell}(X,H^{1*1,\infty};\alpha_-,\alpha_+)
$$ 
by replacing Condition (5) in 
Definition  \ref{defn210revto044} by the following condition: 
\par
\smallskip
For $a \leq a_1$, $u \circ \varphi_a$ satisfies 
\begin{equation} \nonumber 
\frac{\partial (u_a \circ \varphi_a)}{\partial \tau} +  J \left( \frac{\partial (u_a \circ \varphi_a)}{\partial t} - \frak X_{H^{a,\infty}_{\tau,t}}
\circ (u_a \circ \varphi_a) \right) = 0.  
\end{equation}
\par
For $a \geq a_2$, $u \circ \varphi_a$ satisfies 
\begin{equation} \nonumber 
\frac{\partial (u_a \circ \varphi_a)}{\partial \tau} +  J \left( \frac{\partial (u_a \circ \varphi_a)}{\partial t} - \frak X_{H^{a,\infty}_{\tau,t +t_0}}
\circ (u_a \circ \varphi_a) \right) = 0, 
\end{equation}
for some $t_0 \in {\mathbb R}/{\mathbb Z}$.  
\par
When $X$ is a point, we denote by
$$
{\mathcal N}'_{\ell}({\rm source};*1*,\infty),
\quad {\mathcal N}'_{\ell}({\rm source};1*1,\infty),
\quad {\mathcal N}^{\bullet}_{\ell}({\rm source};1*1,\infty)
$$
the spaces
${\mathcal N}'_{\ell}(\text{\rm one point},H^{*1*,\infty};\alpha_-,\alpha_+)$,
${\mathcal N}'_{\ell}(\text{\rm one point},H^{1*1,\infty};\alpha_-,\alpha_+)$
and 
\linebreak
${\mathcal N}^{\bullet}_{\ell}(\text{\rm one point},H^{1*1,\infty};\alpha_-,\alpha_+)$
respectively.
\begin{defn}
We put
\begin{equation}
({\rm\ref{form23999}}) \cup {\mathcal N}'_{\ell}(X,H^{*1*,\infty};\alpha_-,\alpha_+)
={\mathcal N}'_{\ell}(X,H^{*1*};\alpha_-,\alpha_+),
\end{equation}
\begin{equation}
({\rm\ref{form23888}}) \cup {\mathcal N}'_{\ell}(X,H^{1*1,\infty};\alpha_-,\alpha_+)
={\mathcal N}'_{\ell}(X,H^{1*1};\alpha_-,\alpha_+)
\end{equation}
and
\begin{equation}
\eqref{form23888bullet} \cup {\mathcal N}^{\bullet}_{\ell}(X,H^{1*1,\infty};\alpha_-,\alpha_+)
={\mathcal N}^{\bullet}_{\ell}(X,H^{1*1};\alpha_-,\alpha_+).
\end{equation}
\par
When $X$ is a point, they are written as
${\mathcal N}'_{\ell}({\rm source};*1*)$, ${\mathcal N}'_{\ell}({\rm source};1*1)$ and 
${\mathcal N}^{\bullet}_{\ell}({\rm source};1*1)$
respectively.
\end{defn}
\begin{prop}\label{prop23999222}
\begin{enumerate}
\item
We can define topologies on ${\mathcal N}'_{\ell}(X,H^{*1*};\alpha_-,\alpha_+)$
and  ${\mathcal N}'_{\ell}(X,H^{1*1};\alpha_-,\alpha_+)$
so that they are compact and Hausdorff.
\item
We can define Kuranishi structures on them.
\item
We can define Kuranishi structures on the spaces
${\mathcal N}'_{\ell}(X,H^{*1*};\alpha_-,\alpha_+)^{\boxplus 1}$
and  ${\mathcal N}'_{\ell}(X,H^{1*1};\alpha_-,\alpha_+)^{\boxplus 1}$.
\item
Together with other objects
${\mathcal N}'_{\ell}(X,H^{*1*};\alpha_-,\alpha_+)^{\boxplus 1}$
defines a homotopy between
$\frak N_{* (H,J)} \circ \frak N_{(H,J) *} :  \mathcal F_X^{\rm tri} \to \mathcal F_X^{\rm tri}$ and a morphism of
energy loss $0$,
of which it will be an interpolation space.
\item Together with other objects
${\mathcal N}'_{\ell}(X,H^{1*1};\alpha_-,\alpha_+)^{\boxplus 1}$
defines a homotopy between
$ \frak N_{(H,J) *} \circ \frak N_{* (H,J)} : \mathcal F_X(H,J) \to \mathcal F_X(H,J)$
and $\frak N_{11}(\mathcal J,H^{11})$,
of which it will be an interpolation space.
\end{enumerate}
\end{prop}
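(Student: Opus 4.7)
The argument parallels the proofs of Theorems \ref{theorem266rev} and \ref{the26112}, together with the construction given in the proof of Proposition \ref{prop23999}, so I will emphasize only the points where the present situation differs.

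For (1), the topology is defined in direct analogy with Definitions \ref{stableconvergence} and \ref{def2626}, treating the parameter $T \in [0,\infty]$ as an additional coordinate and using the coordinates described around \eqref{formula2Tplus1} near $T=\infty$. Compactness and the Hausdorff property reduce to the standard Gromov-type compactification argument, with exponential convergence to periodic orbits coming from Proposition \ref{prof263} and Lemma \ref{prof26rev3}; as in Lemma \ref{cpthausdorff} the only nontrivial point is the extraction of limits as $T \to \infty$, which is handled by inserting the extra first/second main component data from Definitions \ref{defn210revto022} and \ref{defn210revto044}. The role of $\mathcal N^{\bullet}_\ell(X,H^{1*1};\alpha_-,\alpha_+)$ is to absorb the extra $S^1$ rotation $t_0$ that parametrizes the mismatch between the two main components at $T = \infty$ in the $1*1$ picture.

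For (2) and (3), I would follow the template of Section \ref{subsec;KuraFloer} almost verbatim: take symmetric stabilizations and canonical marked points with the conventions of Proposition \ref{prop23999} (canonical marked points are placed only on mainstream components carrying a nontrivial $H^1$-equation; the transit points between $\Sigma_{a_0}$-type components and $X$-valued $J$-holomorphic components are fattened as in Definition \ref{compwithfattening}), set up obstruction bundle data via Definition \ref{obbundeldata1} adapted to the present fibers of type (i)--(iii) in the proof of Proposition \ref{prop23999}, and construct Kuranishi charts through the obvious parameter space analogous to \eqref{form418revrev}, now including the extra $T$-direction. The outer collar $\boxplus 1$ is placed in the usual way as in Section \ref{subsec:outer}; all the exponential decay estimates in \cite[Chapter 8]{foooanalysis} transfer directly because the Hamiltonian term enters only in the mainstream equation and decays at the infinite necks.

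For (4) and (5), the homotopy structure is encoded by specifying the restriction of the Kuranishi structure to the corners. Following the compatibility analysis of Conditions \ref{conds26117}--\ref{conds26117223} and Proposition \ref{prop26120}, I would impose: at $T=\infty$, the Kuranishi structure is the fiber product arising from the decomposition of $\mathcal N^{\prime}_\ell(X,H^{*1*,\infty};\alpha_-,\alpha_+)$ (respectively $\mathcal N^{\prime}_\ell(X,H^{1*1,\infty};\alpha_-,\alpha_+)$) into the pieces described in Definitions \ref{defn210revto022} and \ref{defn210revto044}, which on the nose realize $\frak N_{*(H,J)} \circ \frak N_{(H,J)*}$ and $\frak N_{(H,J)*} \circ \frak N_{*(H,J)}$ as compositions of morphisms; at $T=0$, one uses that $H^{*1*,0} \equiv 0$ (so that the resulting morphism consists of constant continuations of elements of $X$ via $J$-holomorphic spheres, giving energy loss $0$ in the sense of Lemma \ref{lem26130}) and that $H^{1*1,0} \equiv H$ (yielding exactly $\frak N_{11}(\mathcal J,H^{11})$ from Construction \ref{const1612822}); at the other corners, compatibility with fiber products over intermediate critical submanifolds $R_\alpha$ is imposed as in Section \ref{subsec;KuramodFloercor}. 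The existence of Kuranishi structures satisfying all these conditions is proved by the same inductive choice/fattening procedure as in Propositions \ref{prop2661}, \ref{prop2661rev}, \ref{prop26120}.

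The main obstacle will be the compatibility at $T=\infty$ in the $1*1$ case, where one must identify the Kuranishi structure on $\mathcal N^{\prime}_\ell(X,H^{1*1,\infty};\alpha_-,\alpha_+)$ with the one induced by $\frak N_{(H,J)*} \circ \frak N_{*(H,J)}$, up to the $S^1$ rotation parametrized by $t_0$. This is precisely where the auxiliary space $\mathcal N^{\bullet}_\ell(X,H^{1*1};\alpha_-,\alpha_+)$ enters: it provides the one-parameter family of gluings needed so that, after taking the $S^1$ quotient (encoded by the fattening construction of Proposition \ref{prop23999}), the boundary stratum at $T=\infty$ recovers the composition on the nose. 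Once this identification is set up, everything else reduces to the compatibility formalism already established in Section \ref{subsec;KuramodFloercor} and the proof of Theorem \ref{the26112}(2).
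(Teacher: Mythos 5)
Your overall architecture matches the paper's: generalize the constructions of Theorem \ref{the26112} and Proposition \ref{prop23999}, place the stabilization/canonical marked point conventions and obstruction bundle data according to which components carry a parametrization, build the charts from a gluing map with an extra $T$-direction, and encode the two homotopies through corner compatibility conditions at $T=0$ and $T=\infty$. Items (1)--(3) and the $*1*$ half of (4) are handled essentially as in the paper.

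The gap is in your treatment of the $T=\infty$ stratum in the $1*1$ case, which you correctly flag as the main obstacle but then resolve with a mechanism that is not the paper's and, as stated, does not work. You describe the identification as ``taking the $S^1$ quotient (encoded by the fattening construction)'' after which the boundary stratum ``recovers the composition on the nose.'' The paper explicitly does \emph{not} quotient by the rotations (see Remark \ref{rem:106}); instead, ${\mathcal N}'_{\ell}(X,H^{1*1};\alpha_-,\alpha_+)$ for $T\ge \mathfrak T$ is realized as the \emph{slice} $\theta_1+\cdots+\theta_{m_*}=0$ inside the fattened space ${\mathcal N}^{\bullet}_{\ell}$, and precisely because of this constraint the fiber product \eqref{form26777741rev} describing $\frak N_{(H,J)*}\circ\frak N_{*(H,J)}$ sits in a locus of codimension $m_1+m_2+1$, not in a codimension-one face. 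So the composition is \emph{not} recovered on the nose as a boundary stratum; one must still prove that the smooth correspondence induced at $T=\infty$ agrees with that of the composition. The paper does this by approximating the $T=\infty$ locus with the codimension-two submanifolds $S^{(k)}_{\epsilon}$ (cut out by $\prod_j\rho(t_j)=\epsilon\delta^{k-1}$ and $\sum_j\theta_j=0$ in the fattening disks) and choosing a CF-perturbation of product type on the fattened disks $D^2_j(\delta/2)$, so that the collar region $t_j\in[-1,-1+\delta/2]$ contributes zero and the correspondence from $({\mathcal N}^{\bullet}_{\ell})^{\boxplus 1}_{\epsilon}$ equals the one from \eqref{form26777741rev}; the normalization of the intersecting strata is effected by recording the choice of transit point between $\Sigma_{a_1}$ and $\Sigma_{a_2}$ that splits the composition. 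Without this (or an equivalent) argument, item (5) is not established: you would be asserting an identification of K-spaces that fails already at the level of dimensions of strata.
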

\begin{proof}
The proof is a straight forward generalization of that of
Theorem \ref{the26112} etc. by taking the points
we discussed in the proof of
Proposition \ref{prop23999} into account.
So we only explain the point where the proof is different from that of
Theorems \ref{the26112} etc..
\par
When we define the notion of symmetric stabilization,
we put marked points to each unstable component\footnote
{We call an irreducible component of the source curve {\it unstable} if the
set of biholomorphic map of this component preserving all the marked and
singular points on it is of infinite order.}
of the source curve which is either a bubble component or
a mainstream component where we do not put a parametrization
$\varphi_a$.
We define a canonical marked point on each unstable mainstream
component on which we define a parametrization $\varphi_a$
as a part of the data and
which is not a main component.
\par
When we define obstruction bundle data,
the neighborhood $\mathcal V(\frak x_{\rm v} \cup \vec w_{\rm v}\cup \vec w_{{\rm can},{\rm v}})$
is an open subset of
\begin{enumerate}
\item[(i)] $\overset{\circ}{\mathcal M^{\rm cl}_{\ell_{\rm v}}}$
if $\Sigma_{\rm v}$ is a bubble component or a
mainstream component on which we do not define a parametrization $\varphi$,
\item[(ii)] $\overset{\circ}{\mathcal M}_{\ell_{\rm v}}({\rm Source})$
if $\Sigma_{\rm v}$ is a
mainstream component on which we define
the parametrization $\varphi_{\rm v}$
and which is not a main component,
\item[(iii)]
$\overset{\circ}{\mathcal N}_{\ell_{\rm v}}({\rm Source})$ if
$\Sigma_{\rm v}$ is a main component.
\end{enumerate}
We also include a codimension 2 submanifold $\mathcal D_i$
as in Definition \ref{obbundeldata1} (7) for each $i$
if $w_i$ corresponds to an additional marked point 
which is not a canonical marked point.
\par
We will define a map ${\Phi}_{\bf p}$ similar
to (\ref{form668rev}) as follows.
\par\noindent
{\bf Case 1:}
${\bf p}
= [((\Sigma,(z_-,z_+,\vec z),a_1,a_2),u,\varphi)] \in {\mathcal N}'_{\ell}(X,H^{*1*,\infty};\alpha_-,\alpha_+)$.
Let $k_1$ be the number of mainstream components $\Sigma_a$ with
$a_1 < a < a_2$. Then the map ${\Phi}_{\bf p}$ is
\begin{equation}\label{form668rev222}
\aligned
{\Phi}_{\bf p} : \prod \mathcal V(\frak x_{\rm v} \cup \vec w_{\rm v}\cup \vec w_{{\rm can},{\rm v}}) \times (T_0,\infty]^{k_1+1}
\times &\prod_{j=1}^{m+k_2}\left(((T_{0,j},\infty] \times S^1)/\sim \right)
\\
&\to {\mathcal N}'_{\ell}({\rm source};*1*,\infty).
\endaligned\end{equation}
We note that $k_1 + 1$ is the number of transit points which lie 
between $\Sigma_{a_1}$ and $\Sigma_{a_2}$.
Moreover $k_2$ is the number of other transits points  and $m$ is the
number of non-transit singular points.
The reason we have the $S^1$ factors for the transits points which do not
lie between $\Sigma_{a_1}$ and $\Sigma_{a_2}$ is the same as
the case of (\ref{form668rev}).
\par\noindent
{\bf Case 2:}
${\bf p}
= [((\Sigma,(z_-,z_+,\vec z),a_0),u,\varphi),T] \in\mathcal N_{\ell}(X,H^{*1*,T};\alpha_-,\alpha_+)
\times \{T_{\bf p}\}
$
with $T_{\bf p} > 0$. 
The map ${\Phi}_{\bf p}$ is
\begin{equation}\label{form668rev233}
\aligned
{\Phi}_{\bf p} : \prod \mathcal V(\frak x_{\rm v} \cup \vec w_{\rm v}\cup \vec w_{{\rm can},{\rm v}})
\times &\prod_{j=1}^{m+k}\left(((T_{0,j},\infty] \times S^1)/\sim \right)
\times (T_{\bf p} - \epsilon, T_{\bf p}+\epsilon)
\\
&\to {\mathcal N}'_{\ell}({\rm source};*1*,\infty).
\endaligned\end{equation}
Here $k$ is the number of transit points and $m$ is the number of
non-transit singular points.
We defined the parametrization of the mainstream only on the component $\Sigma_{a_0}$. This is the reason all the factors $(T_{0,j},\infty]$
come with the $S^1$ factors. Note in this case ${\bf p}$ is an interior
point of our moduli space ${\mathcal N}'_{\ell}({\rm source};*1*,\infty)$.
\par
\noindent
{\bf Case 3:}
${\bf p}
= [((\Sigma,(z_-,z_+,\vec z),a_1,a_2),u,\varphi)] \in {\mathcal N}^{\bullet}_{\ell}(X,H^{1*1,\infty};\alpha_-,\alpha_+)$.
The map ${\Phi}^{\bullet}_{\bf p}$ is
\begin{equation}\label{form668rev444bullet}
\aligned
{\Phi}^{\bullet}_{\bf p}: \prod \mathcal V(\frak x_{\rm v} \cup &\vec w_{\rm v}\cup \vec w_{{\rm can},{\rm v}}) \times (T_0,\infty]^{m_1+m_2}
\times \prod_{j=1}^{m}\left(((T_{0,j},\infty] \times S^1)/\sim \right)
\\
&\times \left(\prod_{j=1}^{m_*}\left(((T_{0,j},\infty] \times S^1)/\sim \right)\right)
\to {\mathcal N}^{\bullet}_{\ell}({\rm source};1*1,\infty).
\endaligned\end{equation}
Here $m_*$ is the number of transit points which lie between
$\Sigma_{a_1}$ and $\Sigma_{a_2}$ and 
$m_1$ (resp. $m_3$) is the number of transit points which lie
`left' (resp. `right') from $\Sigma_{a_1}$ (resp. $\Sigma_{a_2}$).
Also $m$ is the number of non-transit singular points.  

For ${\bf p}
= [((\Sigma,(z_-,z_+,\vec z),a_1,a_2),u,\varphi)] \in {\mathcal N}'_{\ell}(X,H^{1*1,\infty};\alpha_-,\alpha_+)$, we have 

\begin{equation}\label{form668rev444}
\aligned
{\Phi}_{\bf p} : \prod \mathcal V(\frak x_{\rm v} \cup &\vec w_{\rm v}\cup \vec w_{{\rm can},{\rm v}}) \times (T_0,\infty]^{m_1+m_2}
\times \prod_{j=1}^{m}\left(((T_{0,j},\infty] \times S^1)/\sim \right)
\\
&\times \left(\prod_{j=1}^{m_*}\left(((T_{0,j},\infty] \times S^1)/\sim \right)\right)' 
\to {\mathcal N}'_{\ell}({\rm source};1*1,\infty), 
\endaligned\end{equation}
where 
\begin{equation}\label{2318factor}
\left(\prod_{j=1}^{m_*}\left(((T_{0,j},\infty] \times S^1)/\sim\right)\right)'
\end{equation}
is the subset of $\prod_{j=1}^{m_*}\left(((T_{0,j},\infty] \times S^1)/\sim\right)$ defined by  
$$\theta_1 + \cdots + \theta_{m_*} = 0 \text{ in } S^1={\mathbb R}/{\mathbb Z}.$$ 
Here $\theta_j$,  $j=1, \dots, m_*$ are the coordinates of the $S^1$ factors.  
The way the factors appear in the left hand side of
(\ref{form668rev444}) is similar to other cases except the factor \eqref{2318factor}.  
We explain the way this factor appears.  
\par
Suppose all the $(T_{0,j},\infty]$ components in this factor are finite.
In this case we use this parameter together with
$\theta_j \in S^1$ to glue $\Sigma_{a_1}$, $\Sigma_{a_2}$
and the components $\Sigma_a$ with $a_1 < a < a_2$.
We then obtain a component which we write as $\Sigma_{a_0}$.
This will be the main component of the resulting element of
${\mathcal N}'_{\ell}({\rm source};*1*,\infty)$.
(More precisely, it may be glued with other mainstream components.)
\par
Note we defined the parametrizations $\varphi_{a_1}$,
$\varphi_{a_2}$ but not defined parametrizations for other
$\Sigma_a$ with $a_1 < a < a_2$.
We consider a parametrization $\varphi_{a_0} : \R \times S^1 \to
\Sigma_{a_0} \setminus \{z_{a_0,-},z_{a_0,+}\}$
such that $\lim_{\tau\to \pm\infty}\varphi_{a_0}(\tau,t)
= z_{a_0,\pm}$.
\par
We can identify $[-T_0,T_0] \times S^1 \subset \Sigma_{a_1}
\subset \Sigma_{a_0}$. Let $\frak v_1$ be this
map.
We also take an embedding $\frak v_2$
from  $[-T_0,T_0] \times S^1 \subset \Sigma_{a_2}$
to $\Sigma_{a_0}$.
There exist $t_1$, $t_2$ such that
$$
(\frak v_j\circ \varphi_{a_j}) (\tau,t) = \varphi_{a_0}(\tau+\tau_j,t+t_j)
$$
for $j=1,2$. Note the choice of $\varphi_{a_0}$ is not unique.
Namely we may change it by the $\R$ action.
We may choose $\varphi_{a_0}$ so that $\tau_1 = - \tau_2$.
Then $\tau_2 - \tau_1 +1 = T$.
(See (\ref{formula2Tplus1}).)
Here $T$ is the second factor in (\ref{form23888}).
\par
We consider $t_j$. We may choose the representative $\varphi_{a_0}$
so that one of them, say $t_1$ to be $0$. However it is
impossible to take the representative $\varphi_{a_0}$
for which both of $t_j$ are zero.
The notation $'$ stands for the constraint that $t_1 = t_2$.
Under this assumption we may choose both of $t_1$ and $t_2$ are zero.
Note if we change the $S^1$ factor in
$((T_{0,j},\infty] \times S^1)/\sim$ by $\theta_j$, then
$t_2 - t_1$ changes by $\sum_{j=1}^{m_*} \theta_j$.
\par
We remark that the point $[\infty,\dots,\infty]$
in (\ref{2318factor}) is a boundary point of  (\ref{2318factor}).
In fact, if we remove $'$ then it will be an interior point.
Because of the constraint  $t_1 = t_2$ this point becomes
the boundary points.
This is consistent to the fact that
${\mathcal N}'_{\ell}({\rm source};1*1,\infty)$
lies at the part $T = \infty$ of
${\mathcal N}'_{\ell}({\rm source};1*1)$.
\par
Note that there exists ${\mathfrak T}_0 > 0$ such that the closed neighborhoods $W({\mathbf p}_c)$ 
of  ${\mathbf p}_c \in {\mathcal N}'_{\ell}(X,H^{1*1,\infty};\alpha_-,\alpha_+)$ cover 
$\cup_{T \geq {\mathfrak T}_0}  \mathcal N_{\ell}(X,H^{1*1,T};\alpha_-,\alpha_+) \times \{T\}$.  
For a sufficiently large $\mathfrak T > {\mathfrak T}_0$, the obstruction space for 
${\mathbf q} \in \cup_{T \geq {\mathfrak T}_0}  \mathcal N_{\ell}(X,H^{1*1,T};\alpha_-,\alpha_+) \times \{T\}$ 
is the direct sum of $I_{{\bf p_c},{\rm v}; {\mathbf q}}(E_{{\bf p_c},{\rm v}}(\frak y))$ with ${\mathbf q} \in W({\mathbf p}_c)$.  
\par\noindent
{\bf Case 4:}
${\bf p}  \in\mathcal N_{\ell}(X,H^{1*1,T};\alpha_-,\alpha_+)
\times \{T_{\bf p}\}.$
This case 
is entirely the same as the case of (\ref{form418revrev}).
Using this map ${\Phi}_{\bf p}$, we define the notion of $\epsilon$-closeness.
Then we can define an obstruction space using them.
The definition of transversal constraint is similar.
We then take an appropriate closed finite covering of
our moduli space by $\{ W({\bf p}_c)\}_c$
and use it to define a Kuranishi structure in the same way
as in the proof of
Theorems \ref{the26112}.
\par
We note that the boundary of it has the form required
by Proposition \ref{prop23999222} (4)(5).
In fact, the boundary of the space
${\mathcal N}'_{\ell}(X,H^{*1*};\alpha_-,\alpha_+)$ is
only at $T=0$ and $T=\infty$.
This is the consequence of the description of the
domain of the maps (\ref{form668rev222}),
(\ref{form668rev233}), especially of the fact
that the domain of (\ref{form668rev233}) has no boundary.
\par

The boundary of the space
${\mathcal N}'_{\ell}(X,H^{1*1};\alpha_-,\alpha_+)$
other than those which are at $T=0$ and $T=\infty$
can be described by using the space of
connecting orbits of $\mathcal F_X(H,J)$.
Moreover the part of 
${\mathcal N}'_{\ell}(X,H^{1*1};\alpha_-,\alpha_+)$ which
appears at the part $T=\infty$ is the union of various spaces  
\begin{equation}\label{form26777741rev}
\aligned
&{\mathcal M}_{\ell_{1,1}}(X,H^1;\alpha_{1,0},\alpha_{1,1})
\,{}_{{\rm ev}_{+}}\times_{{\rm ev}_-}  \dots \\
&\dots
\,{}_{{\rm ev}_{+}}\times_{{\rm ev}_-}
{\mathcal M}_{\ell_{1,m_1}}(X,H^1;\alpha_{1,m_1-1},\alpha_{1,m_1})
\\
&
\,{}_{{\rm ev}_{+}}\times_{{\rm ev}_-}
{\mathcal N}'_{\ell'}(X,H^{1*};\alpha_{1,m},\alpha_{*,1}) \\
&\,{}_{{\rm ev}_{+}}\times_{{\rm ev}_-}{\mathcal M}^{\rm cl}_{\ell_{*,1}+2}(X;\alpha_{*,2}-\alpha_{*,1})
\,{}_{{\rm ev}_{+}}\times_{{\rm ev}_-} \dots\\
&\,{}_{{\rm ev}_{+}}\times_{{\rm ev}_-}
{\mathcal M}^{\rm cl}_{\ell_{*,m_*}+2}(X;\alpha_{*,m_*}-\alpha_{*,m_{*-1}})
\\
&
\,{}_{{\rm ev}_{+}}\times_{{\rm ev}_-}
{\mathcal N}'_{\ell''}(X,H^{*1};\alpha_{*,m},\alpha_{2,1}) 
\\
&{\mathcal M}_{\ell_{2,1}}(X,H^1;\alpha_{2,1},\alpha_{2,2})
\,{}_{{\rm ev}_{+}}\times_{{\rm ev}_-}  \dots
\\
&\dots
\,{}_{{\rm ev}_{+}}\times_{{\rm ev}_-}
{\mathcal M}_{\ell_{2,m_2+1}}(X,H^1;\alpha_{2,m_2},\alpha_{2,m_2+1}).
\endaligned
\end{equation}
Here ${\mathcal M}^{\rm cl}_{\ell}(X,\alpha)$
is the moduli space of stable maps 
of genus $0$ with $\ell$ marked points
and of homology class $\alpha$.
\par
The situation at $T = \infty$ is analogous to the space ${\frak{forget}}^{-1}([\Sigma_0])$ in \cite[Section 2.6, Lemmas 2.6.3, 2.6.27]{foooAst}, 
which is examined in detail in \cite[Section 4.6]{foooAst}.  
We remark that (\ref{form26777741rev}) lies in a codimension $m_1+m_2 + 1$ locus.  
This is a consequence of the discussion on (\ref{form668rev444}).
\par
For ${\mathcal N}^{\bullet}_{\ell}(X,H^{1*1};\alpha_-,\alpha_+)$, 
we perform the collar construction as well as the fattening of origins of complex smoothing parameters 
at transit points between $\Sigma_{a_1}$ and $\Sigma_{a_2}$ as in the proof of Proposition \ref{prop23999} 
to obtain the desired Kuranishi structure on ${\mathcal N}^{\bullet}_{\ell}(X,H^{1*1};\alpha_-,\alpha_+)^{\boxplus 1}$.  
\par
We regard the part of ${\mathcal N}'_{\ell}(X,H^{1*1};\alpha_-,\alpha_+)$ with 
$T \in [\mathfrak T, \infty]$ as a subspace of 
${\mathcal N}^{\bullet}_{\ell}(X,H^{1*1};\alpha_-,\alpha_+)$ defined by $\theta_1 + \cdots + \theta_{m_*} = 0$.  
For the construction of Kuranishi structures, we take a finite subset 
$\{{\mathbf p}_c\} \subset {\mathcal N}'_{\ell}(X,H^{1*1,T};\alpha_-,\alpha_+)$ such that 
the union of ${\rm Int} \ W({\mathbf p}_c)$ contains ${\mathcal N}'_{\ell}(X,H^{1*1,T};\alpha_-,\alpha_+)$, 
and a finite subset $\{{\mathbf p}_{c'}\} \subset {\mathcal N}^{\bullet}_{\ell}(X,H^{1*1};\alpha_-,\alpha_+) \setminus {\mathcal N}'_{\ell}(X,H^{1*1};\alpha_-,\alpha_+)$ 
such that $W({\mathbf p}_{c'})$'s are disjoint from ${\mathcal N}'_{\ell}(X,H^{1*1};\alpha_-,\alpha_+)$ 
and ${\mathcal N}^{\bullet}_{\ell}(X,H^{1*1};\alpha_-,\alpha_+)$ is covered by 
${\rm Int} \ W({\mathbf p}_c)$'s and ${\rm Int} \ W({\mathbf p}_{c'})$'s.  
By the construction, the Kuranishi structure on ${\mathcal N}^{\bullet}_{\ell}(X,H^{1*1,T};\alpha_-,\alpha_+)$ is compatible with 
the fattening on $D_j(\delta)$ for a sufficiently small $\delta > 0$ (Definition \ref{compwithfattening}). 
\par
The Kuranishi structure on ${\mathcal N}^{\bullet}_{\ell}(X,H^{1*1};\alpha_-,\alpha_+)^{\boxplus 1}$ restricts to the one on 
the subspace defined by $\sum_{j=1}^{k_2} \theta_j = 0$ and $T \in [\mathfrak T, \infty]$.   
Our choice of ${\mathbf p}_c$, we see that the Kuranishi structure on ${\mathcal N}'_{\ell}(X,H^{1*1};\alpha_-,\alpha_+)$ with $T \in [\mathfrak T, \infty]$ 
can be arranged to match with the part $T \in [0, \mathfrak T +1]$.  Thus we obtain 
${\mathcal N}'_{\ell}(X,H^{1*1};\alpha_-,\alpha_+)^{\boxplus 1}$ with the desired Kuranishi structure.  
\par
We approximate ${\mathcal N}_{\ell}(X,H^{1*1, \infty};\alpha_-,\alpha_+)^{\boxplus 1}$ in ${\mathcal N}^{\bullet}_{\ell}(X,H^{1*1};\alpha_-,\alpha_+)^{\boxplus 1}$ 
using a family of smooth submanifolds $S^{(k)}_{\epsilon}$ of codimension $2$
in $\prod_{j=1}^{k_2} D^2_j$ given below.  
\par
Let $\rho:[-1,0] \to {\mathbb R}$ be a non-decreasing smooth function such that $\rho (t) = 3(t+1)/2$ for $-1 \leq t \leq-1 + \delta/2$ and $\rho (t) = \delta$ for 
$t \geq -1 + \delta$.  
Choose $\epsilon < \delta/10$ and define $S^{(k)}_{\epsilon}$ by $\prod_{j=1}^{k} \rho(t_j) = \epsilon \delta^{k-1}$ and $\sum_{j=1}^{k} \theta_j = 0$ in ${\mathbb R}/{\mathbb Z}$.  
Then $S^{(k)}_{\epsilon}$ approachs to $\cup_{j=1}^{k}  D^2_1 \times \cdots \times \check{D^2_j}\times  \cdots \times D^2_k$.  
\par
The space ${\mathcal N}^{\bullet}_{\ell}(X,H^{1*1};\alpha_-,\alpha_+)^{\boxplus 1}$ is obtained by gluing various 
\begin{equation} \label{variouspieces}
\eqref{form26777741rev} \times [-1,0]^{m_1 + m_2} \times \prod_{j=1}^{m_*} D^2_j
\end{equation}
to ${\mathcal N}^{\bullet}_{\ell}(X,H^{1*1};\alpha_-,\alpha_+)$.  
\par
We define $({\mathcal N}^{\bullet}_{\ell}(X,H^{1*1};\alpha_-,\alpha_+)^{\boxplus 1})_{\epsilon}$ by 
the union of $\eqref{form26777741rev} \times \prod_{j=1}^{m_1 + m_2} \{-1\} \times S^{(m_*)}_{\epsilon}$.  
In order to define the smooth correspondence (see \cite[Section 9.5, (9.17)]{fooonewbook}), we need to take a CF-perturbation (see 
\cite[Chapter 7]{fooonewbook}).  Since the Kuranishi structure is ${\mathfrak C}$-collared
as in Proposition \ref{prop2661} and Remark \ref{defn1531revrev} 
and compatible with the fattening on $D^2_j(\delta)$, we can 
take a CF-perturbation, which is $\tau$-collared for a small $\tau>0$ with respect to $\mathfrak C$ (see \cite[Lemma 17.40 (2)]{fooonewbook}).  
In the same way, we arrange the CF-perturbation such that it is compatible with the fattening, i.e., the product type, on $D^2_j(\delta/2)$.  
Then the contribution to the smooth correspondence from the part with $t_j \in [-1, -1+\delta/2]$ is zero and 
the smooth correspondence given by $({\mathcal N}^{\bullet}_{\ell}(X,H^{1*1};\alpha_-,\alpha_+)^{\boxplus 1})_{\epsilon}$ equipped with the CF-perturbation 
is equal to the smooth correspondence given by \eqref{form26777741rev}.  

In fact, the composition of $ \frak N_{(H,J) *} \circ \frak N_{* (H,J)}$ is given by 
$
((\Sigma,(z_-,z_+,\vec z),a_1,a_2),u,\varphi)
$
in Definition \eqref{defn210revto022} with a choice of one of $z_{a,+}=z_{a+1,-}$, $a_1 \leq a \leq a_2-1$, i.e., a transit point between 
$\Sigma_{a_1}$ and $\Sigma_{a_2}$.  (This choice determines the output of $\frak N_{* (H,J)}$ and the input of $ \frak N_{(H,J) *}$.  
Adding this choice in the data to the object, $((\Sigma,(z_-,z_+,\vec z),a_1,a_2),u,\varphi, a), a \in \{a_1, \dots, a_2 -1\}$, the intersection of 
$\cup_{j=1}^{k}  D^2_1 \times \cdots \times \check{D^2_j} \times \cdots \times D^2_k$ in the fattening factor is resolved (normalization) and 
$({\mathcal N}^{\bullet}_{\ell}(X,H^{1*1};\alpha_-,\alpha_+)^{\boxplus 1})_{\epsilon}$ becomes the K-space giving the smooth correspondence 
for $ \frak N_{(H,J) *} \circ \frak N_{* (H,J)}$.  Since our choice of the CF-perturbation makes such intersections do not contribute to 
the smooth correspondence, we can also use $({\mathcal N}^{\bullet}_{\ell}(X,H^{1*1};\alpha_-,\alpha_+)^{\boxplus 1})_{\epsilon}$. 
\par
The proof of Proposition \ref{prop23999222} is complete.
\end{proof}
The proof of Theorem \ref{therem232} is now complete.
\end{proof}
\begin{rem}
In Theorem \ref{therem232} we did not claim
that the composition
$\frak N_{* (H,J)}\circ \frak N_{(H,J) *} :
\mathcal F_X^{\rm tri} \to \mathcal F_X^{\rm tri}$ is homotopic to the
identity morphism.
We can prove it.
In fact, it follows from Claim \ref{clain26136}, but
we do not provide the technical details of the
proof of Claim \ref{clain26136} in this article.
\end{rem}

\bibliographystyle{amsalpha}

\end{document}